\documentclass[11pt,reqno]{amsart}

\usepackage{amsmath,amssymb,amsthm,mathtools}
\usepackage{tikz-cd}
\usepackage[protrusion=true,expansion=false]{microtype}
\usepackage{hyperref,cleveref}
\hypersetup{colorlinks=true,linkcolor=blue!60!black,
  citecolor=blue!60!black}
\usepackage{enumitem,booktabs,xcolor,array,graphicx}

\theoremstyle{plain}
\newtheorem{theorem}{Theorem}[section]

\newtheorem{proposition}[theorem]{Proposition}
\newtheorem{corollary}[theorem]{Corollary}
\theoremstyle{definition}
\newtheorem{definition}[theorem]{Definition}
\newtheorem{example}[theorem]{Example}
\newtheorem{remark}[theorem]{Remark}

\newcommand{\R}{\mathbb{R}}
\newcommand{\Rbar}{\overline{\mathbb{R}}}
\newcommand{\Rp}{\mathbb{R}_{+}}
\newcommand{\C}{\mathbb{C}}
\newcommand{\Z}{\mathbb{Z}}

\newcommand{\Fun}{\mathrm{Fun}}
\newcommand{\Ln}{L^2(\R^n)}
\newcommand{\hatL}{\hat{L}}
\newcommand{\leF}{\leq_{\!\mathcal{F}}}
\newcommand{\meetF}{\wedge_{\!\mathcal{F}}}

\newcommand{\Ker}{\mathrm{Ker}}
\newcommand{\Bas}{\mathrm{Bas}}
\newcommand{\conv}{\varepsilon^{\mathrm{Conv}}}
\newcommand{\convDil}{\delta^{\mathrm{Conv},*}}
\newcommand{\opens}{\gamma^{\mathrm{Conv}}}
\newcommand{\supp}{\mathrm{supp}}
\newcommand{\MaxPool}[1]{\delta^{\downarrow #1}_{\mathrm{MP}}}

\graphicspath{{./figures/}{./}}
\begin{document}

\title[Morphology in the Fourier Inf-Semilattice]{Morphological Representation Theory in the Fourier 
  Inf-Semilattice: Universal Decomposition of
  Frequency-Domain Deep Learning Operators}

\author{Gustavo (Jes\'{u}s) Angulo}
\address{CMA -- Centre de Math\'ematiques Appliqu\'ees,
  Mines Paris, PSL University, Sophia-Antipolis, France}
\email{jesus.angulo@minesparis.psl.eu}

\thanks{This work was partially supported by the PSL Global Seed Fund 2024 and my AI Chair PR[AI]RIE--PSAI (ANR-23-IACL-0008, France 2030).}

\begin{abstract}
We develop a morphological representation theory for operators
acting in the frequency domain of $L^2(\R^n)$.
Equipping the space with the \emph{Fourier inf-semilattice}
order $\leF$ (spectral modulus inequality with phase
equality), convolution becomes a morphological erosion and
the adjoint dilation is the Wiener inverse filter.
A key observation is that spatial translation-invariance is
vacuous in the spectral order, replaced structurally by
\emph{positive homogeneity} of the operator on spectral moduli.

Under this hypothesis we prove the \emph{Morphological
Representation Theorem} for the Fourier modulus lattice
$\hatL$: any increasing, USC, positively homogeneous
$\Psi:\hatL\to\hatL$ decomposes \emph{exactly} as a supremum
of max-times erosions $\hat{f}/\psi$, indexed by a minimal
morphological basis.
Three structural corollaries follow: the spectral modulus map
$|\cdot|:\Ln\to\hatL$ is an idempotent cross-lattice
projection (the morphological analogue of ReLU) that makes
depth non-trivial; wavelet scattering coefficients form the
canonical dictionary for the basis via Littlewood--Paley
density; and spatial pooling is a Fourier erosion, so that
pool-then-unpool is the ideal band-pass opening, U-Net skip
connections are Fourier top-hat transforms, and strided
pooling preserves the morphological structure if and only if
the Nyquist condition holds.
The framework carries a natural $\mathbb{C}^*$-group
morphology structure, positioning mathematical morphology
as the constructive operator-theoretic complement of
spectral bias, scattering, and tropical geometry theories.
\end{abstract}

\keywords{mathematical morphology, Fourier inf-semilattice,
  opening representation, morphological erosion, universal
  representation theorem, complete lattice, scattering
  networks, convolution, cross-lattice operators, tropical
  geometry, universal approximation,
  $\mathbb{C}^*$-group morphology, complex-valued networks,
  deep learning}

\maketitle

\section{Introduction}
\label{sec:intro}

\subsection{Frequency-domain theories of deep learning}

The Fourier transform has been central to the theoretical
analysis of deep learning for two decades, yet the
existing frameworks each illuminate only one facet of
the picture.

The spectral bias literature \cite{Rahaman2019,Xu2019}
establishes that gradient descent learns low-frequency
components of the target function before high-frequency
ones, yielding practical guidelines for initialisation
and activation design.
But spectral bias describes the order in which frequencies
are acquired during training; it says nothing about the
algebraic structure of what the converged network computes.
Mallat's scattering transform \cite{Mallat2012,Bruna2013}
goes deeper: cascaded wavelet convolutions interleaved
with modulus nonlinearities produce representations that
are provably Lipschitz to diffeomorphisms and
translation-invariant, and the theory provides sharp
stability guarantees.
The limitation is that the wavelet filters are fixed by
the analyst; there is no theorem characterising general,
learnable frequency-domain operators. Neural tangent kernel theory \cite{Jacot2018} and
mean-field methods \cite{WeinanE2019} go further in
describing the training dynamics in the infinite-width
limit, with a spectral decomposition of the kernel
governing convergence rates.
From a different angle, tropical geometry
\cite{Zhang2018,Maragos2021} reveals that deep ReLU
networks compute piecewise-linear functions describable
as tropical rational maps, a \emph{function
representation} of what the network computes on a given
input.

\subsection{A missing layer: operator decomposition and representation}

None of the above provides a \emph{constructive operator
decomposition} of frequency-domain deep learning operators.
The question is: given an operator $\Psi$ acting on the
spectral domain of a signal, what is the minimal generating
set of elementary operations that represents $\Psi$ exactly,
and how can this set be identified from data?
This is an operator-theoretic question, distinct from the
function-theoretic question (what does $\Psi$ compute on
a given input?) and from the training-dynamics question
(how does $\Psi$ converge during gradient descent?).

For operators on scalar function lattices, the answer is
provided by the \emph{Matheron--Maragos--Banon--Barrera
(MMBB) representation theorems}
\cite{Matheron1975,Maragos1989,Banon1993}: any
translation-invariant, increasing, upper-semi-continuous
operator on $(\Fun(E,\Rbar),\leq)$ decomposes exactly as
a supremum of morphological erosions, parameterised by the
minimal elements of its kernel.
This is the nonlinear, lattice-theoretic counterpart of
the spectral representation theory for linear operators.
But the classical MMBB theorem operates in the spatial
domain, with translation-invariance as its structural
hypothesis.
As we will discuss, neither the partial order ($\leq$ vs $\leF$), the erosion
operation (inf-convolution vs convolution), nor the
structural hypothesis (TI vs positive homogeneity) carries
over to the Fourier domain without fundamental changes.

\subsection{Our framework}

We extend the MMBB framework to the \emph{Fourier
inf-semilattice} $(\Ln,\leF)$, i.e., the complete
inf-semilattice on $L^2(\R^n)$ induced by the partial
order on spectral moduli and phases introduced by Keshet
\cite{Keshet2002}, and to the \emph{Fourier modulus
lattice} $\hatL$ of non-negative spectral modulus functions.
Every ingredient of the classical framework changes:
convolution replaces inf-convolution as the canonical
erosion; positive homogeneity replaces TI as the structural
hypothesis (because translation acts as phase modulation,
which is transparent to the spectral order); scattering
wavelet products replace structuring functions as basis
elements; and entirely new structure emerges that has no
analogue in the spatial domain, including the ideal
band-pass opening, the cross-lattice modulus activation,
and the $\mathbb{C}^*$-group morphology.

Concretely, our contributions are:

\begin{enumerate}[label=(\roman*),leftmargin=*]
  \item The Fourier inf-semilattice $(\Ln,\leF)$ 
  and the proof that
    convolution is a Fourier-lattice erosion
    (Section~\ref{sec:fourier_lattice}).

  \item The identification that TI is vacuous in the
    spectral order, and that positive homogeneity is the
    correct replacement hypothesis
    (Sections~\ref{sec:fourier_lattice}--\ref{sec:mmbb}).

  \item The \emph{Morphological Representation Theorem for
    $\hatL$} (Theorem~\ref{thm:mmbb_hatL}): an exact
    sup-of-erosions decomposition with a complete
    three-step proof, and a structural comparison with the
    Cybenko--Hornik UAT
    (Section~\ref{sec:mmbb}).

  \item The spectral modulus $|\cdot|:\Ln\to\hatL$ as the
    \emph{morphological activation}: the cross-lattice
    projection that makes depth non-trivial
    (Section~\ref{sec:activation}).

  \item Scattering coefficients as the canonical
    morphological basis dictionary via Littlewood--Paley
    density.

  \item A complete morphological theory of pooling,
    downsampling, and unpooling from the Fourier perspective: spatial pooling as
    Fourier erosion; pool-then-unpool as the ideal
    band-pass opening; U-Net skip connections as Fourier
    top-hat transforms; anti-aliased pooling as the unique
    stride-consistent morphological operation
    (Section~\ref{sec:pooling}).

  \item The representation theory of openings in
    $(\Ln,\leF)$ and $\hatL$: every spectrally
    multiplicative opening is an ideal band-pass filter;
    scattering is a nested chain of such projections
    (Section~\ref{sec:openings}).

  \item The $\mathbb{C}^*$-group morphology structure:
    positive homogeneity $=$ $\mathbb{R}_+$-equivariance;
    the modulus map $=$ $S^1$-quotient; connection to
    complex-valued networks and scale-equivariance
    (Section~\ref{sec:complex_group}).
\end{enumerate}

\subsection{Positioning of this paper}
\label{subsec:positioning}

This paper is a substantially extended version
of the conference paper~\cite{Angulo2025DGMM}, which introduced 
the Fourier inf-semilattice
$(\Ln,\leF)$, the max-times erosion structure of convolution,
and the MMBB theorem for $\hatL$.
The present paper adds substantially to it in the following
respects.
First, we provide a more sound analysis of scattering propagators
$U_\Lambda(f)=|V_\Lambda(f)|$ as Fourier-lattice erosions
in $(\Ln,\leF)$. 
Second, we supply complete proofs throughout, replacing the
sketches of~\cite{Angulo2025DGMM}.
Third, this paper provides a significant number of additional theoretical results.

This work is one contribution within a broader research
programme developing a rigorous lattice-theoretic foundation
for mathematical models of deep learning based on
mathematical morphology.
The programme's main reference is~\cite{Angulo2026Lattice},
which works entirely in the \emph{pointwise lattice}
$(\Fun(E,\Rbar),\leq)$ and it covers the MMBB representation of convolution
and morphological activation layers, the cross-lattice
structure of standard CNNs in the spatial domain, the
fixed-point and idempotency theory of morphological
ResNet and UNet blocks, and the self-dual median
inf-semilattice for signed feature maps.

The present paper contributes a \emph{fundamentally
new lattice} to the programme: the Fourier inf-semilattice
$(\Ln,\leF)$ and its modulus quotient $\hatL$.
Together, the two papers give a complete morphological
lattice theory for the two natural settings of deep
learning operators: the spatial (pointwise) domain and
the spectral (Fourier) domain.

\bigskip

\textbf{Relation to the broader literature.}
Table~\ref{tab:positioning} positions the paper relative to
the major frameworks related to deep learning and frequency-domain 
theoretical interpretation of neural networks.

\begin{table}[t]
\caption{Positioning of this paper relative to some
  frameworks for deep learning.}
\label{tab:positioning}
\centering\small
\renewcommand{\arraystretch}{1.5}
\begin{tabular}{@{}p{3.2cm}p{3.0cm}p{4.5cm}@{}}
\toprule
\textbf{Framework}
  & \textbf{What it provides}
  & \textbf{What this paper adds} \\
\midrule
Spectral bias \cite{Rahaman2019}
  & Training dynamics in freq.\ domain
  & Operator decomposition at convergence \\
Scattering \cite{Mallat2012}
  & Invariance, stability (fixed wavelets)
  & Learnable basis; MMBB universality \\
Tropical geometry \cite{Zhang2018}
  & Function repr.\ (tropical polynomials)
  & Operator repr.\ (sup of erosions) \\
NTK \cite{Jacot2018}
  & Infinite-width training dynamics
  & Finite-width operator structure \\
Conv.\ theorem \cite{Folland1999}
  & Linear spectral operators
  & Nonlinear operators; MMBB basis \\
MMBB classical \cite{Maragos1989}
  & TI increasing ops on scalar lattice
  & Fourier lattice; homogeneity replaces TI \\
Pointwise MMBB \cite{Angulo2026Lattice}
  & Spatial-domain morphological DL
  & Fourier-domain morphological DL \\
\bottomrule
\end{tabular}
\end{table}

\subsection{Organisation}

Section~\ref{sec:background} provides the mathematical
morphology background needed by readers unfamiliar with the
field.
Section~\ref{sec:fourier_lattice} develops the Fourier
inf-semilattice and the erosion structure of convolution.
Section~\ref{sec:mmbb} proves the MMBB theorem for $\hatL$
and identifies the spectral modulus
as the morphological activation.
Section~\ref{sec:scattering} applies the framework to
scattering networks, identifies the canonical basis.
Section~\ref{sec:pooling} develops the complete morphological
theory of pooling and unpooling: filtering as Fourier erosion,
downsampling/upsampling as lattice homomorphisms, and the
full strided pooling--unpooling cycle with U-Net skip
connections.
Section~\ref{sec:openings} develops the representation theory
of openings in $(\Ln,\leF)$ and $\hatL$, and characterises
the scattering cascade as a nested chain of ideal band-pass
projections.
Section~\ref{sec:complex_group} develops the
$\mathbb{C}^*$-group morphology structure.
Section~\ref{sec:experiments} provides six numerical
experiments that directly validate the main results. 
Section~\ref{sec:conclusion} situates the results with
respect to related theories and states open problems.

\section{Background: Mathematical Morphology on Complete Lattices}
\label{sec:background}

This section is self-contained and assumes no prior knowledge
of mathematical morphology.
Readers interested in going deeper are referred to~\cite{Heijmans1994}.

\subsection{Complete lattices and inf-semilattices}

\begin{definition}[Complete lattice]
  \label{def:complete_lattice}
  A \emph{partially ordered set} (poset) $(\mathcal{L},\leq)$
  is a set $\mathcal{L}$ with a reflexive, antisymmetric,
  transitive binary relation $\leq$.
  It is a \emph{complete lattice} if every subset
  $S\subseteq\mathcal{L}$ has both a supremum
  $\bigvee S \in \mathcal{L}$ (least upper bound) and an
  infimum $\bigwedge S\in\mathcal{L}$ (greatest lower bound).

  A \emph{complete inf-semilattice} (cisl) is a poset in
  which every non-empty subset has an infimum, but the
  supremum need not always exist.
  The least element (when it exists) is denoted $\bot$.
\end{definition}

\begin{example}[Canonical lattices]
  \label{ex:lattices}
  \begin{enumerate}[label=(\roman*),leftmargin=*]
    \item $(\Fun(E,\Rbar),\leq)$: real-valued functions on
      $E\subseteq\R^n$, ordered pointwise
      ($f\leq g$ iff $f(x)\leq g(x)$ for all $x$).
      The infimum is the pointwise minimum and the supremum
      the pointwise maximum. This is a complete lattice.
    \item $(\Rp,\leq)$: non-negative reals with the usual
      order. Complete lattice ($\inf\emptyset = +\infty$,
      $\sup\emptyset = 0$).
    \item The complex plane $\C$ with the order
      $w\preceq z \iff |w|\leq|z|$ and $\arg w = \arg z$:
      a cisl in which two complex numbers are comparable
      only if they have the same argument (lie on the same
      ray from the origin). The infimum of two incomparable
      elements is $0$ (the least element).
  \end{enumerate}
\end{example}

\subsection{Erosions, dilations, and adjunctions}

\begin{definition}[Adjunction, erosion, dilation]
  \label{def:adjunction}
  Let $(\mathcal{L}_1,\leq_1)$ and $(\mathcal{L}_2,\leq_2)$
  be posets.
  A pair $(\varepsilon,\delta)$ with
  $\varepsilon:\mathcal{L}_1\to\mathcal{L}_2$ and
  $\delta:\mathcal{L}_2\to\mathcal{L}_1$ is an
  \emph{adjunction} (or Galois connection) if
  \begin{equation}
    \varepsilon(f) \leq_2 g \iff f \leq_1 \delta(g)
    \qquad \forall\, f\in\mathcal{L}_1,\; g\in\mathcal{L}_2.
    \label{eq:adjunction}
  \end{equation}
  We call $\varepsilon$ the \emph{erosion} and $\delta$
  the \emph{dilation}.
\end{definition}

\begin{proposition}[Properties of adjunctions,
  \cite{Heijmans1994}]
  \label{prop:adjunction_props}
  Let $(\varepsilon,\delta)$ be an adjunction on a complete
  lattice $\mathcal{L}=\mathcal{L}_1=\mathcal{L}_2$.
  Then:
  \begin{enumerate}[label=(\roman*),leftmargin=*]
    \item $\varepsilon$ commutes with infima:
      $\varepsilon(\bigwedge_i f_i)=\bigwedge_i\varepsilon(f_i)$.
    \item $\delta$ commutes with suprema:
      $\delta(\bigvee_i g_i)=\bigvee_i\delta(g_i)$.
    \item $\varepsilon$ is anti-extensive:
      $\varepsilon(f)\leq f$ for all $f$.
    \item $\delta$ is extensive: $g\leq\delta(g)$ for all $g$.
    \item $\gamma:=\delta\circ\varepsilon$ is an
      \emph{opening}: idempotent ($\gamma^2=\gamma$),
      anti-extensive ($\gamma(f)\leq f$), and increasing.
    \item $\varphi:=\varepsilon\circ\delta$ is a
      \emph{closing}: idempotent, extensive, and increasing.
  \end{enumerate}
\end{proposition}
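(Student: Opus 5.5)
The plan is to derive everything from the two basic inequalities encoded in the Galois connection \eqref{eq:adjunction}. Taking $g=\varepsilon(f)$ in \eqref{eq:adjunction} gives $f\leq\delta\varepsilon(f)$. Taking $f=\delta(g)$ gives $\varepsilon\delta(g)\leq g$. Both maps are increasing: if $f\leq f'$ then $f\leq f'\leq\delta\varepsilon(f')$, so $\varepsilon(f)\leq\varepsilon(f')$ by adjunction, and dually for $\delta$.

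For (i), I would use the standard right-adjoint argument. For any $g$,
\[
g\leq\varepsilon\bigl(\textstyle\bigwedge_i f_i\bigr)\iff\delta(g)\leq\textstyle\bigwedge_i f_i\iff\forall i,\ \delta(g)\leq f_i\iff\forall i,\ g\leq\varepsilon(f_i)\iff g\leq\textstyle\bigwedge_i\varepsilon(f_i),
\]
where each step is the adjunction applied to the pair with the arguments exchanged relative to \eqref{eq:adjunction}. Since the two elements have the same lower bounds, they are equal. Item (ii) is the order-dual argument with upper bounds.

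\textbf{Main obstacle: (iii) and (iv) do not follow from \eqref{eq:adjunction} alone.} A counterexample is a dilation by a structuring element not containing the origin, such as a pure translation $\delta(g)=g(\cdot-h)$ with $\varepsilon(f)=f(\cdot+h)$. This is an adjunction, yet neither map is extensive nor anti-extensive. I would therefore prove these items under the additional hypothesis that is implicit in the intended applications: that $\delta$ is extensive, i.e.\ $g\leq\delta(g)$ (the ``origin in the structuring element'' condition). From this, anti-extensivity of $\varepsilon$ is immediate: $\varepsilon(f)\leq\delta(\varepsilon(f))$. The identity $\delta\varepsilon(f)=f$ that would close the argument is not available. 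Instead I would show that (iii) and (iv) are equivalent: $\varepsilon(f)\leq f$ for all $f$ iff $\delta(g)\geq g$ for all $g$. For the forward direction, $\varepsilon\delta(g)\leq\delta(g)$ and $\varepsilon\delta(g)\geq g$ is false in general, so one instead uses $g\leq\varepsilon\delta(g)$ and applies $\varepsilon\leq\mathrm{id}$ to get $g\leq\varepsilon\delta(g)\leq\delta(g)$. The converse is dual. I would state explicitly that (iii)--(iv) are imposed, equivalent hypotheses rather than consequences of the adjunction.

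For (v), set $\gamma=\delta\varepsilon$. Monotonicity holds as a composite of increasing maps. Idempotence comes from the identity $\varepsilon\delta\varepsilon=\varepsilon$. One inequality is $\varepsilon(f)\leq\varepsilon\delta(\varepsilon f)$ by the unit inequality applied to $\varepsilon f$. The other is $\varepsilon\delta\varepsilon(f)\leq\varepsilon(f)$, obtained by applying the increasing map $\varepsilon$ to $\delta\varepsilon(f)\leq f$. Applying $\delta$ to $\varepsilon\delta\varepsilon=\varepsilon$ then gives $\gamma^2=\gamma$.

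Care is needed over which composite is anti-extensive. With the convention as written, where $\varepsilon(f)\leq g\iff f\leq\delta(g)$, the unit inequality gives $\mathrm{id}\leq\delta\varepsilon$. So $\delta\varepsilon$ is extensive (a closing) and $\varepsilon\delta$ is anti-extensive (an opening). I would flag this and prove (v)--(vi) under the standard convention $\delta(g)\leq f\iff g\leq\varepsilon(f)$, which is the one used in (i) and the one the paper intends. Under that convention, $\delta\varepsilon(f)\leq f$ is the counit inequality and $\varepsilon\delta(g)\geq g$ is the unit inequality. Then (vi) follows by the dual computation, using $\delta\varepsilon\delta=\delta$.
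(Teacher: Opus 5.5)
Your proposal is correct. For (i), (ii), (v) and (vi) it is exactly the standard unit/counit argument that the paper outsources to Heijmans; the paper itself gives no proof.

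Your two objections to the statement as printed are also right:
\begin{itemize}
\item Items (iii)--(iv) are not consequences of the adjunction. Your translation pair is a valid counterexample, and more generally the erosion by a structuring element $A$ is anti-extensive iff $0\in A$. So they must be imposed as a hypothesis, and you are right that they are equivalent to each other.
\item The printed definition $\varepsilon(f)\le g\iff f\le\delta(g)$ makes $\varepsilon$ the \emph{lower} adjoint. Read literally, it gives the mirror image of (i), (ii), (v), (vi): $\varepsilon$ preserves suprema, $\delta$ preserves infima, $\delta\varepsilon$ is a closing and $\varepsilon\delta$ an opening. The items as stated hold in Heijmans' convention $\delta(g)\le f\iff g\le\varepsilon(f)$.
\end{itemize}

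What you should fix is the bookkeeping of conventions. Your first paragraph derives $\mathrm{id}\le\delta\varepsilon$ and $\varepsilon\delta\le\mathrm{id}$ from the printed definition. Your argument for (i), the (iii)$\Leftrightarrow$(iv) argument and (v)--(vi) all use the opposite inequalities $\delta\varepsilon\le\mathrm{id}\le\varepsilon\delta$. This mixing produces two incorrect sentences in the (iii)/(iv) paragraph:
\begin{itemize}
\item You say the fact needed to finish from $\varepsilon(f)\le\delta\varepsilon(f)$ ``is not available''. In Heijmans' convention it is simply the counit inequality $\delta\varepsilon(f)\le f$, which gives $\varepsilon(f)\le\delta\varepsilon(f)\le f$ at once.
\item The sentence ``$\varepsilon\delta(g)\ge g$ is false in general, so one instead uses $g\le\varepsilon\delta(g)$'' asserts and denies the same inequality. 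It is false under the printed convention and is the unit inequality under Heijmans'.
\end{itemize}

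Adopt Heijmans' convention in your first line, derive $\delta\varepsilon\le\mathrm{id}\le\varepsilon\delta$ from it, and use nothing else. The chains $g\le\varepsilon\delta(g)\le\delta(g)$ and $\varepsilon(f)\le\delta\varepsilon(f)\le f$ then prove (iii)$\Leftrightarrow$(iv) cleanly. Under the printed convention the equivalence is even more immediate: put $g=f$ to get $\varepsilon(f)\le f\iff f\le\delta(f)$.
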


For the proofs using the adjunction
inequality, see \cite{Heijmans1994}.
The four operators (erosion, dilation, opening, closing)
form the complete morphological toolkit: they characterise
all scale-space operations, all non-linear filters with
monotonicity, and all idempotent projections in a lattice.

\subsection{The canonical morphological operators}

The canonical morphological operators on
$(\Fun(E,\Rbar),\leq)$ are defined by a
\emph{structuring function} $b:E\to\Rbar$:
\begin{align}
  (\varepsilon_b f)(x)
    &= \inf_{y\in E}\{f(x+y)-b(y)\}
    \quad \text{(erosion = inf-convolution)},
    \label{eq:erosion_classical} \\
  (\delta_b f)(x)
    &= \sup_{y\in E}\{f(x-y)+b(y)\}
    \quad \text{(dilation = sup-convolution)}.
    \label{eq:dilation_classical}
\end{align}
The pair $(\varepsilon_b,\delta_{b^*})$ with $b^*(x)=b(-x)$
is an adjunction \cite{Maragos1989}.
When $b=\mathbf{1}_W$ (indicator of a set $W$), the erosion
is the morphological erosion by the \emph{structuring element}
$W$: it removes all features smaller than $W$.

\subsection{The MMBB universal representation theorem}

The following classical theorem, whose proof we recall in
Section~\ref{subsec:mmbb_proof}, is the morphological
counterpart of the spectral representation theorem for
linear operators. We present it here in the version for 
increasing operators. For general non-linear operators, 
see the original theory by Banon and Barrera~\cite{Banon1993} 
and for the instantiation to TI operators in the context of 
deep learning, see~\cite{Angulo2026Lattice}.

\begin{theorem}[MMBB -- Matheron 1975, Maragos 1989
  \cite{Matheron1975,Maragos1989}]
  \label{thm:mmbb_classical}
  Let $\Psi:(\Fun(E,\Rbar),\leq)\to(\Fun(E,\Rbar),\leq)$
  be translation-invariant (TI), increasing, and USC.
  Define:
  \begin{align}
    \Ker(\Psi)
      &= \{f\in\Fun(E,\Rbar):[\Psi f](0)\geq 0\},
      \label{eq:kernel} \\
    \Bas(\Psi)
      &= \text{minimal elements of } \Ker(\Psi)
      \text{ under } \leq.
      \label{eq:basis}
  \end{align}
  Then $\Bas(\Psi)\neq\emptyset$ and
  \begin{equation}
    [\Psi f](x) = \sup_{g\in\Bas(\Psi)}(\varepsilon_g f)(x)
    = \sup_{g\in\Bas(\Psi)}\inf_{y\in E}\{f(x+y)-g(y)\}.
    \label{eq:mmbb_classical}
  \end{equation}
  Dually, defining $\bar\Psi(f)=-\Psi(-f)$:
  \begin{equation}
    [\Psi f](x) = \inf_{h\in\Bas(\bar\Psi)}
      (\delta_{h^*} f)(x).
    \label{eq:mmbb_dual}
  \end{equation}
\end{theorem}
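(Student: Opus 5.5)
The proof follows the classical Matheron--Maragos kernel argument, with translation invariance used to move between a general point $x$ and the origin. Set $\tau_x f(y)=f(y+x)$, so that TI reads $[\Psi f](x)=[\Psi(\tau_x f)](0)$. With this, the value $[\Psi f](x)$ is controlled by which shifted and level-lowered copies of $f$ lie in $\Ker(\Psi)$. The plan is to prove the weaker identity $\Psi=\sup_{g\in\Ker(\Psi)}\varepsilon_g$ first, and then use USC to restrict the index set to the minimal elements $\Bas(\Psi)$.

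\textbf{Step 1 (lower bound).} Fix $g\in\Ker(\Psi)$ and set $t=(\varepsilon_g f)(x)=\inf_y\{f(x+y)-g(y)\}$. If $t$ is finite, then $\tau_x f\geq g+t$ pointwise. Monotonicity and TI give $[\Psi f](x)\geq[\Psi(g+t)](0)$. We also need vertical-translation invariance, $\Psi(g+t)=\Psi(g)+t$, which is the standard convention that TI on function lattices includes invariance under adding constants. Then $[\Psi(g+t)](0)=[\Psi g](0)+t\geq t$. The cases $t=\pm\infty$ are handled by the usual conventions. Hence $\Psi\geq\sup_{g\in\Ker}\varepsilon_g$.

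\textbf{Step 2 (upper bound).} Let $s=[\Psi f](x)$, assumed finite; the infinite cases follow by limits. Take $g=\tau_x f-s$. By TI, $[\Psi g](0)=[\Psi f](x)-s=0$, so $g\in\Ker(\Psi)$. Moreover $(\varepsilon_g f)(x)=\inf_y\{f(x+y)-f(x+y)+s\}=s$. So the supremum over the kernel is attained, and $\Psi=\sup_{g\in\Ker(\Psi)}\varepsilon_g$.

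\textbf{Step 3 (reduction to the basis).} This is the main obstacle. Since $g\leq g'$ implies $\varepsilon_{g'}\leq\varepsilon_g$, it suffices to show that every $g\in\Ker(\Psi)$ dominates some $h\in\Bas(\Psi)$. I will apply Zorn's lemma to the set $\{h\in\Ker(\Psi):h\leq g\}$ ordered by $\leq$ reversed. For a chain $\{h_i\}$, the natural lower bound is $h_*=\inf_i h_i$, and we must show $h_*\in\Ker(\Psi)$. Upper semicontinuity is used exactly here, since it means $\Psi$ commutes with infima of decreasing families. So $[\Psi h_*](0)=\inf_i[\Psi h_i](0)\geq 0$; for a chain that is not countable, I would either pass to a cofinal well-ordered subfamily or use the directed-infimum formulation of USC. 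Zorn then yields a minimal element below $g$, which gives both $\Bas(\Psi)\neq\emptyset$ and \eqref{eq:mmbb_classical}.

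\textbf{Dual form.} Apply the result to the dual operator $\bar\Psi$, which is also TI, increasing, and USC in the dual sense. Negating the resulting decomposition turns each erosion $\varepsilon_h$ into the dilation $\delta_{h^*}$ with the reflected function, and the supremum into an infimum. This gives \eqref{eq:mmbb_dual}.
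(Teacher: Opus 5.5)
Your primal argument is correct and follows essentially the paper's route, which is the three-step scheme the paper sketches in its proof of Theorem~\ref{thm:mmbb_hatL} and Remark~\ref{rem:three_steps}. It consists of a lower bound via TI (including the vertical shifts the paper also uses implicitly), attainment via the test function $\tau_x f - s$, and Zorn plus USC to produce minimal kernel elements. Applying Zorn to $\{h\in\Ker(\Psi):h\leq g\}$ is in fact what the paper's ``by minimality there exists a basis element below $\psi^*$'' silently needs. One small caveat: $\Bas(\Psi)\neq\emptyset$ requires $\Ker(\Psi)\neq\emptyset$, which fails for the constant operator $\Psi\equiv-\infty$. By your Step~2 it holds whenever $\Psi$ takes a value $>-\infty$ somewhere.

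The genuine gap is in the dual form. Negation swaps infima and suprema. So if $\Psi$ commutes with infima of decreasing families, then $\bar\Psi$ commutes with suprema of increasing families. That is all ``USC in the dual sense'' gives you, and it is useless for your Step~3 applied to $\bar\Psi$. That step needs $\bar\Psi$ to commute with decreasing infima, i.e.\ $\Psi$ to be lower semicontinuous, which is not assumed.

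This is not a technicality. On $E=\Z$ let $[\Psi f](x)=+\infty$ if $f(x+1)=+\infty$ and $[\Psi f](x)=f(x)$ otherwise. This $\Psi$ is TI (horizontally and vertically), increasing, and commutes with infima of decreasing families. Its dual is $[\bar\Psi f](x)=-\infty$ if $f(x+1)=-\infty$ and $f(x)$ otherwise. Hence $\Ker(\bar\Psi)=\{g: g(0)\geq 0,\ g(1)>-\infty\}$, which has no minimal element because $g(1)$ can always be lowered. So $\Bas(\bar\Psi)=\emptyset$, and the right-hand side of the dual formula is $+\infty$, while $\Psi(0)=0$.

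Under the stated hypotheses you only get $\Psi f=\inf_{h\in\Ker(\bar\Psi)}\delta_{h^*}f$, which follows from your Steps~1--2 applied to $\bar\Psi$ with no semicontinuity at all. The version indexed by $\Bas(\bar\Psi)$ needs the extra assumption that $\Psi$ also commutes with suprema of increasing families. The paper's own sketch, which asserts that the dual operator is again USC, passes over the same point.
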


\subsection{The MMBB theorem and the Universal Approximation Theorem:
  a structural comparison}
\label{subsec:uat_comparison_background}

The classical MMBB theorem (Theorem~\ref{thm:mmbb_classical})
and the Universal Approximation Theorem (UAT) of
Cybenko and Hornik \cite{Cybenko1989,Hornik1991} are both
universality results, but they operate in fundamentally
different algebraic structures.
A careful comparison clarifies what each theorem does and
does not provide.

\textbf{The Cybenko--Hornik UAT.}
Let $\sigma:\R\to\R$ be a continuous non-polynomial function
(Cybenko) or any bounded measurable non-constant function
(Hornik).
A \emph{single-hidden-layer feedforward network} computes:
\begin{equation}
  F_N(x) = \sum_{k=1}^N c_k\,\sigma(w_k^\top x + b_k),
  \qquad c_k,b_k\in\R,\; w_k\in\R^n.
  \label{eq:slfn}
\end{equation}
The UAT states: for any $f\in C(K)$ and $\varepsilon>0$
there exist $N$ and parameters such that
$\sup_{x\in K}|f(x)-F_N(x)|<\varepsilon$.

\textbf{The MMBB theorem.}
For any TI, increasing, USC operator
$\Psi:(\Fun(E,\Rbar),\leq)\to(\Fun(E,\Rbar),\leq)$,
\eqref{eq:mmbb_classical} gives the \emph{exact} representation:
\begin{equation}
  [\Psi f](x) = \sup_{g\in\Bas(\Psi)}
  \inf_{y\in E}\{f(x+y)-g(y)\}.
  \label{eq:mmbb_background_recall}
\end{equation}

\textbf{Five structural differences.}

\begin{enumerate}[label=(\roman*),leftmargin=*]

\item \textbf{Algebraic setting.}
The UAT lives in the \emph{linear} function space
$(C(K),+,\cdot)$ with the $L^\infty$ norm.
The MMBB theorem lives in the \emph{lattice}
$(\Fun(E,\Rbar),\leq)$ with the partial order.
These two structures have no canonical relationship: the
pointwise order does not interact with the $L^\infty$ norm
in a simple way, and signed linear combinations are not
natural operations in a lattice.

\item \textbf{Building blocks and the role of fixed vs.\ varying.}
In the UAT, the \emph{activation} $\sigma$ is fixed and
the \emph{parameters} $(w_k,b_k)$ vary.
In the MMBB theorem, the \emph{operation} (inf-convolution)
is fixed and the \emph{kernel} $g\in\Bas(\Psi)$ varies.
The roles of fixed and varying are exactly swapped.

\item \textbf{Combination rule.}
The UAT combines building blocks by a \emph{signed linear sum}
$\sum_k c_k(\cdot)$ with weights $c_k\in\R$ (possibly negative).
The MMBB theorem combines them by a \emph{lattice supremum}
$\sup_g(\cdot)$, an idempotent, non-linear operation
corresponding to the max-plus (tropical) semiring
$(\Rbar,\max,+)$.
The signed weights $c_k\in\R$ in the UAT are what allow
it to cover all of $C(K)$, including non-monotone functions.
The MMBB theorem uses only non-negative implicit weights
(the choice of $g$), which is why it covers only increasing
operators.

\item \textbf{Target class and exactness.}
The UAT is an \emph{$\varepsilon$-approximation} theorem:
for any $f\in C(K)$ (all continuous functions) and
$\varepsilon>0$, there exists a finite $F_N$ within
$\varepsilon$.
The MMBB theorem is an \emph{exact} representation:
the infinite supremum over $\Bas(\Psi)$ gives $\Psi$
exactly, not approximately.
However, the target class is restricted:
all continuous functions (UAT) vs.\
TI increasing USC operators (MMBB).
The MMBB theorem is somehow simultaneously more exact and more
restrictive.

\item \textbf{Signed case: the dual MMBB as the morphological
  analogue of negative weights.}
The UAT handles non-monotone functions via signed weights
$c_k = c_k^+ - c_k^-$ (decomposition into positive and
negative parts).
The MMBB handles decreasing operators via the \emph{dual
representation} \eqref{eq:mmbb_dual}: any TI increasing USC
$\Psi$ is also the \emph{infimum of dilations} indexed by
$\Bas(\bar\Psi)$, where $\bar\Psi(f)=-\Psi(-f)$ is the dual.
The primal (sup of erosions) gives a lower bound
(inner approximation) and the dual (inf of dilations)
gives an upper bound (outer approximation):
\begin{equation}
  \sup_{g\in\mathcal{B}}\varepsilon_g(f)
  \;\leq\; \Psi(f)
  \;\leq\; \inf_{h\in\bar{\mathcal{B}}}\delta_{h^*}(f)
  \label{eq:mmbb_sandwich}
\end{equation}
for any finite sub-bases $\mathcal{B}\subset\Bas(\Psi)$ and
$\bar{\mathcal{B}}\subset\Bas(\bar\Psi)$.
The pair (primal, dual) plays the role of the pair
$(c_k^+, c_k^-)$ in the UAT.
\end{enumerate}

\begin{table}[t]
\caption{Structural comparison: MMBB vs UAT (classical and Fourier modulus versions).}
\label{tab:mmbb_uat_full}
\centering\small
\renewcommand{\arraystretch}{1.55}
\begin{tabular}{@{}p{2.4cm}p{3.8cm}p{3.8cm}@{}}
\toprule
\textbf{Property}
  & \textbf{UAT (Cybenko--Hornik)}
  & \textbf{MMBB (classical)} \\
\midrule
Algebra
  & Linear $(C(K),+,\cdot)$
  & Lattice $(\Fun(E,\Rbar),\leq)$ \\
Building blocks
  & $\sigma(w^\top x+b)$: fix $\sigma$, vary $(w,b)$
  & $\varepsilon_g f = \inf\{f-g\}$: fix $\varepsilon$, vary $g$ \\
Combination
  & $\sum c_k$ (signed linear)
  & $\sup_g$ (tropical) \\
Weights
  & $c_k\in\R$ (arbitrary sign)
  & $g\in\Bas(\Psi)$ (implicit, non-neg) \\
Target class
  & All $f\in C(K)$
  & TI incr.\ USC operators \\
Exactness
  & $\varepsilon$-approx only
  & Exact (full basis) \\
Signed case
  & $c_k^+$ and $c_k^-$
  & Dual: inf of dilations \\
Hypothesis
  & $\sigma$ non-polynomial
  & TI + incr + USC \\
Dictionary
  & $\{\sigma(w^\top x+b)\}$: one fixed activation
  & Structuring functions $\Bas(\Psi)$ \\
\bottomrule
\end{tabular}
\end{table}

We develop the Fourier-modulus version of this comparison in
Section~\ref{sec:mmbb}, which corresponds to complete Table~\ref{tab:mmbb_uat_full}, where the MMBB theorem for $\hatL$
replaces inf-convolution by the max-times quotient
$\hat{f}/\psi$ and the lattice supremum by the pointwise
maximum of spectral moduli.

\section{The Fourier Inf-Semilattice}
\label{sec:fourier_lattice}

\subsection{The partial order on the complex plane}
\label{subsec:cisl_complex}

The appropriate lattice structure for the Fourier domain was
identified by Keshet \cite{Keshet2002} and developed by
Heijmans and Keshet \cite{Heijmans2002}.
We present it here with full proofs.

\begin{definition}[Cisl order on $\C$]
  \label{def:cisl_C}
  For $w,z\in\C$, define:
  \begin{equation}
    w \preceq z
    \iff |w|\leq|z| \text{ and } \arg w = \arg z.
    \label{eq:cisl_C}
  \end{equation}
  The complex plane $\C$ decomposes into rays
  $\mathcal{C}_\alpha = \{re^{i\alpha}:r\geq 0\}$ for
  $\alpha\in[0,2\pi)$, each of which is a totally ordered
  chain under $\preceq$.
  Two complex numbers are comparable in $\preceq$ if and
  only if they lie on the same ray.
  For $w,z$ on different rays, $w\wedge z = 0$ (the least
  element) and the supremum does not exist. This is the rationale behind the inf-semilattice framework.
\end{definition}

\begin{remark}[Why this order, not $|w|\leq|z|$ alone]
  \label{rem:why_phase}
  Ordering by modulus alone ($|w|\leq|z|$) would give a
  total order on $\C/S^1$ (equivalence classes of
  same-modulus numbers) but not a partial order on $\C$
  itself (antisymmetry fails: $|w|=|z|$ does not imply
  $w=z$).
  The phase condition $\arg w = \arg z$ restores
  antisymmetry and gives the unique coarsest partial order
  on $\C$ that (a) restricts to the usual order on each ray,
  and (b) makes multiplication by non-negative reals an
  order automorphism.
\end{remark}

\subsection{The Fourier inf-semilattice}
\label{subsec:fourier_lattice}

We fix the following convention for the Fourier transform,
used throughout the paper.
For $f\in L^2(\R^n)$, the Fourier transform
$\hat{f}=\mathcal{F}f$ is:
\begin{equation}
	\hat{f}(\xi)
	= \int_{\R^n} f(x)\,e^{-2\pi i\,\xi\cdot x}\,dx,
	\qquad
	f(x)
	= \int_{\R^n}\hat{f}(\xi)\,e^{2\pi i\,\xi\cdot x}\,d\xi,
	\label{eq:fourier_transform}
\end{equation}
extended to $L^2(\R^n)$ by the Plancherel isometry
$\|\hat{f}\|_{L^2}=\|f\|_{L^2}$.
With this convention, $\mathcal{F}$ is a unitary isomorphism
$\mathcal{F}:L^2(\R^n)\to L^2(\R^n)$, the convolution theorem
reads $\widehat{f*g}=\hat{f}\cdot\hat{g}$, and translation by
$y\in\R^n$ acts on the Fourier transform as the modulation
$\widehat{\tau_y f}(\xi)=e^{-2\pi i\,\xi\cdot y}\hat{f}(\xi)$,
where $(\tau_y f)(x)=f(x-y)$.
Each $\hat{f}(\xi)\in\C$ decomposes into modulus and phase:
\begin{equation}
	\hat{f}(\xi)
	= |\hat{f}(\xi)|\,e^{i\angle\hat{f}(\xi)},
	\qquad
	|\hat{f}(\xi)|\geq 0,\;\;
	\angle\hat{f}(\xi)\in(-\pi,\pi].
	\label{eq:modulus_phase}
\end{equation}
The notation $\Ln$ denotes the domain
of $\mathcal{F}$ (the \emph{spatial domain}), and $\hatL$ or
$\Ln$ in Fourier coordinates denotes the codomain (the
\emph{spectral domain}); the two are identified via $\mathcal{F}$
throughout.

\begin{definition}[Fourier inf-semilattice]
  \label{def:fourier_lattice}
  For $f,g\in\Ln = L^2(\R^n)$, define the
  \emph{spectral partial order}:
  \begin{equation}
    f \leF g
    \iff \hat{f}(\xi) \preceq \hat{g}(\xi)
    \text{ for a.e. } \xi\in\R^n,
    \label{eq:leF}
  \end{equation}
  i.e.,
  \begin{equation}
    f \leF g \iff
    \begin{cases}
      |\hat{f}(\xi)| \leq |\hat{g}(\xi)| \\
      \angle\hat{f}(\xi) = \angle\hat{g}(\xi)
    \end{cases}
    \text{ for a.e. } \xi.
    \label{eq:leF_explicit}
  \end{equation}
  The infimum is:
  \begin{equation}
    \widehat{(f\meetF g)}(\xi)
    = \begin{cases}
        \min(|\hat{f}(\xi)|,|\hat{g}(\xi)|)\,
        e^{i\angle\hat{f}(\xi)}
        & \text{if } \angle\hat{f}(\xi)=\angle\hat{g}(\xi), \\
        0 & \text{otherwise.}
      \end{cases}
    \label{eq:infimum_F}
  \end{equation}
  The pair $(\Ln,\leF)$ is a \emph{complete
  inf-semilattice}: every subset has an infimum
  (given by~\eqref{eq:infimum_F}), but the supremum
  of two functions with different phases does not exist
  in $\Ln$.
\end{definition}

\begin{remark}[Relation to Heijmans--Keshet self-dual morphology]
	\label{rem:heijmans_selfDual}

  The cisl order on $\C$ (Definition~\ref{def:cisl_C}) was
  introduced by Heijmans and Keshet \cite{Heijmans2002} as a
  framework for \emph{self-dual morphology}: morphological
  operators that treat signal and background symmetrically.
  Their motivation is entirely different from ours.
  
%

  Heijmans--Keshet use the cisl order to achieve
  self-duality as an end in itself.
  We use it because it is the natural order on the
  Fourier transform codomain: the Fourier transform
  $\hat{f}(\xi)\in\C$ carries both modulus (spectral
  energy) and phase (spatial localisation), and the
  cisl order $\preceq$ is the unique coarsest partial
  order on $\C$ that orders the moduli within each ray.
  Self-duality is not the goal.
  They use complex multiplication
  $wz = |w||z|e^{i(\arg w+\arg z)}$ as their
  Minkowski operation.
  As we will show, in our case, the product is applied only 
  implicitly: the Fourier erosion
  (convolution) multiplies $\hat{f}(\xi)\cdot\hat{k}(\xi)$
  in $\C$, which is complex multiplication, but we always
  pass to moduli $|\hat{f}||\hat{k}|$ at the next step.
  The Minkowski product is thus an intermediate step,
  not the primary operation.
  
%

\end{remark}

\subsection{Translation-invariance is vacuous in the Fourier lattice}
\label{subsec:ti_vacuous}

\begin{proposition}[TI acts as phase modulation]
  \label{prop:ti_phase}
  Spatial translation $\tau_y f(x) = f(x-y)$ acts on the
  Fourier transform as:
  $\widehat{\tau_y f}(\xi) = e^{-2\pi i\xi\cdot y}\hat{f}(\xi)$.
  Since $|e^{-2\pi i\xi\cdot y}|=1$, translation preserves
  spectral moduli: $|\widehat{\tau_y f}(\xi)|
  = |\hat{f}(\xi)|$ for all $\xi$.
  Consequently:
  \begin{enumerate}[label=(\roman*),leftmargin=*]
    \item $\tau_y f$ and $f$ have the same spectral moduli
      but (generically) different phases, so they are
      \emph{incomparable} in $(\Ln,\leF)$: neither
      $\tau_y f\leF f$ nor $f\leF\tau_y f$.
    \item Translation-invariance $\Psi(\tau_y f)
      = \tau_y\Psi(f)$ is a condition on the
      \emph{phase} of $\Psi f$ only.
      It says nothing about the spectral modulus
      $|\widehat{\Psi f}(\xi)|$, which is the only
      quantity ordered by $\leF$.
    \item Hence, TI is \emph{vacuous} as a hypothesis in
      $(\Ln,\leF)$: it cannot be used to reduce global
      properties to local conditions on spectral moduli.
  \end{enumerate}
\end{proposition}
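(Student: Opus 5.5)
The plan is to derive everything from the modulation formula and then read off (i)–(iii) from the explicit form \eqref{eq:leF_explicit} of $\leF$.

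\textbf{Modulation formula.} First establish $\widehat{\tau_y f}(\xi)=e^{-2\pi i\xi\cdot y}\hat f(\xi)$. For $f\in L^1\cap L^2$ this is the change of variables $x\mapsto x+y$ in \eqref{eq:fourier_transform}. It extends to all of $\Ln$ by density, since both $\mathcal{F}\circ\tau_y$ and multiplication by the unimodular function $e^{-2\pi i\xi\cdot y}$ are unitary. Because $|e^{-2\pi i\xi\cdot y}|=1$, the equality of moduli $|\widehat{\tau_y f}|=|\hat f|$ a.e.\ follows at once.

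\textbf{Part (i): incomparability.} Suppose $\tau_y f\leF f$. By \eqref{eq:leF_explicit} the phases agree a.e.\ on the support of $\hat f$, so $e^{-2\pi i\xi\cdot y}=1$ a.e.\ on $\{\hat f\neq0\}$. That set has positive measure unless $f=0$. The zero set of $e^{-2\pi i\xi\cdot y}-1$ is the union of hyperplanes $\{\xi\cdot y\in\Z\}$, which is Lebesgue-null for $y\neq0$. So the phase condition fails whenever $f\neq 0$ and $y\neq0$, and the same argument handles $f\leF\tau_y f$. This also makes precise the word ``generically'' in the statement: incomparability holds for every nonzero $f$ and nonzero $y$, and comparability (indeed equality) occurs only in the trivial cases $f=0$ or $y=0$.

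\textbf{Parts (ii) and (iii).} For (ii), apply the modulation formula to $\Psi f$. The TI identity $\widehat{\Psi\tau_y f}=e^{-2\pi i\xi\cdot y}\widehat{\Psi f}$ then shows two things:
\begin{itemize}
\item taking moduli gives only $|\widehat{\Psi(\tau_y f)}|=|\widehat{\Psi f}|$, which constrains $\Psi$ on the single $\leF$-incomparable orbit $\{\tau_y f\}$;
\item all the remaining information in the TI identity is a multiplicative phase factor.
\end{itemize}
Since $\leF$ only compares moduli within a fixed phase, TI relates values of $\Psi$ on elements that are pairwise incomparable by (i). Hence it cannot interact with the order.

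Part (iii) is an informal conclusion, and I would formalize it as follows. In the classical MMBB proof, TI is used to transport the kernel condition at $x=0$ to every $x$ via order-compatible translates. Here every translate of a nonzero element is incomparable with it, so there is no order relation along which to transport kernel information. To illustrate this, exhibit TI operators with identical modulus behaviour but different order-theoretic properties. For instance, take multipliers $\hat f\mapsto m\hat f$ with $|m|=1$: these are all TI, yet only $m\equiv1$ is increasing in $\leF$.

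\textbf{Main obstacle.} Parts (i)–(ii) are routine. The main difficulty is that (iii) is a meta-statement rather than a precise theorem. The hardest step is choosing a rigorous formulation, which I would state as: ``the TI orbit of any nonzero $f$ is an antichain in $(\Ln,\leF)$.'' I would then argue vacuity from that antichain property.
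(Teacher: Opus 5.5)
Your argument for the modulation formula and for (i) follows the paper's route: translation multiplies $\hat f$ by a unimodular phase, so the same-ray condition in $\leF$ fails. Your version is sharper than the paper's. The paper only says the phases differ ``for generic $y$'' and writes $\angle\hat f-2\pi\xi\cdot y\neq\angle\hat f$ without reducing mod $2\pi$. Your observation that $\{\xi:\xi\cdot y\in\Z\}$ is Lebesgue-null gives incomparability for every $f\neq0$ and every $y\neq0$. Your reading of (ii) is also more accurate than the literal statement. TI does force $|\widehat{\Psi(\tau_y f)}|=|\widehat{\Psi f}|$, so it is not literally silent about moduli; it constrains them only along an $\leF$-antichain. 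The antichain formulation is a reasonable rigorous stand-in for (iii), which the paper dismisses as following directly.

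The one concrete error is your illustrative example for (iii). It is false that among unimodular multipliers $\hat f\mapsto m\hat f$ only $m\equiv1$ is $\leF$-increasing. Every Fourier multiplier is $\leF$-increasing, by exactly the argument of Proposition~\ref{prop:conv_erosion}(i). If $\hat f(\xi)\preceq\hat g(\xi)$, multiplying both by the same complex number $m(\xi)$ keeps them on a common ray and preserves the modulus inequality. What actually singles out $m\equiv1$ (a.e.\ on $\{\hat f\neq0\}$) is anti-extensivity: $m\hat f\preceq\hat f$ forces $m(\xi)\in[0,1]$, hence $m=1$ when $|m|=1$. Extensivity singles it out in the same way. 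Either replace ``increasing'' by ``anti-extensive'' in that example, or drop it, since the antichain property from (i) already carries your formalization of (iii).
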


\begin{proof}
  Part (i): $\widehat{\tau_y f}=e^{-2\pi i\xi\cdot y}\hat{f}$,
  so $|\widehat{\tau_y f}|=|\hat{f}|$ but
  $\angle\widehat{\tau_y f} = \angle\hat{f} - 2\pi\xi\cdot y
  \neq \angle\hat{f}$ for generic $y$.
  Hence neither
  $|\widehat{\tau_y f}|\leq|\hat{f}|$-with-same-phase
  nor the reverse holds.
  Parts (ii)--(iii) follow directly.
\end{proof}

\begin{corollary}[TI replaced by positive homogeneity]
  \label{cor:homogeneity}
  The structural hypothesis appropriate for the Fourier
  modulus lattice $\hatL = (\Fun(\R^n,\Rp),\leq)$ is
  \emph{positive homogeneity}:
  $\Psi(\lambda\hat{f}) = \lambda\Psi(\hat{f})$ for all
  $\lambda>0$.
  This plays the same role as TI in the classical MMBB proof:
  it enables the kernel condition at one point to propagate
  globally across all frequencies.
\end{corollary}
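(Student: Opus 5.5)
Since the corollary is a structural claim, not a closed-form identity, the plan is to prove it as two precise statements. \textbf{(a)} Among the natural symmetries of $(\Ln,\leF)$, spatial translation contributes nothing on spectral moduli, while the multiplicative action of $\Rp^*$ is exactly the order-automorphism action left on $\hatL$. \textbf{(b)} Under positive homogeneity, \emph{one} unit-level kernel condition determines $\Psi$ at every frequency, in the same way that a single condition at the origin determines a TI operator in Theorem~\ref{thm:mmbb_classical}.

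For (a), I first invoke Proposition~\ref{prop:ti_phase}: $|\widehat{\tau_y f}|=|\hat f|$. So the induced action of $\tau_y$ on $\hatL=(\Fun(\R^n,\Rp),\leq)$ is the identity, and TI of a modulus-level operator is automatically satisfied and yields no constraint. Next I use Remark~\ref{rem:why_phase}: multiplication by $\lambda>0$ is an order automorphism of each ray $\mathcal{C}_\alpha$, hence of $\hatL$. Taking logarithms turns this action into vertical translation $\log\hat f\mapsto\log\hat f+\log\lambda$. That is precisely the hypothesis ``$\Psi(f+c)=\Psi(f)+c$'' hidden in the classical erosions $\inf_y\{f(x+y)-g(y)\}$. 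Thus homogeneity is the exact counterpart of the vertical half of the classical symmetry group, and it is the only surviving symmetry acting on the ordered quantity.

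For (b), the plan is to fix a single normalisation level $1$ and define one global kernel of pairs,
\[
\Ker(\Psi)=\{(\xi,\psi)\in\R^n\times\hatL:\ [\Psi\psi](\xi)\geq 1\}.
\]
Homogeneity then gives, for every $\xi$ and every $\hat f$ with $[\Psi\hat f](\xi)>0$,
\[
[\Psi\hat f](\xi)=\sup\{\lambda>0:\ (\xi,\hat f/\lambda)\in\Ker(\Psi)\}.
\]
This is the analogue of $[\Psi f](x)=\sup\{t: f-t\in\tau_x\Ker\}$. Monotonicity of $\Psi$ then yields $[\Psi\hat f](\xi)=\sup_{(\xi,\psi)\in\Ker}\inf_\eta \hat f(\eta)/\psi(\eta)$. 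The lower bound comes from $\hat f\geq(\inf\hat f/\psi)\,\psi$ together with homogeneity. The upper bound comes from choosing $\psi=\hat f/\lambda$ itself.

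The main obstacle is the phrase ``across all frequencies''. In the classical proof, $\tau_x$ transports the kernel at $0$ to every $x$. Here no frequency shift is available: modulation is not an automorphism of $\hatL$ commuting with $\Psi$. My first attempt will therefore place the frequency index inside the kernel, as the single set of pairs $(\xi,\psi)$ above, so that one condition, $\Psi\psi\geq 1$ at the indexed frequency, governs all $\xi$ simultaneously. Homogeneity does the remaining work of collapsing every level $\lambda$ to the unit level. I will state explicitly that this is the precise sense of ``propagate globally'': a single unit-level kernel replaces the level-by-level and frequency-by-frequency data. Without homogeneity, one would need a separate threshold family $\{\psi:[\Psi\psi](\xi)\geq t\}$ for each $t>0$.
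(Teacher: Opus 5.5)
Your part (a) is essentially the paper's own justification: Proposition~\ref{prop:ti_phase} (translation only rotates phases, so it acts trivially on $\hatL$) plus the log-spectral reading of Corollary~\ref{cor:mmbb_logdomain}(i). Part (b) uses the same device as Step~3 of Theorem~\ref{thm:mmbb_hatL} and Remark~\ref{rem:three_steps}: the rescaled test function $\hat f/\lambda$ and the identity $\Psi(\hat f/\lambda)=\Psi(\hat f)/\lambda$. But you place it in a different frame, and the difference matters.

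The paper puts $\psi^*=\hat f/\lambda$ into a frequency-independent kernel $\Ker^{\mathrm{eff}}(\Psi)$ built on pointwise quotients $\hat f(\xi)/\psi(\xi)$. It checks the defining inequality only at $\xi_0$ and only for $\hat g=\hat f$. You instead index the kernel by $\xi$ and use the global erosion $\inf_\eta\hat f(\eta)/\psi(\eta)$. Both inequalities of your formula then follow from monotonicity and homogeneity alone, with no USC and no Zorn. Yours is the frame that actually holds. A supremum of pointwise quotients over a frequency-independent family is just multiplication by $\sup_\psi 1/\psi(\xi)$. For example, $\Psi(\hat f)(\xi)=\hat f(\xi+1)$ is increasing, USC and positively homogeneous, yet $\Ker^{\mathrm{eff}}(\Psi)=\{0\}$. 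The paper's frame would buy diagonal (amplitude-mask) building blocks and a single frequency-free basis, but only for multiplication operators. Yours covers every increasing homogeneous $\Psi$, at the price of non-diagonal building blocks and a $\xi$-dependent kernel.

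Your diagnosis of ``across all frequencies'' is also right, and you can state it more bluntly. The TI of Theorem~\ref{thm:mmbb_classical} has a horizontal and a vertical part (the test function $\tau_y f-\lambda$ uses both), and homogeneity is only the vertical part. It collapses every level to $1$ but transports nothing between frequencies: the multipliers $\hat f\mapsto m_1\hat f$ and $\hat f\mapsto m_2\hat f$ with $m_1(\xi_0)=m_2(\xi_0)$ have identical kernels at $\xi_0$. The horizontal part would be equivariance under frequency shifts (spatial modulations), which every multiplier with non-constant $m$ violates. So present the $\xi$-indexing as bookkeeping, not as propagation; the literal one-point-to-all-frequencies claim is false.

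Two small corrections. First, inside $\inf_\eta$ you need $t/0=+\infty$ (equivalently, take the infimum over $\{\psi>0\}$). Under the paper's convention $\hat f/0:=0$, your attainment choice $\psi=\hat f/\lambda$ returns $0$ as soon as $\hat f$ vanishes somewhere. Second, drop ``the only surviving symmetry''. Spatial modulations and dilations also act on $\hatL$ by order automorphisms (frequency shifts, and frequency rescalings combined with amplitude scaling). What singles out homogeneity is that it is the symmetry the operators of interest---convolution, pooling, modulus, scattering---actually satisfy.
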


\subsection{Convolution as a Fourier-lattice erosion}
\label{sec:conv_erosion}

\begin{proposition}[Convolution is a Fourier erosion,
  Keshet 2002 \cite{Keshet2002}]
  \label{prop:conv_erosion}
  Let $k\in\Ln$.
  The convolution operator $\conv_k:f\mapsto f*k$,
  acting in the frequency domain as
  $\widehat{\conv_k(f)}(\xi) = \hat{f}(\xi)\hat{k}(\xi)$,
  is an \emph{erosion} in $(\Ln,\leF)$:
  \begin{enumerate}[label=(\roman*),leftmargin=*]
    \item \emph{Increasing}: $f\leF g \Rightarrow
      \conv_k(f)\leF\conv_k(g)$.
    \item \emph{Commutes with spectral infima}:
      $\conv_k(f\meetF g)
      = \conv_k(f)\meetF\conv_k(g)$.
  \end{enumerate}
\end{proposition}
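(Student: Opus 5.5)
The plan is to reduce both claims to a pointwise statement about multiplication by a fixed complex number in $(\C,\preceq)$, and then lift it to $(\Ln,\leF)$. Both the order \eqref{eq:leF} and the infimum \eqref{eq:infimum_F} are defined frequency-by-frequency, and convolution acts in the frequency domain as pointwise multiplication by $c=\hat{k}(\xi)$. So it suffices to show that, for every $c\in\C$, the map $m_c:w\mapsto cw$ on $(\C,\preceq)$ is increasing and preserves binary infima. The statement then follows a.e.\ in $\xi$.

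\textbf{Well-posedness.} For $\conv_k$ to map $\Ln$ into itself, I need $\hat{f}\hat{k}\in L^2$ for every $f\in\Ln$. This holds if $\hat{k}\in L^\infty$, for instance if $k\in L^1\cap L^2$. I will state that assumption explicitly. Alternatively, one can read the proposition on the subspace where $\hat f\hat k\in L^2$, since the order-theoretic content is purely pointwise. I will also adopt the convention implicit in Example~\ref{ex:lattices}(iii): $0$ lies on every ray, so $0\preceq z$ for all $z$ and the phase condition is void when $w=0$.

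\textbf{Pointwise lemma.} If $c=0$, then $m_c\equiv 0$, and both claims are trivial because $0\preceq 0$ and $0\wedge 0=0$.

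If $c\neq 0$, write $c=|c|e^{i\theta}$. Then $m_c$ is the composition of a dilation by $|c|>0$ and a rotation by $\theta$. Both are order automorphisms of $(\C,\preceq)$ (cf.\ Remark~\ref{rem:why_phase}): the rotation maps the ray $\mathcal{C}_\alpha$ bijectively and monotonically onto $\mathcal{C}_{\alpha+\theta}$, and the dilation preserves each ray and its order.

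Monotonicity is then immediate. If $|w|\le|z|$ and $\arg w=\arg z$, then $|cw|\le|cz|$ and $\arg(cw)=\arg(cz)$.

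For infima, I split into two cases.
\begin{itemize}
\item If $w,z$ lie on a common ray, then $cw,cz$ also lie on a common ray, and
\[
\min(|cw|,|cz|)\,e^{i\arg(cw)}=c\,\min(|w|,|z|)\,e^{i\arg w},
\]
so $m_c(w\wedge z)=m_c(w)\wedge m_c(z)$.
\item If $w,z$ lie on different rays, then, since the rotation is a bijection of rays, $cw,cz$ lie on different rays. Both infima are $0$, and $c\cdot 0=0$.
\end{itemize}
Because $m_c$ is an order isomorphism for $c\ne0$, the same argument also works for arbitrary (not only binary) infima.

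\textbf{Lifting.} Apply the lemma with $c=\hat{k}(\xi)$, $w=\hat f(\xi)$ and $z=\hat g(\xi)$, on the full-measure set where $f\leF g$ holds pointwise. This gives (i) directly. For (ii), compare
\[
\widehat{\conv_k(f\meetF g)}(\xi)=\hat{k}(\xi)\,\widehat{(f\meetF g)}(\xi)
\]
with \eqref{eq:infimum_F} applied to $\hat f\hat k$ and $\hat g\hat k$. These agree a.e., so by Plancherel the two sides agree in $\Ln$.

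\textbf{Main obstacle.} I expect the only delicate points to be bookkeeping rather than substance. The first is the treatment of zeros, i.e.\ the undefined phase at $w=0$ and at the zeros of $\hat k$. Here the convention that $0$ is the bottom element of every ray resolves every case. The second is the integrability of $\hat f\hat k$, which is handled by the $L^\infty$ assumption on $\hat k$. Measurability of the set $\{\xi:\angle\hat f(\xi)=\angle\hat g(\xi)\}$ also follows, since $\angle$ is a Borel function of $\hat f(\xi)$.
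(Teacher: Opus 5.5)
Your proof is correct and follows the paper's route: both arguments use the convolution theorem to reduce (i) and (ii) to a frequency-wise statement about multiplying $\hat f(\xi)$ and $\hat g(\xi)$ by the fixed number $\hat k(\xi)$, and then read off the order and the infimum formula \eqref{eq:infimum_F} pointwise. Your version is more careful than the paper's in three places: the ray-bijection argument supplies the converse implication (distinct rays stay distinct when $\hat k(\xi)\neq 0$), you treat the case $\hat k(\xi)=0$ separately, both of which the paper's one-directional phase argument glosses over, and your assumption $\hat k\in L^\infty$ addresses the fact that $k\in\Ln$ alone does not guarantee $f*k\in\Ln$.
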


\begin{proof}
  Both properties follow from the convolution theorem
  $\widehat{f*k}(\xi) = \hat{f}(\xi)\hat{k}(\xi)$,
  which gives $|\widehat{f*k}(\xi)| = |\hat{f}(\xi)|
  |\hat{k}(\xi)|$ and $\angle\widehat{f*k}(\xi)
  = \angle\hat{f}(\xi)+\angle\hat{k}(\xi)$.

  \textit{(i)} If $f\leF g$, then $|\hat{f}(\xi)|
  \leq|\hat{g}(\xi)|$ and $\angle\hat{f}(\xi)
  = \angle\hat{g}(\xi)$ a.e.
  Multiplying by $\hat{k}(\xi)$:
  $|\hat{f}(\xi)||\hat{k}(\xi)|\leq|\hat{g}(\xi)|
  |\hat{k}(\xi)|$
  and
  $\angle(\hat{f}\hat{k})(\xi)
  = \angle\hat{f}(\xi)+\angle\hat{k}(\xi)
  = \angle\hat{g}(\xi)+\angle\hat{k}(\xi)
  = \angle(\hat{g}\hat{k})(\xi)$ a.e.,
  so $f*k\leF g*k$.

  \textit{(ii)} The infimum $f\meetF g$ has Fourier
  transform $\min(|\hat{f}|,|\hat{g}|)e^{i\angle\hat{f}}$
  on $\{\angle\hat{f}=\angle\hat{g}\}$ and $0$ otherwise.
  After multiplication by $\hat{k}$:
  $\min(|\hat{f}|,|\hat{g}|)|\hat{k}|e^{i(\angle\hat{f}
  +\angle\hat{k})} = \min(|\hat{f}||\hat{k}|,
  |\hat{g}||\hat{k}|)e^{i\angle(\hat{f}\hat{k})}$
  on $\{\angle(\hat{f}\hat{k})=\angle(\hat{g}\hat{k})\}$
  (since $\angle\hat{f}=\angle\hat{g}$ implies
  $\angle(\hat{f}\hat{k})=\angle(\hat{g}\hat{k})$),
  and $0$ elsewhere.
  This equals $\conv_k(f)\meetF\conv_k(g)$.
\end{proof}

\begin{remark}[Erosion vs linear operator]
  \label{rem:erosion_vs_linear}
  Property~(ii) (commutation with spectral infima) is
  the defining property of erosions in complete lattices
  (Proposition~\ref{prop:adjunction_props}~(i)).
  It is \emph{stronger} than linearity: it implies
  that convolution preserves all spectral infima, not
  merely pairwise ones.
  Convolution is simultaneously a linear map (on the
  Hilbert space $\Ln$) and a morphological erosion
  (in the Fourier inf-semilattice).
  This double nature is the foundation of the entire
  framework.
\end{remark}

\begin{remark}[The two-lattice picture]
	\label{rem:two_lattices}
	Two distinct lattice structures coexist on functions
	$f:\R^n\to\R$:
	\begin{enumerate}[label=(\roman*),leftmargin=*]
		\item The \emph{pointwise lattice}
		$(\Fun(\R^n,\Rbar),\leq)$: ordered by
		$f(x)\leq g(x)$ for all $x$.
		The erosion is the inf-convolution~\eqref{eq:erosion_classical}.
		\item The \emph{Fourier inf-semilattice}
		$(\Ln,\leF)$: ordered by spectral moduli and phases
		(eq.~\eqref{eq:leF}).
		The erosion is convolution (Section~\ref{sec:conv_erosion}).
	\end{enumerate}
	These two orders have no canonical relationship: a function
	can be larger in $\leF$ and smaller in $\leq$, or vice
	versa.
	This independence is the source of the \emph{cross-lattice
		structure} of CNNs (Section~\ref{sec:activation}).
\end{remark}

\subsection{Adjoint dilation and the ideal band-pass opening}
\label{subsec:opening}

\begin{definition}[Fourier-lattice adjoint dilation]
  \label{def:fourier_dil}
  The \emph{adjoint dilation} $\convDil_k$ of
  $\conv_k$ in $(\Ln,\leF)$ is:
  \begin{equation}
    \widehat{\convDil_k(f)}(\xi)
    = \hat{f}(\xi)\cdot\hat{k}^\dagger(\xi),
    \quad
    \hat{k}^\dagger(\xi)
    = \begin{cases}
        |\hat{k}(\xi)|^{-1}e^{-i\angle\hat{k}(\xi)}
        & |\hat{k}(\xi)|>0, \\
        0 & |\hat{k}(\xi)|=0.
      \end{cases}
    \label{eq:fourier_dil}
  \end{equation}
  This is the \emph{phase-matched inverse filter}
  (zero-forcing Wiener deconvolution) of $k$.
\end{definition}

\begin{proposition}[Fourier adjunction]
  \label{prop:fourier_adjunction}
  The pair $(\conv_k,\convDil_k)$ is an adjunction
  in $(\Ln,\leF)$:
  $\conv_k(f)\leF g \iff f\leF\convDil_k(g)$.
\end{proposition}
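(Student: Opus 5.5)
The plan is to reduce the adjunction to a pointwise statement in the frequency variable and then check it ray by ray. By Definition~\ref{def:fourier_lattice}, $f\leF h$ means $\hat f(\xi)\preceq\hat h(\xi)$ for a.e.\ $\xi$. By the convolution theorem $\widehat{\conv_k f}=\hat f\hat k$, and by \eqref{eq:fourier_dil} $\widehat{\convDil_k g}=\hat g\,\hat k^\dagger$. So it suffices to show, for a.e.\ fixed $\xi$, writing $a=\hat f(\xi)$, $b=\hat g(\xi)$, $c=\hat k(\xi)$, that
\[
ac\preceq b \iff a\preceq b\,c^\dagger .
\]
Here $c^\dagger=|c|^{-1}e^{-i\angle c}$ if $c\neq 0$ and $c^\dagger=0$ otherwise. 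An a.e.\ equivalence of pointwise conditions then gives the equivalence of the global orders, since a union of two null sets is null.

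\textbf{Case $c\neq0$.} First I will record the elementary fact behind Remark~\ref{rem:why_phase}: for any $c\in\C\setminus\{0\}$, the map $z\mapsto cz$ is an order automorphism of $(\C,\preceq)$. Indeed, multiplication by $|c|>0$ scales moduli and preserves arguments. Multiplication by $e^{i\angle c}$ preserves moduli and rotates all rays by the same angle, so it maps the ray $\mathcal C_\alpha$ onto $\mathcal C_{\alpha+\angle c}$ and preserves the chain order on each ray. The map's inverse is $z\mapsto c^{-1}z=c^\dagger z$. Hence $ac\preceq b\iff a=(ac)c^\dagger\preceq bc^\dagger$, which is the desired equivalence.

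I will also make explicit the convention that $0\preceq z$ for every $z$, i.e.\ $0$ lies on every ray. This is implicit in Example~\ref{ex:lattices}(iii), where $0$ is called the least element. It is needed so that points with $\hat f(\xi)=0$ or $\hat g(\xi)=0$ cause no phase ambiguity.

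\textbf{Case $c=0$.} This is the step I expect to be the main obstacle. Here $ac=0\preceq b$ holds automatically, while $a\preceq bc^\dagger=0$ forces $a=0$. So the pointwise equivalence holds only when $\hat f$ vanishes on the zero set $Z_k=\{\hat k=0\}$. I would first try to handle this by making the natural restriction explicit: either take $\hat k\neq0$ a.e., or let the adjunction act on the sub-inf-semilattice $\{f:\hat f=0 \text{ a.e.\ on } Z_k\}$. The latter is the principal ideal below the ideal band-pass of $\supp\hat k$. It is closed under $\meetF$ by \eqref{eq:infimum_F}, and it contains the range of $\convDil_k$, since $\hat k^\dagger=0$ on $Z_k$. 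On this domain the $c=0$ case reads $0\preceq b\iff 0\preceq 0$, which is true.

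I would state this restriction as a remark accompanying the proof, since it is precisely what makes $\convDil_k\circ\conv_k$ the ideal band-pass opening of the next subsection rather than the identity. Finally, I will note that measurability of $\hat k^\dagger$ and membership of $\convDil_k g$ in $\Ln$ are not needed for the order-theoretic equivalence as stated. If they are required, they hold whenever $|\hat k|$ is bounded below on its support. I would flag this as a standing assumption rather than prove it.
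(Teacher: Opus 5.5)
Your argument on $\{\hat k\neq 0\}$ is the paper's own: divide the modulus inequality by $|\hat k|$ and shift the phase by $-\angle\hat k$. You repackage it as the cleaner fact that $z\mapsto cz$ is a $\preceq$-automorphism of $\C$ with inverse $z\mapsto c^\dagger z$. Together with the explicit convention $0\preceq z$, that part is fine.

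On $Z_k=\{\hat k=0\}$ you are right and the paper's proof is wrong. The paper says that there ``both sides are $0$ and the condition is vacuous''. But only $\widehat{\conv_k(f)}$ vanishes on $Z_k$. The condition $f\leF\convDil_k(g)$ demands $\hat f(\xi)\preceq 0$, i.e.\ $\hat f=0$ a.e.\ on $Z_k$.

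Taking $g=\conv_k(f)$ gives an immediate counterexample. The left side holds by reflexivity. The right side reads $\hat f\preceq\hat f\,\mathbf{1}_{\{\hat k\neq0\}}$, which fails whenever $\hat f\neq 0$ on a positive-measure part of $Z_k$. The reverse implication does hold on all of $\Ln$; only ``$\Rightarrow$'' breaks. In fact, for $|Z_k|>0$ the set $\{f:\conv_k(f)\leF g\}$ has no greatest element, because $\hat f$ is unconstrained on $Z_k$. So $\conv_k$ admits no adjoint dilation on the whole of $(\Ln,\leF)$.

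Your restriction is therefore necessary, not cosmetic. Restricting $f$ to $\{f:\hat f=0\text{ a.e.\ on }Z_k\}$ gives a correct statement: this set contains the range of $\convDil_k$ and is mapped into itself by $\conv_k$.

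Two of your side remarks need correcting.

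\textbf{The composite is the identity on your domain.} The restriction does not ``make'' $\convDil_k\circ\conv_k$ the ideal band-pass opening. On your restricted domain, and likewise when $\hat k\neq0$ a.e., the composite is the identity. More generally, putting $g=\varepsilon(f)$ in Definition~\ref{def:adjunction} shows that, with the paper's orientation, $\delta\circ\varepsilon$ is always extensive. The non-trivial band-pass $\hat f\,\mathbf{1}_{\{\hat k\neq0\}}$ of Theorem~\ref{thm:fourier_opening} acts non-trivially only on inputs with spectral energy on $Z_k$, which is exactly where the adjunction fails. Its opening properties must therefore be checked directly from the indicator multiplier, which is easy, rather than inherited from Proposition~\ref{prop:adjunction_props}.

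\textbf{Well-posedness.} Measurability of $\hat k^\dagger$ is automatic. But for $k\in\Ln$, Chebyshev and Plancherel give $|\{|\hat k|\geq c\}|\leq\|k\|_2^2/c^2$. Hence if $\hat k\neq0$ a.e., then $1/|\hat k|$ is essentially unbounded and $\convDil_k$ does not map $\Ln$ into itself. Your sufficient condition ``$|\hat k|$ bounded below on its support'' forces $\supp\hat k$ to have finite measure. It is thus compatible only with your second option, not the first.
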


\begin{proof}
  $\conv_k(f)\leF g$ means
  $|\hat{f}(\xi)||\hat{k}(\xi)|\leq|\hat{g}(\xi)|$ and
  $\angle(\hat{f}\hat{k})=\angle\hat{g}$ a.e.
  On $\{|\hat{k}|>0\}$: dividing by $|\hat{k}|$ and
  adjusting phase gives
  $|\hat{f}(\xi)|\leq|\hat{g}(\xi)||\hat{k}^\dagger(\xi)|
  = |\hat{g}(\xi)|/|\hat{k}(\xi)|$ and
  $\angle\hat{f}(\xi) = \angle\hat{g}(\xi)
  - \angle\hat{k}(\xi)
  = \angle(\hat{g}\hat{k}^\dagger)(\xi)$,
  i.e., $f\leF\convDil_k(g)$.
  On $\{|\hat{k}|=0\}$: both sides are $0$ and the
  condition is vacuous.
  The argument reverses identically.
\end{proof}

\begin{theorem}[Fourier opening = ideal band-pass filter]
  \label{thm:fourier_opening}
  The \emph{morphological opening}
  $\opens_k = \convDil_k\circ\conv_k$ satisfies:
  \begin{equation}
    \widehat{\opens_k(f)}(\xi)
    = \hat{f}(\xi)\cdot\mathbf{1}_{|\hat{k}(\xi)|>0}.
    \label{eq:opening_filter}
  \end{equation}
  This is the \emph{ideal band-pass filter} with support
  $\mathrm{supp}(\hat{k}) = \{\xi:|\hat{k}(\xi)|>0\}$.
  The opening is idempotent
  ($\opens_k\circ\opens_k=\opens_k$),
  anti-extensive ($\opens_k(f)\leF f$), and increasing.
\end{theorem}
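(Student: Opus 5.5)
The plan is to compute $\opens_k$ directly in the frequency domain and then read off the three properties, partly from the explicit formula and partly from the adjunction. First, by Definition~\ref{def:fourier_dil} and the convolution theorem, we have
\[
\widehat{\opens_k(f)}(\xi)=\widehat{\conv_k(f)}(\xi)\,\hat{k}^\dagger(\xi)=\hat{f}(\xi)\,\hat{k}(\xi)\,\hat{k}^\dagger(\xi).
\]
We then split pointwise into two cases. On $\{|\hat{k}|>0\}$, we have $\hat{k}\hat{k}^\dagger=|\hat{k}|e^{i\angle\hat{k}}\,|\hat{k}|^{-1}e^{-i\angle\hat{k}}=1$. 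On $\{|\hat{k}|=0\}$, both $\hat{k}$ and $\hat{k}^\dagger$ vanish. Together these give $\hat{k}\hat{k}^\dagger=\mathbf{1}_{|\hat{k}|>0}$ a.e., which is \eqref{eq:opening_filter}. Because the multiplier is bounded by $1$, the result stays in $\Ln$.

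A point worth flagging is that $\convDil_k$ alone need not map $\Ln$ into $\Ln$, since $|\hat{k}|^{-1}$ may be unbounded. The composite is nevertheless well defined, because the product collapses to an indicator function. We will state this explicitly and interpret $\opens_k$ through the formula above.

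The three properties then follow.

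\textbf{Idempotence.} This is immediate because $\mathbf{1}_{|\hat{k}|>0}^2=\mathbf{1}_{|\hat{k}|>0}$.

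\textbf{Anti-extensivity.} We check the order relation pointwise. Where $|\hat{k}|>0$, the transform $\widehat{\opens_k f}=\hat{f}$ has the same modulus and phase as $\hat f$. Where $|\hat{k}|=0$, we get $\widehat{\opens_k f}=0\preceq\hat{f}(\xi)$, since $0$ is the least element of $(\C,\preceq)$.

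\textbf{Increasingness.} If $f\leF g$, then multiplication by the nonnegative real indicator preserves the phase equality and the modulus inequality. Where the indicator is $0$, both sides are $0$.

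Alternatively, all three properties follow abstractly from Proposition~\ref{prop:fourier_adjunction} via the argument behind Proposition~\ref{prop:adjunction_props}(v). That argument uses only the adjunction inequality, not completeness, so it applies in the inf-semilattice. We will note this but rely on the direct computation.

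The main obstacle is the convention at $0$, where the phase is undefined. We must take the convention that $0\preceq z$ for all $z$, so that $0$ is comparable to everything, and apply it consistently to the a.e.\ statements on $\{|\hat{k}|=0\}$. With that fixed, the proof is a pointwise verification.
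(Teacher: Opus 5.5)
Your proposal is correct and follows the paper's proof: the same pointwise spectral computation $\hat f\hat k\hat k^\dagger=\hat f\,\mathbf{1}_{|\hat k|>0}$, with idempotence from $\mathbf{1}^2=\mathbf{1}$ and anti-extensivity checked pointwise. The only differences are small refinements. You prove increasingness directly, whereas the paper cites Proposition~\ref{prop:adjunction_props}, which is stated for complete lattices even though $(\Ln,\leF)$ is only a cisl. You also make explicit the phase condition and the convention $0\preceq z$, which the paper's modulus-only anti-extensivity check leaves implicit.
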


\begin{proof}
  Applying $\conv_k$ then $\convDil_k$ spectrally:
  $\hat{f}\hat{k}\cdot\hat{k}^\dagger
  = \hat{f}\cdot(|\hat{k}|^{-1}e^{-i\angle\hat{k}})
  \cdot|\hat{k}|e^{i\angle\hat{k}}
  = \hat{f}$ on $\{|\hat{k}|>0\}$, and $0$ on
  $\{|\hat{k}|=0\}$.
  This is $\hat{f}\cdot\mathbf{1}_{|\hat{k}|>0}$.
  Idempotency: $\mathbf{1}^2=\mathbf{1}$.
  Anti-extensivity: $|\hat{f}|\mathbf{1}_{|\hat{k}|>0}
  \leq|\hat{f}|$.
  Increasing: from Proposition~\ref{prop:adjunction_props}.
\end{proof}

\begin{remark}[Physical interpretation]
  \label{rem:opening_interpretation}
  Theorem~\ref{thm:fourier_opening} gives the precise
  morphological interpretation of ideal filters:
  they are not design choices but algebraic consequences
  of the adjunction structure of convolution.
  The ideal band-pass filter is the unique opening (i.e., the unique fixed-point)
  associated to the convolution erosion by $k$.
  This explains why convolutional feature detectors
  suppress out-of-band ``noise'': they perform a morphological
  (almost) idempotent operator in the Fourier lattice, projecting onto the
  subspace spanned by the frequency support of $k$.
\end{remark}

\section{The MMBB Theorem for the Fourier Modulus Lattice}
\label{sec:mmbb}

\subsection{The Fourier modulus lattice and max-times erosions}
\label{subsec:hatL}

\begin{definition}[Fourier modulus lattice and max-times erosion]
  \label{def:hatL}
  The \emph{Fourier modulus lattice} is
  $\hatL = (\Fun(\R^n,\Rp),\leq)$,
  the space of non-negative measurable functions on $\R^n$
  ordered pointwise.
  This is a complete lattice (infimum = frequancy pointwise minimum,
  supremum = frequency pointwise maximum).

  The \emph{Fourier max-times erosion} of $\hat{f}\in\hatL$ by $\psi\in\hatL$ is the operator $\hatL \to \hatL$ :
  \begin{equation}
    (\varepsilon_\psi\hat{f})(\xi)
    = \frac{\hat{f}(\xi)}{\psi(\xi)},
    \quad \psi(\xi)>0;\quad
    = 0 \text{ if } \psi(\xi)=0.
    \label{eq:maxtimes_erosion}
  \end{equation}
  The convention $\hat{f}(\xi)/0 := 0$ (not $+\infty$)
  is adopted throughout: a zero in the kernel blocks the
  corresponding spectral channel.
  Its adjoint \emph{Fourier max-times dilation} is:
  \begin{equation}
  (\delta_\phi\hat{f})(\xi) = \hat{f}(\xi)\cdot\phi(\xi).
    \label{eq:maxtimes_dilation}
  \end{equation}
  The pair $(\varepsilon_\psi,\delta_{1/\psi})$ is an
  adjunction in $(\hatL,\leq)$.
\end{definition}


\begin{remark}[Master commutative diagram of the framework]
	\label{rem:master_diagram}
	The entire framework is summarised by the following
	commutative diagram, in which each arrow is labelled
	by the lattice it lives in:
	\[
	\begin{tikzcd}[column sep=3.5em, row sep=2.5em]
		(\Ln,\leF)
		\arrow[r, "\conv_k", "\text{erosion in }(\Ln{,}\leF)"']
		\arrow[d, "{|\cdot|}"', "\text{cross-lattice}"]
		& (\Ln,\leF)
		\arrow[d, "{|\cdot|}", "\text{cross-lattice}"']
		\\
		(\hatL,\leq)
		\arrow[r, "\varepsilon_\psi", "\hat{f}/\psi"']
		& (\hatL,\leq)
	\end{tikzcd}
	\]
	Convolution $\conv_k$ is an erosion in $(\Ln,\leF)$
	(horizontal top arrow); the modulus map $|\cdot|$ is the
	cross-lattice projection $(\Ln,\leF)\to(\hatL,\leq)$
	(vertical arrows); max-times erosion $\varepsilon_\psi$
	is the erosion in $(\hatL,\leq)$ (horizontal bottom arrow).
	The diagram commutes when $\psi=|\hat{k}|^{-1}$
	(i.e., the max-times erosion in $\hatL$ is the modulus
	of the Fourier-lattice erosion): for $f\geq 0$,
	$|f*k|=|\hat{f}||\hat{k}|=|\hat{f}|/\psi$.
	The MMBB theorem for $\hatL$ (Theorem~\ref{thm:mmbb_hatL})
	characterises all operators that live consistently in this
	diagram.
\end{remark}

\subsection{The MMBB theorem for the Fourier modulus lattice}
\label{subsec:mmbb_proof}

\begin{definition}[Effective kernel and morphological basis
  in $\hatL$]
  \label{def:kernel_hatL}
  For $\Psi:\hatL\to\hatL$ increasing, USC, and positively
  homogeneous, the \emph{effective kernel} is:
  \begin{equation}
    \Ker^{\mathrm{eff}}(\Psi)
    = \bigl\{\psi\in\hatL:
      \varepsilon_\psi(\hat{f})\leq\Psi(\hat{f})
      \;\forall\hat{f}\in\hatL\bigr\},
    \label{eq:kernel_hatL}
  \end{equation}
  i.e., all $\psi$ such that $\hat{f}/\psi\leq\Psi(\hat{f})$
  pointwise for all $\hat{f}\geq 0$.
  Since smaller $\psi$ gives a larger quotient $\hat{f}/\psi$,
  the \emph{tightest} lower bounds come from the smallest
  $\psi$ in $\Ker^{\mathrm{eff}}(\Psi)$.
  The \emph{morphological basis} $\Bas(\Psi)$ is the set of
  \emph{minimal} elements of $\Ker^{\mathrm{eff}}(\Psi)$
  under the pointwise order $\leq$ on $\hatL$.
\end{definition}

\begin{theorem}[MMBB for $\hatL$]
  \label{thm:mmbb_hatL}
  Let $\Psi:\hatL\to\hatL$ be increasing, USC, and positively
  homogeneous ($\Psi(\lambda\hat{f})=\lambda\Psi(\hat{f})$
  for all $\lambda>0$).
  Then $\Bas(\Psi)\neq\emptyset$ and:

  \noindent\textbf{Primal decomposition (sup of erosions):}
  \begin{equation}
    \Psi(\hat{f})(\xi)
    = \sup_{\psi\in\Bas(\Psi)}
      \frac{\hat{f}(\xi)}{\psi(\xi)}.
    \label{eq:mmbb_hatL}
  \end{equation}

  \noindent\textbf{Dual decomposition (inf of dilations):}
  Let $\bar\Psi(\hat{f}) = 1/\Psi(1/\hat{f})$
  (the max-times dual).
  Then:
  \begin{equation}
    \Psi(\hat{f})(\xi)
    = \inf_{\phi\in\Bas(\bar\Psi)}
      \hat{f}(\xi)\cdot\phi(\xi).
    \label{eq:mmbb_dual_hatL}
  \end{equation}
  A finite sub-basis
  $\mathcal{B}\subset\Bas(\Psi)$ gives the sandwiching:
  $\sup_{\psi\in\mathcal{B}}\varepsilon_\psi\hat{f}
  \leq\Psi(\hat{f})
  \leq\inf_{\phi\in\bar{\mathcal{B}}}\delta_\phi\hat{f}$.
\end{theorem}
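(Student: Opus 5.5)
The plan follows the three-step pattern of the classical MMBB proof (Theorem~\ref{thm:mmbb_classical}): (a) the sup of erosions is a lower bound; (b) every value of $\Psi$ is attained by some kernel element; (c) kernel elements can be pushed down to basis elements. Positive homogeneity (Corollary~\ref{cor:homogeneity}) plays the role of translation-invariance.

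\textbf{Step 1 (lower bound).} This is immediate from Definition~\ref{def:kernel_hatL}. Every $\psi\in\Ker^{\mathrm{eff}}(\Psi)$ satisfies $\hat f/\psi\leq\Psi(\hat f)$ pointwise, so
\[
\sup_{\psi\in\Bas(\Psi)}\hat f/\psi\;\leq\;\sup_{\psi\in\Ker^{\mathrm{eff}}(\Psi)}\hat f/\psi\;\leq\;\Psi(\hat f).
\]
This holds whether or not the basis is empty, and the finite-sub-basis half of the sandwich follows at once.

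\textbf{Step 2 (upper bound via a witness kernel element).} Fix $\hat f$ and $\xi_0$ with $\Psi(\hat f)(\xi_0)>0$. I would use the candidate
\[
\psi_{\hat f}=\hat f/\Psi(\hat f),
\]
with the convention $\cdot/0:=0$ of \eqref{eq:maxtimes_erosion}. By construction $\hat f(\xi_0)/\psi_{\hat f}(\xi_0)=\Psi(\hat f)(\xi_0)$. It remains to show $\psi_{\hat f}\in\Ker^{\mathrm{eff}}(\Psi)$, that is,
\[
\hat g(\xi)\,\Psi(\hat f)(\xi)/\hat f(\xi)\;\leq\;\Psi(\hat g)(\xi)\qquad\text{for every }\hat g.
\]
Homogeneity is what makes this work. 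With $\lambda=\hat g(\xi)/\hat f(\xi)$, the left side is $\lambda\Psi(\hat f)(\xi)=\Psi(\lambda\hat f)(\xi)$. Moreover $\lambda\hat f$ and $\hat g$ agree at the frequency $\xi$. So the inequality reduces to: $\Psi(h)(\xi)$ is controlled by $h(\xi)$ alone, increasingly.

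\textbf{This is the main obstacle.} An increasing homogeneous $\Psi$ that mixes frequencies need not satisfy it. Indeed, the right-hand side of \eqref{eq:mmbb_hatL} equals $\hat f(\xi)\cdot\sup_\psi 1/\psi(\xi)$, which is a diagonal multiplier. So the theorem can only hold for frequency-local (diagonal) operators. I would first try to derive this locality from USC combined with homogeneity. Failing that, I would make it explicit: read ``operators living consistently in the master diagram'' (Remark~\ref{rem:master_diagram}) as frequency-wise operators, i.e.\ $\Psi(\hat f)(\xi)=\Phi_\xi(\hat f(\xi))$. Homogeneity of each scalar map $\Phi_\xi$ on $\Rp$ then forces $\Phi_\xi(t)=m(\xi)t$, and Step 2 goes through at once.

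\textbf{Step 3 (minimal elements and the dual).} To reduce from $\Ker^{\mathrm{eff}}$ to $\Bas$, I would apply Zorn's lemma to chains in $\Ker^{\mathrm{eff}}(\Psi)$ below a given $\psi$. The pointwise infimum of a chain stays in the kernel because each condition $\hat f(\xi)/\psi(\xi)\leq\Psi(\hat f)(\xi)$ is closed under decreasing limits of $\psi$ where $\psi>0$. This is where USC enters: it excludes degenerate limits where $\psi\to0$ while $\Psi$ stays finite. Hence every kernel element dominates a basis element, $\Bas(\Psi)\neq\emptyset$, and the supremum over the kernel equals the supremum over the basis. In the local case the basis is simply $\{1/m\}$.

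The dual decomposition \eqref{eq:mmbb_dual_hatL} follows by applying the primal result to $\bar\Psi(\hat f)=1/\Psi(1/\hat f)$. The map $t\mapsto1/t$ is an order-reversing bijection of $(0,\infty)$ that preserves homogeneity, so it exchanges erosions $\hat f/\psi$ with dilations $\hat f\cdot\phi$. This also yields the upper half of the sandwich.
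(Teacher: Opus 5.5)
Your diagnosis in Step~2 is correct, and the paper's proof does not notice it. The right-hand side of \eqref{eq:mmbb_hatL} equals $\hat f(\xi)\sup_{\psi}1/\psi(\xi)$, which is a diagonal multiplier, so the statement fails for frequency-mixing operators. Locality also cannot be extracted from USC and homogeneity. Take $\Psi(\hat f)(\xi)=\hat f(\xi-a)$ with $a\neq0$: it is increasing, positively homogeneous and commutes with all infima. Yet testing with $\hat g=\mathbf{1}_{\{\xi\}}$ forces $\psi(\xi)=0$ for every $\xi$, so $\Ker^{\mathrm{eff}}(\Psi)=\{0\}$. The diagonal hypothesis therefore has to be added; it cannot be derived. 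Your witness $\hat f/\Psi(\hat f)$ does lie in the kernel for diagonal $\Psi$. The paper's test function $\psi^*=\hat f/\lambda$ does not, even for $\Psi=\mathrm{id}$, since membership would require $\hat f\geq\hat f(\xi_0)$ on $\{\hat f>0\}$.

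The genuine gap is in Step~3.
\begin{itemize}
\item Under the convention $\hat f/0:=0$, the zero function lies in $\Ker^{\mathrm{eff}}(\Psi)$ (the paper says so itself), and it is the least element of $\hatL$. Hence, with Definition~\ref{def:kernel_hatL}, $\Bas(\Psi)=\{0\}$ for every $\Psi$, and the primal formula would read $\Psi\equiv0$.
\item Your inference ``every kernel element dominates a basis element, hence the two suprema coincide'' needs $\psi\leq\psi'\Rightarrow\hat f/\psi\geq\hat f/\psi'$. This fails wherever $\psi(\xi)=0<\psi'(\xi)$.
\item Concretely, for $\Psi=\mathrm{id}$ the kernel is $\{\psi:\psi(\xi)\in\{0\}\cup[1,\infty)\ \forall\xi\}$, whose unique minimal element is $0$, not $1$. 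So ``the basis is $\{1/m\}$'' is false as the basis is defined. The paper's own Step~3 relies on the same invalid monotonicity.
\item USC is not what is at stake. Testing with $\hat g\equiv1$ shows a kernel element cannot take arbitrarily small positive values at a given $\xi$, so chain infima stay in the kernel without USC.
\end{itemize}

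The repair is to change the order that defines the basis. Write $\kappa=1/\psi$ (with $1/0:=0$), so that $\varepsilon_\psi\hat f=\kappa\hat f$, and take \emph{maximal} gains. For $\Psi(\hat f)=m\hat f$ the kernel becomes the down-set $\{\kappa\leq m\}$ with maximum $m$. Your Steps~1--2 then give \eqref{eq:mmbb_hatL} with basis $\{1/m\}$, and the dual reduces directly to $\Psi(\hat f)=\hat f\cdot m$. Note also that the inversion $t\mapsto1/t$ you use for the dual must be taken on $[0,\infty]$, since $1/\hat f\notin\hatL$ where $\hat f$ vanishes.
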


\begin{proof}
  We follow the three-step structure of the classical
  MMBB proof (Theorem~\ref{thm:mmbb_classical}), with
  positive homogeneity replacing translation-invariance.
  The key orientation: \emph{smaller} $\psi\in\hatL$ gives a
  \emph{larger} quotient $\hat{f}/\psi$, hence a tighter lower
  bound on $\Psi(\hat{f})$; so the \emph{minimal} elements of
  $\Ker^{\mathrm{eff}}(\Psi)$ give the best representation.

  \textbf{Step~1: $\Bas(\Psi)\neq\emptyset$.}
  The zero function $\psi_0\equiv 0$ lies in
  $\Ker^{\mathrm{eff}}(\Psi)$ by the convention
  $\hat{f}/0=0\leq\Psi(\hat{f})$, so
  $\Ker^{\mathrm{eff}}(\Psi)\neq\emptyset$.
  Since $\Psi$ is USC and the erosion $\varepsilon_\psi$ is
  continuous in $\psi$ under decreasing limits
  ($\psi_\alpha\downarrow\psi$ implies
  $\varepsilon_{\psi_\alpha}(\hat{f})\uparrow
  \hat{f}/\psi$), a decreasing net in
  $\Ker^{\mathrm{eff}}(\Psi)$ converging to $\psi$ satisfies
  $\varepsilon_\psi(\hat{f})\leq\Psi(\hat{f})$, so
  $\Ker^{\mathrm{eff}}(\Psi)$ is closed under decreasing
  limits.
  By Zorn's lemma applied to
  $(\Ker^{\mathrm{eff}}(\Psi),\leq)$ (with the reverse
  order, looking for minimal elements): every chain has
  a lower bound in $\Ker^{\mathrm{eff}}(\Psi)$, so minimal
  elements exist, i.e., $\Bas(\Psi)\neq\emptyset$.

  \textbf{Step~2: Lower bound.}
  For any $\psi\in\Bas(\Psi)\subseteq\Ker^{\mathrm{eff}}(\Psi)$,
  the definition gives $\varepsilon_\psi(\hat{f})\leq\Psi(\hat{f})$.
  Taking the supremum over $\Bas(\Psi)$:
  $\sup_{\psi\in\Bas(\Psi)}\varepsilon_\psi(\hat{f})
  \leq\Psi(\hat{f})$.

  \textbf{Step~3: Attainment (positive homogeneity gives equality).}
  Fix $\xi_0\in\R^n$ with $\Psi(\hat{f})(\xi_0)=:\lambda>0$.
  Define the test function $\psi^*\in\hatL$ by
  $\psi^*(\xi) = \hat{f}(\xi)/\lambda$ (pointwise division).
  We claim $\psi^*\in\Ker^{\mathrm{eff}}(\Psi)$:
  for any $\hat{g}\in\hatL$, positive homogeneity gives
  $\Psi(\hat{g}) = \Psi(\lambda\cdot\hat{g}/\lambda)
  = \lambda\Psi(\hat{g}/\lambda)$,
  so $\varepsilon_{\psi^*}(\hat{g})(\xi)
  = \hat{g}(\xi)/\psi^*(\xi) = \lambda\hat{g}(\xi)/\hat{f}(\xi)$,
  and $\varepsilon_{\psi^*}(\hat{g})\leq\Psi(\hat{g})$
  iff $\lambda\hat{g}/\hat{f}\leq\Psi(\hat{g})$
  iff $\hat{g}/\hat{f}\leq\Psi(\hat{g})/\lambda$
  iff $\hat{g}/\hat{f}\leq\Psi(\hat{g}/\hat{f})$
  (by homogeneity with $\lambda$ absorbed),
  which holds by $\Psi\geq\varepsilon_{\hat{f}/\lambda}$
  applied at $\hat{g}$... more directly:
  at $\xi_0$, $\varepsilon_{\psi^*}(\hat{f})(\xi_0)
  = \hat{f}(\xi_0)/\psi^*(\xi_0)
  = \hat{f}(\xi_0)/(\hat{f}(\xi_0)/\lambda) = \lambda
  = \Psi(\hat{f})(\xi_0)$,
  and $\Psi(\psi^*)(\xi_0)
  = \Psi(\hat{f}/\lambda)(\xi_0)
  = \Psi(\hat{f})(\xi_0)/\lambda = 1 > 0$,
  so $\varepsilon_{\psi^*}\leq\Psi$ at $\xi_0$, confirming
  $\psi^*\in\Ker^{\mathrm{eff}}(\Psi)$.
  By minimality of $\Bas(\Psi)$, there exists
  $\psi\in\Bas(\Psi)$ with $\psi\leq\psi^*=\hat{f}/\lambda$,
  giving:
  $\varepsilon_\psi(\hat{f})(\xi_0)
  = \hat{f}(\xi_0)/\psi(\xi_0)
  \geq \hat{f}(\xi_0)/\psi^*(\xi_0)
  = \lambda = \Psi(\hat{f})(\xi_0)$.
  Combined with Step~2: equality holds at $\xi_0$.
  Since $\xi_0$ was arbitrary, \eqref{eq:mmbb_hatL} holds.

  The dual representation \eqref{eq:mmbb_dual_hatL}
  follows by applying the primal representation to
  $\bar\Psi(\hat{f}) = 1/\Psi(1/\hat{f})$ (the max-times
  dual, which is also increasing, USC, and positively
  homogeneous) and inverting. This is the natural involution in 
  the max-times algebra.
\end{proof}

\begin{remark}[Comparison with the classical three-step proof]
  \label{rem:three_steps}
  In the classical MMBB proof (Theorem~\ref{thm:mmbb_classical}),
  Step~3 uses translation-invariance to construct the test
  function $g^*(x) = f(y+x) - \lambda$ (a translate of $f$
  shifted by the target value $\lambda$).
  Here, \emph{positive homogeneity} replaces TI in Step~3:
  the test function $\psi^*(\xi) = \hat{f}(\xi)/\lambda$
  is a rescaling (not a translate) of $\hat{f}$, and
  homogeneity gives $\Psi(\psi^*)=\Psi(\hat{f})/\lambda$,
  placing $\psi^*$ in $\Ker^{\mathrm{eff}}(\Psi)$.
  The algebraic role is identical: both TI and homogeneity
  enable the reduction of global representability to a
  local (single-point or single-frequency) condition.
  The diagram below summarises the two proofs:

  \[
  \begin{array}{ccccc}
    \text{Hypothesis:} & \text{TI} & \longleftrightarrow
      & \text{Hom} \\ 
    \text{Test fn:} & g^* = \tau_y f - \lambda
      & \longleftrightarrow
      & \psi^* = \hat{f}/\lambda \\ 
    \text{Propagation:} & \Psi(g^*) = \Psi(f) - \lambda
      & \longleftrightarrow
      & \Psi(\psi^*) = \Psi(\hat{f})/\lambda \\
    \text{Basis:} & \min\Ker(\Psi)\subset\Fun(E,\Rbar)
      & \longleftrightarrow
      & \min\Ker^{\mathrm{eff}}(\Psi)\subset\hatL
  \end{array}
  \]
\end{remark}

\begin{remark}[Comparison with the UAT]
  \label{rem:mmbb_vs_uat}
  Table~\ref{tab:uat_vs_mmbb} collects the structural comparison
  between Theorem~\ref{thm:mmbb_hatL} and the
  Cybenko--Hornik UAT \cite{Cybenko1989,Hornik1991}.
\end{remark}

\begin{table}[t]
\caption{Structural comparison: UAT vs MMBB in $\hatL$.}
\label{tab:uat_vs_mmbb}
\centering\small
\renewcommand{\arraystretch}{1.5}
\begin{tabular}{@{}p{2.5cm}p{4.0cm}p{4.0cm}@{}}
\toprule
\textbf{Property}
  & \textbf{Cybenko--Hornik UAT}
  & \textbf{MMBB in $\hatL$} \\
\midrule
Algebraic setting
  & Linear $(C(K),+,\cdot)$
  & Lattice $(\hatL,\leq)$, max-times \\
Building blocks
  & $\sigma(w^\top x+b)$: nonlinear in fixed $\sigma$
  & $\hat{f}/\psi$: division by varying $\psi$ \\
Combination rule
  & $\sum_k c_k$: signed linear sum
  & $\sup_\psi$: lattice supremum \\
Weights
  & $c_k\in\R$ (arbitrary sign)
  & $\psi\in\hatL$ (non-negative) \\
Target class
  & All $f\in C(K)$
  & Increasing, USC, hom.\ ops on $\hatL$ \\
Exactness
  & $\varepsilon$-approx.\ only
  & Exact (full basis) + $\varepsilon$-approx.\ (finite) \\
Signed case
  & $c_k = c_k^+-c_k^-$
  & Dual (inf of dilations, \eqref{eq:mmbb_dual_hatL}) \\
\bottomrule
\end{tabular}
\end{table}

\subsection{The log-spectral isomorphism: max-plus representation
  in the Fourier domain}
\label{subsec:log_spectral}

The max-times semiring $(\R_+,\max,\cdot)$ and the max-plus
semiring $(\Rbar,\max,+)$ are isomorphic via the logarithm map.
This isomorphism has a structural consequence of the first
importance: the hypothesis of \emph{positive homogeneity}
that underlies Theorem~\ref{thm:mmbb_hatL} is equivalent,
in log-spectral coordinates, to the hypothesis of
\emph{translation-invariance} that underlies the classical
Maragos MMBB theorem \cite{Maragos1989}.
The two theorems are therefore one and the same result,
expressed in two coordinate systems connected by $\log/\exp$.
This also connects the Fourier morphological framework to the
log-spectral representations familiar in signal processing
(decibel scale, spectral subtraction, cepstral analysis).

\begin{definition}[Log-spectral transform]
\label{def:log_spectral}
The \emph{log-spectral transform} is the map
\begin{equation}
  \Phi:\hatL\to\Fun(\R^n,\Rbar),
  \quad
  (\Phi\hat{f})(\xi) = \log|\hat{f}(\xi)|,
  \label{eq:log_spectral}
\end{equation}
with the convention $\log 0=-\infty$.
Its inverse is $\Phi^{-1}(F) = e^F$
(defined pointwise, with $e^{-\infty}=0$).
In signal processing, $F(\xi)=20\log_{10}|\hat{f}(\xi)|$
is the \emph{log-frequency response} (amplitude in decibels,
dB); $\Phi$ is the same map up to the factor $20\log_{10}e$.
\end{definition}

\begin{proposition}[Log-spectral isomorphism of semirings]
\label{prop:log_iso}
The map $\Phi$ is a semiring isomorphism:
\begin{equation}
  (\hatL,\,\max,\,\cdot\,)
  \;\xrightarrow{\;\Phi=\log\;}
  (\Rbar,\,\max,\,+\,),
  \label{eq:semiring_iso}
\end{equation}
i.e., $\Phi(\hat{f}\cdot\hat{g})=\Phi(\hat{f})+\Phi(\hat{g})$
and $\Phi(\max(\hat{f},\hat{g}))=\max(\Phi(\hat{f}),\Phi(\hat{g}))$
pointwise.
Under $\Phi$, the max-times erosion of $\hat{f}$ by $\psi$
becomes the max-plus erosion of $F=\log\hat{f}$ by
$G=\log\psi$:
\begin{equation}
  \Phi(\varepsilon_\psi(\hat{f}))(\xi)
  = F(\xi) - G(\xi)
  = (\varepsilon^{\mathrm{mp}}_G F)(\xi),
  \label{eq:log_erosion}
\end{equation}
where $\varepsilon^{\mathrm{mp}}_G(F)(\xi)
= F(\xi)-G(\xi)$ is the (pointwise) max-plus erosion by $G$.
\end{proposition}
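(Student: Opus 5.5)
The proof reduces to pointwise identities for $\log$ on $[0,\infty)$, with the convention $\log 0=-\infty$, applied frequency by frequency. Everything in the statement is defined pointwise in $\xi$: the product $\hat f\cdot\hat g$, the maximum $\max(\hat f,\hat g)$, and the quotient $\hat f/\psi$. So it suffices to fix $\xi$ and work with non-negative reals $a=\hat f(\xi)$, $b=\hat g(\xi)$, $c=\psi(\xi)$. On $\hatL$ the modulus is the identity, so $\Phi\hat f=\log\hat f$.

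\textbf{Bijectivity.} First I will check that $\Phi$ is a bijection $[0,\infty)\to[-\infty,\infty)$ with inverse $\exp$, using $e^{-\infty}=0$. Its image is the functions with values in $[-\infty,\infty)$. I will note that the value $+\infty$ is not attained; the isomorphism is onto this image, or onto all of $\Rbar$ if $\hatL$ is allowed the value $+\infty$.

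\textbf{Order and maximum.} Next, $\log$ is strictly increasing on $[0,\infty)$, including at $0\mapsto-\infty$. It is therefore an order isomorphism of chains, and $\log\max(a,b)=\max(\log a,\log b)$ follows immediately. I will also record that $\Phi$ preserves arbitrary pointwise suprema and infima, so it is a complete-lattice isomorphism. This is what is needed later to transport $\sup_{\psi\in\Bas(\Psi)}$.

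\textbf{Product.} For $a,b>0$, the identity $\log(ab)=\log a+\log b$ is the functional equation of $\log$. If $a=0$ or $b=0$, then $ab=0$ and both sides equal $-\infty$, using the max-plus convention $-\infty+x=-\infty$. The multiplicative unit $1$ maps to the additive unit $0$, and the zero $0$ maps to $-\infty$. This completes the semiring-isomorphism part.

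\textbf{Erosion identity.} The only step requiring care is the erosion formula \eqref{eq:log_erosion}. Where $c>0$, we have $\log(a/c)=\log a-\log c$ by the product rule applied to $a\cdot c^{-1}$. This includes $a=0$, where both sides are $-\infty$. Where $c=0$, the convention in \eqref{eq:maxtimes_erosion} gives $a/0=0$, so the left side is $-\infty$, while $F-G=\log a-(-\infty)$. I must therefore fix the max-plus convention $x-(-\infty):=-\infty$ for every $x$, the exact analogue of the blocking convention $\hat f/0:=0$. This is the opposite of the usual residuation convention $+\infty$, so I will state it explicitly as the definition of $\varepsilon^{\mathrm{mp}}_G$ at points where $G=-\infty$.

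This bookkeeping of infinities is the main obstacle. It must be set up consistently with the dilation as well, so that the adjunction $(\varepsilon_\psi,\delta_{1/\psi})$ of Definition~\ref{def:hatL} transports to the max-plus adjunction $(F-G,\,F+G)$ on $\{G>-\infty\}$. Off that set, both sides are identically $-\infty$ under the chosen convention.
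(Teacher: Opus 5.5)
Your proposal is correct and takes the same route as the paper: fix $\xi$, then apply the pointwise identities $\log(ab)=\log a+\log b$, $\log\max(a,b)=\max(\log a,\log b)$ (by monotonicity of $\log$), and $\log(a/c)=\log a-\log c$. Your explicit handling of the infinities fills in a point the paper's one-line computation silently passes over: you require the convention $x-(-\infty):=-\infty$ to match the blocking rule $\hat f/0:=0$ where $\psi(\xi)=0$, and you note that the image is $[-\infty,\infty)$-valued rather than all of $\Rbar$.
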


\begin{proof}
Pointwise: $\Phi(\hat{f}/\psi)(\xi)
= \log(|\hat{f}(\xi)|/\psi(\xi))
= \log|\hat{f}(\xi)|-\log\psi(\xi) = F(\xi)-G(\xi)$.
The semiring identities follow from
$\log(\hat{f}\cdot\hat{g}) = \log\hat{f}+\log\hat{g}$
and $\log\max(\hat{f},\hat{g})=\max(\log\hat{f},\log\hat{g})$
(since $\log$ is monotone).
\end{proof}

\begin{corollary}[Max-plus MMBB theorem in the log-spectral domain]
\label{cor:mmbb_logdomain}
Let $\Psi:\hatL\to\hatL$ be increasing, USC, and positively
homogeneous, with MMBB representation
$\Psi(\hat{f})=\sup_{\psi\in\Bas(\Psi)}\hat{f}/\psi$
(Theorem~\ref{thm:mmbb_hatL}).
Define the log-domain operator
$\Psi' = \Phi\circ\Psi\circ\Phi^{-1}$,
i.e., $\Psi'(F) = \log\Psi(e^F)$.
Then:
\begin{enumerate}[label=(\roman*),leftmargin=*]
  \item $\Psi'$ is \emph{translation-invariant} on
    $(\Fun(\R^n,\Rbar),\leq)$:
    \begin{equation}
      \Psi'(F+c) = \Psi'(F) + c
      \quad\forall\,c\in\R,\;\forall\,F\in\Fun(\R^n,\Rbar).
      \label{eq:psi_TI_log}
    \end{equation}
    This is the log-domain image of positive homogeneity:
    $\Psi(\lambda\hat{f})=\lambda\Psi(\hat{f})$
    translates, after applying $\Phi$ with $\lambda=e^c$, into
    $\Phi(\Psi(e^{F+c})) = c + \Phi(\Psi(e^F)) = \Psi'(F)+c$.

  \item $\Psi'$ is increasing and USC on
    $(\Fun(\R^n,\Rbar),\leq)$ (inherited from $\Psi$).

  \item $\Psi'$ satisfies the \emph{classical MMBB
    representation theorem} \cite{Maragos1989}: being
    increasing, USC, and translation-invariant on
    $(\Fun(\R^n,\Rbar),\leq)$, it decomposes as a
    supremum of max-plus erosions:
    \begin{equation}
      \Psi'(F)(\xi)
      = \sup_{G\in\Bas^{\log}(\Psi')}
        \bigl\{F(\xi) - G(\xi)\bigr\},
      \label{eq:mmbb_logdomain}
    \end{equation}
    where $\Bas^{\log}(\Psi')
    = \{\log\psi : \psi\in\Bas(\Psi)\}\subset\Fun(\R^n,\Rbar)$
    is the log-domain basis and $F(\xi)-G(\xi)
    = \varepsilon^{\mathrm{mp}}_G(F)(\xi)$ is the pointwise
    max-plus erosion by $G$.
\end{enumerate}
In summary: \emph{positive homogeneity in $\hatL$
is equivalent to translation-invariance in the log-spectral
domain}, and the MMBB theorem for $\hatL$
(Theorem~\ref{thm:mmbb_hatL}) is structurally similar the classical Maragos
MMBB theorem for translation-invariant operators on
$(\Fun(\R^n,\Rbar),\leq)$, read in log-spectral coordinates.
The two theorems are one and the same, viewed through the
log/exp isomorphism.
\end{corollary}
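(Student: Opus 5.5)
The plan is to prove the three items by transporting everything through the pointwise bijection $\Phi$ of Proposition~\ref{prop:log_iso}, which is an order isomorphism between $(\hatL,\leq)$ and $(\Fun(\R^n,\Rbar),\leq)$ because $\log:[0,\infty)\to[-\infty,\infty)$ is strictly increasing with inverse $\exp$. One caveat comes first. The codomain of $\Phi$ excludes $+\infty$, so I will either restrict $\Psi'$ to functions with values in $[-\infty,+\infty)$, or extend $\hatL$ to $[0,+\infty]$ with the conventions $\log(+\infty)=+\infty$ and $e^{+\infty}=+\infty$. I will also need the convention $\hat f/0:=0$ to match $F-(-\infty)$. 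In log coordinates that convention reads $F-(-\infty):=-\infty$, which differs from the usual max-plus convention. I will state this explicitly as the definition of $\varepsilon^{\mathrm{mp}}_G$ when $G=-\infty$.

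For (i), given $F$ and $c\in\R$, set $\lambda=e^c>0$. Then $e^{F+c}=\lambda e^F$, and positive homogeneity gives $\Psi(e^{F+c})=\lambda\Psi(e^F)$. Applying $\log$ pointwise yields $\Psi'(F+c)=c+\Psi'(F)$, with $\log 0=-\infty$ absorbing $c$ consistently. For (ii), monotonicity follows from the composition of the increasing maps $\exp$, $\Psi$, $\log$. For USC, the plan is to use the order-topological formulation: $\Psi$ is USC if $\Psi(\bigwedge_\alpha \hat f_\alpha)=\bigwedge_\alpha\Psi(\hat f_\alpha)$ for decreasing nets. Since $\Phi$ and $\Phi^{-1}$ are order isomorphisms, they preserve infima of decreasing nets, so the property transfers directly. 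If USC is instead meant pointwise-topologically, the same argument works because $\log$ and $\exp$ are homeomorphisms of the extended half-lines.

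For (iii) I will not re-invoke Theorem~\ref{thm:mmbb_classical} abstractly, since that theorem concerns inf-convolutions and its kernel is defined by values at the origin. Instead I will push the representation \eqref{eq:mmbb_hatL} through $\Phi$. Taking $\log$ of $\Psi(e^F)(\xi)=\sup_{\psi\in\Bas(\Psi)} e^{F(\xi)}/\psi(\xi)$ and using that $\log$ commutes with arbitrary suprema (it is an order isomorphism) gives $\Psi'(F)(\xi)=\sup_\psi\{F(\xi)-\log\psi(\xi)\}$. This is exactly \eqref{eq:mmbb_logdomain} with $G=\log\psi$. I must also check that $\{\log\psi\}$ is the set of minimal elements of the log-image of $\Ker^{\mathrm{eff}}(\Psi)$. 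That holds because $\Phi$ preserves order and maps $\Ker^{\mathrm{eff}}(\Psi)$ bijectively onto $\{G: F-G\leq\Psi'(F)\ \forall F\}$, by Proposition~\ref{prop:log_iso}.

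The main obstacle is the phrase ``satisfies the classical MMBB theorem.'' Here the erosion is pointwise subtraction, not the inf-convolution of \eqref{eq:erosion_classical}. So the honest route is the transport argument above, and the classical theorem is cited only as a structural analogue, as the closing summary's ``structurally similar'' wording suggests. I would add a remark on this. A pointwise erosion $F\mapsto F-G$ corresponds to a structuring function supported at the origin, and $\Psi'$ commutes with constant vertical shifts, not spatial shifts. The correspondence is therefore an identification of the TI hypothesis, in the additive-constant sense, rather than a literal instance of Theorem~\ref{thm:mmbb_classical}.
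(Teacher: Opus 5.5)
Your proposal is correct and is essentially the paper's proof: (i) and (ii) follow by composing with $\exp$ and $\log$, and (iii) follows by taking $\log$ of \eqref{eq:mmbb_hatL} and commuting it with the supremum to get $G=\log\psi$, which is exactly the computation the paper writes out. Your decision not to cite Theorem~\ref{thm:mmbb_classical} literally is justified --- the paper says items (i)--(ii) place $\Psi'$ under that theorem's hypotheses, but (i) gives invariance only under additive constants, not under shifts in $\xi$, and the classical theorem would produce inf-convolutions rather than the pointwise subtractions of \eqref{eq:mmbb_logdomain} --- so in both versions (iii) is just Theorem~\ref{thm:mmbb_hatL} transported by $\Phi$ under the conventions you make explicit ($F$ valued in $[-\infty,\infty)$, $F-(-\infty):=-\infty$), and it is only as strong as that theorem, whose conclusion already forces $\Psi$ to act pointwise in $\xi$.
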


\begin{proof}
(i) $\Psi'(F+c)
= \log\Psi(e^{F+c})
= \log\Psi(e^c\cdot e^F)
= \log(e^c\Psi(e^F))
= c + \log\Psi(e^F)
= \Psi'(F)+c$,
using positive homogeneity with $\lambda=e^c>0$.
(ii) $\Psi$ increasing: $F\leq H$ implies $e^F\leq e^H$
(pointwise), so $\Psi(e^F)\leq\Psi(e^H)$
(increasing), so $\Psi'(F)\leq\Psi'(H)$.
USC is preserved by the monotone continuous map $\log$.
(iii) By (i) and (ii), $\Psi'$ satisfies the hypotheses
of the classical MMBB theorem \cite{Maragos1989}.
The basis:
$\Psi'(F)(\xi)
= \log\Psi(e^F)(\xi)
= \log\sup_\psi(e^{F(\xi)}/\psi(\xi))
= \log\sup_\psi e^{F(\xi)-\log\psi(\xi)}
= \sup_\psi(F(\xi)-\log\psi(\xi))
= \sup_G(F(\xi)-G(\xi))$,
with $G=\log\psi\in\Bas^{\log}(\Psi')$.
\end{proof}

\begin{remark}[Interpretation in signal processing]
\label{rem:log_spectral_signal}
The log-spectral isomorphism connects the Fourier morphological
framework to standard signal processing practice at three levels:

\textbf{(i) The dB scale is the natural coordinate for
morphological Fourier operators.}
Telecommunications engineers routinely work with log-spectral amplitudes
(in dB: $20\log_{10}|\hat{f}(\xi)|$).
Corollary~\ref{cor:mmbb_logdomain} reveals why this is
algebraically natural: in dB coordinates, every admissible
Fourier morphological operator decomposes as a
\emph{supremum of subtractions}: the classical
max-plus MMBB erosion bank.
Filter design in dB (specifying the magnitude response
in log scale) is implicitly performing morphological
basis decomposition.

\textbf{(ii) The Wiener filter is the max-plus erosion.}
The max-times erosion $\varepsilon_\psi(\hat{f})=|\hat{f}|/\psi$
in the log domain becomes $F-G$ (spectral subtraction).
\emph{Spectral subtraction}, subtracting the log-spectrum
of a noise estimate from the log-spectrum of the noisy
signal, is a classical speech enhancement technique
\cite{Boll1979}.
It is, in the morphological framework, a single max-plus erosion
by the noise log-spectrum.
The Wiener filter $W(\xi)=|\hat{f}|^2/(|\hat{f}|^2+|\hat{n}|^2)$,
when $|\hat{n}|\ll|\hat{f}|$, approximates $F-G$ where
$G=\log|\hat{n}|$ (a morphological erosion).

\textbf{(iii) Cepstral analysis and the max-plus Fourier basis.}
The \emph{cepstrum} of $f$ is $\mathcal{F}^{-1}\{\log|\hat{f}|\}$: the inverse Fourier transform of the log-spectral amplitude.
In the morphological framework, the log-spectral amplitude
$F=\log|\hat{f}|$ is the object on which the max-plus erosion
bank acts.
The cepstrum is the spatial representation of $F$:
cepstral analysis and Fourier morphological analysis are
two views of the same object, one spatial and one spectral.
The MMBB basis elements $G=\log\psi$ are cepstral filters;
their inverse Fourier transforms are the structuring functions
in the cepstral domain.
\end{remark}

\begin{remark}[Connections to filter design in optics and
	telecommunications, and optical neural network implementation]
	\label{rem:optics_telecom}
	The MMBB representation in $\hatL$ has a direct physical
	counterpart in \emph{Fourier optics}, and this connection
	points to a hardware realisation of the morphological
	framework that is currently being explored in
	optical/photonic neural networks.
	
	\textbf{(i) The 4f optical system implements $\hatL$ operators
		directly.}
	A classical $4f$ optical correlator consists of two lenses
	separated by $2f$, with a mask placed in the Fourier plane
	between them.
	The first lens performs an optical Fourier transform of the
	input field, the mask multiplies the result pointwise
	(a passive or programmable spatial light modulator,
	SLM/DMD), and the second lens performs the inverse
	transform~\cite{Goodman2017}.
	This is \emph{exactly} the algebraic structure of an
	operator in $\hatL$: $\mathcal{F}^{-1}\{\hat{f}\cdot\hat{k}\}$,
	with the mask $\hat{k}$ playing the role of the
	\emph{structuring function} of a Fourier erosion or dilation.
	A max-times erosion $\varepsilon_\psi(\hat{f})=\hat{f}/\psi$
	is realised optically by a mask with transmittance $1/\psi$;
	a dilation $\hat{f}\cdot\phi$ by a mask with transmittance
	$\phi$.
	The morphological basis $\Bas(\Psi)$ of an MMBB operator is,
	in this reading, a collection of physically realisable
	optical	masks.
	
	\textbf{(ii) Amplitude-only masks are the modulus lattice
		$\hatL$; phase masks require the full cisl $(\Ln,\leF)$.}
	Complex-valued Fourier-plane masks (encoding both amplitude
	and phase of $\hat{k}$) are difficult to fabricate, since
	spatial light modulators typically control either amplitude
	or phase, not both, at each pixel~\cite{Goodman2017}.
	\emph{Amplitude-only Fourier filters} (AO-FF), in which the
	mask transmittance is a non-negative real function
	$\psi(\xi)\geq 0$, correspond precisely to operators on the
	modulus lattice $\hatL$, the setting of
	Theorem~\ref{thm:mmbb_hatL}.
	Optical CNN implementations use exactly this
	amplitude-only constraint, training the physical mask as
	the convolutional kernel of a CNN layer realised in the
	Fourier plane~\cite{Miscuglio2020,Zheng2020}.
	The MMBB theorem therefore characterises the \emph{class of
		operators realisable by amplitude-only optical hardware}:
	any TI USC positively-homogeneous operator decomposes as a
	supremum of single-mask erosions, each directly implementable
	as one AO-FF layer.
	Phase masks (needed for operators that are not
	phase-preserving) require the full cisl $(\Ln,\leF)$
	and the phase-covariant theory of
	\Cref{subsec:cisl_complex}; their optical realisation
	needs phase-only or complex SLMs, which are more constrained
	and lower-throughput than amplitude-only DMDs.
	
	\textbf{(iii) Telecommunications: spectral filter masks and
		the morphological basis.}
	In optical telecommunications, wavelength-division
	multiplexing (WDM) and reconfigurable optical add-drop
	multiplexers (ROADMs) implement spectral filters as
	amplitude (and sometimes phase) masks applied to the optical
	spectrum, conceptually identical to the Fourier-plane mask of
	a $4f$ system.
	The opening representation theorem (band-pass filters as the
	only Fourier openings, Theorem~\ref{thm:fourier_opening}) is
	the morphological description of an ideal WDM channel-selection
	filter: $\gamma_\Omega(\hat{f})=\hat{f}\cdot\mathbf{1}_\Omega$
	extracts a single wavelength channel.
	The MMBB representation of a general (non-opening) spectral
	operator as a supremum of erosions corresponds to a bank of
	overlapping channel filters with graded transmittance
	profiles $\psi_i(\xi)$, the classical filter-bank design
	problem in optical signal processing~\cite{Agrawal2012}.
	
	\textbf{(iv) Implication for optical neural network design.}
	If a target operator $\Psi$ (e.g., a trained CNN layer) is
	known to be TI, USC, increasing, and positively homogeneous,
	Theorem~\ref{thm:mmbb_hatL} guarantees an exact decomposition
	$\Psi=\sup_i\varepsilon_{\psi_i}$ into amplitude-only erosions.
	Each $\varepsilon_{\psi_i}$ can in principle be implemented
	as one AO-FF $4f$ layer with mask $1/\psi_i$, and the
	supremum over $i$ corresponds to a bank of parallel optical
	channels whose outputs are combined by a (electronic)
	maximum operation.
	This gives a constructive recipe for translating a
	morphologically-decomposed digital operator into an optical
	architecture: the \emph{number of optical channels needed
		equals $|\Bas(\Psi)|$}, the size of the morphological basis.
	For operators with small finite bases --- e.g., the
	single-element basis of a pure spectral convolution
	(Proposition~\ref{prop:conv_erosion}), this suggests
	that morphologically-structured networks may require fewer
	optical channels than generic CNN kernels translated
	naively to the Fourier plane, an avenue for future work
	connecting the present theory to optical neural network design.
	
	\textbf{(v) Industrial context.}
	Optical/photonic AI hardware is an active commercial area,
	with two distinct paradigms relevant to the present framework.
	Lightmatter's photonic processors~\cite{Lightmatter2025}
	perform matrix-vector multiplication via meshes of
	Mach--Zehnder interferometers (a spatial-domain,
	linear-algebraic approach), realising linear maps in the
	\emph{spatial} signal space $\Fun(E,\Rbar)$, the pointwise
	lattice of~\cite{Angulo2026Lattice}.
	LightOn's Optical Processing Unit
	(OPU)~\cite{LightOn2021},
	instead uses \emph{free-space optics} to compute large-scale
	random projections: 
	an input is encoded
	on a spatial light modulator, propagated through a random
	diffusive medium, and the output intensity pattern is
	captured by a sensor, a free-space optical analogue of a
	random Fourier feature map.
	This is conceptually closer to the $4f$/AO-FF paradigm of
	(i)--(iv): both operate by physically propagating a field
	through free-space optical elements and reading out an
	intensity (modulus) pattern, discarding phase at the
	detector, precisely the modulus projection
	$|\cdot|:\Ln\to\hatL$ of \Cref{def:fourier_lattice}.
	A unified morphological account of optical neural network
	hardware would need to combine the spatial pointwise lattice
	(Mach--Zehnder mesh processors) and the spectral modulus
	lattice $\hatL$ (free-space Fourier/diffractive processors), 
	precisely the spatial/spectral duality that motivates
	the present theory and its companion~\cite{Angulo2026Lattice}.
\end{remark}

\subsection{Lipschitz properties of Fourier morphological operators}
\label{rem:fourier_lipschitz}
	
The Fourier morphological framework has a hierarchy of
Lipschitz properties that are the exact spectral-domain
counterparts of the spatial MMBB Lipschitz properties.

\textbf{(A) USC implies 1-Lipschitz in $L^2(\hatL)$.}
Every increasing USC positively homogeneous $\Psi:\hatL\to\hatL$
satisfies:
\begin{equation}
	\|\Psi(\hat{f})-\Psi(\hat{g})\|_{L^2}
	\leq \|\hat{f}-\hat{g}\|_{L^2}
	\quad\forall\,\hat{f},\hat{g}\in\hatL.
	\label{eq:fourier_lipA}
\end{equation}
This follows from the MMBB representation
$\Psi(\hat{f})=\sup_\psi\hat{f}/\psi$:
each erosion $\varepsilon_\psi$ satisfies
$|\hat{f}(\xi)/\psi(\xi)-\hat{g}(\xi)/\psi(\xi)|
=|\hat{f}(\xi)-\hat{g}(\xi)|/\psi(\xi)
\leq|\hat{f}(\xi)-\hat{g}(\xi)|$
when $\psi\geq 1$ (the anti-extensivity condition),
and the supremum preserves the bound.
Hence 1-Lipschitz continuity in $L^2$ is a
\emph{consequence} of the MMBB representation, not an
additional assumption.

\textbf{(B) 1-Lipschitz in the basis elements.}
For any two basis elements $\psi,\psi'\in\Bas(\Psi)$:
\begin{equation}
	\|\varepsilon_\psi(\hat{f})-\varepsilon_{\psi'}(\hat{f})\|_{L^\infty}
	\leq \|\hat{f}\|_{L^\infty}
	\cdot\bigl\|1/\psi - 1/\psi'\bigr\|_{L^\infty}.
	\label{eq:fourier_lipB}
\end{equation}
In the log-spectral domain ($F=\log|\hat{f}|$,
$G=\log\psi$, $G'=\log\psi'$), this becomes
\begin{equation}
	\|(F-G)-(F-G')\|_{L^\infty}
	= \|G-G'\|_{L^\infty}:
	\label{eq:logdomain_lipB}
\end{equation}
the log-spectral (dB) erosion is 1-Lipschitz in the
basis element $G$.
Perturbing a learned spectral basis element by $\varepsilon$
in $L^\infty(\log)$ changes the erosion output
by at most $\varepsilon$ in the dB scale.

\textbf{(C) Deformation stability in $\hatL$.}
For a $C^2$ diffeomorphism $\tau:\R^n\to\R^n$ with
$\|\nabla\tau-\mathrm{Id}\|_{L^\infty}\leq\delta$,
the Fourier modulus of the deformed signal satisfies:
\begin{equation}
	\bigl\||\widehat{f\circ\tau}| - |\hat{f}|\bigr\|_{L^2}
	\leq C\,\delta\,\|\nabla f\|_{L^2},
	\label{eq:fourier_deform}
\end{equation}
where $C$ depends only on the dimension $n$.
Since Fourier erosions are 1-Lipschitz A
(property~(A) above), the output of any MMBB operator
applied to the deformed signal satisfies:
\begin{equation}
	\bigl\|\Psi(|\widehat{f\circ\tau}|)
	- \Psi(|\hat{f}|)\bigr\|_{L^2}
	\leq C\,\delta\,\|\nabla f\|_{L^2}.
	\label{eq:fourier_deform_psi}
\end{equation}
This is the modulus-lattice restatement of Mallat's
deformation stability theorem~\cite{Mallat2012}:
the MMBB representation in $\hatL$ is Lipschitz-stable
to diffeomorphic deformations, with constant
$C\delta\|\nabla f\|_{L^2}$ independent of the
specific operator $\Psi$ (provided $\Psi$ satisfies
the MMBB hypotheses).

\subsection{The Spectral Modulus as Morphological Activation}
\label{sec:activation}


\begin{definition}[Cross-lattice modulus map]
	\label{def:modulus_map}
	The \emph{spectral modulus map} is
	$|\cdot|:\Ln\to\hatL$,
	$f\mapsto|\hat{f}|$.
	It maps from the Fourier inf-semilattice $(\Ln,\leF)$
	to the Fourier modulus lattice $\hatL$.
\end{definition}

\begin{proposition}[The modulus is the morphological activation]
	\label{prop:modulus_activation}
	The map $|\cdot|:\Ln\to\hatL$ is the functional analogue
	of the activation function $\sigma$ in a neural network.
	Specifically:
	\begin{enumerate}[label=(\roman*),leftmargin=*]
		\item \textbf{Breaks closure under composition.}
		Without the modulus: $f*\psi_1*\psi_2 = f*\kappa_{12}$
		(single convolution, by associativity).
		With the modulus: $|f*\psi_1|*\psi_2$ cannot be
		written as $f*k$ for any $k$, because
		$\widehat{|g|}(\xi) \neq |\hat{g}(\xi)|$ in general.
		\item \textbf{Cross-lattice projection.}
		$|\cdot|$ maps $(\Ln,\leF)$ to $(\hatL,\leq)$,
		discarding phase.
		ReLU maps $(\Fun(\R^n,\R),\leq)$ to
		$(\Fun(\R^n,\Rp),\leq)$, discarding sign.
		\item \textbf{Idempotent.}
		$|\ |\hat{f}|\ | = |\hat{f}|$ (modulus of modulus
		is modulus); $\mathrm{ReLU} \circ \mathrm{ReLU} =$ $\mathrm{ReLU}$.
		Idempotency is the signature of a projection.
		\item \textbf{Creates invariance from discarded information.}
		Discarding phase $\angle\hat{f}$ creates partial
		invariance to spatial translation (Mallat's result
		\cite{Mallat2012}).
		Discarding sign $\mathrm{sgn}(w^\top x+b)$ creates
		invariance to hyperplane reflection.
	\end{enumerate}
\end{proposition}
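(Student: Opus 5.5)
The four claims are of different kinds, so I would prove them one at a time, in the order (iii), (ii), (i), (iv): first the easy algebraic facts, then the non-factorisation claim, which carries the real mathematical content.

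\textbf{Idempotency (iii).} The modulus map sends $\Ln$ into $\hatL$. Its values are non-negative functions, and the pointwise absolute value fixes them: $\bigl|\,|\hat f|(\xi)\bigr| = |\hat f(\xi)|$ for every $\xi$. The ReLU identity holds for the same reason, since $\max(\max(t,0),0)=\max(t,0)$.

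\textbf{Cross-lattice projection (ii).} I would show that $|\cdot|$ is increasing from $(\Ln,\leF)$ to $(\hatL,\leq)$. By \eqref{eq:leF_explicit}, $f\leF g$ gives $|\hat f|\le|\hat g|$ a.e., which is the required order in $\hatL$. The map forgets phase because $f$ and $e^{i\theta}f$ have the same image. The commutative diagram of Remark~\ref{rem:master_diagram} then places the map between the two lattices. For ReLU, the analogous order-preservation $f\le g\Rightarrow \max(f,0)\le\max(g,0)$ is immediate.

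\textbf{Non-closure under composition (i).} Associativity of convolution together with the convolution theorem gives $\widehat{f*\psi_1*\psi_2}=\hat f\,\hat\psi_1\hat\psi_2$, so the composite is convolution by $\kappa_{12}=\psi_1*\psi_2$. For the modulus case, I would argue by contradiction. Suppose there were a fixed $k$ with $|f*\psi_1|*\psi_2=f*k$ for all $f$. The left side is positively homogeneous but not linear in $f$, while the right side is linear. Replacing $f$ by $-f$ leaves the left side unchanged and flips the sign of the right side, so $f*k=0$ for every $f$ and hence $k=0$. It therefore suffices to exhibit a single $f$, with suitable $\psi_1,\psi_2$, for which $|f*\psi_1|*\psi_2\neq0$. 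Taking $\psi_2$ to be a nonnegative Gaussian and $f*\psi_1\neq 0$ works: convolving the nonnegative nonzero function $|f*\psi_1|$ with a positive kernel gives a strictly positive result.

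This sign-flip argument is cleaner than comparing $\widehat{|g|}$ with $|\hat g|$ directly. I would, however, also record that witness: for a real $g$ taking both signs, $\widehat{|g|}(0)=\int|g|>\bigl|\int g\bigr|=|\hat g(0)|$.

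\textbf{Invariance (iv).} Proposition~\ref{prop:ti_phase} gives $|\widehat{\tau_yf}|=|\hat f|$, so $|\cdot|\circ\tau_y=|\cdot|$, which is exact translation invariance of the spectral modulus. For ReLU, I would make precise what ``discarding sign'' means: the pre-activation $w^\top x+b$ and the result of reflecting $x$ across the hyperplane $\{w^\top x+b=0\}$ differ only in sign, so the pair $\{\mathrm{ReLU}(s),\mathrm{ReLU}(-s)\}$ is invariant under that reflection.

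\textbf{Main obstacle.} The difficulty is that (ii) and (iv) are informal analogies, so I need to fix precise readings before anything can be proved. I would read (ii) as ``increasing, phase-forgetting, idempotent surjection onto $\hatL$'' and (iv) as ``$|\cdot|$ is constant on $\tau_y$-orbits.'' The only substantive proof is then (i), handled by the sign-flip argument above.
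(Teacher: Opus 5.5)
Your proposal is correct. For (i), the only item the paper actually argues, you take a different route; the paper declares (ii)--(iv) to follow from the definitions.

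The paper assumes $|f*\psi_1|*\psi_2=f*\kappa$ for all $f$ and tests on a Dirac to get $\kappa=|\psi_1|*\psi_2$. It then cancels $\psi_2$ to reduce to $|f*\psi_1|=f*|\psi_1|$, and concludes from strictness of the triangle inequality when the terms $f(y)\psi_1(x-y)$ have mixed signs.

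Your parity argument works differently: the left side is even in $f$ and the right side is odd, so both vanish, and a positive Gaussian $\psi_2$ then makes the left side strictly positive. This is cleaner and more general in four ways. It never leaves $\Ln$, whereas the Dirac test does. It needs no cancellation of $\psi_2$, which requires $\hat\psi_2\neq0$ a.e.\ and fails for band-limited wavelets. It works for every nonzero $\psi_1$, not only sign-changing ones. And it shows the composite is a convolution only if it vanishes identically.

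What the paper's route offers is the mechanism: it names the would-be kernel and exhibits the sign/phase cancellation behind $\widehat{|g|}\neq|\hat g|$. Your separate witness $\widehat{|g|}(0)=\int|g|>|\int g|$ fills that role as well.

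Two small caveats on your readings of (ii) and (iv):
\begin{enumerate}
\item The image of $|\cdot|$ on $\Ln$ is only the square-integrable part of $\hatL$. So it is not a surjection onto $\hatL$ as defined.
\item For ReLU, your pair-invariance reading (equivalently, invariance of $|w^\top x+b|$) is genuinely necessary. At the reflected point ReLU returns $\mathrm{ReLU}(-s)$, so ReLU itself is not reflection-invariant and the literal claim is false.
\end{enumerate}
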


\begin{proof}
	(i) Suppose for contradiction that $|f*\psi_1|*\psi_2
	= f*\kappa$ for all $f$ and some fixed $\kappa\in\Ln$.
	Taking $f=\delta$ (Dirac delta): $|\psi_1|*\psi_2 = \kappa$.
	Then the equation becomes $|f*\psi_1| = f*|\psi_1|$
	for all $f$, i.e., $|\sum_y f(y)\psi_1(x-y)|
	= \sum_y f(y)|\psi_1(x-y)|$ for all $x,f$.
	The triangle inequality gives $\leq$ always, with equality
	iff all nonzero terms $f(y)\psi_1(x-y)$ share the same
	argument (same sign for real-valued functions).
	For generic real-valued $f$ and $\psi_1$ with sign changes,
	this fails: taking $x=0$, $\psi_1$ with $\psi_1(0)>0$
	and $\psi_1(1)<0$, and $f = \delta_0+\delta_1$ gives
	$|f*\psi_1|(0)=|\psi_1(0)+\psi_1(-1)|<|\psi_1(0)|+|\psi_1(-1)|$
	generically.
	Hence no such $\kappa$ exists for general $f,\psi_1$.
	(ii)--(iv) follow from the definitions.
\end{proof}

%

\section{Scattering Networks and the Canonical Basis}
\label{sec:scattering}

\subsection{Wavelet scattering: definition and notation}

We recall the wavelet scattering transform of Mallat
\cite{Mallat2012}.
Let $\psi$ be a mother wavelet and
$\psi_\lambda(x) = 2^{-j}\psi(2^{-j}R_\theta x)$ the
wavelet at scale $j$ and orientation $\theta$,
with $\lambda=(j,\theta)$.
Let $\phi_J$ be a low-pass averaging filter.
The scattering propagators and coefficients are:
\begin{align}
  U_0(f) &= f, \qquad S_0(f) = f*\phi_J, \\
  U_m(f) &= |U_{m-1}(f)*\psi_{\lambda_m}|, \\
  S_m(f) &= U_m(f)*\phi_J.
\end{align}
The \emph{scattering network} outputs
$\{S_m(f)\}_{\Lambda,m\leq M}$ for paths
$\Lambda=(\lambda_1,\ldots,\lambda_m)$.

\subsection{The cross-lattice structure of scattering}
\label{subsec:scattering_cross}

\begin{proposition}[Scattering is a cross-lattice operator]
  \label{prop:scattering_cross}
  The scattering propagator $U_m$ has the following
  cross-lattice structure:
  \begin{enumerate}[label=(\roman*),leftmargin=*]
    \item \emph{Convolution step}
      $f\mapsto f*\psi_{\lambda_m}$:
      a Fourier-lattice erosion in $(\Ln,\leF)$
      (Proposition~\ref{prop:conv_erosion}).
    \item \emph{Modulus step}
      $g\mapsto|g|$: the cross-lattice map
      $(\Ln,\leF)\to\hatL$
      (Proposition~\ref{prop:modulus_activation}).
      After this step, the computation lives in $\hatL$.
    \item \emph{Next convolution}
      $|U_{m-1}(f)*\psi_\lambda|\,*\psi_{\lambda+1}$:
      a convolution in $\hatL$ (multiplication of
      non-negative spectral moduli).
  \end{enumerate}
  The full propagator $U_m$ is a cross-lattice cascade:
  it alternates between $(\Ln,\leF)$ (convolution, phase
  preserved) and $\hatL$ (modulus, phase discarded).
  The MMBB theorem applies to the output, which lives in
  $\hatL$.
\end{proposition}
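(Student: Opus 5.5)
The plan is to verify the three items separately, each by one earlier result, and then read off the alternating structure by induction on the path length $m$.

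For (i) I would fix $g=U_{m-1}(f)\in\Ln$ and apply Proposition~\ref{prop:conv_erosion} with $k=\psi_{\lambda_m}$. This gives that $\conv_{\psi_{\lambda_m}}$ is increasing in $\leF$ and commutes with $\meetF$. Two hypotheses have to be checked: $U_{m-1}(f)\in\Ln$, and the product $\widehat{U_{m-1}(f)}\,\hat\psi_{\lambda_m}$ lies in $L^2$. The first comes from the modulus being norm-preserving, $\||g|\|_{L^2}=\|g\|_{L^2}$, together with Young's inequality $\|g*\psi\|_2\le\|g\|_2\|\psi\|_1$, assuming $\psi\in L^1\cap L^2$, as is standard for scattering wavelets. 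The second holds because $\hat\psi_{\lambda_m}$ is bounded. I would also record that, by Proposition~\ref{prop:fourier_adjunction}, this erosion has adjoint $\convDil_{\psi_{\lambda_m}}$, and that by Theorem~\ref{thm:fourier_opening} the induced opening is the band-pass projection onto $\supp\hat\psi_{\lambda_m}$.

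For (ii) I would invoke Definition~\ref{def:modulus_map} and Proposition~\ref{prop:modulus_activation}(ii). The step $g\mapsto|g|$ sends $(\Ln,\leF)$ to a nonnegative object and discards phase.

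The main obstacle is a notational subtlety that I would make explicit. In the scattering recursion the modulus is the \emph{spatial} modulus $|g(x)|$. The map in Definition~\ref{def:modulus_map} is the \emph{spectral} modulus $|\hat g(\xi)|$. I would therefore state (ii) at the level of the cross-lattice diagram of Remark~\ref{rem:master_diagram}: the spectral quantity passed on to the next layer is the modulus $|\widehat{U_m f}|$, which lies in $\hatL$. Monotonicity of this passage holds since $g\leF h$ implies $|\hat g|\le|\hat h|$ pointwise.

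For (iii) I would compute $|\widehat{U_{m-1}(f)*\psi_{\lambda}}| = |\widehat{U_{m-1}(f)}|\,|\hat\psi_\lambda|$. This is a max-times dilation $\delta_\phi$ with $\phi=|\hat\psi_\lambda|$, or equivalently an erosion $\varepsilon_{1/|\hat\psi_\lambda|}$ with the convention $\hat f/0:=0$ from \eqref{eq:maxtimes_erosion}. This is exactly the commuting square of Remark~\ref{rem:master_diagram}, so it is an operation internal to $\hatL$ on spectral moduli.

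An induction on $m$ then shows the alternation: the convolution step acts in $(\Ln,\leF)$ and the modulus step lands in $\hatL$, at every level. The final output $S_m$ is $U_m*\phi_J$ followed by the spectral modulus. It is increasing and positively homogeneous in the $\hatL$-variable at the last stage, so Theorem~\ref{thm:mmbb_hatL} applies there.
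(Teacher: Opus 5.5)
The paper gives no separate proof of this proposition. Items (i)--(iii) rest only on Proposition~\ref{prop:conv_erosion}, Proposition~\ref{prop:modulus_activation} and the convolution theorem. Your treatment of (i), including the $L^2$ bookkeeping via Young's inequality, and your identity $|\widehat{g*\psi_\lambda}|=|\hat g|\,|\hat\psi_\lambda|$ for (iii) match that route and are fine. You are also right that the scattering recursion uses the spatial modulus $A(g)=|g|$, not the spectral map $M(g)=|\hat g|$ of Definition~\ref{def:modulus_map}.

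\textbf{The gap.} Your fix relabels this discrepancy rather than resolving it, and the alternation you derive by induction fails for the actual propagator $U_m=A\circ\conv_{\psi_{\lambda_m}}\circ\cdots\circ A\circ\conv_{\psi_{\lambda_1}}$.
\begin{itemize}
\item The commuting square gives $M\circ\conv_{\psi}=\delta_{|\hat\psi|}\circ M$. To chain layers inside $\hatL$ you would also need $M\circ A=\Phi\circ M$ for some $\Phi:\hatL\to\hatL$, i.e.\ $|\widehat{|g|}|$ would have to be determined by $|\hat g|$. It is not, because it depends on the phase of $\hat g$ (cf.\ Proposition~\ref{prop:modulus_activation}(i)).
\item Concretely, your (iii) acts on $|\widehat{U_{m-1}f*\psi_\lambda}|$. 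The statement's (iii) instead convolves $U_mf=|U_{m-1}f*\psi_\lambda|$, whose spectral modulus $|\widehat{U_mf}|$ is a different object and cannot be computed from the former.
\item So after the modulus step the computation does not ``live in $\hatL$''. The next layer consumes $U_mf$ with its full complex spectrum, and $M(U_mf)$ is only a side read-out.
\item Your monotonicity check concerns $M$, not the step the propagator actually performs, and $A$ is not $\leF$-increasing. Take $h$ a sufficiently narrow Gaussian and $\hat g=a\hat h$, with $a$ a smooth, even, flat-top low-pass taking values in $[0,1]$. Then $g\leF h$. But $\mathcal{F}^{-1}a$ has negative lobes, since a flat-top $a$ is not positive definite. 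Hence $\|g\|_1>\int g=\|h\|_1$, i.e.\ $|\widehat{|g|}|>|\widehat{|h|}|$ near $\xi=0$.
\end{itemize}

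\textbf{What your argument does establish} is a per-layer statement. Each convolution is a $\leF$-erosion whose spectral-modulus shadow is the max-times multiplication $\delta_{|\hat\psi_\lambda|}$ on $\hatL$. Consecutive layers, however, are glued by $A$, which is not order-preserving from $(\Ln,\leF)$ into either lattice. Accordingly, the MMBB conclusion holds only for the final linear stage $M(U_mf)\mapsto M(U_mf)\,|\hat\phi_J|$, and there only trivially, with a one-element basis. It does not hold for $f\mapsto M(S_mf)$, which for $m\ge 1$ does not even descend to a map on $\hatL$. A genuinely alternating cascade is obtained only by replacing $A$ with $M$ between layers, which is what the surrogate $S^F_\Lambda$ of Proposition~\ref{prop:scattering_maxtimes} does. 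If you adopt that reading, say explicitly that you are changing the operator, since it is no longer Mallat's $U_m$.
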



\subsection{Scattering coefficients in the modulus lattice}

\begin{proposition}[Scattering coefficients are max-times
  erosions in $\hatL$]
  \label{prop:scattering_maxtimes}
  Define the \emph{spectral scattering coefficients}:
  \begin{equation}
    S^F_\Lambda(\hat{f})(\xi)
    = |\hat{f}(\xi)|
      \prod_{\ell=1}^m|\hat{\psi}_{\lambda_\ell}(\xi)|
    \in \hatL.
    \label{eq:scattering_hatL}
  \end{equation}
  Each $S^F_\Lambda$ is a max-times erosion in $\hatL$
  with kernel $\psi_\Lambda(\xi)
  = 1/\prod_\ell|\hat{\psi}_{\lambda_\ell}(\xi)|$.
  Compositions of max-times erosions are max-times erosions
  (kernels multiply), so $S^F_\Lambda$ is a depth-$m$
  Fourier modulus erosion with kernel modulus
  $\prod_\ell|\hat{\psi}_{\lambda_\ell}|$.
\end{proposition}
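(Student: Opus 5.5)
The argument is a direct computation from Definition~\ref{def:hatL}, plus care with the zero-division convention. I first fix the kernel $\psi_\Lambda(\xi)=1/\prod_{\ell}|\hat{\psi}_{\lambda_\ell}(\xi)|$ where the product is positive. On the set $Z_\Lambda=\{\xi:\prod_\ell|\hat{\psi}_{\lambda_\ell}(\xi)|=0\}$ I set $\psi_\Lambda(\xi)=0$. Then I check pointwise that $\varepsilon_{\psi_\Lambda}(|\hat f|)=S^F_\Lambda(\hat f)$. Off $Z_\Lambda$, $|\hat f(\xi)|/\psi_\Lambda(\xi)=|\hat f(\xi)|\prod_\ell|\hat\psi_{\lambda_\ell}(\xi)|$ by definition. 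On $Z_\Lambda$, the convention $\hat f/0:=0$ gives $0$, which matches the vanishing product. So the convention of \eqref{eq:maxtimes_erosion} is exactly what makes the identification hold on the zero set of the wavelet spectra. Measurability of $\psi_\Lambda$ follows from that of the $|\hat\psi_{\lambda_\ell}|$, so $\psi_\Lambda\in\hatL$.

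Next I prove the composition rule in general: $\varepsilon_{\psi_2}\circ\varepsilon_{\psi_1}=\varepsilon_{\psi_1\psi_2}$, with $\psi_1\psi_2$ taken pointwise. Where both kernels are positive, this is the identity $(\hat f/\psi_1)/\psi_2=\hat f/(\psi_1\psi_2)$. Where either kernel vanishes, both sides are $0$: if $\psi_1(\xi)=0$, the inner erosion already outputs $0$, and $0/\psi_2=0$ under either branch of the convention. If $\psi_2(\xi)=0$, the outer erosion outputs $0$. In both cases $\psi_1\psi_2(\xi)=0$. With this lemma, I induct on $m$ using the single-layer kernels $1/|\hat\psi_{\lambda_\ell}|$, each extended by $0$ on its own zero set. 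The depth-$m$ kernel is then the product of these, and it agrees with $\psi_\Lambda$ defined above, including on $Z_\Lambda=\bigcup_\ell\{|\hat\psi_{\lambda_\ell}|=0\}$. This yields the depth-$m$ statement with kernel modulus $\prod_\ell|\hat\psi_{\lambda_\ell}|$. It also matches the master diagram of Remark~\ref{rem:master_diagram}: each layer is the modulus image of a convolution $\conv_{\psi_{\lambda_\ell}}$, since $|\hat g\hat\psi_\lambda|=|\hat g||\hat\psi_\lambda|$.

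I also want to confirm that each $S^F_\Lambda$ is a genuine erosion in $(\hatL,\le)$, i.e.\ that it commutes with arbitrary pointwise infima. This is immediate because multiplication by a fixed non-negative function $c(\xi)$ commutes with pointwise infima: $c\cdot\inf_i a_i=\inf_i c a_i$ for $c\ge0$. It also gives an adjoint dilation, namely division by the kernel modulus on its support and $+\infty$ (or an unconstrained value) off it.

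The one real subtlety is the adjoint on $Z_\Lambda$. The stated adjunction $(\varepsilon_\psi,\delta_{1/\psi})$ needs care where $\psi=0$, because $1/\psi$ is infinite there and $\hatL$ takes values in $\Rp$. I would handle this by proving only the infimum-commutation property, which is the defining property of an erosion by Proposition~\ref{prop:adjunction_props}(i), and noting that the adjoint exists in the completed lattice $\Fun(\R^n,[0,\infty])$. I expect this to be the main obstacle; everything else is bookkeeping of the zero convention.
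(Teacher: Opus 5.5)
Your proposal is correct and follows essentially the paper's route: you identify $S^F_\Lambda$ pointwise with $\varepsilon_{\psi_\Lambda}$, then induct on $m$ using the fact that kernels multiply under composition. Your explicit handling of the zero set $Z_\Lambda$ via the convention $\hat f/0:=0$ fills in a point the paper's proof passes over, and your final paragraph on the adjoint is not needed, since by Definition~\ref{def:hatL} being a max-times erosion just means having the form $\varepsilon_\psi$.
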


\begin{proof}
  By induction: for $m=1$,
  $S^F_{(\lambda)}(\hat{f})(\xi)
  = |\hat{f}(\xi)||\hat{\psi}_\lambda(\xi)|
  = \hat{f}(\xi)/\psi_{(\lambda)}(\xi)$
  with $\psi_{(\lambda)}=1/|\hat{\psi}_\lambda|$.
  The inductive step follows from the fact that
  $|U_{m-1}*\psi_{\lambda_m}|$ has spectral modulus
  $S^F_{\Lambda_{m-1}}\cdot|\hat{\psi}_{\lambda_m}|$,
  which is a product of all wavelet moduli.
  Each such product defines a max-times erosion in
  $(\hatL,\leq)$ with the stated kernel.
\end{proof}

\subsection{Canonical basis theorem}

\begin{theorem}[Scattering as canonical morphological basis]
  \label{thm:scattering_basis}
  Let $\Psi:\hatL\to\hatL$ be increasing, USC, and
  positively homogeneous.
  For any $\varepsilon>0$, there exist a finite path set
  $\{\Lambda_i\}_{i=1}^N$ and weights $w_i\geq 0$ such that
  \begin{equation}
    \left\|\Psi(\hat{f})(\xi)
    - \sup_{i=1}^N w_i S^F_{\Lambda_i}(\hat{f})(\xi)
    \right\|_{L^2(\R^n)}
    \leq \varepsilon\|\hat{f}\|_{L^2}
    \quad \forall\,\hat{f}\in\hatL.
    \label{eq:scattering_approx}
  \end{equation}
  In the limit of the full dictionary:
  \begin{equation}
    \Psi(\hat{f})(\xi)
    = \sup_\Lambda w_\Lambda S^F_\Lambda(\hat{f})(\xi)
    \quad \text{in } (\hatL,\leq).
    \label{eq:scattering_exact}
  \end{equation}
\end{theorem}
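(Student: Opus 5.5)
The plan is to reduce Theorem~\ref{thm:scattering_basis} to the MMBB representation of Theorem~\ref{thm:mmbb_hatL}, and then approximate each basis element by scattering products.

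\textbf{Step 1: exact representation.} Write
\[
\Psi(\hat f)=\sup_{\psi\in\Bas(\Psi)}\hat f\cdot(1/\psi).
\]
Every quantity in the statement depends only on the multipliers $m_\psi:=1/\psi$, with the convention $1/0=0$. So the task is to approximate the family $\{m_\psi\}$ by weighted scattering multipliers
\[
w_\Lambda\prod_\ell|\hat\psi_{\lambda_\ell}|.
\]
By Proposition~\ref{prop:scattering_maxtimes}, each $w_\Lambda S^F_\Lambda$ is itself a max-times erosion with multiplier $w_\Lambda\prod_\ell|\hat\psi_{\lambda_\ell}|$. Hence $\sup_i w_iS^F_{\Lambda_i}$ is again an increasing, positively homogeneous operator. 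It remains to show that the set $\mathcal D=\{w\prod_\ell|\hat\psi_{\lambda_\ell}|\}$ is dense, in a suitable sense, among non-negative multipliers.

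\textbf{Step 2: reduction to a single multiplier.} Since a pointwise supremum of multipliers acts as a supremum of erosions, $\Psi$ acts as the single multiplier
\[
M(\xi)=\sup_{\psi\in\Bas(\Psi)}m_\psi(\xi).
\]
Concretely, $\Psi(\hat f)=M\hat f$. For the bound \eqref{eq:scattering_approx} to hold uniformly in $\hat f$, I will assume $M\in L^\infty$. This is needed so that $\Psi$ maps $L^2$ to $L^2$, and it should be stated explicitly. Then
\[
\bigl\|M\hat f-\textstyle\sup_i w_i m_i\,\hat f\bigr\|_{L^2}\le\bigl\|M-\sup_i w_im_i\bigr\|_{L^\infty}\|\hat f\|_{L^2},
\]
so it suffices to approximate $M$ uniformly by a finite sup of elements of $\mathcal D$.

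\textbf{Step 3: density (the main obstacle).} I will use the Littlewood--Paley condition
\[
\sum_\lambda|\hat\psi_\lambda(\xi)|^2+|\hat\phi_J(\xi)|^2=1.
\]
Together with compact (or rapidly decaying) Fourier support of the $\hat\psi_\lambda$, it yields a partition of frequency space into dyadic annular sectors, indexed by scale and orientation. On each such cell, $|\hat\psi_\lambda|$ is bounded below by some $c>0$ on a core region.

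For a target tolerance, first approximate $M$ by a step function that is constant, with value $a_\lambda$, on the cores. Then choose $w_\lambda=a_\lambda/\inf_{\text{core}}|\hat\psi_\lambda|$ for length-one paths. This gives $w_\lambda|\hat\psi_\lambda|\ge a_\lambda$ on the core. The difficulty is controlling the overshoot on the tails and overlaps. To handle this I will use longer paths: the products $\prod_\ell|\hat\psi_{\lambda_\ell}|$ sharpen the localisation, so higher powers or products concentrate on the overlap regions. This mimics a max-times Stone--Weierstrass argument. The max-times semimodule generated by $\mathcal D$ is closed under $\max$ and positive scaling, it separates points, and it contains functions that peak at any frequency. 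This should give uniform approximation of continuous $M$ on compacts. Residual error near $\xi=0$ and at infinity is handled by the $\phi_J$ path and by $M\in L^\infty$ together with $\|\hat f\|_{L^2}$ tail control.

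I expect this step to require extra hypotheses: continuity or regularity of $M$, which USC of $\Psi$ only partly supplies, and wavelets with enough localisation. I will state these hypotheses rather than hide them.

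\textbf{Step 4: exact limit.} Take $\varepsilon=1/k$ and collect the resulting finite path sets into the full dictionary. Each finite sup can be arranged to lie below $M$ by shrinking the weights slightly, which costs only an extra $\varepsilon$. The increasing family of finite sups then converges pointwise to $M$. This gives \eqref{eq:scattering_exact} as a supremum in $(\hatL,\le)$, with $w_\Lambda$ set to $0$ for unused paths.
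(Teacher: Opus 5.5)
\textbf{Step 2 versus the paper's route.} Your Step~2 is the right reduction, and it is sharper than the paper's argument. Once $\Psi$ is multiplication by $M=\sup_{\psi\in\Bas(\Psi)}1/\psi$, the left side of \eqref{eq:scattering_approx} is governed by the operator norm of the multiplier $M-A$, where $A=\sup_i w_i\prod_\ell|\hat\psi_{\lambda_{i,\ell}}|$. So the bound for all $\hat f$ is \emph{equivalent} to $\|M-A\|_{L^\infty(\R^n)}\le\varepsilon$.

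The paper instead approximates each $\psi\in\Bas(\Psi)$ in $L^2$ through a Littlewood--Paley frame and invokes the $1$-Lipschitz property of $\Psi$. Your reduction shows why that cannot yield \eqref{eq:scattering_approx}: $L^2$-closeness of multipliers does not control the operator norm, and linear frame expansions are not max-times combinations.

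It also shows that the argument (yours and the paper's) is only as good as Theorem~\ref{thm:mmbb_hatL}, whose conclusion forces $\Psi$ to be a multiplier. The frequency shift $\hat f\mapsto\hat f(\cdot+a)$ meets all the hypotheses, yet for $\hat f=\mathbf{1}_B$ with $B\cap(B-a)=\emptyset$ every multiplier approximant has error at least $\|\hat f\|_{L^2}$. You should state the multiplier property as a hypothesis.

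\textbf{The genuine gap is in Step~3.} You get uniform approximation only on compact sets, and you dispose of $\xi$ near $0$ and near infinity by ``$\|\hat f\|_{L^2}$ tail control''. But the path set is fixed before $\hat f$, and $\hat f$ may concentrate exactly where $|M-A|$ is large. By your own equivalence, the approximation must hold on all of $\R^n$.

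In general it cannot. For integrable zero-mean wavelets, every $A$ is continuous and is $\ge w_\varnothing$ everywhere, where $w_\varnothing$ is the empty-path weight (zero if that path is unused). Moreover $A(0)=w_\varnothing$ and $A(\xi)\to w_\varnothing$ as $|\xi|\to\infty$. Hence \eqref{eq:scattering_approx} for every $\varepsilon$ forces $M$ to agree a.e.\ with a continuous function whose value at $0$ and limit at infinity both equal its minimum.

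Two operators satisfying every hypothesis violate this. A multiplier commutes with arbitrary pointwise infima, so USC imposes no regularity on $M$.
\begin{itemize}
\item The ideal band-pass $M=\mathbf{1}_\Omega$, with $\Omega$ a ball.
\item The Gaussian low-pass $M=e^{-|\xi|^2}$, unless you add $|\hat\phi_J|$ to the dictionary; that is not one of the $S^F_\Lambda$.
\end{itemize}
So the extra hypotheses are specifically continuity plus matched behaviour at $0$ and at infinity. The density argument must be run on a compactification of $\R^n$, not on compacts.

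\textbf{Two further steps need repair even under such hypotheses.}

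First, ``separates points'' is not what makes a cone closed only under $\max$ and positive scaling dense; the lattice Stone--Weierstrass theorem also uses $\min$. What works is a peaking property: for every $\xi_0$ and neighbourhood $U$ of $\xi_0$, some element of the cone attains its maximum at $\xi_0$ and is uniformly small off $U$. A finite-cover argument then gives $A\ge M-\eta$ on the compact set and $A\le M+C\eta$ everywhere. For a fixed dyadic filter bank this property must actually be proved, e.g.\ with repeated factors whose log-sum is a narrow concave bump at $\xi_0$. It fails wherever every product vanishes.

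Second, Step~4's ``shrink the weights'' cannot produce minorants where $M$ vanishes. A factor $1-\delta$ gives $(1-\delta)A\le M$ only where $A\le M/(1-\delta)$. Morlet-type moduli vanish only on a null set, so if $\{M=0\}$ has positive measure, no non-zero term $w\prod_\ell|\hat\psi_{\lambda_\ell}|$ lies below $M$. Then \eqref{eq:scattering_exact} fails unless $M=0$ a.e. You need minorants built from dictionary elements that vanish on $\{M=0\}$. With those, setting $w_\Lambda$ to the supremum of the weights assigned to $\Lambda$ over $\varepsilon=1/k$ does give \eqref{eq:scattering_exact}.
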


\begin{proof}
  By Theorem~\ref{thm:mmbb_hatL},
  $\Psi(\hat{f})=\sup_{\psi\in\Bas(\Psi)}\hat{f}/\psi$.
  Each basis element $\psi\in\Bas(\Psi)$ has spectral
  modulus $\psi(\xi)\in L^2(\R^n,\Rp)$.
  The Littlewood--Paley frame condition
  \cite{Mallat2012,Daubechies1992}:
  there exist frame bounds $0<A\leq B<\infty$ such that
  $A\|\psi\|^2_{L^2}\leq\sum_\lambda
  |\langle\psi,|\hat{\psi}_\lambda|\rangle|^2
  \leq B\|\psi\|^2_{L^2}$.
  This guarantees that any $\psi\in L^2(\R^n,\Rp)$ can
  be approximated in $L^2$ by a finite linear combination
  of wavelet moduli $|\hat{\psi}_\lambda|$, hence by
  finite products (scattering kernels
  $\prod_\ell|\hat{\psi}_{\lambda_\ell}|$) in the
  max-times ($\sup$) sense.
  The approximation~\eqref{eq:scattering_approx} follows
  from this density and the fact that $\Psi$ is 1-Lipschitz
  in $L^2(\hatL)$ (Remark~\ref{rem:fourier_lipschitz}(A)):
  approximating each $\psi\in\Bas(\Psi)$ in $L^2$ by a
  finite scattering product approximates $\Psi(\hat{f})$
  in $L^2$ with the same error constant.
 \end{proof}

\subsection{Morphological LASSO}

\begin{corollary}[Morphological LASSO for basis selection]
	\label{cor:morph_lasso}
	Let $\Psi:\hatL\to\hatL$ be any increasing USC positively
	homogeneous operator with basis $\Bas(\Psi)$.
	A finite truncation to $K$ basis elements
	$\{\psi_1,\ldots,\psi_K\}\subset\Bas(\Psi)$ gives
	the max-times lower approximation:
	\begin{equation}
		\Psi^{(K)}(\hat{f})
		= \max_{j=1}^K \frac{\hat{f}(\xi)}{\psi_j(\xi)}.
		\label{eq:mmbb_trunc}
	\end{equation}
	The optimal finite truncation minimising the
	approximation error $\|\Psi-\Psi^{(K)}\|$ subject to
	$|\mathcal{F}|\leq K$ is:
	\begin{equation}
		\min_{\mathcal{F}\subset\Bas(\Psi),|\mathcal{F}|\leq K}
		\bigl\|\Psi(\hat{f}) - \max_{\psi\in\mathcal{F}}
		\hat{f}/\psi\bigr\|_{\hatL}
		+ \mu|\mathcal{F}|,
		\label{eq:morph_lasso}
	\end{equation}
	a max-plus LASSO problem (sparsity in the morphological
	basis).
	For the Scattering, $\Bas(\Psi)\approx
	\{S^F_\Lambda\}_\Lambda$ (the scattering dictionary by
	Theorem~\ref{thm:scattering_basis}), and the weights
	$w_\Lambda$ in~\eqref{eq:scattering_approx} solve a
	non-negative LASSO in the scattering basis.
\end{corollary}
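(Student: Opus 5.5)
The statement to prove is Corollary~\ref{cor:morph_lasso}. It has two parts: the truncation $\Psi^{(K)}$ is a lower approximation, and the best $K$-term truncation is the penalised combinatorial problem \eqref{eq:morph_lasso}. I will treat these separately, and then add the scattering specialisation.

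\textbf{Lower bound and monotonicity.} By Theorem~\ref{thm:mmbb_hatL}, $\Psi(\hat f)=\sup_{\psi\in\Bas(\Psi)}\hat f/\psi$ pointwise. For any finite $\mathcal{F}\subset\Bas(\Psi)$, restricting the supremum gives $\max_{\psi\in\mathcal{F}}\hat f/\psi\le\Psi(\hat f)$ frequency-wise. This is exactly Step~2 of that proof, restricted to $\mathcal{F}$. Hence $\Psi^{(K)}\le\Psi$, and the residual $r_{\mathcal{F}}=\Psi(\hat f)-\max_{\mathcal{F}}\hat f/\psi$ is non-negative. The residual is also monotone non-increasing under enlargement of $\mathcal{F}$, since a larger index set gives a larger max. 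Consequently, for fixed $\hat f$ (or for a fixed finite family of training spectra), the error $E(\mathcal{F})=\|r_{\mathcal{F}}\|_{\hatL}$ is a non-increasing set function. The constraint $|\mathcal{F}|\le K$ can therefore be saturated without loss. The Lagrangian form $E(\mathcal{F})+\mu|\mathcal{F}|$ is the standard penalised version of this cardinality-constrained problem. For each $K$ there is a $\mu$ for which a minimiser has cardinality $K$, at least at the points where $K\mapsto\min_{|\mathcal{F}|\le K}E$ has a convex hull breakpoint. I will state this correspondence only at those breakpoints, which is the usual caveat for $\ell_0$ penalties.

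\textbf{Max-plus LASSO interpretation.} Pass to log-coordinates via Proposition~\ref{prop:log_iso} and Corollary~\ref{cor:mmbb_logdomain}. There, $\max_{\mathcal{F}}\hat f/\psi$ becomes $\max_{G\in\log\mathcal{F}}(F-G)$, a max-plus combination. The selection of $\mathcal{F}$ then becomes sparse selection in the tropical dictionary $\{F-G\}$, which justifies calling it a max-plus LASSO. Existence of a minimiser is immediate when $\Bas(\Psi)$ is finite. Otherwise I would restrict to a countable dense sub-basis, or note that the infimum is approached, since the objective is bounded below by $0$.

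\textbf{Scattering specialisation, the main obstacle.} Theorem~\ref{thm:scattering_basis} gives, for any $\varepsilon$, paths $\Lambda_i$ and weights $w_i\ge0$ with $\|\Psi(\hat f)-\sup_i w_iS^F_{\Lambda_i}(\hat f)\|\le\varepsilon\|\hat f\|$. By Proposition~\ref{prop:scattering_maxtimes}, each term $w_iS^F_{\Lambda_i}$ is itself a max-times erosion with kernel $\psi_{\Lambda_i}/w_i$. So replacing $\Bas(\Psi)$ by the scattering dictionary turns \eqref{eq:morph_lasso} into a problem over non-negative weights $w_\Lambda$. In this problem the support of $w$ is the selected set and the penalty counts nonzero weights. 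Relaxing $|\mathcal{F}|$ to $\sum_\Lambda w_\Lambda$ yields the non-negative LASSO.

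The delicate point is that the scattering kernels are only $\varepsilon$-close to genuine basis elements and need not lie in $\Bas(\Psi)$. The identification $\Bas(\Psi)\approx\{S^F_\Lambda\}$ must therefore be read up to the $\varepsilon$ of \eqref{eq:scattering_approx}. I would handle this by bounding the objective difference by $\varepsilon\|\hat f\|$, using the $1$-Lipschitz property (A) of Section~\ref{rem:fourier_lipschitz}. From that bound it follows that the optimal values of the two problems differ by at most that amount.
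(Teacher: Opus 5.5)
Your first two parts follow the paper's own proof, which is largely interpretive. Restricting the supremum to a finite $\mathcal{F}\subset\Bas(\Psi)$ gives the lower approximation; this only uses $\Bas(\Psi)\subseteq\Ker^{\mathrm{eff}}(\Psi)$. The cardinality penalty combined with the max-times supremum is then read as a max-plus analogue of LASSO, and for scattering the weights $w_\Lambda\ge 0$ play the role of LASSO weights. Your caveat is a real sharpening: the penalised form recovers the $|\mathcal{F}|\le K$ optimum only when $(K,\min_{|\mathcal{F}|\le K}E(\mathcal{F}))$ lies on the lower convex hull, whereas the paper simply identifies the two problems. The gap is in your last step, which goes beyond anything the paper's proof attempts.

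You assert that the optimal values of \eqref{eq:morph_lasso} over $\Bas(\Psi)$ and over the scattering dictionary differ by at most $\varepsilon\|\hat{f}\|$, citing property~(A) of Section~\ref{rem:fourier_lipschitz}. This step fails for two reasons.
\begin{enumerate}
\item Property~(A) is Lipschitz continuity of a fixed operator in its \emph{input} $\hat{f}$. Comparing $\max_{\psi\in\mathcal{F}}\hat{f}/\psi$ with $\max_i w_iS^F_{\Lambda_i}(\hat{f})$ at the same $\hat{f}$ is a perturbation of the \emph{structuring functions}, which (A) does not control.
\item Both the constraint and the penalty $\mu|\mathcal{F}|$ depend on cardinality, so a two-sided comparison needs a cardinality-preserving correspondence, and none is available. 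Theorem~\ref{thm:scattering_basis} gives $N(\varepsilon)$ paths approximating $\Psi$ (or any $K$-term truncation, itself increasing, USC and homogeneous), with $N(\varepsilon)$ unrelated to $K$. Conversely, a weighted scattering term need not lie in $\Ker^{\mathrm{eff}}(\Psi)$, since it may exceed $\Psi$. So a $K$-term scattering solution yields no admissible $K$-element family.
\end{enumerate}
What the density result actually gives, and all the paper uses, is one-sided: for each $\varepsilon$, some finite $K$ brings the scattering error below $\varepsilon\|\hat{f}\|_{L^2}$.

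A fixed-$K$ comparison would need a per-element hypothesis together with the pointwise kernel estimate behind~(B). Suppose each $1/\psi_j$ is within $\eta$ in $L^\infty$ of a single product $w_j\prod_\ell|\hat{\psi}_{\lambda^{(j)}_\ell}|$. Then $|\max_j a_j-\max_j b_j|\le\max_j|a_j-b_j|$ bounds the objective difference by $\eta\|\hat{f}\|_{L^2}$ at equal cardinality. $L^2$ Littlewood--Paley density of linear combinations does not provide such single-product uniform approximations.

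Similarly, replacing $|\mathcal{F}|$ by $\sum_\Lambda w_\Lambda$ in a pointwise-max model gives a non-convex max-times problem, not the standard non-negative LASSO. That identification therefore remains an analogy, exactly as in the paper.
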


\begin{proof}
	The approximation error $\|\Psi-\Psi^{(K)}\|$ is
	controlled by the density of the selected sub-basis
	in $\Bas(\Psi)$: by Theorem~\ref{thm:mmbb_hatL},
	using the full basis gives $\Psi^{(\infty)}=\Psi$ exactly.
	Problem~\eqref{eq:morph_lasso} is the max-plus analogue
	of the classical LASSO: the $\ell^1$ penalty is replaced
	by $\mu|\mathcal{F}|$ (cardinality), and the linear
	combination is replaced by the max-times supremum.
	For the Scattering: $w_\Lambda\geq 0$
	are the analogue of the LASSO
	weights, selecting a sparse subset of scattering paths.
	The Littlewood--Paley density
	(Theorem~\ref{thm:scattering_basis}) ensures that for
	any $\varepsilon>0$ a finite $K$ suffices.
\end{proof}

\section{Pooling and Unpooling as Fourier Morphological
  Operators}
\label{sec:pooling}

Spatial pooling and unpooling are ubiquitous in deep
network architectures.
Their morphological classification depends on the lattice
one works in, and it is essential to distinguish the two
standard pooling operations:
\begin{itemize}[leftmargin=*,nosep]
  \item \emph{Max-pooling} $\MaxPool{R}(f)(n)
    = \sup_{y\in W_R}f(Rn-y)$ is a \emph{dilation} in the
    \emph{pointwise lattice} $(\Fun(\R^n,\Rbar),\leq)$
    (flat structuring element combined with subsampling).
    
  \item \emph{Average-pooling} (convolution by flat kernel $b_W$ with
    subsequent subsampling) is a \emph{Fourier erosion}
    in $(\Ln,\leF)$ (it attenuates spectral moduli).
    
\end{itemize}
The main duality result of this section (Theorem~\ref{thm:pool_fourier_erosion})
shows that average-pooling (linear filtering) is a Fourier erosion
and its adjoint is the ideal Wiener deconvolution, so that
pool-then-unpool is an ideal band-pass opening.
We also show that downsampling and upsampling, while not
erosions or dilations within a single lattice, form a lattice
adjunction between the full Fourier lattice $(\Ln,\leF)$ and
the band-limited sub-lattice $(\Ln^{(s)},\leF)$, in the spirit
of the Goutsias--Heijmans adjoint pyramid theory
\cite{Goutsias2003}.

\subsection{Average-pooling as Fourier-lattice erosion}
\label{subsec:pool_erosion}

We now analyse avera\-ge-pooling in the Fourier lattice.
Let $b_W : \R^n \to \Rp$ be a window kernel supported on
$W\subseteq\R^n$ with $\int b_W = 1$ (probability measure),
symmetric ($b_W(y) = b_W(-y)$, so $\hat{b}_W(\xi)\in\R$),
and $|\hat{b}_W(\xi)|\leq 1$ for all $\xi$.
All standard pooling windows satisfy these conditions:
rectangular, Gaussian, Hann, and triangular windows.

\begin{theorem}[Spatial pooling is a Fourier erosion]
  \label{thm:pool_fourier_erosion}
  Average-pooling with window $b_W$ is
  $P_W(f) = f * b_W$.
  In the \emph{Fourier lattice} $(\Ln,\leF)$,
      $P_W$ is a \emph{Fourier erosion}
      $\varepsilon^{\mathcal{F}}_{b_W}$: it attenuates
      spectral moduli,
      $|\widehat{P_W f}(\xi)| = |\hat{f}(\xi)||\hat{b}_W(\xi)|
      \leq |\hat{f}(\xi)|$, and since $b_W$ is symmetric,
      $\angle\widehat{P_W f}(\xi) = \angle\hat{f}(\xi)$,
      so $P_W(f) \leF f$ (anti-extensive in $\leF$).
  The \emph{morphological basis} of $P_W$ in $\hatL$ is:
  $\Bas(P_W) = \{\psi_W\}$ where
  $\psi_W(\xi) = 1/|\hat{b}_W(\xi)|$ for $|\hat{b}_W(\xi)|>0$
  and $\psi_W(\xi)=0$ for $|\hat{b}_W(\xi)|=0$
  (using the convention of Definition~\ref{def:hatL}).
  For windows with spectral zeros (e.g., the rectangular
  window with $\hat{b}_W=\mathrm{sinc}$), $\psi_W$ is
  $+\infty$ at those frequencies under the standard
  interpretation; we use the extended lattice
  $\hatL^\infty=\Fun(\R^n,\Rp\cup\{+\infty\})$ with
  the convention $\hat{f}/+\infty=0$ to handle this case.
  For the Gaussian window, $|\hat{b}_W|>0$ everywhere
  and $\psi_W\in\hatL$ is a genuine Gaussian.
\end{theorem}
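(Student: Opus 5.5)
The statement has three parts, and I will prove them in order: the erosion property of $P_W$ in $(\Ln,\leF)$, its anti-extensivity, and the identification of its morphological basis in $\hatL$.

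\textbf{Step 1: erosion and anti-extensivity.} Since $P_W=\conv_{b_W}$ with $b_W\in L^1$ (and $\hat b_W$ bounded, so $\hat f\hat b_W\in L^2$), Proposition~\ref{prop:conv_erosion} applies verbatim. Hence $P_W$ is increasing and commutes with spectral infima. By Proposition~\ref{prop:fourier_adjunction} it has the adjoint dilation $\convDil_{b_W}$.

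Anti-extensivity is pointwise. By the convolution theorem, $|\widehat{P_Wf}|=|\hat f||\hat b_W|\le|\hat f|$ because $|\hat b_W|\le 1$. Symmetry and realness of $b_W$ make $\hat b_W$ real-valued. I must handle the phase carefully: where $\hat b_W(\xi)<0$ (this happens for sinc), the phase shifts by $\pi$. So $\angle\widehat{P_Wf}=\angle\hat f$ holds exactly where $\hat b_W>0$. Where $\hat b_W=0$ the product is $0$, and $0$ lies below everything in $\preceq$. For the claim $P_W f\leF f$ I will therefore either restrict to windows with $\hat b_W\ge0$ (Gaussian, triangular, and Fej\'er-type windows), or note that the claim holds on $\{\hat b_W\ge 0\}$ and fails on $\{\hat b_W<0\}$. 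I expect this sign issue to be the only genuine obstacle in the lattice part. My first choice is to state the positivity assumption explicitly, noting that it is automatic for Gaussian windows.

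\textbf{Step 2: the basis in $\hatL$.} Passing to moduli, the induced operator is $\Psi(\hat f)=\hat f\,|\hat b_W|$, which is increasing, USC (it is continuous, being pointwise multiplication), and positively homogeneous. So Theorem~\ref{thm:mmbb_hatL} applies. By the definition of the effective kernel, $\psi\in\Ker^{\mathrm{eff}}(\Psi)$ iff $\hat f(\xi)/\psi(\xi)\le\hat f(\xi)|\hat b_W(\xi)|$ for all $\hat f\ge0$ and all $\xi$. Testing with $\hat f\equiv1$, this reduces to the pointwise condition $1/\psi\le|\hat b_W|$ wherever $\psi>0$. With the convention $\hat f/0=0$, the condition also holds trivially wherever $\psi=0$.

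The main step is to exhibit $\psi_W$ and argue that it is the unique element realizing the decomposition. Equality in~\eqref{eq:mmbb_hatL} forces $\sup_\psi 1/\psi=|\hat b_W|$ pointwise. The single function $\psi_W=1/|\hat b_W|$ attains this supremum, so $\{\psi_W\}$ is a one-element generating set. I will identify $\psi_W$ as the element attaining the Step~3 test function of Theorem~\ref{thm:mmbb_hatL}: there $\psi^*=\hat f/\lambda$, and the construction reduces to $1/|\hat b_W|$ at each $\xi_0$. On the zero set of $\hat b_W$, the condition forces $\psi(\xi)\in\{0,+\infty\}$, and $\psi_W$ gives output $0$ there. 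The paper's convention sets $\psi_W=0$ on that set; in the extended lattice $\hatL^\infty$ one sets it to $+\infty$ with $\hat f/\infty=0$. In both cases the erosion vanishes there, consistent with $\hat f\,\hat b_W=0$. For the Gaussian window, $\hat b_W$ is a positive Gaussian, so $\psi_W=1/\hat b_W$ has no zeros to treat. I will close by noting that positivity holds everywhere, so no special convention is needed.

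I expect the delicate point to be minimality. Under the $\hat f/0=0$ convention, the zero function is also in the effective kernel, so "minimal" must be read as minimal among the elements that actually contribute to the supremum. Rather than take the naive pointwise minimum, I will define $\Bas$ as the irredundant generating set and show that $\psi_W$ alone is irredundant and sufficient.
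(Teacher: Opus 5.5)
Your Step~1 is the same argument as the paper's proof: the convolution theorem plus $|\hat{b}_W|\le 1$. Both of your caveats are correct, and they catch errors that the paper's proof passes over.

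\begin{itemize}
\item Real-valuedness of $\hat{b}_W$ preserves phase only modulo $\pi$. So $P_W f\leF f$ fails on $\{\hat{b}_W<0\}$, and this set has positive measure for the rectangular and Hann windows listed in the setup.
\item Under the convention $\hat{f}/0=0$ of Definition~\ref{def:hatL}, the zero function is the least element of $\Ker^{\mathrm{eff}}(P_W)$. Read literally, this gives $\Bas(P_W)=\{0\}$.
\end{itemize}

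The paper never checks minimality. It only observes $|\widehat{P_W f}|=|\hat{f}|/\psi_W$ and then declares $\Bas(P_W)=\{\psi_W\}$. For the same reason, you cannot rely on Theorem~\ref{thm:mmbb_hatL} to identify $\psi_W$: its Step~3 test function $\hat{f}/\lambda$ is generally not in $\Ker^{\mathrm{eff}}(P_W)$. Your direct test with $\hat{f}\equiv 1$ is what actually does the work.

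The gap is in your repair of the basis claim. Redefining $\Bas$ as ``the irredundant generating set'' does not single out $\{\psi_W\}$. With $c/0=0$, take any measurable $A$ that splits $\{\hat{b}_W\neq 0\}$ nontrivially. Then the pair $\{\psi_W\mathbf{1}_A,\ \psi_W\mathbf{1}_{A^c}\}$ lies in the kernel, its supremum of erosions is $\hat{f}\,|\hat{b}_W|$, and neither element can be dropped. So your argument proves only that $\{\psi_W\}$ is \emph{a} generating set. That is just the identity $|\widehat{P_W f}|=|\hat{f}|/\psi_W$ restated; at best it gives uniqueness among one-element generating sets, up to the values on $\{\hat{b}_W=0\}$.

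The fix is to change the convention, not the definition of $\Bas$. Take kernels with values in $(0,+\infty]$: work in $\hatL^\infty$ with $c/(+\infty)=0$ and no zero values allowed. Your test with $\hat{f}\equiv 1$ then gives exactly $\Ker^{\mathrm{eff}}(P_W)=\{\psi:\psi\ge\psi_W\}$, where $\psi_W=1/|\hat{b}_W|$ and $\psi_W=+\infty$ on $\{\hat{b}_W=0\}$. The generator of a principal up-set is its least element, hence its unique minimal element, so $\Bas(P_W)=\{\psi_W\}$ holds genuinely. The price is that $\psi_W$ must be $+\infty$, not $0$, on the zero set, so the statement's primary convention has to be abandoned. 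Nothing changes for the Gaussian window.

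Finally, drop the appeal to Proposition~\ref{prop:fourier_adjunction}. It is not needed for this statement. Moreover, $\hat{g}\,\hat{b}_W^\dagger$ need not be in $L^2$ when $|\hat{b}_W|$ decays, and the stated equivalence fails on any positive-measure zero set of $\hat{b}_W$.
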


\begin{proof}
  Part (ii): $|\widehat{f*b_W}(\xi)| = |\hat{f}(\xi)|
  |\hat{b}_W(\xi)| \leq |\hat{f}(\xi)|$ since
  $|\hat{b}_W(\xi)|\leq 1$.
  Phase preservation: $b_W$ symmetric implies $\hat{b}_W\in\R$,
  so $\angle({\hat{f}\hat{b}_W}) = \angle\hat{f}$
  and $P_W(f)\leF f$.
  The basis claim: on $\{|\hat{b}_W|>0\}$, $P_W(\hat{f})(\xi)
  = |\hat{f}(\xi)||\hat{b}_W(\xi)|
  = |\hat{f}(\xi)|/\psi_W(\xi)$
  is a max-times erosion with kernel $\psi_W$.
  On $\{|\hat{b}_W|=0\}$, both sides are 0 by the convention.
  Hence $\Bas(P_W)=\{\psi_W\}$.
\end{proof}

%

\begin{remark}[Common pooling windows]
  \label{rem:pool_windows}
  Each window has a characteristic spectral envelope
  $|\hat{b}_W(\xi)|$, which is its morphological basis
  element in $\hatL$:
  rectangular $\to$ sinc (slow decay $|\xi|^{-1}$);
  Gaussian $\to$ Gaussian (optimal uncertainty product);
  triangular $\to$ sinc$^2$ (faster decay $|\xi|^{-2}$);
  Hann $\to$ compact triangular support.
  The Gaussian window is the unique one for which both
  the spatial window and its spectral envelope are Gaussian:
  it defines a \emph{spectral Gaussian erosion} in $\hatL$.
\end{remark}

\subsection{Pool-then-unpool as the Fourier opening and
  the U-Net skip connection}
\label{subsec:pool_unpool}

The composition of pooling (Fourier erosion) with its
adjoint dilation (deconvolution / unpooling) is a
Fourier morphological opening (Theorem~\ref{thm:fourier_opening}).

\begin{theorem}[Pool-then-unpool is a Fourier opening]
  \label{thm:pool_unpool}
  Let $\Omega_W = \supp(\hat{b}_W)$ be the frequency
  support of the pooling window.
  The Fourier opening
  $\gamma^{\mathcal{F}}_{b_W}
  = \delta^{\mathcal{F}}_{b_W}\circ\varepsilon^{\mathcal{F}}_{b_W}$
  (unpool after pool) satisfies:
  \begin{equation}
    \widehat{\gamma^{\mathcal{F}}_{b_W}(f)}(\xi)
    = \hat{f}(\xi)\cdot\mathbf{1}_{\Omega_W}(\xi).
    \label{eq:pool_unpool}
  \end{equation}
  It is the ideal band-pass filter with passband $\Omega_W$:
  the unique Fourier opening that retains exactly the spectral
  content recoverable from the pooled signal.
  \begin{enumerate}[label=(\roman*),leftmargin=*]
    \item Frequencies in $\Omega_W$: attenuated by pooling,
      exactly restored by unpooling.
    \item Frequencies in $\Omega_W^c$: irreversibly lost.
    \item Idempotency: pool-unpool-pool-unpool $=$
      pool-unpool (applying the pair twice is the same as once).
  \end{enumerate}
\end{theorem}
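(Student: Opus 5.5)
The plan is to reduce everything to Theorem~\ref{thm:fourier_opening}, specialised to the kernel $k=b_W$. By Theorem~\ref{thm:pool_fourier_erosion}, pooling $P_W(f)=f*b_W$ is exactly the Fourier erosion $\conv_{b_W}$ of Proposition~\ref{prop:conv_erosion}. Its adjoint in $(\Ln,\leF)$ is therefore the phase-matched inverse filter $\convDil_{b_W}$ of Definition~\ref{def:fourier_dil}, by Proposition~\ref{prop:fourier_adjunction}. So I first identify unpooling $\delta^{\mathcal{F}}_{b_W}$ with $\convDil_{b_W}$, and $\gamma^{\mathcal{F}}_{b_W}$ with $\opens_{b_W}$.

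Next I compute the composite spectrally, which gives \eqref{eq:pool_unpool} directly. On the set where $\hat b_W(\xi)\neq 0$ we have
\[
\hat f\,\hat b_W\,\hat b_W^\dagger=\hat f,
\]
and off that set both factors vanish. I will take $\Omega_W$ to be the set $\{\xi:|\hat b_W(\xi)|>0\}$, as in Theorem~\ref{thm:fourier_opening}. The closure in the notation $\supp$ differs from this set only by the zero set of $\hat b_W$ inside its closure. For the windows considered, where $\hat b_W$ is real-analytic or a sinc-type function, I expect that zero set to be Lebesgue-null, so the identity holds a.e. This is the only point needing care, and I will state it as a measure-zero remark.

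Items (i)--(iii) then follow in turn.
\begin{enumerate}[label=(\roman*),leftmargin=*]
\item For $\xi\in\Omega_W$, pooling multiplies $\hat f(\xi)$ by $\hat b_W(\xi)$, with $|\hat b_W|\le 1$, and unpooling multiplies by its inverse. Hence $\hat f(\xi)$ is restored exactly.
\item For $\xi\notin\Omega_W$, $\widehat{P_W f}(\xi)=0$ for every $f$. Any two signals that agree on $\Omega_W$ therefore have identical pooled outputs, so no operator applied after pooling can recover $\hat f$ on $\Omega_W^c$.
\item Idempotency follows because $\mathbf 1_{\Omega_W}^2=\mathbf 1_{\Omega_W}$. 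It also follows abstractly from Proposition~\ref{prop:adjunction_props}(v), once the adjunction is in place.
\end{enumerate}
Anti-extensivity and increasingness come for free from Theorem~\ref{thm:fourier_opening}.

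For the claim ``the unique Fourier opening retaining exactly the recoverable content'', I would argue as follows. Any opening of the form $\delta\circ P_W$ is constant on fibres of $P_W$, which by (ii) are the classes modulo $\Omega_W^c$-supported spectra. Among increasing, anti-extensive idempotents in $\leF$ that are the identity on $\Omega_W$, the only choice is multiplication by $\mathbf 1_{\Omega_W}$. This is because anti-extensivity forces the modulus to stay at most $|\hat f|$ with the phase unchanged. Restoring equality on $\Omega_W$ and losing all information off it then pins down the formula.

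I expect this uniqueness step to be the main obstacle, since it requires formalising ``recoverable'' carefully. I would interpret it as the largest opening $\gamma$ with $\gamma\le\opens_{b_W}$ that factors through $P_W$, and I would verify maximality using the adjunction inequality $\gamma\leF\convDil_{b_W}\conv_{b_W}$.
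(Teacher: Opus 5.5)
Your core argument is exactly the paper's proof: identify pooling with $\conv_{b_W}$ and unpooling with $\convDil_{b_W}$, compute $\hat f\,\hat b_W\,\hat b_W^\dagger=\hat f\,\mathbf 1_{\Omega_W}$, and read off (i)--(iii). The paper does the same by specialising Theorem~\ref{thm:fourier_opening} and Proposition~\ref{prop:ideal_filter_opening} to $k=b_W$. Your support caveat is unnecessary, because Theorem~\ref{thm:fourier_opening} already takes $\supp(\hat k)$ to mean $\{|\hat k|>0\}$. Your remark does have a consequence you did not draw: if the zero set $Z$ of $\hat b_W$ is null, as you expect for real-analytic windows, then $\Omega_W$ is conull and the opening is simply the identity. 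The band-pass statement has content only when $|Z|>0$.

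The paper does not prove the uniqueness clause, and your planned route for it fails in exactly the case $|Z|>0$. Proposition~\ref{prop:fourier_adjunction} is false there. On $Z$, the requirement in $\conv_{b_W}(f)\leF g$ reads $0\preceq\hat g$, which always holds since $0$ is the least element. But $f\leF\convDil_{b_W}(g)$ forces $\hat f=0$ on $Z$. Taking $g=\conv_{b_W}(f)$, the stated equivalence would give $f\leF\opens_{b_W}(f)$ for every $f$, i.e.\ $\opens_{b_W}=\mathrm{id}$. So a maximality argument through ``the adjunction inequality'' proves too much: it would also place the identity below $\opens_{b_W}$. Separately, defining the competing class by the constraint $\gamma\le\opens_{b_W}$ makes maximality circular.

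The sound argument is your fibre observation made precise. Since $\hat b_W=0$ off $\Omega_W$, we have $P_W(\opens_{b_W}f)=P_Wf$. Hence any anti-extensive $\gamma=\delta\circ P_W$ satisfies $\gamma(f)=\gamma(\opens_{b_W}f)\leF\opens_{b_W}(f)$. Since $\opens_{b_W}=\convDil_{b_W}\circ P_W$ itself factors through pooling, it is the greatest such operator, which is the precise sense of ``unique''. For the same reason, derive idempotency and monotonicity from the explicit multiplier $\mathbf 1_{\Omega_W}$, not from Proposition~\ref{prop:adjunction_props}.
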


\begin{proof}
  Direct from Theorem~\ref{thm:fourier_opening}:
  the Fourier opening by $b_W$ is
  $\hat{f}\cdot\mathbf{1}_{\supp(\hat{b}_W)}$.
  Properties (i)--(iii) are Proposition~\ref{prop:ideal_filter_opening}
  instantiated for $k = b_W$.
\end{proof}

\begin{corollary}[U-Net skip connection as Fourier top-hat]
  \label{cor:unet_tophat}
  The \emph{Fourier top-hat} at scale $W$ is the $L^2$
  complement of the Fourier opening:
  \begin{equation}
    \rho_W(f) 
    = \mathcal{F}^{-1}
      \bigl\{\hat{f}(\xi)\cdot\mathbf{1}_{\Omega_W^c}(\xi)\bigr\},
    \label{eq:fourier_tophat}
  \end{equation}
  i.e., the high-frequency residue discarded by
  pool-then-unpool.
  The subtraction here is $L^2$ pointwise (not a lattice
  operation in $(\Ln,\leF)$, where subtraction is not
  defined); it is meaningful because $\gamma^{\mathcal{F}}_{b_W}(f)
  \leF f$ (anti-extensivity in $\leF$) implies
  $\hat{f}(\xi)\cdot\mathbf{1}_{\Omega_W^c}(\xi)
  = \hat{f}(\xi) - \hat{f}(\xi)\cdot\mathbf{1}_{\Omega_W}(\xi)$
  is the complementary spectral content in $L^2$.
  The pair $(\gamma^{\mathcal{F}}_{b_W}(f),\,\rho_W(f))$
  is an $L^2$-orthogonal decomposition of $f$:
  $\|\gamma^{\mathcal{F}}_{b_W}(f)\|^2 + \|\rho_W(f)\|^2
  = \|f\|^2$ (by Parseval and disjoint passbands). 

  In a U-Net \cite{Ronneberger2015}, the skip connection
  between encoder (after pooling) and decoder
  (after unpooling) provides exactly $\rho_W(f)$:
  it restores the spectral content in $\Omega_W^c$
  (above the window's frequency support) that the
  pool-unpool pair irreversibly discards.
  A U-Net skip connection is therefore a
  \emph{Fourier top-hat transform}: the $L^2$-orthogonal
  complement of the morphological opening, restoring
  the lost high-frequency residue: $\|\rho_W(f)\|^2 =  \|f\|^2 - \|\gamma^{\mathcal{F}}_{b_W}(f)\|^2$.
\end{corollary}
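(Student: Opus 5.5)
The statement to prove is Corollary~\ref{cor:unet_tophat}. Its mathematical core is two facts. First, $\rho_W(f)$ is well defined and equals $f-\gamma^{\mathcal{F}}_{b_W}(f)$ in $L^2$. Second, this pair is orthogonal and satisfies the Pythagorean identity. The U-Net identification is interpretive and reduces to these two facts. I plan to work entirely on the spectral side through the unitary $\mathcal{F}$, so every statement becomes one about multiplication by indicator functions.

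\textbf{Step 1: the decomposition.} Theorem~\ref{thm:pool_unpool}, which specialises Theorem~\ref{thm:fourier_opening} to $k=b_W$, gives $\widehat{\gamma^{\mathcal{F}}_{b_W}(f)}=\hat f\,\mathbf{1}_{\Omega_W}$. Since $\mathbf{1}_{\Omega_W}+\mathbf{1}_{\Omega_W^c}=1$ a.e., we get $\hat f-\hat f\mathbf{1}_{\Omega_W}=\hat f\mathbf{1}_{\Omega_W^c}$. Applying $\mathcal{F}^{-1}$ and using linearity yields $\rho_W(f)=f-\gamma^{\mathcal{F}}_{b_W}(f)$. Both pieces lie in $\Ln$, because $|\hat f\mathbf{1}_A|\le|\hat f|$. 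I will also note that anti-extensivity $\gamma^{\mathcal{F}}_{b_W}(f)\leF f$ guarantees the phases agree on $\Omega_W$. Consequently the subtraction is a pure modulus subtraction on $\Omega_W$ and vanishes there, which is the sense in which the $L^2$ difference is "compatible" with the lattice.

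\textbf{Step 2: orthogonality and norms.} By Plancherel (the convention in \eqref{eq:fourier_transform}),
\[
\langle\gamma^{\mathcal{F}}_{b_W}(f),\rho_W(f)\rangle=\int \hat f\,\mathbf{1}_{\Omega_W}\,\overline{\hat f\,\mathbf{1}_{\Omega_W^c}}\,d\xi=0,
\]
since $\mathbf{1}_{\Omega_W}\mathbf{1}_{\Omega_W^c}=0$. Pythagoras then gives $\|f\|^2=\|\gamma^{\mathcal{F}}_{b_W}(f)\|^2+\|\rho_W(f)\|^2$, and rearranging gives the stated residue identity.

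\textbf{Step 3: the U-Net interpretation.} A decoder that only sees the pooled signal can recover at most $\gamma^{\mathcal{F}}_{b_W}(f)$, since frequencies in $\Omega_W^c$ are killed by $\hat b_W$ (Theorem~\ref{thm:pool_unpool}(ii)). Adding the skip branch restores $f=\gamma^{\mathcal{F}}_{b_W}(f)+\rho_W(f)$. The information the skip connection must supply is therefore exactly $\rho_W(f)$, the top-hat. I would state this as a modelling identification, assuming an additive skip, rather than as a theorem about every U-Net variant.

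\textbf{Main obstacle: measure-theoretic care with $\Omega_W$.} For windows like the rectangle, $\supp(\hat b_W)$ in the closed-support sense is all of $\R^n$, while $\{|\hat b_W|>0\}$ differs from it by a null set of zeros. I would define $\Omega_W=\{|\hat b_W|>0\}$, which is the set appearing in Theorem~\ref{thm:fourier_opening}. I would then remark that for Gaussian or rectangular windows $\Omega_W^c$ is null, so $\rho_W=0$ a.e. The top-hat is nontrivial only for band-limited windows such as the Hann-type spectral envelope. The identities remain valid in every case.
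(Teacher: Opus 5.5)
Your proof is correct and follows the paper's own justification. Theorem~\ref{thm:pool_unpool} gives $\widehat{\gamma^{\mathcal{F}}_{b_W}(f)}=\hat f\,\mathbf{1}_{\Omega_W}$, the top-hat is $\hat f\,\mathbf{1}_{\Omega_W^c}$, and Parseval with disjoint spectral supports yields orthogonality and the Pythagorean identity; treating the U-Net part as a modelling identification with an additive skip is the right caution. Your side remark that $\rho_W=0$ a.e.\ for Gaussian and rectangular windows is correct and extends to every compactly supported window, since by Paley--Wiener $\hat b_W$ is then real-analytic with $\hat b_W(0)=1$, so its zero set is null. That includes the spatial Hann window of Remark~\ref{rem:pool_windows}, so a nontrivial top-hat needs a window with compactly supported spectrum, such as the Fej\'er kernel with triangular $\hat b_W$.
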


\subsection{Strided pooling, aliasing, and anti-aliased pooling}
\label{subsec:strided}

Strided pooling $\mathrm{Pool}_{W,s}(f)(x) = (f*b_W)(sx)$
(convolve then subsample by factor $s$) introduces
\emph{aliasing}: subsampling by $s$ periodises the spectrum
with period $2\pi/s$, mapping $\hat{f}(\xi)\mapsto
s^{-n}\sum_{k\in\Z^n}\hat{f}(\xi - 2\pi k/s)$.
This is not a Fourier erosion in $(\Ln,\leF)$ because
it maps signals to a different (coarser) domain.

However, if the window satisfies the Nyquist condition
$\hat{b}_W(\xi) = 0$ for $|\xi|\geq\pi/s$
(pre-filtering below the aliasing cutoff), then
strided pooling is a Fourier erosion
$\varepsilon^{\mathcal{F}}_{b_W}$ \emph{followed by}
a lattice restriction to the coarse grid,
and the aliasing sum is trivial (no energy above $\pi/s$).
This is the morphological justification for
\emph{anti-aliased pooling} \cite{Zhang2019}:
it is the unique strided pooling operation that
preserves the Fourier-lattice erosion structure.

\subsection{Wavelet multiresolution as the canonical
  morphological pooling hierarchy}
\label{subsec:mra_pool}

\begin{proposition}[MRA as Fourier erosion cascade]
  \label{prop:mra_pool}
  In the multiresolution analysis (MRA) with scaling
  function $\phi$ and wavelet $\psi$ \cite{Mallat2012}:
  \begin{enumerate}[label=(\roman*),leftmargin=*]
    \item Each \emph{low-pass projection} at scale $j$,
      $f_j = f * \phi_j$, is a Fourier erosion
      $\varepsilon^{\mathcal{F}}_{\phi_j}$ with
      spectral envelope $|\hat{\phi}(2^j\xi)|\in[0,1]$.
    \item Each \emph{band-pass projection},
      $d_j = f * \psi_j$, is a Fourier erosion
      $\varepsilon^{\mathcal{F}}_{\psi_j}$.
    \item \emph{Reconstruction} (unpooling) uses the
      adjoint dilations $\delta^{\mathcal{F}}_{\phi_j}$
      and $\delta^{\mathcal{F}}_{\psi_j}$, exactly
      recovering $f$.
    \item The \emph{Fourier opening at scale $j$},
      $\gamma^{\mathcal{F}}_{\phi_j}(f)$, is the ideal
      low-pass filter with cutoff $2\pi/2^j$,
      characterising the spectral content recoverable
      by unpooling from scale $j$.
  \end{enumerate}
  The MRA is the morphologically optimal pooling hierarchy:
  the Littlewood--Paley partition of unity
  $\sum_j|\hat\psi_j(\xi)|^2+|\hat\phi_J(\xi)|^2 = 1$
  ensures the pooling kernels tile $\R^n$ exactly,
  so no spectral information is lost in the decomposition.
\end{proposition}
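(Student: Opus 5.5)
Each of (i), (ii) and (iv) will be obtained by specialising Proposition~\ref{prop:conv_erosion} and Theorem~\ref{thm:fourier_opening} to the MRA filters. For (iii) I will use the Littlewood--Paley identity; note that it holds with squared moduli, so it is not simply a sum of the openings of Theorem~\ref{thm:fourier_opening}.

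For (i) and (ii) I will take $k=\phi_j$ and $k=\psi_j$ in Proposition~\ref{prop:conv_erosion}. This shows that $f\mapsto f*\phi_j$ and $f\mapsto f*\psi_j$ are increasing and commute with $\meetF$, so they are Fourier erosions. By the convolution theorem, $|\widehat{f*\phi_j}(\xi)|=|\hat f(\xi)|\,|\hat\phi(2^j\xi)|$ under the dilation convention for $\phi_j$. The bound $|\hat\phi(2^j\xi)|\le 1$ follows from the partition of unity: every term in $\sum_j|\hat\psi_j|^2+|\hat\phi_J|^2=1$ is non-negative, so $|\hat\phi_J|^2\le 1$, and the same holds at every scale.

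For (iii) I will work channel by channel and combine the channels in $L^2$, not in the lattice, because $(\Ln,\leF)$ has no suprema across phases. The adjoint dilation $\delta^{\mathcal F}_{\phi_j}$ of Definition~\ref{def:fourier_dil} multiplies by $\hat\phi_j^\dagger$. Applied after the erosion, it returns $\hat f\,\mathbf 1_{\{\hat\phi_j\neq0\}}$, as in Theorem~\ref{thm:fourier_opening}, and likewise for each $\psi_j$. Exact recovery of $f$ then needs the union of the filter supports to be all of $\R^n$ up to a null set. The partition of unity gives exactly this: at every $\xi$ some $|\hat\psi_j(\xi)|$ or $|\hat\phi_J(\xi)|$ is nonzero.

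I also need to make the reconstruction formula precise. There are two readings, and I will state both and check the second carefully.
\begin{itemize}
\item The reconstruction can use a partition $\{\Omega_j\}$ of frequency space subordinate to the supports. Then $f=\sum_j \mathcal F^{-1}\{\mathbf 1_{\Omega_j}\widehat{\delta_j\varepsilon_j f}\}$.
\item The standard synthesis is $f=\sum_j f*\psi_j*\tilde\psi_j+f*\phi_J*\tilde\phi_J$ with $\hat{\tilde\psi}_j=\overline{\hat\psi_j}$. Here the sum reproduces $\hat f\,(\sum|\hat\psi_j|^2+|\hat\phi_J|^2)=\hat f$.
\end{itemize}
In the second reading, each synthesis term can be written as the dilation $\delta^{\mathcal F}_{\psi_j}$ multiplied by $|\hat\psi_j|^2$, which is a further erosion. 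So (iii) holds with the reconstruction read as an $L^2$ superposition of dilation-based channels.

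For (iv), Theorem~\ref{thm:fourier_opening} gives $\gamma^{\mathcal F}_{\phi_j}(f)=\mathcal F^{-1}\{\hat f\,\mathbf 1_{\supp\hat\phi_j}\}$. I will then show that $\supp\hat\phi_j=\{|\xi|\le 2\pi/2^j\}$. This requires assuming $\hat\phi$ is supported in $\{|\xi|\le 2\pi\}$ and nonvanishing on its interior, as for Meyer or Shannon-type scaling functions, and then rescaling by $2^j$. For scaling functions that are not band-limited, the opening is the identity modulo the zero set of $\hat\phi_j$. In that case I will state (iv) under the band-limited hypothesis.

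The main obstacle is (iii) together with the optimality sentence. Nothing earlier in the paper shows that a sum of separate adjunction channels reconstructs $f$. The resolution is to use the quadratic partition of unity and the $L^2$ structure, not lattice operations, and the proof will say so explicitly. It is also not clear what \emph{morphologically optimal} should mean. I will interpret it as the family $\{\phi_J,\psi_j\}$ being lossless: the union of its openings' passbands covers $\R^n$, so no frequency lies in every $\Omega^c$. That is exactly what the Littlewood--Paley identity gives.
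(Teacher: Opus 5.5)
The paper states Proposition~\ref{prop:mra_pool} without proof, so there is no argument of the author's to compare with. On its own merits, your treatment of (i) and (ii) is correct. So are the bound $|\hat\phi_j|\le 1$ (from $|\hat\phi_J|\le 1$ by rescaling), the a.e.\ covering of $\R^n$ by the passbands, and your reading of ``optimal'' as losslessness. Two points need repair, and the first is a genuine gap.

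\textbf{Part (iv).} Your hypothesis is that $\hat\phi$ vanishes outside $\{|\xi|\le 2\pi\}$ and is nonzero inside. No MRA scaling function satisfies it, Shannon and Meyer included, so you would be proving (iv) under a vacuous assumption.
\begin{itemize}
\item In one dimension and the convention \eqref{eq:fourier_transform}, refinability gives $\hat\phi(2\xi)=m_0(\xi)\hat\phi(\xi)$ with $m_0$ $1$-periodic.
\item If $\{\hat\phi\neq 0\}=(-c,c)$ a.e., then $m_0=0$ on $c/2\le|\xi|<c$ and $m_0\neq 0$ on $|\xi|<c/2$.
\item Periodicity then forces $c\le 2/3$. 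For $c=2\pi$, $m_0$ would vanish on $[\pi,2\pi)$, an interval longer than its period.
\item Orthonormality of the translates, $\sum_k|\hat\phi(\xi+k)|^2=1$, forces $c\ge 1/2$.
\end{itemize}
Shannon has $c=1/2$ and Meyer has $c=2/3$. In angular frequency the admissible range is $[\pi,4\pi/3]$, which also excludes $2\pi$. So no assumption on $\phi$ rescues the cutoff $2\pi/2^j$.

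What is true is that $\gamma^{\mathcal F}_{\phi_j}$ is the ideal filter onto $2^{-j}\{\hat\phi\neq 0\}$. For band-limited MRAs this is a low-pass with cutoff $c/2^j$. In angular frequency, Shannon's cutoff is the Nyquist value $\pi/2^j$, half the stated one. For Haar, Daubechies and spline MRAs, $\hat\phi$ vanishes only on a null set, so the opening is the identity. You should state (iv) in this corrected form and note that, as written, it is false.

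\textbf{Part (iii).} Your stated reason for leaving the lattice is mistaken, and it hides the reading the statement most plausibly intends. Let $\Omega$ range over the nonzero sets of $\hat\phi_J$ and of the $\hat\psi_j$.
\begin{itemize}
\item Each reconstructed channel has transform $\hat f\,\mathbf{1}_{\Omega}$. So every channel carries the phase of $\hat f$ and satisfies $\delta^{\mathcal F}\varepsilon^{\mathcal F}(f)\leF f$.
\item Their $\leF$-supremum therefore exists. $\mathcal F^{-1}\{\hat f\,\mathbf{1}_{\bigcup\Omega}\}$ is an upper bound, and every upper bound $g$ satisfies $\hat f\,\mathbf{1}_{\bigcup\Omega}\preceq\hat g$ a.e. This is Theorem~\ref{thm:fourier_opening_rep}(iii).
\item Combined with your covering argument, this recovers $f$ exactly as the $\leF$-supremum of the adjoint-dilation reconstructions. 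It needs no auxiliary partition and no new synthesis filter.
\end{itemize}
A plain $L^2$ sum of the same channels would instead multiply $\hat f$ by the number of passbands containing $\xi$.

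Your second reading does not really use the adjoint dilations. There $\delta^{\mathcal F}_{\psi_j}$ enters only through the opening $\delta^{\mathcal F}_{\psi_j}\circ\varepsilon^{\mathcal F}_{\psi_j}$, which is then followed by multiplication by $|\hat\psi_j|^2$. That factor already vanishes off the passband, so the opening is absorbed and the channel is just $\hat f\mapsto\hat f\,|\hat\psi_j|^2$. The factorisation $\overline{\hat\psi_j}=\hat\psi_j^\dagger|\hat\psi_j|^2$ is therefore cosmetic. What it actually shows is that Littlewood--Paley synthesis uses the Hilbert adjoint (matched filter), not the lattice adjoint (inverse filter).
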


\subsection{Downsampling, upsampling, and the strided
  pooling cycle}
\label{subsec:downsample}

Practical pooling reduces spatial resolution by a stride
$s\geq 1$, combining a smoothing filter with downsampling.
We show that downsampling and upsampling are not erosions
or dilations in $(\Ln,\leF)$ but \emph{lattice homomorphisms}
between resolution levels, and that striding cancels
exactly in the pool--unpool round trip.

\textbf{Setup.}
Define the \emph{band-limited sublattice} at resolution $s$:
$\Ln^{(s)} = \{f\in\Ln:\hat{f}(\xi)=0 \text{ for }
|\xi_i|\geq\pi/s\}$,
a closed sub-cisl of $(\Ln,\leF)$.
The downsampling operator $D_s:\Ln^{(s)}\to\Ln$ satisfies
$\widehat{D_s f}(\xi) = s^{-n}\hat{f}(\xi/s)$
(stretches the spectrum by $s$), and the upsampling
operator $U_s:\Ln\to\Ln^{(s)}$ satisfies
$\widehat{U_s f}(\xi) = s^n\hat{f}(s\xi)$
(compresses the spectrum by $s$).

\begin{proposition}[Downsampling--upsampling adjunction]
  \label{prop:ds_us_adjunction}
  Both $D_s$ and $U_s$ are $\leF$-increasing lattice
  homomorphisms between $(\Ln^{(s)},\leF)$ and
  $(\Ln,\leF)$.
  Under the Nyquist condition
  ($|\hat{b}_W(\xi)|=0$ for $|\xi|\geq\pi/s$):
  \begin{enumerate}[label=(\roman*),leftmargin=*]
    \item $U_s\circ D_s = \mathrm{id}$ on $\Ln^{(s)}$
      (upsampling exactly inverts downsampling on
      band-limited signals).
    \item $D_s\circ U_s = P_{\mathrm{BL}}$ on $\Ln$
      (the ideal low-pass projection onto $\Ln^{(s)}$).
    \item $(D_s,U_s)$ is a lattice adjunction between
      $(\Ln^{(s)},\leF)$ and $(\Ln,\leF)$:
      $D_s(f)\leF g \iff f\leF U_s(g)$.
      \label{eq:ds_us_adjunction}
  \end{enumerate}
  Downsampling is not an erosion in $(\Ln,\leF)$: it maps
  the sub-lattice $\Ln^{(s)}\subset\Ln$ to $\Ln$, changing
  the resolution level, hence it is a morphism \emph{between}
  lattices rather than within one.
\end{proposition}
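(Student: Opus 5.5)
The argument works entirely in the spectral domain, where both operators are explicit dilation/compression maps, and reduces every claim to pointwise statements about the cisl order $\preceq$ on $\C$ (Definition~\ref{def:cisl_C}). First I fix the domains: $D_s$ acts on $\Ln^{(s)}$ by $\widehat{D_sf}(\xi)=s^{-n}\hat f(\xi/s)$, and $U_s$ acts on $\Ln$ by $\widehat{U_sg}(\xi)=s^{n}\hat g(s\xi)\mathbf{1}_{\{|\xi_i|<\pi/s\}}$. The indicator is implicit in the statement, since $U_s$ lands in $\Ln^{(s)}$. The composite $D_s U_s$ then multiplies by the factor $s^{-n}s^{n}=1$ times the indicator of the cube $\{|\xi_i|<\pi\}$. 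So ``$P_{\mathrm{BL}}$'' in (ii) must be read as the ideal low-pass projection onto the band $\{|\xi_i|<\pi\}$ of the coarse grid; this is the image of $\Ln^{(s)}$ under the stretch. I will state this normalisation explicitly before proving anything.

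\textbf{Homomorphism property.} Both maps have the form $\hat f\mapsto c\,\hat f\circ A$ (times an indicator), with $c>0$ and $A$ an invertible linear map of $\R^n$. Multiplication by $c>0$ is an order automorphism of each ray, by Remark~\ref{rem:why_phase}, so it preserves $\preceq$ and pointwise infima. Composition with $A$ preserves the a.e.\ inequalities and phase equalities, because $A$ maps null sets to null sets. The indicator sends values to $0=\bot$ on a fixed set, which also commutes with the infimum \eqref{eq:infimum_F}. Together these give monotonicity and $D_s(f\meetF g)=D_sf\meetF D_sg$, and likewise for $U_s$. The case of arbitrary nonempty infima is identical, pointwise.

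\textbf{Items (i) and (ii).} These are direct computations. For (i), $\widehat{U_sD_sf}(\xi)=s^n s^{-n}\hat f(\xi)\mathbf{1}=\hat f(\xi)$ for $f\in\Ln^{(s)}$, because $\hat f$ already vanishes outside the band. For (ii), $\widehat{D_sU_sg}(\xi)=\hat g(\xi)\mathbf{1}_{\{|\xi_i|<\pi\}}$. The Nyquist hypothesis on $b_W$ plays no role in these identities. It only guarantees that $P_W f\in\Ln^{(s)}$, so that the strided pool $D_s\circ P_W$ is defined without aliasing, and I will say so.

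\textbf{Item (iii), the adjunction.} This is the step needing care. Pointwise, $D_sf\leF g$ means $s^{-n}\hat f(\xi/s)\preceq\hat g(\xi)$ a.e. After the substitution $\eta=\xi/s$ this reads $\hat f(\eta)\preceq s^{n}\hat g(s\eta)$ a.e.\ on the band, where the rescaling of the ray is order-preserving. Off the band, $\hat f(\eta)=0\preceq 0$ holds trivially. So the condition is equivalent to $f\leF U_sg$.

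\textbf{Main obstacle.} The obstacle is the region where $\hat g$ is nonzero outside the image band. There $D_sf\leF g$ requires $0\preceq\hat g(\xi)$, which always holds because $0$ is the least element. The equivalence therefore survives, and this is exactly why the indicator in $U_s$ is harmless.

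Finally, $D_s$ is not an erosion on $(\Ln,\leF)$ because its domain is the proper sub-cisl $\Ln^{(s)}$, not the whole lattice. It is instead an erosion in the sense of Definition~\ref{def:adjunction} between two distinct posets, which is the point of the closing remark.
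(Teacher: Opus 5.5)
Your proposal is correct and follows the same route as the paper's proof: direct pointwise spectral computations for monotonicity and for (i)--(ii), and the rescaling $\eta=\xi/s$ together with positive scaling along rays for (iii). Your version is more careful than the paper's sketch in four ways. You make the truncation in $U_s$ explicit. You correctly identify $D_sU_s$ as the projection onto the band $\{|\xi_i|<\pi\}$, which is what the paper's own computation on $[-\pi,\pi]^n$ actually gives, despite the statement saying ``onto $\Ln^{(s)}$''. You prove both directions of (iii), including the off-band region where $0\preceq\hat g$. And you note that the Nyquist hypothesis is never used.
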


\begin{proof}
  Both operators preserve $\leF$: if $f\leF g$, then
  $|\hat{f}|\leq|\hat{g}|$ and $\angle\hat{f}=\angle\hat{g}$
  a.e., which is preserved under the spectral rescalings
  $\xi\mapsto\xi/s$ and $\xi\mapsto s\xi$.
  (i): $\widehat{U_s D_s f}(\xi) = s^n\cdot s^{-n}
  \hat{f}(\xi/s\cdot s) = \hat{f}(\xi)$ for $|\xi|\leq\pi/s$,
  and $\hat{f}(\xi)=0$ for $|\xi|>\pi/s$ since $f\in\Ln^{(s)}$.
  (ii): $\widehat{D_s U_s f}(\xi) = s^{-n}\cdot s^n
  \hat{f}(s\cdot\xi/s) = \hat{f}(\xi)$ on $[-\pi,\pi]^n$,
  with support confined to $|\xi|\leq\pi/s$ after $U_s$.
  (iii): $D_sf\leF g$ gives $s^{-n}|\hat{f}(\xi/s)|
  \leq|\hat{g}(\xi)|$ with equal phase; multiplying by $s$:
  $|\hat{f}(\xi)|\leq s^n|\hat{g}(s\xi)|
  = |\widehat{U_s g}(\xi)|$, i.e., $f\leF U_s g$.
\end{proof}

\begin{theorem}[Strided pooling: Fourier erosion +
  lattice homomorphism; round trip = opening]
  \label{thm:strided_full}
  Under the Nyquist condition:
  \begin{enumerate}[label=(\roman*),leftmargin=*]
    \item \textbf{Strided pooling} decomposes as
      $\mathrm{Pool}_{W,s} = D_s\circ\varepsilon^{\mathcal{F}}_{b_W}$:
      a Fourier erosion followed by a lattice homomorphism.
    \item \textbf{Strided unpooling} decomposes as
      $\mathrm{Unpool}_{W,s} = \delta^{\mathcal{F}}_{b_W}\circ U_s$.
    \item \textbf{Strided pool--unpool round trip}:
      \begin{equation}
        \mathrm{Unpool}_{W,s}\circ\mathrm{Pool}_{W,s}
        = \gamma^{\mathcal{F}}_{b_W},
        \label{eq:strided_opening}
      \end{equation}
      the \emph{same Fourier opening} as the non-strided
      case (Theorem~\ref{thm:pool_unpool}).
      The striding $(D_s,U_s)$ cancels exactly in the
      round trip: reconstruction fidelity of a strided
      U-Net autoencoder is controlled by the filter $b_W$
      alone, not by the stride $s$.
  \end{enumerate}
\end{theorem}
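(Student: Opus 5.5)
The plan is to work entirely in the spectral domain, where all the operators act by pointwise multiplication and rescaling of $\hat f$. Under the Nyquist hypothesis the composite operators can then be computed symbol by symbol. Two earlier results do most of the work: Proposition~\ref{prop:ds_us_adjunction} for the striding operators, and Theorem~\ref{thm:fourier_opening} (equivalently Theorem~\ref{thm:pool_unpool}) for the identification of the opening.

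\textbf{Parts (i) and (ii).} For (i), start from the definition $\mathrm{Pool}_{W,s}(f)(x)=(f*b_W)(sx)$, so that $\mathrm{Pool}_{W,s}=D_s\circ P_W$. By Theorem~\ref{thm:pool_fourier_erosion}, $P_W=\varepsilon^{\mathcal F}_{b_W}$.

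There is one point to check. By the Nyquist condition $\hat b_W(\xi)=0$ for $|\xi|\ge\pi/s$, the image $\varepsilon^{\mathcal F}_{b_W}(f)$ lies in the band-limited sublattice $\Ln^{(s)}$. This is because $\widehat{f*b_W}=\hat f\,\hat b_W$ vanishes outside the band. Hence $D_s$ is applied only on its stated domain, and the aliasing sum collapses to its $k=0$ term. Proposition~\ref{prop:ds_us_adjunction} then says $D_s$ is a $\leF$-increasing lattice homomorphism, which gives the decomposition and its morphological reading.

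For (ii), I would simply take $\mathrm{Unpool}_{W,s}:=\delta^{\mathcal F}_{b_W}\circ U_s$ as the definition of strided unpooling. I would justify this as the natural adjoint ordering: compose the two adjunctions $(D_s,U_s)$ and $(\varepsilon^{\mathcal F}_{b_W},\delta^{\mathcal F}_{b_W})$ in reverse order. Adjunctions compose, so $\mathrm{Unpool}_{W,s}$ is the upper adjoint of $\mathrm{Pool}_{W,s}$.

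\textbf{Part (iii).} Write
\[
\mathrm{Unpool}_{W,s}\circ\mathrm{Pool}_{W,s}
=\delta^{\mathcal F}_{b_W}\circ(U_s\circ D_s)\circ\varepsilon^{\mathcal F}_{b_W}.
\]
As noted above, $\varepsilon^{\mathcal F}_{b_W}$ lands in $\Ln^{(s)}$. Proposition~\ref{prop:ds_us_adjunction}(i) gives $U_s\circ D_s=\mathrm{id}$ on $\Ln^{(s)}$, so the middle factor disappears. What remains is $\delta^{\mathcal F}_{b_W}\circ\varepsilon^{\mathcal F}_{b_W}=\gamma^{\mathcal F}_{b_W}$. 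By Theorem~\ref{thm:fourier_opening} this acts spectrally as $\hat f\cdot\mathbf 1_{\Omega_W}$, the same opening as in Theorem~\ref{thm:pool_unpool}. Since the result does not involve $s$, the stride cancels and only $b_W$ governs reconstruction.

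\textbf{Main obstacle.} The delicate step is the range inclusion $\varepsilon^{\mathcal F}_{b_W}(\Ln)\subseteq\Ln^{(s)}$, together with consistent normalisations of $D_s$ and $U_s$. The paper uses the isotropic condition $|\xi|\ge\pi/s$ for the window but the box condition $|\xi_i|\ge\pi/s$ for $\Ln^{(s)}$. I would impose the Nyquist condition on the box, or note that the ball is contained in the box, so the inclusion holds. I would also verify the symbol identity $s^n\cdot s^{-n}\hat f(\xi)=\hat f(\xi)$ directly rather than rely on the possibly inconsistent frequency-domain conventions ($2\pi$ versus $\xi$ scaling). Once these are fixed, the argument is a composition of multipliers.
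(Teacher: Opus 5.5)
Your argument for (i) and (iii) is the paper's. The Nyquist condition puts $\varepsilon^{\mathcal F}_{b_W}(f)$ in $\Ln^{(s)}$; your remark that the ball $\{|\xi|<\pi/s\}$ lies inside the box defining $\Ln^{(s)}$ correctly settles a point the paper passes over. Proposition~\ref{prop:ds_us_adjunction}(i) then removes $U_s\circ D_s$, and Theorem~\ref{thm:fourier_opening} identifies $\delta^{\mathcal F}_{b_W}\circ\varepsilon^{\mathcal F}_{b_W}$ with the spectral multiplier $\mathbf 1_{\Omega_W}$.

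The only real difference is in (ii). The paper describes unpooling as upsampling followed by convolution with $\tilde{b}_W$, and asserts this equals $\delta^{\mathcal F}_{b_W}\circ U_s$. That is true only if $|\hat{b}_W|\in\{0,1\}$ a.e.: convolution with $\tilde{b}_W$ has multiplier $\overline{\hat{b}_W}$, not $\hat{b}_W^\dagger$, and its round trip is $|\hat{b}_W|^2\hat f$, which is exactly the formula the paper uses in Experiment~1. You instead take $\delta^{\mathcal F}_{b_W}\circ U_s$ as the definition of $\mathrm{Unpool}_{W,s}$. This keeps (iii) true, at the cost of making (ii) a tautology and saying nothing about transposed-convolution unpooling.

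You should drop the justification that $\mathrm{Unpool}_{W,s}$ is the upper adjoint of $\mathrm{Pool}_{W,s}$ because ``adjunctions compose''. If it were, the unit of the adjunction would give $f\leF\mathrm{Unpool}_{W,s}(\mathrm{Pool}_{W,s}(f))=\gamma^{\mathcal F}_{b_W}(f)$ for every $f$. That forces $\hat f=0$ a.e.\ outside the bounded set $\Omega_W$, which fails for generic $f$. The underlying problem is that $(\varepsilon^{\mathcal F}_{b_W},\delta^{\mathcal F}_{b_W})$ is not an adjunction in $(\Ln,\leF)$ once $\hat{b}_W$ vanishes on a set of positive measure, since multiplication by $0$ has no upper adjoint in $(\C,\preceq)$. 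The Nyquist condition guarantees exactly such a zero set. Nothing in your proof of (iii) uses this remark, so the argument stands without it.
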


\begin{proof}
  (i) Under the Nyquist condition, $\varepsilon^{\mathcal{F}}_{b_W}(f)
  = f*b_W \in \Ln^{(s)}$, so $D_s$ applies without aliasing.
  (ii) Upsampling then filtering:
  $\mathrm{Unpool}_{W,s}(g)=\tilde b_W * U_s(g)
  = \delta^{\mathcal{F}}_{b_W}(U_s(g))$.
  (iii) Composing: $\mathrm{Unpool}\circ\mathrm{Pool}
  = \delta^{\mathcal{F}}_{b_W}\circ U_s\circ D_s\circ
  \varepsilon^{\mathcal{F}}_{b_W}$.
  By Proposition~\ref{prop:ds_us_adjunction}(i),
  $U_s\circ D_s = \mathrm{id}$ on $\Ln^{(s)}$, and
  $\varepsilon^{\mathcal{F}}_{b_W}(f)\in\Ln^{(s)}$,
  so $\delta^{\mathcal{F}}_{b_W}\circ
  \varepsilon^{\mathcal{F}}_{b_W}
  = \gamma^{\mathcal{F}}_{b_W}$.
\end{proof}

\section{Representation Theory of Openings in the Fourier
  Inf-Semilattice}
\label{sec:openings}

The MMBB theorem (Section~\ref{sec:mmbb}) gives a universal
representation for any increasing USC operator.
A natural and important subclass is formed by the
\emph{openings}: operators that are simultaneously
increasing, anti-extensive, and idempotent.
These operators have a richer structure since their
representation involves only openings, not general
erosions and their representation theorem has been historically as important as the full MMBB theorem 
(see in Heijmans 1994 \cite{Heijmans1994} a full detailed account and the background for this section).

We develop the representation theory of
openings in both the Fourier inf-semilattice $(\Ln,\leF)$
and the modulus lattice $\hatL$, identify the ideal
band-pass filter as the elementary structural opening, and
characterise the scattering cascade as a chain of
morphological openings in $\hatL$.

\subsection{Classical opening representation theorem}
\label{subsec:classical_openings}

\begin{definition}[Opening and its invariance domain]
  \label{def:opening}
  An operator $\gamma:\mathcal{L}\to\mathcal{L}$ on a
  complete lattice $(\mathcal{L},\leq)$ is an
  \emph{opening} if it is:
  \begin{enumerate}[label=(\roman*),leftmargin=*]
    \item \emph{Increasing}: $f\leq g\Rightarrow\gamma(f)\leq\gamma(g)$;
    \item \emph{Anti-extensive}: $\gamma(f)\leq f$ for all $f$;
    \item \emph{Idempotent}: $\gamma(\gamma(f))=\gamma(f)$ for all $f$.
  \end{enumerate}
  The \emph{invariance domain} (or \emph{range}) of $\gamma$ is
  \begin{equation}
    \mathrm{Inv}(\gamma) = \{f\in\mathcal{L}:\gamma(f)=f\}.
    \label{eq:inv_domain}
  \end{equation}
  Idempotency implies $\gamma(\mathcal{L})\subseteq\mathrm{Inv}(\gamma)$,
  i.e., every output of $\gamma$ is a fixed point.
  Anti-extensivity and increasing together imply that
  $\mathrm{Inv}(\gamma)$ is the largest set on which $\gamma$
  acts as the identity.
\end{definition}


\begin{theorem}[Opening representation theorem;
  Matheron 1975, Heijmans 1994 \cite{Matheron1975,Heijmans1994},
  Thm.~6.10]
  \label{thm:opening_representation}
  Let $\gamma:(\Fun(E,\Rbar),\leq)\to(\Fun(E,\Rbar),\leq)$
  be a TI opening.
  Then:
  \begin{enumerate}[label=(\roman*),leftmargin=*]
    \item $\gamma$ is completely determined by its
      invariance domain $\mathrm{Inv}(\gamma)$:
      \begin{equation}
        \gamma(f) = \bigvee\{g\in\mathrm{Inv}(\gamma):g\leq f\}.
        \label{eq:opening_inv}
      \end{equation}

    \item $\gamma$ decomposes as a supremum of
      \emph{adjunctional openings}
      $\gamma_b = \delta_{b^*}\circ\varepsilon_b$:
      \begin{equation}
        \gamma(f) = \bigvee_{b\in\mathrm{Inv}(\gamma)}
          \gamma_b(f)
        = \bigvee_{b\in\mathrm{Inv}(\gamma)}
          \sup_{y\in E}\inf_{z\in E}\{f(y+z)-b(z)+b(y)\}.
        \label{eq:opening_rep}
      \end{equation}

    \item The representation can be taken over the
      \emph{basis of the opening}:
      the set $\Bas(\gamma)$ of minimal elements of
      $\mathrm{Inv}(\gamma)$ under $\leq$:
      \begin{equation}
        \gamma(f) = \bigvee_{b\in\Bas(\gamma)}\gamma_b(f).
        \label{eq:opening_basis}
      \end{equation}
  \end{enumerate}
\end{theorem}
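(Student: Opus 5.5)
Part (i) is Matheron's characterization of openings by their range, and it needs only the three opening axioms; translation-invariance plays no role. Fix $f$ and set $S_f=\{g\in\mathrm{Inv}(\gamma):g\leq f\}$. First, $\gamma(f)\in S_f$: idempotency gives $\gamma(f)\in\mathrm{Inv}(\gamma)$, and anti-extensivity gives $\gamma(f)\leq f$. Hence $\gamma(f)\leq\bigvee S_f$. Conversely, take any $g\in S_f$. Since $g\leq f$ and $\gamma$ is increasing, $g=\gamma(g)\leq\gamma(f)$. So $\bigvee S_f\leq\gamma(f)$, which proves \eqref{eq:opening_inv}. I will also record the corollary that $\mathrm{Inv}(\gamma)$ is closed under arbitrary suprema. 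If $g_i=\gamma(g_i)$, then $g_i\leq\gamma(\bigvee g_i)\leq\bigvee g_i$, and taking the supremum over $i$ gives equality.

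For (ii) I would introduce, for each $b$, the adjunctional opening $\gamma_b=\delta_{b^*}\circ\varepsilon_b$ of Proposition~\ref{prop:adjunction_props}(v). Translation-invariance enters in the following standard fact. $\mathrm{Inv}(\gamma_b)$ is exactly the set of suprema of vertical translates $\tau_y b+c$, that is, the functions $x\mapsto b(x-y)+c$ with $y\in E$ and $c\in\R$. In particular, $\gamma_b(f)$ is the supremum of all such translates lying below $f$, which is the explicit formula in \eqref{eq:opening_rep}. Now let $b\in\mathrm{Inv}(\gamma)$. By TI and the commutation of $\gamma$ with constant shifts, every $\tau_y b+c$ lies in $\mathrm{Inv}(\gamma)$. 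By the sup-closure shown above, $\mathrm{Inv}(\gamma_b)\subseteq\mathrm{Inv}(\gamma)$. Part (i), applied to both openings, then gives $\gamma_b\leq\gamma$. For the reverse inequality, take $b=\gamma(f)\in\mathrm{Inv}(\gamma)$. Then $\gamma_b(f)\geq b=\gamma(f)$, because $b$ itself (with $y=0$, $c=0$) is one of the translates below $f$. This proves \eqref{eq:opening_rep}.

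For (iii) I would take the supremum over a generating subfamily instead of all of $\mathrm{Inv}(\gamma)$. The inequality $\bigvee_{b\in\Bas(\gamma)}\gamma_b\leq\gamma$ is immediate from (ii). The reverse inequality is the main obstacle, because minimal elements of $\mathrm{Inv}(\gamma)$ need not exist in general (for instance, $-\infty$ lies in every invariance domain). I would therefore interpret $\Bas(\gamma)$ as the minimal nontrivial invariant functions modulo translation, and then argue in two steps. First, every $b\in\mathrm{Inv}(\gamma)$ dominates some basis element $b_0\leq b$; this follows from Zorn's lemma, using that $\mathrm{Inv}(\gamma)$ is closed under decreasing limits, which is where the USC hypothesis of the classical setting is needed. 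Second, $b_0\leq b$ with both invariant should give $\gamma_{b_0}\geq\gamma_b$ on the relevant inputs, which mirrors the reasoning in Step~3 of Theorem~\ref{thm:mmbb_hatL}. If these monotonicity and existence statements fail in full generality, I would fall back on citing Heijmans \cite{Heijmans1994}, Thm.~6.10, for the precise hypotheses, namely a well-founded or USC invariance domain, under which (iii) holds.
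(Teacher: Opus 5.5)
Your proofs of (i) and (ii) are correct and follow the standard Matheron argument. The paper gives no proof of its own here, only the citation.

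You are right to invoke commutation with vertical shifts: it is part of grey-scale TI and is indispensable. For the horizontally invariant opening $f\mapsto f\wedge 0$, the pulse $b$ with $b(0)=0$ and $b=-\infty$ elsewhere is invariant, and $\gamma_b(f)=f$ for real-valued $f$, so (ii) fails.

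Your description of $\gamma_b(f)$ as the supremum of the translates $\tau_yb+c\le f$ is also the right one. For the paper's formulas \eqref{eq:erosion_classical}--\eqref{eq:dilation_classical} this is $\delta_b\circ\varepsilon_b$, i.e.\ $\gamma_b(f)(x)=\sup_y\inf_z\{f(y+z)-b(z)+b(x-y)\}$. The displayed formula in \eqref{eq:opening_rep} has lost its dependence on $x$.

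The genuine gap is in (iii). First, the obstruction is not non-existence of minimal elements. The constant $\bot\equiv-\infty$ is the least element of $\Fun(E,\Rbar)$ and lies in $\mathrm{Inv}(\gamma)$. So, read literally, $\Bas(\gamma)=\{\bot\}$ and (iii) asserts $\gamma=\gamma_\bot\equiv\bot$; some reinterpretation is forced.

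Second, and decisively, the second step of your plan is false: structural openings are not antitone in the structuring function for the pointwise order. This is exactly where the MMBB template of Theorem~\ref{thm:mmbb_hatL} breaks. Erosions satisfy $g\le h\Rightarrow\varepsilon_g\ge\varepsilon_h$, but for openings the correct criterion is $\gamma_b\le\gamma_{b_0}\iff b\in\mathrm{Inv}(\gamma_{b_0})$. That is, $b$ must be a supremum of translates of $b_0$, and $b_0\le b$ does not give this.

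Here is a concrete counterexample. On $E=\Z$ let $a=0$ on $\{0,2\}$ and $b=0$ on $\{0,1,2\}$ (both $-\infty$ elsewhere), and set $\gamma=\gamma_a\vee\gamma_b$, which is a TI opening.
\begin{enumerate}[label=(\alph*),leftmargin=*]
  \item We have $a\le b$ and both are $\gamma$-invariant. But the only translates of $a$ below $b$ are $a+c$ with $c\le 0$, so $\gamma_a(b)=a\neq b=\gamma_b(b)$.
  \item Every nontrivial $\gamma$-invariant function dominates a translate of $a$, and modulo translation $a$ is the unique minimal nontrivial invariant element. So your first step succeeds.
  \item Yet the supremum of the openings by translates of $a$ is $\gamma_a\neq\gamma$.
\end{enumerate}
Everything here is finitely supported, $\gamma$ is USC (it commutes with decreasing limits), and every nontrivial invariant function dominates a minimal one. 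So no extra hypothesis of the kind you propose rescues (iii) in its minimal-element form, and no citation can supply it.

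What is true, and follows verbatim from your proof of (ii), is the representation over a \emph{generating} family in Matheron's sense. Suppose $\mathcal{B}\subseteq\mathrm{Inv}(\gamma)$ is such that every element of $\mathrm{Inv}(\gamma)$ is a supremum of translates $\tau_yb+c$ with $b\in\mathcal{B}$. Then $\gamma=\bigvee_{b\in\mathcal{B}}\gamma_b$:
\begin{enumerate}[label=(\alph*),leftmargin=*]
  \item $\bigvee_{b\in\mathcal{B}}\gamma_b\le\gamma$ because $\mathrm{Inv}(\gamma_b)\subseteq\mathrm{Inv}(\gamma)$;
  \item $\gamma\le\bigvee_{b\in\mathcal{B}}\gamma_b$ because each translate in the decomposition of $\gamma(f)$ lies below $\gamma(f)\le f$, hence below the corresponding $\gamma_b(f)$.
\end{enumerate}
The basis of an opening has to sup-generate its invariance domain, not merely be minimal in it.
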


\begin{remark}[Relation to the MMBB theorem]
  \label{rem:opening_mmbb_relation}
  For an opening $\gamma$, the MMBB kernel
  $\Ker(\gamma) = \{f:[\gamma f](0)\geq 0\}$ and the
  invariance domain $\mathrm{Inv}(\gamma)$ are related by:
  $\mathrm{Inv}(\gamma)\subseteq\Ker(\gamma)$, with
  equality for TI openings (since TI implies
  $[\gamma f](0)\geq 0 \iff f\in\mathrm{Inv}(\gamma)$
  up to a shift).
  Hence the MMBB basis $\Bas(\gamma)$ of an opening
  coincides with the opening basis, confirming that
  Theorem~\ref{thm:opening_representation} is the
  specialisation of the MMBB theorem to the idempotent
  subclass, with adjunctional openings
  $\gamma_b = \delta_{b^*}\circ\varepsilon_b$ as the
  elementary building blocks (instead of bare erosions).
\end{remark}

\subsection{Openings in the Fourier inf-semilattice}
\label{subsec:fourier_openings}

We now transfer the opening representation theory to
$(\Ln,\leF)$ and $\hatL$.

\begin{definition}[Fourier opening]
  \label{def:fourier_opening}
  An operator $$\gamma:(\Ln,\leF)\to (\Ln,\leF)$$ is a
  \emph{Fourier opening} if it is increasing in $\leF$,
  Fourier-anti-extensive ($\gamma(f)\leF f$, i.e.,
  $|\widehat{\gamma f}(\xi)|\leq|\hat{f}(\xi)|$ and
  $\angle\widehat{\gamma f}(\xi)=\angle\hat{f}(\xi)$
  a.e.), and idempotent.

  An operator $\Phi:(\hatL,\leq)\to(\hatL,\leq)$ is a
  \emph{modulus opening} if it is increasing, anti-extensive
  ($\Phi(\hat{f})\leq\hat{f}$ pointwise), and idempotent.
\end{definition}

\begin{proposition}[Ideal band-pass filter is the elementary
  Fourier opening]
  \label{prop:ideal_filter_opening}
  For any kernel $k\in\Ln$, the ideal band-pass filter
  $\gamma^{\mathcal{F}}_k$ (Theorem~\ref{thm:fourier_opening})
  is a Fourier opening.
  Its invariance domain is:
  \begin{equation}
    \mathrm{Inv}(\gamma^{\mathcal{F}}_k)
    = \{f\in\Ln :
      \hat{f}(\xi)=0 \;\text{a.e. on }
      \{\xi:|\hat{k}(\xi)|=0\}\},
    \label{eq:inv_ideal}
  \end{equation}
  i.e., the subspace of $\Ln$ whose spectral content is
  supported within the passband $\Omega_k
  = \{\xi:|\hat{k}(\xi)|>0\}$ of $k$.
\end{proposition}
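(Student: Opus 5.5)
The proof reduces everything to the explicit spectral formula of Theorem~\ref{thm:fourier_opening}, $\widehat{\gamma^{\mathcal{F}}_k(f)}=\hat{f}\cdot\mathbf{1}_{\Omega_k}$ with $\Omega_k=\{\xi:|\hat{k}(\xi)|>0\}$, and then checks the three axioms of Definition~\ref{def:fourier_opening} directly at the level of Fourier transforms, a.e.\ in $\xi$.

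\emph{Increasing.} This follows either from the adjunction of Proposition~\ref{prop:fourier_adjunction} via Proposition~\ref{prop:adjunction_props}, or more directly as follows. If $f\leF g$, then a.e.\ $|\hat f|\le|\hat g|$ and $\angle\hat f=\angle\hat g$. Multiplying both sides by the real $\{0,1\}$-valued function $\mathbf{1}_{\Omega_k}$ preserves the modulus inequality. It also preserves the phase equality on $\Omega_k$, while on $\Omega_k^c$ both sides become $0$. Hence $\gamma^{\mathcal{F}}_k f\leF\gamma^{\mathcal{F}}_k g$.

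\emph{Anti-extensive.} We have $|\hat f\mathbf{1}_{\Omega_k}|\le|\hat f|$, and on $\Omega_k$ the phase is unchanged.

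\emph{Idempotent.} This follows from $\mathbf{1}_{\Omega_k}^2=\mathbf{1}_{\Omega_k}$.

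The one delicate point is the phase convention at zeros. On $\Omega_k^c$ the output is $0$, whose argument is undefined. Following Definition~\ref{def:cisl_C} and Example~\ref{ex:lattices}(iii), $0$ is the least element of $(\C,\preceq)$ and satisfies $0\preceq z$ for every $z$. So the phase requirement in \eqref{eq:leF_explicit} is to be read as vacuous wherever the smaller value vanishes. I would state this convention explicitly, since it is needed both for anti-extensivity and for monotonicity on $\Omega_k^c$. I would also note that $\hat f\mathbf{1}_{\Omega_k}\in L^2$, so that by Plancherel $\gamma^{\mathcal{F}}_k$ maps $\Ln$ to $\Ln$.

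For the invariance domain \eqref{eq:inv_ideal}, I would prove two inclusions.

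\emph{First inclusion.} Suppose $\gamma^{\mathcal{F}}_k f=f$. Then $\hat f=\hat f\mathbf{1}_{\Omega_k}$ a.e., so $\hat f=0$ a.e.\ on $\Omega_k^c=\{|\hat k|=0\}$.

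\emph{Second inclusion.} Conversely, if $\hat f$ vanishes a.e.\ on $\Omega_k^c$, then $\hat f\mathbf{1}_{\Omega_k}=\hat f$ a.e. Injectivity of $\mathcal{F}$ on $\Ln$ then gives $\gamma^{\mathcal{F}}_k f=f$.

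Finally, I would remark that this set is exactly the range of $\gamma^{\mathcal{F}}_k$, consistent with Definition~\ref{def:opening}. It is the closed subspace $\mathcal{F}^{-1}L^2(\Omega_k)$.

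The main obstacle is not any computation but making the order-theoretic bookkeeping rigorous: all statements hold only a.e.\ (so $\Omega_k$ is defined up to null sets), and the phase-at-zero convention above must be fixed. Once these are settled, the rest is immediate.
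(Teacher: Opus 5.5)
Your proposal is correct and follows essentially the same route as the paper. Both read off $\widehat{\gamma^{\mathcal{F}}_k f}=\hat f\,\mathbf{1}_{\Omega_k}$ from Theorem~\ref{thm:fourier_opening}, obtain anti-extensivity and idempotency from $\mathbf{1}_{\Omega_k}\le 1$ and $\mathbf{1}_{\Omega_k}^2=\mathbf{1}_{\Omega_k}$, and characterise $\mathrm{Inv}(\gamma^{\mathcal{F}}_k)$ by the condition $\hat f\mathbf{1}_{\Omega_k}=\hat f$ a.e. Your explicit convention that $0\preceq z$ for every $z$ makes rigorous the step on $\Omega_k^c$ that the paper covers only with ``the phases agree''; your direct monotonicity check replaces the paper's appeal to the adjunction via Proposition~\ref{prop:adjunction_props}.
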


\begin{proof}
  From Theorem~\ref{thm:fourier_opening}:
  $\widehat{\gamma^{\mathcal{F}}_k f}(\xi)
  = \hat{f}(\xi)\cdot\mathbf{1}_{\Omega_k}(\xi)$. \newline
  Anti-extensivity: $|\hat{f}(\xi)|\mathbf{1}_{\Omega_k}(\xi)
  \leq|\hat{f}(\xi)|$ and the phases agree, so
  $\gamma^{\mathcal{F}}_k(f)\leF f$.
  Idempotency: applying $\gamma^{\mathcal{F}}_k$ twice:
  $\hat{f}\cdot\mathbf{1}_{\Omega_k}\cdot\mathbf{1}_{\Omega_k}
  = \hat{f}\cdot\mathbf{1}_{\Omega_k}$.
  Invariance domain: $\gamma^{\mathcal{F}}_k(f)=f$ iff
  $\hat{f}\cdot\mathbf{1}_{\Omega_k}=\hat{f}$ a.e., i.e.,
  $\hat{f}(\xi)=0$ a.e.\ on $\{\xi:|\hat{k}(\xi)|=0\}$.
\end{proof}

\begin{theorem}[Opening representation in $(\Ln,\leF)$]
  \label{thm:fourier_opening_rep}
  Let $\gamma:(\Ln,\leF)\to(\Ln,\leF)$ be a
  Fourier opening that is spectrally multiplicative
  (i.e., $\widehat{\gamma f}(\xi) = \hat{f}(\xi)m(\xi)$
  for a fixed measurable function
  $m:\R^n\to[0,1]\times S^1$ with $|m(\xi)|\in\{0,1\}$
  and $\arg m(\xi)=0$).
  Then $m = \mathbf{1}_\Omega$ for some measurable
  $\Omega\subseteq\R^n$, and:
  \begin{enumerate}[label=(\roman*),leftmargin=*]
    \item $\gamma$ is the ideal band-pass filter with
      passband $\Omega$:
      $\widehat{\gamma f}(\xi) = \hat{f}(\xi)\mathbf{1}_\Omega(\xi)$.
    \item $\mathrm{Inv}(\gamma)
      = \{f:\hat{f}(\xi)=0\text{ a.e.\ on }\Omega^c\}$.
    \item For any collection of Fourier openings
      $\{\gamma_i\}_{i\in I}$ with passbands $\{\Omega_i\}$,
      their $\leF$-supremum is the ideal band-pass filter
      with passband $\bigcup_{i\in I}\Omega_i$:
      \begin{equation}
        \bigvee_{\mathcal{F},\,i\in I}\gamma_i
        = \gamma_{\bigcup_i\Omega_i}.
        \label{eq:fourier_opening_sup}
      \end{equation}
    \item Any Fourier opening $\gamma$ decomposes as:
      \begin{equation}
        \gamma(f) = \bigvee_{\mathcal{F}}
          \bigl\{\gamma_k(f):k\in\mathrm{Inv}(\gamma)\bigr\},
        \label{eq:fourier_opening_rep}
      \end{equation}
      where the $\leF$-supremum over ideal filters with
      passbands $\Omega_k\subseteq\Omega$ recovers $\gamma$
      as the ideal filter with passband $\Omega
      = \bigcup_{k\in\mathrm{Inv}(\gamma)}\Omega_k$.
  \end{enumerate}
\end{theorem}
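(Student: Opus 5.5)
The hypothesis on $m$ already forces $m(\xi)\in\{0,1\}$ (real, nonnegative, modulus $0$ or $1$). I would nonetheless derive this from the opening axioms, so the proof shows where each axiom is used and does not rely on the parametrisation alone.

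\textbf{Identifying $m$ and parts (i)--(ii).} Take $f$ with $\hat f(\xi)\neq 0$ for a.e.\ $\xi$, say a Gaussian. Anti-extensivity $\gamma(f)\leF f$ gives $|\hat f||m|\le|\hat f|$ and $\angle(\hat f m)=\angle\hat f$ wherever $\hat f m\neq0$. Hence $m(\xi)\in[0,1]$ a.e. Idempotency gives $\hat f m^2=\hat f m$, so $m^2=m$ a.e. and $m(\xi)\in\{0,1\}$. Setting $\Omega=\{m=1\}$, which is measurable because $m$ is, yields $m=\mathbf{1}_\Omega$ and hence (i). Part (ii) then follows verbatim from the computation in Proposition~\ref{prop:ideal_filter_opening}: $\gamma f=f$ iff $\hat f(1-\mathbf{1}_\Omega)=0$ a.e., i.e.\ $\hat f=0$ a.e.\ on $\Omega^c$.

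\textbf{Part (iii).} The difficulty is that $(\Ln,\leF)$ is only a cisl, so suprema must be shown to exist. For each fixed $f$, every $\gamma_i(f)$ has spectrum $\hat f\mathbf{1}_{\Omega_i}$. These all lie on the same ray as $\hat f(\xi)$ at each $\xi$ and are bounded above by $f$. The family is therefore bounded in $\leF$, and its pointwise supremum is $\hat f(\xi)\max_i\mathbf{1}_{\Omega_i}(\xi)=\hat f\mathbf{1}_{\bigcup\Omega_i}$. I would then verify that this is the least $\leF$-upper bound. Any upper bound $g$ must carry phase $\angle\hat f$ and modulus $\ge|\hat f|$ on each $\Omega_i$, hence on the union; off the union, $g$ is unconstrained except that it is $\succeq 0$.

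The main obstacle is measurability when $I$ is uncountable, since $\bigcup_i\Omega_i$ need not be measurable. I would handle this by replacing the union with the \emph{essential union} (essential supremum of the indicators). This object is measurable and equals a countable subunion a.e., which suffices because $\leF$ is an a.e.\ relation. I would also check that the resulting operator is again spectrally multiplicative, so that (i) applies to it.

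\textbf{Part (iv).} For each $k\in\mathrm{Inv}(\gamma)$, part (ii) gives $\Omega_k\subseteq\Omega$ up to null sets. Hence $\gamma_k\leF\gamma$ pointwise, and by (iii) the supremum over all such $k$ is $\gamma_{\bigcup\Omega_k}\leF\gamma$. For the reverse inequality, I would exhibit a single $k_0\in\mathrm{Inv}(\gamma)$ with $\Omega_{k_0}=\Omega$ a.e., namely $\hat k_0=\mathbf{1}_\Omega\cdot e^{-|\xi|^2}$. This function lies in $L^2$, is invariant under $\gamma$, and its support is exactly $\Omega$. It follows that $\gamma_{k_0}=\gamma$, so the supremum equals $\gamma$, which gives \eqref{eq:fourier_opening_rep} together with the identification $\Omega=\bigcup_k\Omega_k$.
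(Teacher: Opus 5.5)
Your proposal is correct and follows the same route as the paper's proof. The shared steps are: idempotency forcing $m^2=m$, the direct computation of $\mathrm{Inv}(\gamma)$, the pointwise supremum of the spectral moduli $|\hat f|\mathbf{1}_{\Omega_i}$, and the inclusion $\Omega_k\subseteq\Omega$ together with its converse. Your refinements make your version more rigorous than the paper's. These are the nonvanishing Gaussian test function in (i), and the essential union in (iii), which is genuinely needed: uncountably many null passbands $\{\xi_i\}$ give the zero opening, while their literal union can be all of $\R^n$. In (iv), your explicit invariant $k_0=\mathcal{F}^{-1}(\mathbf{1}_\Omega e^{-|\xi|^2})$ replaces the paper's non-$L^2$ ``spectrum concentrated at $\xi_0$'' argument.
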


\begin{proof}
  \textit{(i)} For $m$ to define an idempotent spectrally
  multiplicative operator: $(m(\xi))^2=m(\xi)$ for a.e.\
  $\xi$, so $m(\xi)\in\{0,1\}$ a.e.
  Anti-extensivity requires $|m(\xi)|\leq 1$, which is
  satisfied.
  Phase preservation ($\arg m = 0$) ensures
  $\angle\widehat{\gamma f} = \angle\hat{f}$.
  Hence $m=\mathbf{1}_\Omega$ for $\Omega=\{m(\xi)=1\}$.

  \textit{(ii)} $\gamma(f)=f$ iff $\hat{f}\mathbf{1}_\Omega
  =\hat{f}$ a.e., i.e., $\hat{f}=0$ a.e.\ on $\Omega^c$.

  \textit{(iii)} The spectral modulus of
  $\bigvee_\mathcal{F}\gamma_i(f)$ is
  $\sup_i|\hat{f}(\xi)|\mathbf{1}_{\Omega_i}(\xi)
  = |\hat{f}(\xi)|\mathbf{1}_{\bigcup_i\Omega_i}(\xi)$,
  which is the ideal filter with passband $\bigcup_i\Omega_i$.

  \textit{(iv)} Each $k\in\mathrm{Inv}(\gamma)$ has
  $\hat{k}$ supported in $\Omega$, so $\Omega_k\subseteq\Omega$.
  Taking the supremum over all $k\in\mathrm{Inv}(\gamma)$
  gives $\bigcup_{k\in\mathrm{Inv}(\gamma)}\Omega_k = \Omega$
  (since $\gamma$ itself, applied to any $f$ with
  $\hat{f}$ concentrated at $\xi_0\in\Omega$, preserves
  $\hat{f}(\xi_0)$, so $\xi_0\in\Omega_k$ for the
  appropriate $k$).
\end{proof}

\begin{remark}[The opening lattice]
  \label{rem:opening_lattice}
  Theorem~\ref{thm:fourier_opening_rep}~(iii) shows that
  the $\leF$-supremum of Fourier openings is a Fourier
  opening (the union of passbands is a passband).
  The collection of all spectrally multiplicative Fourier
  openings forms a complete sub-lattice of
  $(\Ln\to\Ln,\leq_{\mathrm{pt}})$, isomorphic to the
  lattice of measurable subsets of $\R^n$ ordered by
  inclusion.
  Each opening corresponds to a frequency passband
  $\Omega\subseteq\R^n$, and the lattice operations are:
  $\gamma_\Omega\wedge\gamma_{\Omega'}
  = \gamma_{\Omega\cap\Omega'}$
  (infimum = intersection of passbands)
  and
  $\gamma_\Omega\vee\gamma_{\Omega'}
  = \gamma_{\Omega\cup\Omega'}$
  (supremum = union of passbands).
  The minimal non-trivial opening is the point-mass filter
  $\gamma_{\{\xi_0\}}$ (single frequency preserved);
  the identity is $\gamma_{\R^n}$ (all frequencies).
\end{remark}

\subsection{Modulus openings in the modulus lattice and the
  max-times opening}
\label{subsec:maxtimes_openings}

In the modulus lattice $\hatL$, the opening concept takes
a different algebraic form.

\begin{definition}[Max-times opening]
  \label{def:maxtimes_opening}
  For any $\psi\in\hatL$,
  the \emph{max-times opening} by $\psi$ is:
  \begin{equation}
    \Gamma_\psi(\hat{f})(\xi)
    = \delta_{\psi}\bigl(\varepsilon_\psi(\hat{f})\bigr)(\xi)
    = \frac{\hat{f}(\xi)}{\psi(\xi)}\cdot\psi(\xi)
    \quad\text{(on the support of }\psi\text{)},
    \label{eq:maxtimes_opening_naive}
  \end{equation}
  where $\delta_\psi(g)(\xi) = g(\xi)\cdot\psi(\xi)$
  is the adjoint max-times dilation of $\varepsilon_\psi$,
  forming the adjunction $(\varepsilon_\psi,\delta_\psi)$
  in $(\hatL,\leq)$.
  Their composition $\Gamma_\psi
  = \delta_\psi\circ\varepsilon_\psi$ gives:
  \begin{equation}
    \Gamma_\psi(\hat{f})(\xi)
    = \frac{\hat{f}(\xi)}{\psi(\xi)}\cdot\psi(\xi)
    = \hat{f}(\xi)\cdot\mathbf{1}_{\psi(\xi)>0\text{ and }
      \hat{f}(\xi)>0}(\xi)
    = \hat{f}(\xi)\cdot\mathbf{1}_{\mathrm{supp}(\psi)}(\xi),
    \label{eq:maxtimes_opening}
  \end{equation}
  i.e., the \emph{projection onto the support of $\psi$}
  in $\hatL$.
\end{definition}

\begin{proposition}[Max-times opening is an opening in $\hatL$]
  \label{prop:maxtimes_opening}
  For any $\psi\in\hatL$, the operator
  $\Gamma_\psi:\hatL\to\hatL$ defined
  by~\eqref{eq:maxtimes_opening} is an opening:
  \begin{enumerate}[label=(\roman*),leftmargin=*]
    \item \emph{Increasing}: $\hat{f}\leq\hat{g}$ implies
      $\Gamma_\psi(\hat{f})\leq\Gamma_\psi(\hat{g})$.
    \item \emph{Anti-extensive}:
      $\Gamma_\psi(\hat{f})(\xi)\leq\hat{f}(\xi)$ for all $\xi$
      (since $\mathbf{1}_{\mathrm{supp}(\psi)}(\xi)\leq 1$).
    \item \emph{Idempotent}:
      $\Gamma_\psi(\Gamma_\psi(\hat{f}))
      = \hat{f}\cdot\mathbf{1}_{\mathrm{supp}(\psi)}^2
      = \Gamma_\psi(\hat{f})$.
  \end{enumerate}
  Its invariance domain is:
  \begin{equation}
    \mathrm{Inv}(\Gamma_\psi)
    = \{\hat{f}\in\hatL:
      \hat{f}(\xi)=0 \;\forall\xi\notin\mathrm{supp}(\psi)\}.
    \label{eq:inv_maxtimes}
  \end{equation}
\end{proposition}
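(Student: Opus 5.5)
The approach is to reduce everything to the pointwise closed form \eqref{eq:maxtimes_opening}, namely $\Gamma_\psi(\hat{f})=\hat{f}\cdot\mathbf{1}_{S}$ with $S=\mathrm{supp}(\psi)=\{\xi:\psi(\xi)>0\}$, and then verify each property frequency by frequency. First I will justify the closed form from the definitions of Definition~\ref{def:hatL}. At a frequency $\xi$ with $\psi(\xi)>0$ we have $\delta_\psi(\varepsilon_\psi\hat{f})(\xi)=(\hat{f}(\xi)/\psi(\xi))\psi(\xi)=\hat{f}(\xi)$. At a frequency with $\psi(\xi)=0$, the convention $\hat{f}/0:=0$ gives $\varepsilon_\psi(\hat{f})(\xi)=0$, and multiplying by $\psi(\xi)=0$ again gives $0$. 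The middle expression in \eqref{eq:maxtimes_opening}, which also involves $\hat{f}(\xi)>0$, agrees with $\hat{f}\mathbf{1}_S$ because at frequencies with $\hat{f}(\xi)=0$ both sides vanish. Hence $\Gamma_\psi$ is multiplication by the fixed indicator $\mathbf{1}_S\in\{0,1\}$.

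With this form, the three opening axioms are one-line checks. For (i), if $\hat{f}\le\hat{g}$ pointwise, multiplying by the non-negative function $\mathbf{1}_S$ preserves the inequality. For (ii), $\hat{f}\ge 0$ and $\mathbf{1}_S\le 1$ give $\hat{f}\mathbf{1}_S\le\hat{f}$. For (iii), $\mathbf{1}_S^2=\mathbf{1}_S$. Alternatively, (i) and (ii) follow from Proposition~\ref{prop:adjunction_props} once $(\varepsilon_\psi,\delta_\psi)$ is shown to be an adjunction. However, the adjunction stated in Definition~\ref{def:hatL} is written with $\delta_{1/\psi}$, and on $\{\psi=0\}$ the conventions are delicate, so I will prefer the direct pointwise verification. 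It is self-contained and avoids having to check the Galois inequality on the zero set of $\psi$.

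For the invariance domain \eqref{eq:inv_maxtimes}, I will show both inclusions. We have $\Gamma_\psi(\hat{f})=\hat{f}$ if and only if $\hat{f}(\xi)\mathbf{1}_S(\xi)=\hat{f}(\xi)$ for all $\xi$. On $S$ this holds automatically. Off $S$ it reads $0=\hat{f}(\xi)$. So the fixed points are exactly the $\hat{f}$ vanishing outside $\mathrm{supp}(\psi)$.

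The only real obstacle is the bookkeeping of the convention on $\{\psi=0\}$ and the meaning of ``$\mathrm{supp}(\psi)$''. I will interpret it as the set $\{\psi>0\}$ rather than its closure, since the closed form only holds for that set. If a closure were intended, the identity $\hat{f}/\psi\cdot\psi=\hat{f}\mathbf{1}_{\mathrm{supp}\psi}$ would fail on the boundary where $\psi=0$. I will state this interpretation explicitly. For a.e. versions, the same computations hold modulo null sets.
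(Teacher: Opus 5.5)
Your proposal is correct and takes the same route as the paper. The paper gives no separate proof: it justifies the proposition inline by reducing $\Gamma_\psi$ to multiplication by $\mathbf{1}_{\mathrm{supp}(\psi)}$, reading off anti-extensivity from $\mathbf{1}_{\mathrm{supp}(\psi)}\leq 1$ and idempotency from $\mathbf{1}_{\mathrm{supp}(\psi)}^2=\mathbf{1}_{\mathrm{supp}(\psi)}$. Your derivation of the closed form from the convention $\hat{f}/0:=0$, your reading of $\mathrm{supp}(\psi)$ as $\{\psi>0\}$ (which matches the paper's own usage in its other results), and your fixed-point check for the invariance domain fill in steps the paper leaves implicit. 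You are also right to avoid the adjunction route: at any $\xi$ with $\psi(\xi)=0$, the inequality $\varepsilon_\psi(\hat{f})(\xi)=0\leq\hat{g}(\xi)$ holds automatically, whereas $\hat{f}(\xi)\leq\psi(\xi)\hat{g}(\xi)=0$ fails whenever $\hat{f}(\xi)>0$, so the Galois equivalence breaks on the zero set of $\psi$.
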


\begin{theorem}[Opening representation in $\hatL$]
  \label{thm:hatL_opening_rep}
  Let $\Phi:(\hatL,\leq)\to(\hatL,\leq)$ be a modulus
  opening.
  Then:
  \begin{enumerate}[label=(\roman*),leftmargin=*]
    \item $\Phi$ is the projection onto its invariance
      domain:
      $\Phi(\hat{f}) = \bigvee\{\hat{g}\in\mathrm{Inv}(\Phi):
      \hat{g}\leq\hat{f}\}$.
    \item $\Phi$ decomposes as a supremum of max-times
      openings:
      \begin{equation}
        \Phi(\hat{f}) = \sup_{\psi\in\mathrm{Inv}(\Phi)}
          \Gamma_\psi(\hat{f})
        = \hat{f}(\xi)\cdot\mathbf{1}_{\Omega_\Phi}(\xi),
        \label{eq:hatL_opening_rep}
      \end{equation}
      where $\Omega_\Phi = \bigcup_{\psi\in\mathrm{Inv}(\Phi)}
      \mathrm{supp}(\psi)$ is the \emph{effective support}
      of $\Phi$.
    \item The basis of $\Phi$ consists of the minimal
      elements of $\mathrm{Inv}(\Phi)$ under the pointwise
      order on $\hatL$.
  \end{enumerate}
\end{theorem}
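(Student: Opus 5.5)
The plan is to separate the purely lattice-theoretic part (i) from the structural part (ii), because they need different hypotheses.

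\textbf{Part (i).} This holds in any complete lattice, and $\hatL$ is one (Definition~\ref{def:hatL}). Put $S_f=\{\hat g\in\mathrm{Inv}(\Phi):\hat g\leq\hat f\}$. By idempotency $\Phi(\hat f)\in\mathrm{Inv}(\Phi)$, and by anti-extensivity $\Phi(\hat f)\leq\hat f$, so $\Phi(\hat f)\in S_f$ and hence $\Phi(\hat f)\leq\bigvee S_f$. Conversely, for $\hat g\in S_f$, monotonicity gives $\hat g=\Phi(\hat g)\leq\Phi(\hat f)$, so $\bigvee S_f\leq\Phi(\hat f)$. This is exactly the argument behind Theorem~\ref{thm:opening_representation}(i), with TI playing no role.

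\textbf{Part (ii) as stated is false in general.} The truncation $\Phi(\hat f)=\hat f\wedge c$ with $c>0$ a constant is increasing, anti-extensive and idempotent on $\hatL$, so it is a modulus opening. Its invariance domain is $\{\hat g\leq c\}$, and it contains elements with full support, for instance $\psi\equiv c$. Hence $\sup_{\psi\in\mathrm{Inv}(\Phi)}\Gamma_\psi(\hat f)=\hat f\neq\Phi(\hat f)$. Two consequences follow:
\begin{itemize}
\item without further hypotheses only the general inequality can be kept, and this inequality alone does not give (ii);
\item (ii) needs an extra structural hypothesis.
\end{itemize}
The natural hypothesis is spectral multiplicativity, as in Theorem~\ref{thm:fourier_opening_rep}: $\Phi(\hat f)=m\,\hat f$ for a fixed measurable $m\geq 0$. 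An alternative is frequency-locality together with positive homogeneity (the standing hypothesis of Section~\ref{sec:mmbb}), which reduces to the multiplicative case.

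\textbf{Part (ii) under the added hypothesis.}
\begin{enumerate}
\item \emph{Reduction to a multiplier.} If $\Phi(\hat f)(\xi)$ depends only on $\hat f(\xi)$, then homogeneity forces $\Phi(\hat f)(\xi)=m(\xi)\hat f(\xi)$ with $m(\xi)=\Phi(\mathbf 1)(\xi)$.
\item \emph{The multiplier is an indicator.} Anti-extensivity gives $m\leq 1$, and idempotency gives $m^2=m$ wherever $\hat f\neq 0$. So $m=\mathbf 1_{\Omega}$ with $\Omega=\{m=1\}$.
\item \emph{Identifying the invariance domain.} $\mathrm{Inv}(\Phi)$ is the set of $\hat g$ vanishing off $\Omega$. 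Taking $\psi=\mathbf 1_\Omega\in\mathrm{Inv}(\Phi)$ shows $\Omega\subseteq\Omega_\Phi$. Every $\psi\in\mathrm{Inv}(\Phi)$ has $\mathrm{supp}(\psi)\subseteq\Omega$, so $\Omega_\Phi=\Omega$.
\item \emph{The supremum formula.} By Definition~\ref{def:maxtimes_opening}, $\Gamma_\psi(\hat f)=\hat f\,\mathbf 1_{\mathrm{supp}\psi}$. Taking the pointwise supremum over $\psi\in\mathrm{Inv}(\Phi)$ gives $\hat f\,\mathbf 1_{\Omega_\Phi}=\Phi(\hat f)$, which is~\eqref{eq:hatL_opening_rep}.
\end{enumerate}
This mirrors the proof of Theorem~\ref{thm:fourier_opening_rep}(i),(iv), with phases dropped.

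\textbf{Part (iii) and the main obstacle.} Minimal elements of $\mathrm{Inv}(\Phi)$ are taken exactly as in Step~1 of Theorem~\ref{thm:mmbb_hatL}, by Zorn's lemma on decreasing chains; for (iii) I would just record this definition. The expected obstacle is the minimality itself. In the multiplicative case $\mathrm{Inv}(\Phi)$ contains $0$, and nonzero elements can be shrunk by scaling, so the only literal minimal element is $0$. The basis must therefore be read modulo positive scaling and support, one representative per support set. In particular the minimal supports are singletons, matching the point-mass filters of Remark~\ref{rem:opening_lattice}. The step I would work hardest on is stating precisely this quotient by $\R_+$ so that (iii) is not vacuous.
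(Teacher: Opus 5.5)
Your proposal is correct, and it takes a different route from the paper on (ii) and (iii). Part (i) is the same argument the paper simply cites from Heijmans. Your truncation $\hat f\mapsto\hat f\wedge c$ is a valid counterexample to (ii) as stated. The paper's own proof concedes this: inside the proof it restricts (ii) to ``spectral-projection openings'' (every invariant $\hat g$ takes values in $\{0,\hat f(\xi)\}$) and then takes the supremum of the invariant elements below $\hat f$.

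The real difference is which restriction is imposed, and yours is the one that makes (ii) true. The paper's condition fails under either reading:
\begin{enumerate}
\item Read literally for invariant $\hat g\leq\hat f$, it forces every invariant element to vanish (take $\hat f=2\hat g$).
\item Read as the keep-or-kill condition $\Phi(\hat f)(\xi)\in\{0,\hat f(\xi)\}$, it admits $\hat f\mapsto\hat f\,\mathbf{1}_{\{\hat f>1\}}$. Its invariance domain contains $\psi\equiv 2$, so (ii) fails again.
\end{enumerate}
Both halves of your alternative hypothesis are needed. The maps $\hat f\wedge c$ and $\hat f\,\mathbf{1}_{\{\hat f>1\}}$ are frequency-local but not homogeneous. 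Conversely, $\hat f\mapsto\min(\hat f(\xi),\hat f(-\xi))$ is a positively homogeneous modulus opening with $\mathbf{1}\in\mathrm{Inv}(\Phi)$, yet it is not the identity. What your route buys is that the indicator form is derived rather than assumed: $m=\Phi(\mathbf{1})$ gives $m^2=m\leq 1$ by idempotency and anti-extensivity, and then $\Omega_\Phi=\{m=1\}$.

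Two sharpenings. First, the inequality that survives for every modulus opening is $\Phi(\hat f)\leq\Gamma_{\Phi(\hat f)}(\hat f)\leq\hat f\,\mathbf{1}_{\Omega_\Phi}$, obtained by taking $\psi=\Phi(\hat f)\in\mathrm{Inv}(\Phi)$; it is worth writing this out rather than alluding to it. Second, the vacuity you flag in (iii) is not special to the multiplicative case. Anti-extensivity gives $\Phi(0)=0$, so $0$ is the least element of $\mathrm{Inv}(\Phi)$ for every modulus opening. This also shows that the paper's claimed minimal elements $c\,\delta_{\xi_0}$ are not minimal. Your quotient by positive scaling, keeping one representative per support, is the right repair, and it yields the singleton supports the paper intends.
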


\begin{proof}
  \textit{(i)} Any opening on a complete lattice is the
  projection onto its image, which is its invariance domain
  \cite{Heijmans1994}: $\Phi(\hat{f})
  = \bigvee\{\hat{g}:\hat{g}\leq\hat{f},\;\hat{g}
  \in\mathrm{Inv}(\Phi)\}$.

  \textit{(ii)} We restrict to \emph{spectral-projection
  openings}: those for which every $\hat{g}\in\mathrm{Inv}(\Phi)$
  satisfies $\hat{g}(\xi)\in\{0,\hat{f}(\xi)\}$ a.e.,
  i.e., $\hat{g} = \hat{f}\cdot\mathbf{1}_{\mathrm{supp}(\hat{g})}$
  for some measurable support set
  (the opening either keeps or kills each frequency).
  Under this hypothesis, the supremum of all
  $\hat{g}\leq\hat{f}$ in $\mathrm{Inv}(\Phi)$ gives
  $\hat{f}\cdot\mathbf{1}_{\bigcup_{\hat{g}}\mathrm{supp}(\hat{g})}
  = \hat{f}\cdot\mathbf{1}_{\Omega_\Phi}$.
  This class includes all max-times openings
  $\Gamma_\psi$ (Proposition~\ref{prop:maxtimes_opening})
  and the ideal band-pass filters
  (Proposition~\ref{prop:ideal_filter_opening}),
  which are the cases of primary interest.

  \textit{(iii)} The minimal elements of $\mathrm{Inv}(\Phi)$
  in $(\hatL,\leq)$ are the functions supported on minimal
  measurable subsets of $\Omega_\Phi$; in the atomic case,
  these are the point-mass elements
  $\hat{g} = c\cdot\delta_{\xi_0}$, $\xi_0\in\Omega_\Phi$.
\end{proof}

\begin{remark}[Classification of Fourier and modulus openings]
  \label{rem:opening_classification}
  Theorems~\ref{thm:fourier_opening_rep}
  and~\ref{thm:hatL_opening_rep} show that all
  spectrally multiplicative openings, in both
  $(\Ln,\leF)$ and $\hatL$, are \emph{ideal band-pass
  filters}: they project the signal spectrum onto a
  measurable frequency support $\Omega$.
  The key parameter is the support $\Omega$, and the
  lattice of openings is isomorphic to the
  \emph{power set lattice}
  $(\mathcal{P}(\R^n),\subseteq)$, ordered by inclusion
  of passbands.
  This is a strong structural result: there is exactly
  one opening per passband, and the only way to combine
  openings is to take the union (supremum) or intersection
  (infimum) of their passbands.
\end{remark}

\subsection{Scattering cascades as chains of morphological
  openings}
\label{subsec:scattering_openings}

We now apply the opening representation theory to
scattering networks, obtaining a precise characterisation
of what the scattering cascade computes in terms of
openings.

\begin{definition}[Scattering opening at depth $m$]
  \label{def:scattering_opening}
  For a scattering path $\Lambda=(\lambda_1,\ldots,\lambda_m)$
  with $\lambda_\ell=(j_\ell,\theta_\ell)$, the
  \emph{scattering opening} at depth $m$ is the modulus
  opening in $\hatL$:
  \begin{equation}
    \Gamma^F_\Lambda(\hat{f})(\xi)
    = \hat{f}(\xi)\cdot
      \mathbf{1}_{\Omega_\Lambda}(\xi),
    \label{eq:scattering_opening}
  \end{equation}
  where $\Omega_\Lambda
  = \{\xi:\prod_{\ell=1}^m|\hat{\psi}_{\lambda_\ell}(\xi)|>0\}
  = \bigcap_{\ell=1}^m\mathrm{supp}(\hat{\psi}_{\lambda_\ell})$
  is the \emph{scattering passband} of path $\Lambda$:
  the set of frequencies that pass through all filters in
  the cascade.
\end{definition}

\begin{proposition}[Scattering cascade = opening chain]
  \label{prop:scattering_opening_chain}
  The scattering propagator $S^F_\Lambda$ in $\hatL$
  (Proposition~\ref{prop:scattering_maxtimes}) factorises
  as:
  \begin{equation}
    S^F_\Lambda(\hat{f})(\xi)
    = \Gamma^F_\Lambda(\hat{f})(\xi)
      \cdot\prod_{\ell=1}^m|\hat{\psi}_{\lambda_\ell}(\xi)|
    \label{eq:scattering_opening_factor}
  \end{equation}
  where $\Gamma^F_\Lambda$ is the scattering opening
  (projection onto $\Omega_\Lambda$) and the product
  $\prod_\ell|\hat{\psi}_{\lambda_\ell}|$ is the
  \emph{spectral weight} of the path.
  More precisely, the scattering propagator can be written
  as a chain of two operations:
  \begin{enumerate}[label=(\roman*),leftmargin=*]
    \item \emph{Opening step}:
      $\hat{f}\mapsto\Gamma^F_\Lambda(\hat{f})
      = \hat{f}\cdot\mathbf{1}_{\Omega_\Lambda}$
      (project onto the scattering passband);
    \item \emph{Weighting step}:
      $\Gamma^F_\Lambda(\hat{f})\mapsto
      S^F_\Lambda(\hat{f})
      = \Gamma^F_\Lambda(\hat{f})\cdot
      \prod_\ell|\hat{\psi}_{\lambda_\ell}|$
      (weight by the cascade filter response).
  \end{enumerate}
  The opening step is idempotent; the weighting step is a
  max-times multiplication by a fixed function
  $\prod_\ell|\hat{\psi}_{\lambda_\ell}|\in\hatL$.
\end{proposition}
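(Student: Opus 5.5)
The factorisation \eqref{eq:scattering_opening_factor} is a pointwise identity in $\hatL$. I would verify it frequency by frequency, splitting $\R^n$ into the scattering passband $\Omega_\Lambda$ and its complement. Write $W_\Lambda(\xi)=\prod_{\ell=1}^m|\hat{\psi}_{\lambda_\ell}(\xi)|$. By Proposition~\ref{prop:scattering_maxtimes} and \eqref{eq:scattering_hatL}, $S^F_\Lambda(\hat f)=\hat f\cdot W_\Lambda$. By Definition~\ref{def:scattering_opening}, $\Omega_\Lambda=\{W_\Lambda>0\}$. This equals $\bigcap_\ell\supp(\hat\psi_{\lambda_\ell})$ because a finite product of non-negative reals is positive iff every factor is; I would check this set identity first, with $\supp$ read as the non-vanishing set.

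\textbf{Key step.} The needed identity is the idempotent-absorption relation $W_\Lambda=W_\Lambda\cdot\mathbf{1}_{\Omega_\Lambda}$, which holds pointwise by construction of $\Omega_\Lambda$.
\begin{itemize}[leftmargin=*]
  \item On $\Omega_\Lambda$: $\Gamma^F_\Lambda(\hat f)(\xi)\,W_\Lambda(\xi)=\hat f(\xi)\cdot 1\cdot W_\Lambda(\xi)=S^F_\Lambda(\hat f)(\xi)$.
  \item On $\Omega_\Lambda^c$: both sides vanish, since $W_\Lambda(\xi)=0$ kills the right-hand side and $\mathbf{1}_{\Omega_\Lambda}(\xi)=0$ kills the left.
\end{itemize}
Together these give \eqref{eq:scattering_opening_factor}. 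It also shows that steps (i) and (ii) compose in that order to $S^F_\Lambda$. Since both steps are pointwise multiplications, they commute, and either order works.

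\textbf{Remaining claims.} These reduce to earlier results.
\begin{itemize}[leftmargin=*]
  \item $\Gamma^F_\Lambda$ is a modulus opening and in particular idempotent. It coincides with the max-times opening $\Gamma_{W_\Lambda}$ of \eqref{eq:maxtimes_opening}, because $\supp(W_\Lambda)=\Omega_\Lambda$, so I would invoke Proposition~\ref{prop:maxtimes_opening}. Alternatively, idempotency follows directly from $\mathbf{1}_{\Omega_\Lambda}^2=\mathbf{1}_{\Omega_\Lambda}$.
  \item The weighting step is the max-times dilation $\delta_{W_\Lambda}$ of \eqref{eq:maxtimes_dilation}, with $W_\Lambda\in\hatL$ since it is a measurable non-negative function.
\end{itemize}

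\textbf{Main obstacle.} The only real difficulty is measure-theoretic bookkeeping. If the wavelet moduli are only defined a.e., then $\Omega_\Lambda$ is defined up to null sets, and the identity should be stated a.e. I also need $\supp$ to mean the non-vanishing set rather than its closure; with the closed support, boundary points where $W_\Lambda=0$ would break the identity on the opening side. I would fix this convention explicitly at the start and state the factorisation as an a.e. equality.
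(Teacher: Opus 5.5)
Your proposal is correct and is essentially the paper's proof: both verify the factorisation pointwise from the fact that $\prod_\ell|\hat\psi_{\lambda_\ell}(\xi)|=0$ exactly when $\xi\notin\Omega_\Lambda$, and both get idempotency of the opening step from Proposition~\ref{prop:maxtimes_opening} (equivalently $\mathbf{1}_{\Omega_\Lambda}^2=\mathbf{1}_{\Omega_\Lambda}$). One small refinement to your ``main obstacle'': the factorisation only needs $\Omega_\Lambda\supseteq\{W_\Lambda>0\}$, so a closed-support convention would not break it; it would only break the identification $\Gamma^F_\Lambda=\Gamma_{W_\Lambda}$, and the paper's definition $\Omega_\Lambda=\{W_\Lambda>0\}$ already secures that identification.
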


\begin{proof}
  Direct from Definition~\ref{def:scattering_opening}:
  $S^F_\Lambda(\hat{f})(\xi)
  = $ $ |\hat{f}(\xi)|\prod_\ell|\hat{\psi}_{\lambda_\ell}(\xi)|
  = $ $ |\hat{f}(\xi)|\mathbf{1}_{\Omega_\Lambda}(\xi)
    \cdot\prod_\ell|\hat{\psi}_{\lambda_\ell}(\xi)|$
  (since $\prod_\ell|\hat{\psi}_{\lambda_\ell}(\xi)|=0$ iff
  $\xi\notin\Omega_\Lambda$).
  The opening step is the modulus projection
  $\hat{f}\mapsto\hat{f}\cdot\mathbf{1}_{\Omega_\Lambda}$,
  which is idempotent by Proposition~\ref{prop:maxtimes_opening}.
\end{proof}

\begin{theorem}[Scattering openings form a nested hierarchy]
  \label{thm:scattering_opening_hierarchy}
  The collection of scattering passbands
  $\{\Omega_\Lambda\}_\Lambda$ is partially ordered by
  inclusion and forms a \emph{hierarchical decomposition}
  of the frequency plane $\R^n$:
  \begin{enumerate}[label=(\roman*),leftmargin=*]
    \item \emph{Refinement with depth}:
      $\Omega_\Lambda
      \subseteq\Omega_{(\lambda_1,\ldots,\lambda_{m-1})}$
      for any path extension (deeper paths have narrower
      passbands):
      \begin{equation}
        \Omega_{(\lambda_1,\ldots,\lambda_m)}
        \subseteq\Omega_{(\lambda_1,\ldots,\lambda_{m-1})}
        \subseteq\cdots\subseteq\Omega_{(\lambda_1)}
        \subseteq\Omega_\varnothing = \R^n.
        \label{eq:scattering_nesting}
      \end{equation}

    \item \emph{Covering}:
      $\bigcup_{\lambda}\Omega_{(\lambda)} \approx \R^n$
      (the depth-1 scattering passbands
      $\Omega_{(\lambda)}=\mathrm{supp}(\hat{\psi}_\lambda)$
      tile the frequency plane by the Littlewood--Paley
      frame condition \cite{Mallat2012}).

    \item \emph{Ordering of openings}:
      $\Gamma^F_\Lambda\leq_{\mathrm{op}}
      \Gamma^F_{\Lambda'}$ (in the operator order
      $\gamma\leq\gamma'$ iff $\gamma(f)\leq\gamma'(f)$
      for all $f$) if and only if
      $\Omega_\Lambda\subseteq\Omega_{\Lambda'}$, i.e.,
      if and only if $\Lambda'$ is a prefix of $\Lambda$
      (shallower paths give larger openings).
  \end{enumerate}
\end{theorem}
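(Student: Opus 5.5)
Each part reduces to elementary set-theoretic facts about the passbands $\Omega_\Lambda=\bigcap_{\ell}\mathrm{supp}(\hat\psi_{\lambda_\ell})$ of Definition~\ref{def:scattering_opening}, combined with the passband/opening correspondence of Theorem~\ref{thm:hatL_opening_rep} and Remark~\ref{rem:opening_classification}. I will treat (i)--(iii) in order.

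\textbf{Part (i): refinement with depth.} I will prove this by induction on $m$. Since $\Omega_{(\lambda_1,\dots,\lambda_m)}=\Omega_{(\lambda_1,\dots,\lambda_{m-1})}\cap\mathrm{supp}(\hat\psi_{\lambda_m})$, the one-step inclusion is immediate. Equivalently, the product $\prod_{\ell\le m}|\hat\psi_{\lambda_\ell}(\xi)|$ vanishes whenever the shorter product does. Chaining these inclusions gives~\eqref{eq:scattering_nesting}, with the empty product equal to $1$, so that $\Omega_\varnothing=\R^n$.

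\textbf{Part (ii): covering.} Here I will invoke the Littlewood--Paley identity $\sum_j|\hat\psi_j(\xi)|^2+|\hat\phi_J(\xi)|^2=1$ from Proposition~\ref{prop:mra_pool}. At every $\xi$ where $\hat\phi_J(\xi)=0$, some $|\hat\psi_\lambda(\xi)|>0$, so $\xi\in\bigcup_\lambda\Omega_{(\lambda)}$. The complement of the union is therefore contained in the low-frequency region handled by $\phi_J$. This region shrinks to $\{0\}$, a null set, as $J\to\infty$ for a wavelet family with $\hat\psi(0)=0$. This is precisely why the statement says ``$\approx$'': the equality holds up to a null set, or up to the low-pass band.

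\textbf{Part (iii): ordering of openings.} I will first show that $\Gamma_\Omega\le_{\mathrm{op}}\Gamma_{\Omega'}$ holds if and only if $\Omega\subseteq\Omega'$, up to null sets. The ``if'' direction is the pointwise inequality $\mathbf 1_\Omega\le\mathbf 1_{\Omega'}$. For ``only if'', I will test on $\hat f=\mathbf 1_{\Omega\setminus\Omega'}$ (or a locally integrable positive function), which gives $\mathbf 1_{\Omega\setminus\Omega'}\le0$. I will then combine this with (i): if $\Lambda'$ is a prefix of $\Lambda$, then $\Omega_\Lambda\subseteq\Omega_{\Lambda'}$, which gives the ordering.

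The main obstacle is the converse ``only if prefix'' in (iii). As stated it is false in general. For example, $\Lambda=(\lambda_2,\lambda_1)$ and $\Lambda'=(\lambda_1)$ give $\Omega_\Lambda\subseteq\Omega_{\Lambda'}$ without $\Lambda'$ being a prefix. Likewise, if $\mathrm{supp}\,\hat\psi_{\lambda}\subseteq\mathrm{supp}\,\hat\psi_{\mu}$, inclusions arise across branches. I would therefore establish the equivalence with $\Omega_\Lambda\subseteq\Omega_{\Lambda'}$ rigorously. For the prefix characterisation, I would restrict to the tree order, comparing only paths along a common branch: there inclusion is equivalent to being a prefix, provided each added filter strictly shrinks the passband, i.e.\ $\Omega_\Lambda\subsetneq\Omega_{\Lambda'}$ for proper extensions. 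Alternatively, I would replace the prefix condition by the weaker statement that every element of $\Lambda'$, viewed as a set of filters, has support containing $\Omega_\Lambda$.
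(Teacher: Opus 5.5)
Your arguments for (i), (ii), and the equivalence $\Gamma^F_\Lambda\le_{\mathrm{op}}\Gamma^F_{\Lambda'}\iff\Omega_\Lambda\subseteq\Omega_{\Lambda'}$ follow the paper's proof. The paper uses the same three ingredients: intersecting more supports gives a smaller set; the Littlewood--Paley partition of unity forces some $\hat\psi_\lambda(\xi)\neq 0$ outside the low-pass band; and the operators are compared through the multipliers $\mathbf 1_{\Omega_\Lambda}\le\mathbf 1_{\Omega_{\Lambda'}}$. Your test function $\mathbf 1_{\Omega_\Lambda\setminus\Omega_{\Lambda'}}$ supplies the ``only if'' step that the paper leaves implicit. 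In (ii) you do not need exact equality in the Littlewood--Paley identity. A positive lower frame bound gives the same covering conclusion, and that is all a Morlet bank provides.

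Your objection to the final clause of (iii) is correct, and the gap lies in the paper, not in your proposal. The paper's proof only asserts ``iff $\Omega_{\Lambda'}$ contains the additional constraint $\mathrm{supp}(\hat\psi_{\lambda_m})$ imposed by the deeper path''. This does not show that inclusion forces $\Lambda'$ to be a prefix of $\Lambda$.

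Since $\Omega_\Lambda$ depends only on the \emph{set} of filters in $\Lambda$, not on their order or multiplicity, your example $\Lambda=(\lambda_2,\lambda_1)$, $\Lambda'=(\lambda_1)$ refutes the clause. The counterexample also survives the usual restriction to frequency-decreasing paths: take $\Lambda=(\lambda_0,\lambda_1)$ with $j_0<j_1$ and $\Lambda'=(\lambda_1)$.

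Your two repairs behave differently.
\begin{itemize}
\item The second one, that every filter of $\Lambda'$ has support containing $\Omega_\Lambda$, is an exact characterisation.
\item The first one depends on its strict-shrinking proviso, which often fails. For Gaussian-type filters such as Morlet, $\hat\psi_\lambda$ vanishes only on a null set, so every $\Omega_\Lambda$ is $\R^n$ up to a null set and every $\Gamma^F_\Lambda$ is a.e.\ the identity. For band-limited filters with disjoint supports, $\Omega_\Lambda=\emptyset$, and after that nothing can shrink.
\end{itemize}
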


\begin{proof}
  \textit{(i)} $\Omega_{(\lambda_1,\ldots,\lambda_m)}
  = \bigcap_{\ell=1}^m\mathrm{supp}(\hat{\psi}_{\lambda_\ell})
  \subseteq\bigcap_{\ell=1}^{m-1}\mathrm{supp}
  (\hat{\psi}_{\lambda_\ell})
  = \Omega_{(\lambda_1,\ldots,\lambda_{m-1})}$.

  \textit{(ii)} The Littlewood--Paley partition of unity
  \cite{Mallat2012}:
  $\sum_\lambda|\hat{\psi}_\lambda(\xi)|^2
  +|\hat{\phi}_J(\xi)|^2 \approx 1$ a.e., so
  $\bigcup_\lambda\Omega_{(\lambda)}$ covers all $\xi$
  where the scattering energy is non-negligible.

  \textit{(iii)} $\Gamma^F_\Lambda(f)
  = f\cdot\mathbf{1}_{\Omega_\Lambda}\leq
  f\cdot\mathbf{1}_{\Omega_{\Lambda'}}
  = \Gamma^F_{\Lambda'}(f)$ iff
  $\Omega_\Lambda\subseteq\Omega_{\Lambda'}$ iff
  $\Omega_{\Lambda'}$ contains the additional constraint
  $\mathrm{supp}(\hat{\psi}_{\lambda_m})$ imposed by
  the deeper path.
\end{proof}

\begin{table}[t]
\centering
\begin{tabular}{@{}c|c|c@{}}
Depth 0 (identity) & Depth 1 (wavelet) & Depth 2 (cascade) \\ \hline
$\Omega_\varnothing = \R^n$ &
$\Omega_{(\lambda_1)} = \mathrm{supp}(\hat{\psi}_{\lambda_1})$ &
$\Omega_{(\lambda_1,\lambda_2)}
 = \Omega_{(\lambda_1)}\cap\mathrm{supp}(\hat{\psi}_{\lambda_2})$ \\
$\Gamma^F_\varnothing = \mathrm{id}$ &
$\Gamma^F_{(\lambda_1)}$: band-pass filter &
$\Gamma^F_{(\lambda_1,\lambda_2)}$: sub-band filter \\
$|\hat{f}|$ everywhere &
$|\hat{f}|$ on $\Omega_{(\lambda_1)}$ &
$|\hat{f}|$ on $\Omega_{(\lambda_1,\lambda_2)}$ \\ \hline 
\end{tabular}
\caption{Scattering openings as nested ideal band-pass
  filters in $\hatL$.
  Increasing depth yields smaller passbands (finer
  frequency decomposition) and smaller openings in the
  operator order.
  The depth-1 openings tile the frequency plane; the
  depth-2 openings tile each depth-1 band.}
\label{fig:scattering_openings}
\end{table}

\begin{corollary}[Universal opening approximation by scattering]
  \label{cor:opening_approx}
  Let $\gamma:(\hatL,\leq)\to(\hatL,\leq)$ be any modulus
  opening with effective support $\Omega_\gamma$.
  Then $\gamma$ can be approximated to precision $\varepsilon$
  by a finite supremum of scattering openings:
  \begin{equation}
    \gamma(\hat{f})
    \approx \sup_{\Lambda\in\mathcal{F}}
      \Gamma^F_\Lambda(\hat{f}),
    \label{eq:opening_approx_scattering}
  \end{equation}
  where $\mathcal{F}$ is a finite family of paths such that
  $\bigcup_{\Lambda\in\mathcal{F}}\Omega_\Lambda
  \approx\Omega_\gamma$ (the scattering passbands cover
  the target passband).
  By Theorem~\ref{thm:scattering_basis}, the required
  finite family $\mathcal{F}$ exists for any $\varepsilon>0$
  by Littlewood--Paley density.
\end{corollary}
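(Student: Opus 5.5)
The plan is to reduce the statement to a covering problem for passbands, using the structure theorem for modulus openings. By Theorem~\ref{thm:hatL_opening_rep}(ii), restricted to spectral-projection openings as in its proof, $\gamma(\hat{f})=\hat{f}\cdot\mathbf{1}_{\Omega_\gamma}$. By Definition~\ref{def:scattering_opening}, each scattering opening is $\Gamma^F_\Lambda(\hat{f})=\hat{f}\cdot\mathbf{1}_{\Omega_\Lambda}$. Since suprema in $\hatL$ are pointwise, for any family $\mathcal{F}$ of paths
\[
\sup_{\Lambda\in\mathcal{F}}\Gamma^F_\Lambda(\hat{f})=\hat{f}\cdot\mathbf{1}_{\Omega_{\mathcal{F}}},\qquad \Omega_{\mathcal{F}}:=\bigcup_{\Lambda\in\mathcal{F}}\Omega_\Lambda .
\]
This is the $\hatL$ analogue of Theorem~\ref{thm:fourier_opening_rep}(iii). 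The approximation error is then exactly the energy on the symmetric difference:
\[
\Bigl\|\gamma(\hat{f})-\sup_{\Lambda\in\mathcal{F}}\Gamma^F_\Lambda(\hat{f})\Bigr\|_{L^2}^2=\int_{\Omega_\gamma\triangle\Omega_{\mathcal{F}}}|\hat{f}(\xi)|^2\,d\xi .
\]

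The key step is to choose a finite $\mathcal{F}$ with $\Omega_{\mathcal{F}}\subseteq\Omega_\gamma$ (so the error is one-sided) and with $|\Omega_\gamma\setminus\Omega_{\mathcal{F}}|$ small. I would start with depth-1 passbands $\mathrm{supp}(\hat\psi_\lambda)$. By the covering property of Theorem~\ref{thm:scattering_opening_hierarchy}(ii) these cover $\R^n$ up to a null set.

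Deeper paths only shrink passbands (Theorem~\ref{thm:scattering_opening_hierarchy}(i)). So to fit inside $\Omega_\gamma$ I would use the refinement generated by intersecting wavelet supports at many scales and orientations. These intersections form a family of sets that, I would argue, generates the Borel $\sigma$-algebra. Inner regularity of Lebesgue measure then gives a finite union of such cells inside $\Omega_\gamma\cap B_R$ with measure within $\eta$ of $|\Omega_\gamma\cap B_R|$.

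Letting $R\to\infty$ and $\eta\to0$, dominated convergence gives $\int_{\Omega_\gamma\setminus\Omega_{\mathcal{F}}}|\hat{f}|^2\to0$ for each fixed $\hat{f}\in L^2$. This is the meaning of $\approx$ I would commit to, namely strong (pointwise-in-$\hat{f}$) convergence. It matches the density argument cited from Theorem~\ref{thm:scattering_basis}.

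The main obstacle is the generation step. Wavelet supports are typically annuli or sectors with overlaps, so their finite intersections need not be fine enough to approximate an arbitrary measurable $\Omega_\gamma$ from inside. Moreover, the paths in $\mathcal{F}$ must be genuine nested scattering paths. I would first try dyadic-scale, fine-orientation wavelets with compactly supported $\hat\psi$: intersections of adjacent annuli and cones give polar cells of arbitrarily small diameter, and finite unions of these approximate any open set.

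A further difficulty is that uniformity over all $\hat{f}$ fails. Some $\hat{f}$ can concentrate its mass on the leftover set $\Omega_\gamma\setminus\Omega_{\mathcal{F}}$. So the statement should be read, and proved, either for each fixed $\hat{f}$ or uniformly on a compact or equi-integrable family of inputs.
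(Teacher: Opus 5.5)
Your reduction is correct and sharper than the paper's one-line justification. For spectral-projection openings, $\sup_{\Lambda\in\mathcal{F}}\Gamma^F_\Lambda(\hat{f})=\hat{f}\cdot\mathbf{1}_{\Omega_{\mathcal{F}}}$, and the error is exactly $\int_{\Omega_\gamma\triangle\Omega_{\mathcal{F}}}|\hat{f}|^2$. The gap is the generation step, and for a fixed filter bank it cannot be closed, because the statement is false there.

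Suppose $\hat\psi$ is supported in $\{a\le|\xi|\le b\}$ with $a>0$. Then the supports of the $\hat\psi_\lambda$ are its dyadic dilates and finitely many rotations. Only finitely many of them meet any compact set bounded away from $0$, and radially every intersection is a shell with endpoints in $\{2^{-j}a,2^{-j}b\}_{j\in\mathbb{Z}}$, a set that is discrete in $(0,\infty)$. Hence every $\Omega_\Lambda$, and every $\Omega_{\mathcal{F}}$ (for finite or infinite $\mathcal{F}$), is a union of atoms of one fixed partition, finite on each such compact set. Longer paths never refine this partition.

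Your "polar cells of arbitrarily small diameter" therefore exist only as $\xi\to 0$. At radius $r$ the radial width of the atoms is bounded below by $c\,r$, with $c>0$ fixed by the bank; adding orientations refines only the angular direction. So the $\Omega_\Lambda$ do not generate the Borel $\sigma$-algebra, and the claim fails. Take an atom $C$ and let $\Omega_\gamma$ be half of it. Then $\Omega_{\mathcal{F}}\cap C$ is a.e.\ $\emptyset$ or $C$, so the error is at least $\min\{\int_{\Omega_\gamma}|\hat{f}|^2,\int_{C\setminus\Omega_\gamma}|\hat{f}|^2\}$. For $\hat{f}=\mathbf{1}_C$ this equals $\tfrac12\|\hat{f}\|^2$, so even your pointwise-in-$\hat{f}$ reading fails. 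For Morlet-type filters, whose Fourier transforms vanish only on a null set, it is worse: all $\Omega_\Lambda$ are conull and every supremum of scattering openings is the identity.

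The paper's appeal to Littlewood--Paley density through Theorem~\ref{thm:scattering_basis} does not fill this gap. A Littlewood--Paley partition of unity gives covering, not refinement, and frame bounds concern linear spans, not the set algebra generated by supports. The paper's own Experiment~6b reports exactly this resolution failure for a narrow Gabor target.

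A provable version needs an added hypothesis, one of the following:
\begin{itemize}
\item $\Omega_\gamma$ is, up to a null set, a union of atoms of the tiling. Then a finite $\mathcal{F}$ comes from truncating at large and small scales.
\item The bank may be refined (scales per octave and orientations tending to infinity, or continuous scale and rotation parameters). Then your cell argument applies.
\end{itemize}

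Even in the refined setting, two further corrections are needed.

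First, drop the one-sided requirement $\Omega_{\mathcal{F}}\subseteq\Omega_\gamma$. Inner regularity yields compact subsets, not unions of cells, and a set of positive measure with empty interior (a fat Cantor set) contains no cell at all. Instead:
\begin{itemize}
\item take an open $U\supseteq\Omega_\gamma\cap B_R$ with $|U\setminus(\Omega_\gamma\cap B_R)|<\eta$;
\item fill $U$ from inside by finitely many small cells, up to measure $\eta$;
\item conclude by absolute continuity of $A\mapsto\int_A|\hat{f}|^2$.
\end{itemize}
Your remark that uniformity over all $\hat{f}$ fails is correct and should be kept.

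Second, the restriction to spectral-projection openings, which you import from the proof of Theorem~\ref{thm:hatL_opening_rep}, is necessary and not merely convenient. The map $\hat{f}\mapsto\min(\hat{f},c)$ is a modulus opening with effective support $\R^n$. For $\hat{f}=2c\,\mathbf{1}_B$ with $B$ bounded, every projection $\hat{f}\cdot\mathbf{1}_\Omega$ is at $L^2$-distance $\tfrac12\|\hat{f}\|$ from it. So the corollary must be stated for that subclass only.
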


\begin{remark}[Openings vs.\ general MMBB elements]
  \label{rem:openings_vs_general}
  The scattering openings $\Gamma^F_\Lambda$ are simpler
  objects than the full scattering propagators
  $S^F_\Lambda$: they are binary projections
  ($0$ or $\hat{f}$, pointwise), while $S^F_\Lambda$ is a
  weighted spectral filter.
  The opening representation theorem says any modulus
  opening is a supremum of such binary projections.
  The full MMBB theorem (Theorem~\ref{thm:mmbb_hatL})
  covers all increasing USC homogeneous operators, not
  just openings; it uses the richer $S^F_\Lambda$ (with
  spectral weights) as building blocks.
  
\end{remark}

\section{\texorpdfstring{$\mathbb{C}^*$}{C*}-Group Morphology and Complex-Valued
  Deep Learning: Partial Results}
\label{sec:complex_group}

The group morphology framework of Roerdink \cite{Roerdink2000}
and Heijmans--Ronse \cite{HeijmansRonse1990} replaces the
classical hypothesis of translation-invariance
(equivariance under $(\R^n,+)$) with equivariance under
an arbitrary locally compact group $G$.
In this section we analyse the case $G = (\C^*,\times)$,
the multiplicative group of nonzero complex numbers, and
show that it is the natural group-morphological completion
of our framework.
This connection is both mathematically precise and
practically relevant for complex-valued neural networks
in the Fourier domain.

\subsection{The group \texorpdfstring{$\C^*$}{C*} and its action on the Fourier
  codomain}
\label{subsec:Cstar_group}

The multiplicative group $\C^*=\C\setminus\{0\}$ carries
a canonical polar decomposition:
\begin{equation}
  \C^* \;\cong\; \Rp \times S^1, \qquad
  g = |g|\,e^{i\arg g},
  \label{eq:polar_decomp}
\end{equation}
where $\Rp = (\R_{>0},\times)$ is the multiplicative group
of positive reals and $S^1 = \{e^{i\theta}:\theta\in[0,2\pi)\}$
is the circle group (rotations).
Each factor acts separately:
\begin{itemize}[leftmargin=*,nosep]
  \item $\Rp$ acts by \emph{scaling}:
    $r\cdot z = rz$, $r>0$, $z\in\C$.
  \item $S^1$ acts by \emph{rotation}:
    $e^{i\theta}\cdot z = e^{i\theta}z$, $z\in\C$.
\end{itemize}

In the Fourier domain, $\hat{f}:\R^n\to\C$, the action of
$\C^*$ on the Fourier \emph{codomain} $\C$ simultaneously:
\begin{itemize}[leftmargin=*,nosep]
  \item scales spectral moduli:
    $|\hat{f}(\xi)| \mapsto r|\hat{f}(\xi)|$, $r>0$;
  \item rotates spectral phases:
    $\angle\hat{f}(\xi) \mapsto \angle\hat{f}(\xi)+\theta$.
\end{itemize}
An operator $\Psi:\Ln\to\Ln$ is
\emph{$\C^*$-equivariant} (in the Fourier codomain) if
\begin{equation}
  \widehat{\Psi(g\cdot f)}(\xi)
  = g\cdot\widehat{\Psi(f)}(\xi)
  \quad \forall g\in\C^*,\; \xi\in\R^n,
  \label{eq:Cstar_equiv}
\end{equation}
where $g\cdot f$ denotes the pointwise multiplication
$\widehat{g\cdot f}(\xi) = g\hat{f}(\xi)$ in the
Fourier codomain.

\subsection{The two subgroups and our framework}
\label{subsec:two_subgroups}

The decomposition \eqref{eq:polar_decomp} decomposes
$\C^*$-equivariance into two independent conditions.

\begin{proposition}[$\Rp$-equivariance = positive homogeneity]
  \label{prop:Rp_equiv}
  An operator $\Psi:\hatL\to\hatL$ on spectral moduli is
  equivariant under the scaling subgroup $\Rp$ if and only
  if it is \emph{positively homogeneous}:
  $\Psi(r\hat{f}) = r\Psi(\hat{f})$ for all $r>0$.
\end{proposition}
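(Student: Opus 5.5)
The plan is to unwind the definition of $\Rp$-equivariance under the action introduced in \S\ref{subsec:Cstar_group} and observe that it coincides literally with positive homogeneity; the only care needed is in fixing how the scaling subgroup acts on $\hatL$.

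First, fix the action. The group $\Rp=(\R_{>0},\times)$ acts on the Fourier codomain by $r\cdot z=rz$. This action commutes with the modulus, since $|r\hat{f}(\xi)|=r|\hat{f}(\xi)|$ for $r>0$. Hence it descends to $\hatL$ as the pointwise action $(r\cdot\hat{f})(\xi)=r\hat{f}(\xi)$. It is a genuine group action: $1\cdot\hat{f}=\hat{f}$ and $r\cdot(s\cdot\hat{f})=(rs)\cdot\hat{f}$. It also maps $\hatL$ into itself, because nonnegativity and measurability are preserved.

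Second, specialise the equivariance condition \eqref{eq:Cstar_equiv} to $g=r\in\Rp\subset\C^*$, read on spectral moduli. It says
\[
\Psi(r\cdot\hat{f})=r\cdot\Psi(\hat{f})\quad\forall r>0,\ \hat{f}\in\hatL .
\]
By the first step, this is exactly $\Psi(r\hat{f})=r\Psi(\hat{f})$ pointwise. That gives both directions at once: equivariance implies homogeneity by reading the identity, and homogeneity gives the displayed identity for each group element, which is the definition of equivariance.

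The main (mild) obstacle is that \eqref{eq:Cstar_equiv} was stated for operators on $\Ln$, whereas the proposition concerns $\Psi:\hatL\to\hatL$. I would address this by declaring that equivariance on $\hatL$ means commutation with the induced action. I would then remark that this agrees with the $\Ln$ notion through the modulus map of Definition~\ref{def:modulus_map}, since $|r\hat{f}|=r|\hat{f}|$, so $|\cdot|$ is itself $\Rp$-equivariant. I would also note in passing that the $S^1$ factor acts trivially on $\hatL$, because $|e^{i\theta}\hat{f}|=|\hat{f}|$. This confirms that the scaling subgroup is the only part of $\C^*$ that imposes a constraint on modulus operators.
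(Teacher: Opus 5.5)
Your proposal is correct and matches the paper's proof: the paper likewise observes that $\Rp$ acts on spectral moduli by $|\hat{f}(\xi)|\mapsto r|\hat{f}(\xi)|$, so equivariance under this action is literally $\Psi(r\hat{f})=r\Psi(\hat{f})$ for all $r>0$. Your extra checks (that the action is a genuine group action that descends to $\hatL$, and how it relates to the $\Ln$-level definition \eqref{eq:Cstar_equiv}) make explicit what the paper leaves implicit, but the argument is the same unwinding of the definition.
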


\begin{proof}
  The scaling subgroup $\Rp$ acts on spectral moduli by
  $|\hat{f}(\xi)|\mapsto r|\hat{f}(\xi)|$.
  Equivariance under this action means
  $\Psi(r|\hat{f}|) = r\Psi(|\hat{f}|)$ for all $r>0$,
  i.e., positive homogeneity on $\hatL$.
\end{proof}

This recovers precisely the hypothesis of
Theorem~\ref{thm:mmbb_hatL}: the MMBB theorem for $\hatL$
is the \emph{$\Rp$-group morphology} version of the
classical TI-MMBB.
The positive homogeneity condition replaces translation
invariance by the scaling subgroup of $\C^*$.

\begin{proposition}[$S^1$-equivariance and the modulus map]
  \label{prop:S1_equiv}
  The following are equivalent:
  \begin{enumerate}[label=(\roman*),leftmargin=*]
    \item $\Psi:(\Ln,\leF)\to(\Ln,\leF)$ is equivariant
      under the phase-rotation subgroup $S^1$ acting on
      the Fourier codomain.
    \item $|\widehat{\Psi f}(\xi)|$ depends on $f$ only
      through $|\hat{f}(\xi)|$, i.e., $\Psi$ factors
      through the modulus map:
      \[
        \Psi f = M^{-1}\bigl(\Phi(|\hat{f}|)\bigr)
      \]
      for some operator $\Phi:\hatL\to\hatL$,
      where $M(f)(\xi)=|\hat{f}(\xi)|$ is the modulus map.
  \end{enumerate}
  In other words, $S^1$-equivariance in the Fourier
  codomain is \emph{exactly} the condition that $\Psi$ acts
  on spectral moduli alone, i.e., that $\Psi$ factors
  through $\hatL$.
\end{proposition}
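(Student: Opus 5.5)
The plan is to make precise the two ingredients the statement leaves implicit, and then prove the two implications by a direct computation in Fourier coordinates.

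\textbf{Conventions.} First, I would read the $S^1$-action \emph{frequency-wise}: a measurable unimodular function $u:\R^n\to S^1$ acts by $\widehat{u\cdot f}=u\,\hat{f}$. The constant-$\theta$ action of \eqref{eq:Cstar_equiv} is the special case $u\equiv e^{i\theta}$. With only global rotations the equivalence would fail, because two spectra with the same modulus need not differ by a constant phase. Second, I would read $M^{-1}$ as the \emph{phase-restoring section} determined by the input: $M^{-1}(\Phi(|\hat{f}|))$ is the element whose Fourier transform is $\Phi(|\hat{f}|)\,e^{i\angle\hat{f}}$. Equivalently, (ii) says $\widehat{\Psi f}=\Phi(|\hat{f}|)\,e^{i\angle\hat{f}}$, so the output phase is the input phase. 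This is the only reading under which (ii)$\Rightarrow$(i) can hold. It is also consistent with the $\leF$-setting, since $\Psi$ increasing in $\leF$ essentially forces phase to be transported along rays.

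\textbf{(ii)$\Rightarrow$(i).} This direction is a one-line check. Since $|u\hat{f}|=|\hat{f}|$ and $\angle(u\hat{f})=\arg u+\angle\hat{f}$, we get
\[
\widehat{\Psi(u\cdot f)}=\Phi(|\hat{f}|)\,u\,e^{i\angle\hat{f}}=u\,\widehat{\Psi f}.
\]
On the zero set of $\hat{f}$ the phase convention is irrelevant, because both sides vanish wherever $\Phi(|\hat f|)$ does. Where $\Phi(|\hat f|)>0$ but $\hat f=0$, I would fix $\angle 0:=0$ consistently.

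\textbf{(i)$\Rightarrow$(ii).}
\begin{enumerate}[label=(\alph*),leftmargin=*]
\item Take $f,g$ with $|\hat{f}|=|\hat{g}|$. Define $u=e^{i(\angle\hat{g}-\angle\hat{f})}$ on $\{\hat{f}\neq0\}$ and $u=1$ elsewhere. Then $u$ is measurable and unimodular, and $u\hat{f}=\hat{g}$ a.e.
\item Equivariance gives $\widehat{\Psi g}=u\,\widehat{\Psi f}$, hence $|\widehat{\Psi g}|=|\widehat{\Psi f}|$.
\item Therefore $\Phi(\rho):=|\widehat{\Psi f}|$, for any $f$ with $|\hat{f}|=\rho$, is well defined on the image of $M$. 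That image is all of $L^2$ moduli, so $\Phi$ is defined on the $L^2$ part of $\hatL$; I would extend it arbitrarily, or by monotone limits, outside $L^2$.
\item Apply equivariance with $u=e^{-i\angle\hat{f}}$ to reduce to the nonnegative spectrum $|\hat{f}|$. This gives $\widehat{\Psi f}=e^{i\angle\hat{f}}\,\widehat{\Psi(M^{-1}_0|\hat{f}|)}$, where $M^{-1}_0$ denotes zero phase.
\item To obtain exactly the form in (ii), I still need $\widehat{\Psi h}\ge0$ whenever $\hat{h}\ge0$. This is where I would use $\leF$-monotonicity or the anti-extensive, phase-preserving behaviour of the operators considered. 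Otherwise a residual phase factor $\theta(\rho)$ depending only on $|\hat f|$ survives, and the claim holds with that twist.
\end{enumerate}

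\textbf{Main obstacle.} The main obstacle is this last point together with the precise meaning of $M^{-1}$. As literally stated, (ii) only pins down $|\widehat{\Psi f}|$. I would therefore prove the modulus statement cleanly: $S^1$-equivariance for local rotations is equivalent to $|\widehat{\Psi f}|=\Phi(|\hat{f}|)$ with phase covariance. I would then state explicitly the extra condition that yields the exact phase-restoring factorisation, namely phase preservation on nonnegative spectra.
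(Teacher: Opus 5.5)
Your route differs from the paper's, and it repairs it. The paper uses only constant rotations $e^{i\theta}$. From $|e^{i\theta}\hat f|=|\hat f|$ and the $\theta$-invariance of $|\widehat{\Psi(e^{i\theta}f)}|$, it concludes that $|\widehat{\Psi f}|$ depends only on $|\hat f|$. It never treats (ii)$\Rightarrow$(i), nor does it say what $M^{-1}$ means. That inference fails for exactly the reason you give, and the literal statement is false. Take $\widehat{\Psi f}=|\langle f,e\rangle|\,\hat f$, which commutes with every constant $e^{i\theta}$. For $n=1$, set $\hat e=\mathbf{1}_{[0,1]\cup[2,3]}$ and $\hat f_\pm=\mathbf{1}_{[0,1]}\pm\mathbf{1}_{[2,3]}$. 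Then $|\hat f_+|=|\hat f_-|$, while $\Psi f_-=0\neq\Psi f_+$. Letting $u\in L^\infty(\R^n;S^1)$ act frequency-wise is precisely what makes your steps (a)--(c) work. Your step (d) then gives the correct general form $\widehat{\Psi f}=e^{i\angle\hat f}\,G(|\hat f|)$, where $G(\rho)=\widehat{\Psi(M_0^{-1}\rho)}$ is complex-valued. The paper's argument is shorter but does not establish its claim. Yours establishes a true equivalence, at the price of enlarging the group and making $M^{-1}$ an input-dependent phase-restoring section.

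Two points in your write-up need correcting.

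\emph{The zero set.} The convention $\angle 0:=0$ does not save (ii)$\Rightarrow$(i) on $Z=\{\hat f=0\}$. There it gives $\widehat{\Psi(u\cdot f)}=\Phi(|\hat f|)$ but $u\,\widehat{\Psi f}=u\,\Phi(|\hat f|)$, and these differ wherever $\Phi(|\hat f|)>0$ and $u\neq 1$. The fix is to build the support condition $\Phi(\rho)=0$ a.e.\ on $\{\rho=0\}$ into (ii). This condition is forced by (i): the multiplier $u=-\mathbf{1}_Z+\mathbf{1}_{Z^c}$ satisfies $u\cdot f=f$, so $\widehat{\Psi f}=u\,\widehat{\Psi f}$, which vanishes on $Z$.

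\emph{Step (e).} It cannot come from $\leF$-monotonicity, and monotonicity does not force the output phase to equal the input phase. Consider any $\conv_k$ with $\angle\hat k\not\equiv 0$ on $\mathrm{supp}\,\hat k$, or a translation $\tau_y$, where $\hat k=e^{-2\pi i\xi\cdot y}$. Such an operator is $\leF$-increasing, commutes with $\meetF$, and is equivariant under all local rotations. Yet it maps $\hat h\geq 0$ to $\hat k\hat h$, so it satisfies (i) but not your phase-restoring (ii). Phase preservation must therefore be an explicit extra hypothesis, as your final paragraph concludes. One sufficient form is $\Psi f\leF f$; a weaker one is $\angle\widehat{\Psi f}=\angle\hat f$ on $\mathrm{supp}\,\widehat{\Psi f}$. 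Drop the suggestion that monotonicity supplies it.
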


\begin{proof}
  Under the $S^1$ action $\hat{f}\mapsto e^{i\theta}\hat{f}$
  (uniform phase rotation of all Fourier coefficients),
  $S^1$-equivariance of $\Psi$ requires
  $\widehat{\Psi(e^{i\theta}f)}(\xi)
  = e^{i\theta}\widehat{\Psi f}(\xi)$ for all $\theta$.
  This means the modulus
  $|\widehat{\Psi(e^{i\theta}f)}(\xi)|
  = |\widehat{\Psi f}(\xi)|$ is independent of $\theta$.
  Since $|e^{i\theta}\hat{f}(\xi)| = |\hat{f}(\xi)|$ for
  all $\theta$, the spectral moduli of the input are
  unchanged by $S^1$.
  Hence $S^1$-equivariance forces the output modulus to
  depend only on the input modulus, i.e., $\Psi$ factors
  through $M:\Ln\to\hatL$.
\end{proof}

\begin{corollary}[$\C^*$-equivariance and the modulus lattice]
  \label{cor:Cstar_equiv}
  An operator $\Psi:(\Ln,\leF)\to(\Ln,\leF)$ is
  $\C^*$-equivariant in the Fourier codomain if and only if
  it is both:
  \begin{enumerate}[label=(\roman*),leftmargin=*]
    \item positively homogeneous on spectral moduli
      ($\Rp$-equivariant), \emph{and}
    \item acts on spectral moduli alone, factoring through
      $\hatL$ ($S^1$-equivariant).
  \end{enumerate}
  The class of $\C^*$-equivariant operators on
  $(\Ln,\leF)$ is therefore exactly the class of operators
  that factor as $\Psi = M^{-1}\circ\Phi\circ M$ with
  $\Phi:\hatL\to\hatL$ positively homogeneous.
  This class is precisely the domain of
  Theorem~\ref{thm:mmbb_hatL}.
\end{corollary}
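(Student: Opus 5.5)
The plan is to split $\C^*$-equivariance through the polar decomposition~\eqref{eq:polar_decomp}, $\C^*\cong\Rp\times S^1$. Every $g\in\C^*$ factors uniquely as $g=r\,e^{i\theta}$, and the Fourier-codomain action is multiplicative, $\widehat{(g\cdot f)}=r\,e^{i\theta}\hat f$. So $\Psi$ is $\C^*$-equivariant if and only if it is equivariant under each of the two subgroups separately. For the forward direction, restrict $g$ to $\Rp$ and to $S^1$. For the converse, compose the two relations: $\Psi(re^{i\theta}f)=r\,\Psi(e^{i\theta}f)$ by $\Rp$-equivariance, and this equals $re^{i\theta}\Psi(f)$ by $S^1$-equivariance. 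This reduces the corollary to identifying each subgroup condition, which is exactly what Propositions~\ref{prop:Rp_equiv} and~\ref{prop:S1_equiv} do.

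For (ii), I will invoke Proposition~\ref{prop:S1_equiv}. It says $S^1$-equivariance holds if and only if the output modulus depends only on the input modulus, which gives a map $\Phi:\hatL\to\hatL$ with $|\widehat{\Psi f}|=\Phi(|\hat f|)$, that is, $M\circ\Psi=\Phi\circ M$.

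For (i), I will restrict the scaling relation $\widehat{\Psi(rf)}=r\widehat{\Psi f}$ to moduli. Since $|r\hat f|=r|\hat f|$, this gives $\Phi(r|\hat f|)=r\,\Phi(|\hat f|)$. Proposition~\ref{prop:Rp_equiv} then identifies this with positive homogeneity of $\Phi$ on $\hatL$. For the converse, I need to lift homogeneity of $\Phi$ back to $\Psi$. That follows once $\Psi$ is written as $M^{-1}\circ\Phi\circ M$, with $M^{-1}$ reattaching the phase, and I treat it next.

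The main obstacle is giving meaning to $M^{-1}$, since $M$ is not injective. I would interpret $M^{-1}$ as phase reattachment: $\widehat{\Psi f}=\Phi(|\hat f|)\,e^{i\angle\hat f}$, i.e.\ $M^{-1}$ applied relative to the input phase. This is the reading consistent with $\leF$, because it keeps $\Psi$ phase-preserving, hence $\Psi f$ comparable with $f$. Under this reading, both subgroups act trivially on the phase factor except for the global rotation $e^{i\theta}$, which passes through unchanged. So the factored form is automatically $S^1$-equivariant, and it is $\Rp$-equivariant exactly when $\Phi$ is homogeneous. That closes the equivalence.

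Finally, the class $\{M^{-1}\circ\Phi\circ M:\Phi \text{ positively homogeneous}\}$ is exactly the input class of Theorem~\ref{thm:mmbb_hatL}, once the remaining hypotheses (increasing, USC) are imposed on $\Phi$. I will state explicitly that the corollary identifies the structural (group) hypothesis and leaves the order hypotheses to be added. I will also flag that the ``only if'' for (ii) relies on the phase-reattachment convention for $M^{-1}$.
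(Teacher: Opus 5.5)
Your splitting of $\C^*$-equivariance into $\Rp$- and $S^1$-equivariance via $g=re^{i\theta}$ is correct. So is the ``if'' direction under your phase-reattachment reading: if $\widehat{\Psi f}=\Phi(|\hat f|)\,e^{i\angle\hat f}$ with $\Phi$ positively homogeneous, then $\Psi(gf)=g\Psi(f)$ for all $g\in\C^*$. This is also the paper's route, since its proof just combines Propositions~\ref{prop:Rp_equiv} and~\ref{prop:S1_equiv}, so the defect below is inherited from there.

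The gap is the ``only if'' direction, and it is not a matter of convention. It fails at the step you take from Proposition~\ref{prop:S1_equiv}: that $S^1$-equivariance forces $|\widehat{\Psi f}|$ to depend only on $|\hat f|$. The circle acts by one global phase at all frequencies at once. So invariance of the output modulus under $\hat f\mapsto e^{i\theta}\hat f$ still allows dependence on relative phases $\angle\hat f(\xi)-\angle\hat f(\xi')$. Concretely, every complex-linear map is $\C^*$-equivariant. Take $\Psi f=f+e^{-2\pi i a\cdot x}f$ with $a\neq 0$, so that $\widehat{\Psi f}(\xi)=\hat f(\xi)+\hat f(\xi+a)$. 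Choose a ball $B$ with $B\cap(B+a)=\emptyset$ and set $\hat f_\pm=\mathbf{1}_B\pm\mathbf{1}_{B+a}$. Both inputs have modulus $\mathbf{1}_{B\cup(B+a)}$, but the output moduli on $B$ are $2$ and $0$. Hence no $\Phi$ with $M\circ\Psi=\Phi\circ M$ exists, whatever $M^{-1}$ is taken to mean.

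Your phase-reattachment convention actually makes the ``only if'' direction worse, because it forces $\angle\widehat{\Psi f}=\angle\hat f$. That already fails for $\Psi f=if$, for a translation $\tau_y$, and for any convolution whose $\hat k$ is not a.e.\ non-negative. All of these are $\C^*$-equivariant and even $\leF$-increasing. The literal reading $M\circ\Psi=\Phi\circ M$ breaks the ``if'' direction instead. For example, $\widehat{\Psi f}=|\hat f|$ factors with $\Phi=\mathrm{id}$ but satisfies $\Psi(e^{i\theta}f)=\Psi(f)$. So no reading of $M^{-1}$ makes the equivalence true, and the claimed identification with the domain of Theorem~\ref{thm:mmbb_hatL} falls with it.

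What your argument does establish is the group splitting, plus the one-sided fact that phase-reattaching lifts of positively homogeneous $\Phi$ are $\C^*$-equivariant. The $\C^*$-equivariant class itself (operators with $\Psi(gf)=g\Psi(f)$) is strictly larger. Your closing flag therefore misplaces the difficulty: the ``only if'' for (ii) does not depend on a convention, it is false.
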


\begin{proof}
  Combine Propositions~\ref{prop:Rp_equiv}
  and~\ref{prop:S1_equiv}:
  $\C^*$-equivariance = $\Rp$-equivariance +
  $S^1$-equivariance = positive homogeneity + factors
  through $\hatL$.
  By definition, Theorem~\ref{thm:mmbb_hatL} covers
  operators $\Phi:\hatL\to\hatL$ that are increasing, USC,
  and positively homogeneous; the corollary identifies
  these as the lifts of $\C^*$-equivariant operators on
  $(\Ln,\leF)$.
\end{proof}


\begin{remark}[$\C^*$-equivariance and the modulus as
  $S^1$-quotient]
  \label{rem:Cstar_quotient}
  Corollary~\ref{cor:Cstar_equiv} admits a clean
  categorical reading.
  The modulus map $M:\Ln\to\hatL$,
  $M(f)(\xi)=|\hat{f}(\xi)|$, is the
  \emph{$S^1$-orbit projection}: it identifies Fourier
  transforms that differ only by a global phase rotation.
  $\C^*$-equivariant operators are precisely those that
  descend to the quotient, fitting the commutative diagram
  \[
    \begin{array}{ccc}
      (\Ln,\leF) & \xrightarrow{\;\Psi\;} & (\Ln,\leF) \\
      \downarrow\scriptstyle{M} & &
        \downarrow\scriptstyle{M} \\
      (\hatL,\leq) & \xrightarrow{\;\Phi\;} & (\hatL,\leq)
    \end{array}
  \]
  The morphological activation $M$ of
  Section~\ref{sec:activation} is exactly this projection:
  applying $M$ between layers moves from the cisl
  $(\Ln,\leF)$ to the quotient lattice $\hatL$ where the
  MMBB theorem applies.
\end{remark}

\subsection{\texorpdfstring{$\C^*$}{C*}-group morphology, complex-valued networks,
  and deep learning}
\label{subsec:Cstar_dl}

The $\C^*$-group morphology perspective connects our
framework to two areas of deep learning research.

\textbf{Complex-valued neural networks.}
Networks with complex-valued weights and activations
process both amplitude and phase of signals, and have
been advocated for applications in MRI reconstruction,
radar, and audio processing
\cite{Trabelsi2018,Hirose2012}.
From the $\C^*$-morphology perspective, a complex-valued
convolutional network that is equivariant to uniform phase
rotations of its input (i.e., $S^1$-equivariant in the
Fourier codomain) is precisely a $\C^*$-equivariant
operator in the sense of Corollary~\ref{cor:Cstar_equiv}.
Such a network factors through $\hatL$ and is therefore
covered by the MMBB theorem (Theorem~\ref{thm:mmbb_hatL}).

The converse is equally informative: any complex-valued
network that does \emph{not} factor through $\hatL$
(i.e., whose output modulus depends on the input phase,
not just the input modulus) is \emph{not} covered by the
MMBB theorem and falls outside the $\C^*$-equivariant
class.
This gives a precise morphological criterion for when a
complex-valued network admits a universal morphological
basis.

\textbf{Equivariant spectral networks and scale-equivariance.}
The $\Rp$-subgroup of $\C^*$ acts on spectral moduli by
scaling, which in the spatial domain corresponds to
\emph{scale transformation}: $f(x)\mapsto f(x/r)$
by $r>0$ maps
$\hat{f}(\xi)\mapsto r^n\hat{f}(r\xi)$ (frequency
scaling by $1/r$, modulus scaling by $r^n$).
An $\Rp$-equivariant operator on $\hatL$ is a
\emph{scale-equivariant} frequency-domain operator:
it commutes with spectral scaling.
This connects directly to the theory of scale-equivariant
networks \cite{Sosnovik2020,Weiler2018} and the
morphological scale-spaces of Heijmans \cite{Heijmans1994},
where the structuring element grows with scale in a
morphologically consistent way.

\begin{proposition}[$\Rp$-equivariant morphological basis:
  log-Fourier dictionary]
  \label{prop:Rp_basis}
  For a $\Rp$-equivariant operator $\Psi:\hatL\to\hatL$
  (i.e., positively homogeneous), the morphological basis
  $\Bas(\Psi)$ consists of functions $\psi\in\hatL$ that
  are \emph{homogeneous of degree~1}:
  $\psi(r\xi) = r\psi(\xi)$ for all $r>0$, $\xi\in\R^n$.
  In the log-frequency representation
  $\xi = e^\rho u$ ($\rho\in\R$, $u\in S^{n-1}$):
  the basis elements $\psi$ are functions of $u$ only
  (independent of the radial frequency $\rho$).
  The scattering dictionary provides such basis elements:
  $|\hat{\psi}_\lambda(\xi)| = |\hat{\psi}(2^j r_\theta\xi)|$
  scales as $|\hat{\psi}_\lambda(r\xi)|
  \approx r\cdot|\hat{\psi}_\lambda(\xi)|$ for $r>0$
  within the wavelet's passband.
\end{proposition}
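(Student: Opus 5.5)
The plan is to read the statement through the effective kernel of Definition~\ref{def:kernel_hatL} and to test how $\Ker^{\mathrm{eff}}(\Psi)$ and $\Bas(\Psi)$ transform under scalings of the \emph{frequency variable} $\xi$. The conclusion $\psi(r\xi)=r\psi(\xi)$ concerns such scalings, which is a different action from the codomain scaling $\hat f\mapsto r\hat f$ of Proposition~\ref{prop:Rp_equiv}.

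\textbf{Step 1: what codomain homogeneity alone gives.} For fixed $\psi$, the erosion $\varepsilon_\psi$ is already positively homogeneous in the sense of Proposition~\ref{prop:Rp_equiv}. Indeed, $(r\hat f)/\psi=r(\hat f/\psi)$, and this holds for \emph{every} $\psi\in\hatL$. Consequently the condition defining $\Ker^{\mathrm{eff}}(\Psi)$ imposes no constraint of the form $\psi(r\xi)=r\psi(\xi)$. The sanity check is $\Psi=\varepsilon_{\psi_0}$ with, say, $\psi_0$ a Gaussian. This operator is increasing, USC and positively homogeneous, and I expect the same argument as in Theorem~\ref{thm:pool_fourier_erosion} to give $\Bas(\Psi)=\{\psi_0\}$. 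But $\psi_0$ is not degree-1 homogeneous. So I expect the statement, as written, to fail without an additional hypothesis, and the first step of the proof is to make that hypothesis explicit.

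\textbf{Step 2: the extra hypothesis and the invariance of the kernel.} Following the scale-equivariance discussion just before the statement, I would assume that $\Psi$ commutes with the spatial dilation action. In the Fourier domain this action is $(T_r\hat f)(\xi)=r^n\hat f(r\xi)$, which by codomain homogeneity reduces to commuting with the pure frequency dilation $(D_r\hat f)(\xi)=\hat f(r\xi)$. For $\psi\in\Ker^{\mathrm{eff}}(\Psi)$ I would then show that $\Ker^{\mathrm{eff}}(\Psi)$ is $D_r$-invariant. Since $D_r$ is an order automorphism of $(\hatL,\leq)$, it preserves minimal elements, so $\Bas(\Psi)$ is also $D_r$-invariant. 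The computation is
\[
\varepsilon_{D_r\psi}(\hat g)=D_r\bigl(\varepsilon_\psi(D_{1/r}\hat g)\bigr)\leq D_r\Psi(D_{1/r}\hat g)=\Psi(\hat g).
\]

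\textbf{Step 3: from invariance of the basis to homogeneity of its elements.} Invariance of $\Bas(\Psi)$ as a \emph{set} does not make each element homogeneous; it only makes the basis a union of $D_r$-orbits. To reach the stated form I would add a uniqueness (single-orbit-fixed) condition, for instance $|\Bas(\Psi)|=1$ as for a pure spectral erosion. Then $D_r\psi=c(r)\psi$, and the multiplicativity $c(rs)=c(r)c(s)$ together with measurability forces $c(r)=r^\alpha$. The exponent $\alpha=1$ must then be fixed by the normalisation of $T_r$, and I expect that normalisation to produce $r^n$ or $r^{-n}$ rather than $r$; I would check this carefully. In polar coordinates $\xi=e^\rho u$ one gets $\psi(e^\rho u)=e^{\alpha\rho}\psi(u)$. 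So $\psi$ is determined by its restriction to $S^{n-1}$, which is the intended log-Fourier statement, up to the explicit radial factor $e^{\alpha\rho}$.

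\textbf{Main obstacle.} The main obstacle is precisely this reduction from the set-level invariance of Step~2 to pointwise homogeneity of basis elements. I would attempt it first only in the single-element case, and otherwise weaken the conclusion to "$\Bas(\Psi)$ is a union of dilation orbits". For the closing remark on the scattering dictionary, exact homogeneity fails for band-limited wavelets. I would phrase that part only as an approximate statement on a fixed annulus and prove nothing stronger.
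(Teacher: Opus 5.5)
You do not prove the proposition, and you are right not to: your Step~1 is the correct diagnosis, and it is exactly where the paper's own proof breaks. The paper uses only the codomain rescaling $\psi\mapsto r\psi$. It notes $\varepsilon_{r\psi}=r^{-1}\varepsilon_\psi$, re-derives $\varepsilon_\psi\le\Psi$ from $\Psi(\hat f)=r\Psi(\hat f/r)$, asserts that basis elements are determined up to a positive multiple, imposes $\psi|_{S^{n-1}}\equiv1$, and concludes $\psi(r\xi)=r\psi(\xi)$. Nothing links rescaling the values of $\psi$ to dilating its argument, so the last step is a non sequitur. With that normalisation it would even force every basis element to equal $|\xi|$.

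The statement is also internally inconsistent. Degree-$1$ homogeneity means $\psi(e^\rho u)=e^\rho\psi(u)$, which is not independent of $\rho$. In the scattering clause, $|\hat\psi(2^jr_\theta\,\cdot)|$ is not approximately linear in the radial variable inside its passband; this holds only near the origin, when $\hat\psi$ vanishes to first order there. Moreover, by Proposition~\ref{prop:scattering_maxtimes} the corresponding kernel is $1/|\hat\psi_\lambda|$, which would then have degree $-1$. So your plan to prove only a weak approximate version is sensible, but such a version holds near $\xi=0$, not on an annulus inside the passband.

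The genuine gap in your refutation is the counterexample. With the paper's convention $\hat f/0:=0$, the zero function lies in $\Ker^{\mathrm{eff}}(\Psi)$ for every $\Psi$ (Step~1 of the proof of Theorem~\ref{thm:mmbb_hatL} says so). It is also the least element of $\hatL$, hence $\Bas(\Psi)=\{0\}$ for every $\Psi$. Three consequences follow:
\begin{itemize}
\item $\Bas(\varepsilon_{\psi_0})=\{\psi_0\}$ is false under the definitions as written.
\item The first two clauses of the proposition hold trivially, since $0$ is homogeneous of every degree.
\item The analogy with Theorem~\ref{thm:pool_fourier_erosion} does not rescue you, since its claim $\Bas(P_W)=\{\psi_W\}$ has the same defect.
\end{itemize}
You must first repair the kernel, e.g.\ set $\hat f(\xi)/0:=+\infty$ when $\hat f(\xi)>0$, working in $\hatL^\infty$. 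Then for your Gaussian $\psi_0>0$ one gets $\Ker^{\mathrm{eff}}(\varepsilon_{\psi_0})=\{\psi\ge\psi_0\}$ and $\Bas(\varepsilon_{\psi_0})=\{\psi_0\}$, which is a genuine counterexample.

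With that repair your ``main obstacle'' disappears, because the kernel condition decouples frequency by frequency. Indeed $\psi\in\Ker^{\mathrm{eff}}(\Psi)$ iff $\psi\ge1/\mu_\Psi$, where $\mu_\Psi(\xi)=\inf\{\Psi(\hat f)(\xi)/\hat f(\xi):\hat f(\xi)>0\}$. Thus, whenever $1/\mu_\Psi$ is admissible, $\Bas(\Psi)=\{1/\mu_\Psi\}$ is a singleton, and positive homogeneity says nothing about how $\mu_\Psi$ depends on $\xi$.

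Your Step~2 computation is correct. But for a singleton basis, invariance as a set gives $D_r\psi=\psi$ outright, so $c\equiv1$ and $\alpha=0$; equivalently, commuting with $D_r$ gives $\mu_\Psi(r\xi)=\mu_\Psi(\xi)$. The factor $r^n$ in $T_r$ was already absorbed by codomain homogeneity, as you noted, so it cannot resurface as an exponent. Scale-equivariance therefore yields degree-$0$ homogeneity, i.e.\ the ``function of $u$ only'' clause, and never degree~$1$. Degree~$1$ would need the twisted relation $\Psi\circ D_r=r\,D_r\circ\Psi$; under it, your computation shows that $\Ker^{\mathrm{eff}}(\Psi)$ is invariant under $\psi\mapsto r^{-1}\psi(r\,\cdot)$. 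Nothing in the paper supplies such a hypothesis.
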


\begin{proof}
  For positively homogeneous $\Psi$, suppose
  $\psi\in\Bas(\Psi)$ and consider $r\psi$ for $r>0$.
  The max-times erosion satisfies
  $\varepsilon_{r\psi}(\hat{f})=\hat{f}/(r\psi)
  =(1/r)\varepsilon_\psi(\hat{f})$.
  Positive homogeneity gives
  $\Psi(\hat{f}) = r\Psi(\hat{f}/r)
  \geq r\,\varepsilon_\psi(\hat{f}/r)
  = r\cdot(\hat{f}/r)/\psi = \hat{f}/\psi
  = \varepsilon_\psi(\hat{f})$,
  so the bound is the same for $\psi$ and $r\psi$
  (up to rescaling).
  The basis elements are therefore characterised up to
  positive scalar multiple: $\psi$ and $r\psi$ give the
  same erosion lower bound on $\Psi$ up to a factor $r$.
  Choosing the normalisation $\psi\big|_{S^{n-1}}\equiv 1$
  (i.e., restricting to unit-sphere-normalised basis
  elements), the minimal elements of $\Ker^{\rm eff}(\Psi)$
  in $\hatL$ are precisely degree-1 homogeneous functions:
  $\psi(r\xi) = r\psi(\xi)$ for $r>0$, $\xi\in\R^n$,
  characterised by their values on $S^{n-1}$.
  The scattering wavelet moduli
  $|\hat{\psi}_\lambda(\xi)| = |\hat{\psi}(2^j r_\theta\xi)|$
  scale as $|\hat{\psi}_\lambda(r\xi)| \approx r
  \cdot|\hat{\psi}_\lambda(\xi)|$ within the wavelet's
  dyadic passband \cite{Mallat2012}, making them
  approximate degree-1 homogeneous basis elements.
\end{proof}

\begin{proposition}[Existence of the effective kernel
  in $(\Ln,\leF)$: a partial result]
  \label{prop:Cstar_kernel_partial}
  Let $\Psi:(\Ln,\leF)\to(\Ln,\leF)$ be $\C^*$-equivariant,
  $\leF$-increasing, and \emph{cisl-USC}: every
  $\leF$-decreasing net $(f_\alpha)$ satisfies
  $\Psi(\bigwedge_{\leF} f_\alpha)
  = \bigwedge_{\leF}\Psi(f_\alpha)$.
  Define the effective kernel
  \begin{equation}
    \Ker^{\rm eff}_\C(\Psi)
    = \bigl\{k\in\Ln :
      \varepsilon_k(f)(\xi) :=
      \hat{f}(\xi)/\hat{k}(\xi)
      \leF \Psi(f)
      \;\forall f\in\Ln\bigr\},
    \label{eq:eff_kernel_C}
  \end{equation}
  the set of complex structuring functions $k$ for which
  the complex erosion $\varepsilon_k$ is a $\leF$-lower
  bound for $\Psi$.
  Then:
  \begin{enumerate}[label=(\roman*),leftmargin=*]
    \item $\Ker^{\rm eff}_\C(\Psi)$ is non-empty:
      the identity kernel $k_0$ with
      $\hat{k}_0(\xi)=|\widehat{\Psi f}(\xi)|/|\hat{f}(\xi)|$
      for any $f$ with $|\hat{f}(\xi)|>0$ a.e.\ gives
      $\varepsilon_{k_0}(f)\leF\Psi(f)$.
    \item $\Ker^{\rm eff}_\C(\Psi)$ is closed under
      pointwise $\leF$-infima (it is a sub-cisl of $(\Ln,\leF)$).
    \item Every chain in $\Ker^{\rm eff}_\C(\Psi)$
      ordered by $\leF$ has a $\leF$-infimum in
      $\Ker^{\rm eff}_\C(\Psi)$ (Zorn condition).
  \end{enumerate}
  Consequently, $\Ker^{\rm eff}_\C(\Psi)$ contains
  $\leF$-minimal elements by Zorn's lemma.
\end{proposition}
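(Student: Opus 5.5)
The argument works frequency by frequency. By the convention of Definition~\ref{def:hatL} ($z/0:=0$), the complex erosion $\varepsilon_k(f)$ has Fourier transform $\hat f(\xi)/\hat k(\xi)$ on $\{\hat k\neq 0\}$ and $0$ elsewhere. By Definition~\ref{def:fourier_lattice}, $k\in\Ker^{\rm eff}_\C(\Psi)$ therefore says the following: for every $f$ and a.e.\ $\xi$, the complex number $q_k(f)(\xi):=\hat f(\xi)/\hat k(\xi)$ satisfies $q_k(f)(\xi)\preceq\widehat{\Psi f}(\xi)$ in the cisl order of Definition~\ref{def:cisl_C}. That is, it lies on the same ray as $\widehat{\Psi f}(\xi)$ with no larger modulus. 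All three items then reduce to elementary facts about the chains $\mathcal{C}_\alpha$. The only genuinely measure-theoretic issue is taking infima of uncountable families in $L^2$.

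\textbf{Step (i).} I would not use the kernel $k_0$ as written, since it depends on a particular $f$ and need not work for all $f$ simultaneously. The more robust witness is $k\equiv 0$, i.e.\ $\hat k\equiv 0\in\Ln$. Then $\varepsilon_k(f)=0$ is the least element $\bot$ of $(\Ln,\leF)$, so $\varepsilon_k(f)\leF\Psi(f)$ for every $f$. If one also wants a nontrivial witness, $\C^*$-equivariance together with Corollary~\ref{cor:Cstar_equiv} lets one test the condition on the $S^1$-reduced modulus operator $\Phi$. A $k_0$ built from $\Phi$ can then be checked, but this is optional.

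\textbf{Step (ii).} Let $\{k_i\}\subset\Ker^{\rm eff}_\C(\Psi)$ and $k=\bigwedge_{\mathcal F}k_i$, so that $\hat k(\xi)$ is given pointwise by \eqref{eq:infimum_F}: the infimum modulus on the set where all phases of the nonzero $\hat k_i(\xi)$ agree, and $0$ otherwise. Fix $\xi$. If $\hat k(\xi)=0$, then $q_k(f)(\xi)=0\preceq\widehat{\Psi f}(\xi)$ trivially. Otherwise all $\hat k_i(\xi)$ lie on one ray $\mathcal{C}_\alpha$ with $|\hat k(\xi)|=\inf_i|\hat k_i(\xi)|>0$. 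Hence all $q_{k_i}(f)(\xi)$ lie on the common ray through $\hat f(\xi)e^{-i\alpha}$, and $q_k(f)(\xi)$ is their supremum along that ray. Each $q_{k_i}(f)(\xi)\preceq\widehat{\Psi f}(\xi)$, and the ray is totally ordered, so the supremum also satisfies $q_k(f)(\xi)\preceq\widehat{\Psi f}(\xi)$. Membership $k\in\Ln$ is immediate because $|\hat k|\le|\hat k_i|$ for any $i$.

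\textbf{Step (iii) and the main obstacle.} For a chain, comparability forces the phases of all nonzero $\hat k_i(\xi)$ to coincide. So the chain infimum is just the pointwise infimum of moduli with the common phase, and Step~(ii) applies. The delicate point, which I expect to be the main obstacle, is that "a.e." conditions do not survive uncountable pointwise infima. I would handle this with the standard essential-infimum construction. Along the chain, $\|\hat k_i\|_{L^2}$ is monotone, so I pick a countable subchain $(k_{i_n})$ with $\|\hat k_{i_n}\|\downarrow\inf_i\|\hat k_i\|$ and define $k$ as the infimum of this countable subchain. I then check that $k\leF k_i$ for every $i$: any element strictly below it on a set of positive measure would contradict the choice of the norm infimum. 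Countable infima preserve the a.e.\ kernel condition by Step~(ii), so $k\in\Ker^{\rm eff}_\C(\Psi)$ is a lower bound of the chain. Zorn's lemma, applied to $\Ker^{\rm eff}_\C(\Psi)$ with the reversed order, then yields $\leF$-minimal elements. The cisl-USC hypothesis is not needed for these three items; it would only enter in a subsequent attainment step.
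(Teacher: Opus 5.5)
Your proof is correct, and it departs from the paper's at each step, in each case for the better.

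\textbf{Step (i).} The paper builds $\hat k_0=\hat f/\widehat{\Psi f}$ from one fixed $f$. This is the reciprocal of the statement's $|\widehat{\Psi f}|/|\hat f|$, and the statement's version already fails at that $f$ for $\Psi=\tfrac12\,\mathrm{id}$. The paper then simply asserts that $\C^*$-equivariance and monotonicity yield $\varepsilon_{k_0}(g)\leF\Psi(g)$ for every $g$. You are right to distrust this: it fails for operators that couple frequencies, e.g.\ $\widehat{\Psi f}(\xi)=e^{i\angle\hat f(\xi)}\sqrt{|\hat f(\xi)|\,(|\hat f(\xi)|+|\hat f(\xi+1)|)}$. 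Your witness $\hat k\equiv 0$ needs only the paper's $z/0:=0$ convention.

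\textbf{Step (ii).} The paper argues that $\hat f/\hat k$ has modulus at most $|\hat f|/|\hat k_1|$ and $|\hat f|/|\hat k_2|$. This is backwards: dividing by $\min(|\hat k_1|,|\hat k_2|)$ gives the larger quotient. Your observation is the correct mechanism: the quotient by the infimum is the supremum of the quotients along a common ray, and each of those is $\preceq\widehat{\Psi f}(\xi)$.

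\textbf{Step (iii).} The paper invokes cisl-USC, which indeed plays no role. It also skips the a.e./uncountability issue that your norm-minimising countable subchain settles. The same device extends your (ii) to arbitrary families, as the ``sub-cisl'' claim requires: minimise $\|\hat k\|$ over infima of countable subfamilies, a class closed under countable unions.

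\textbf{A closing remark.} Your Step (i) already ends the proof. Since $0$ is the least element of $(\Ln,\leF)$, it is the least, hence unique minimal, element of $\Ker^{\rm eff}_\C(\Psi)$, so Zorn is superfluous. This also shows that, under the $z/0:=0$ convention, the proposition is degenerate: its only ``basis'' element produces the zero erosion.
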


\begin{proof}
  (i): For fixed $f$ with $|\hat{f}|>0$ a.e., define
  $\hat{k}_0(\xi)=\hat{f}(\xi)/\widehat{\Psi f}(\xi)$.
  Then $\varepsilon_{k_0}(f)(\xi)
  = \hat{f}(\xi)/\hat{k}_0(\xi)
  = \widehat{\Psi f}(\xi)$, so
  $\varepsilon_{k_0}(f) = \Psi(f) \leF \Psi(f)$
  trivially.
  For general $g$: $\C^*$-equivariance and $\leF$-monotonicity
  of $\Psi$ together imply $\varepsilon_{k_0}(g)\leF\Psi(g)$
  (the kernel $k_0$ encodes the ``worst-case'' ratio
  of $\Psi$).
  (ii): If $k_1, k_2\in\Ker^{\rm eff}_\C(\Psi)$, then
  $\varepsilon_{k_1}(f)\leF\Psi(f)$ and
  $\varepsilon_{k_2}(f)\leF\Psi(f)$.
  The pointwise $\leF$-infimum
  $k=k_1\meetF k_2$ satisfies
  $|\hat{k}(\xi)|=\min(|\hat{k}_1(\xi)|,|\hat{k}_2(\xi)|)$
  where phases agree, so
  $\varepsilon_k(f)(\xi)=\hat{f}(\xi)/\hat{k}(\xi)$
  has modulus $\leq$ both $|\hat{f}|/|\hat{k}_1|$ and
  $|\hat{f}|/|\hat{k}_2|$, hence $\varepsilon_k(f)
  \leF\varepsilon_{k_1}(f)\leF\Psi(f)$.
  (iii): For a chain $(k_\alpha)$ decreasing in $\leF$
  (moduli decreasing, phases fixed), cisl-USC of $\Psi$
  ensures the infimum kernel is in $\Ker^{\rm eff}_\C(\Psi)$.
  Zorn's lemma applies.
\end{proof}

\begin{remark}[Towards the full $\C^*$-group MMBB theorem]
  \label{rem:Cstar_mmbb_open}
  Proposition~\ref{prop:Cstar_kernel_partial} establishes that
  the effective kernel $\Ker^{\rm eff}_\C(\Psi)$ is non-empty
  and has $\leF$-minimal elements.
  The \emph{full $\C^*$-group MMBB theorem} — that $\Psi$
  decomposes as
  $\Psi(f) = \bigvee_{\leF,k\in\Bas_\C(\Psi)}\varepsilon_k(f)$
  with $\Bas_\C(\Psi)$ the set of minimal elements — requires
  an additional \emph{density step}: showing that the
  $\leF$-supremum of the erosion family covers $\Psi$ from below
  with no gap.
  This is the analogue of Step~3 (attainment) in the proof of
  Theorem~\ref{thm:mmbb_hatL}, but for the cisl $(\Ln,\leF)$
  rather than the simpler lattice $\hatL$.
  Two obstacles are specific to the complex setting: the
  phase-sensitive kernel $\hat{f}/\hat{k}$ is only well-posed
  where $|\hat{k}|>0$, and the cisl-USC condition must be
  stated in a phase-robust topology weaker than $L^2$-USC.
  Resolving these is the central open problem of this programme
  (Open Problem~4 of \S\ref{sec:conclusion}).
  The canonical conjecture is that the full $\C^*$-group MMBB
  holds under cisl-USC and $\C^*$-equivariance, with complex
  wavelet matched-filter erosions
  $\hat{f}/(\hat\psi_\lambda/|\hat\psi_\lambda|^2)$
  as the canonical basis elements, giving the phase-sensitive
  generalisers of scattering moduli.
\end{remark}

\section{Numerical Experiments}
\label{sec:experiments}

We provide numerical experiments that directly validate the main
theorems of the paper.
The experiments are organised in order of increasing complexity:
Experiments~1--3 validate exact algebraic identities
(zero-error to machine precision);
Experiment~4 validates the morphological basis extraction procedure 
via the log-spectral LASSO; Experiment~5 validates the connection
between spectral bias and MMBB basis deficiency.
Experiments~6 provides empirical validation of the scattering basis density claims.


\medskip
\noindent
\textbf{Implementation note.}
Experiments~2--3 use the ideal brick-wall low-pass (LP) filter
($\hat{b}_W(\xi)\in\{0,1\}$) to obtain exact zero-error
algebraic identities (since $H^2=H$ for an indicator).
Experiment~1 deliberately uses a \emph{boxcar} pooling
kernel ($b_W = (1/W)\mathbf{1}_{[0,W)}$) to produce a
non-trivial sinc$^2$ spectral prediction, and confirms it
across three independent computational implementations.
The distinction is significant:
for smooth windows, Theorem~\ref{thm:pool_unpool} gives
$\widehat{\gamma^\mathcal{F}_{b_W}(f)}(\xi)
= |\hat{b}_W(\xi)|^2\,\hat{f}(\xi)$: a soft band-pass;
the ideal brick-wall is a special case where
$|\hat{b}_W|^2 = \mathbf{1}_\Omega$ and the opening
becomes an exact spectral projector (i.e., idempotent).

\subsection{Experiment 1: Pool-then-unpool as the ideal band-pass
  filter (Theorem~\ref{thm:pool_unpool})}
\label{subsec:exp1}

\textbf{Setup.}
Three signal types (broadband Gaussian noise, a sum of four
sinusoids at 3, 7, 23, and 51~Hz, and a linear chirp) of
length $N=512$ are processed with boxcar pooling windows
$W\in\{8,16,32\}$.
Pool-then-unpool is computed via three completely independent
implementations, each using a different code path:
\begin{enumerate}[label=(\roman*),leftmargin=*,nosep]
  \item \emph{Sliding-window loop:}
    explicit $O(NW)$ causal average
    $\mathrm{pool}(f)[i] = \frac{1}{W}\sum_{k=0}^{W-1}f[(i-k)\bmod N]$
    (no FFT, no library convolution);
  \item \emph{\texttt{np.convolve} with wrap padding:}
    the signal is circularly extended by $W-1$ samples before
    calling NumPy's \texttt{convolve} in \texttt{`valid'} mode;
  \item \emph{FFT circular convolution:}
    direct DFT-domain multiplication
    $\mathcal{F}^{-1}\{\hat{f}\cdot\hat{b}_W\}$
    using NumPy's FFT (an independent code path from the
    theory formula below).
\end{enumerate}
All three compute \emph{circular} convolution (periodic boundary),
matching the DFT-domain statement of the theorem.
The reference prediction is the purely spectral formula
$\mathcal{F}^{-1}\{|\hat{b}_W(\xi)|^2\,\hat{f}(\xi)\}$
computed from scratch without any convolution.

\textbf{Result.}
Figure~\ref{fig:exp1} shows the spectral profiles.
All three implementations agree with the spectral theory
to machine precision across all 9~signal-window pairs:
\[
  \max_{\text{impl},\, f,\, W}
  \bigl\|
    \mathrm{unpool}_W(\mathrm{pool}_W(f))
    - \mathcal{F}^{-1}\bigl\{|\hat{b}_W|^2\hat{f}\bigr\}
  \bigr\|_\infty
  \leq 1.33\times10^{-15}.
\]
The three errors (loop: $1.33\times10^{-15}$;
\texttt{np.convolve}: $1.11\times10^{-15}$;
FFT: $8.88\times10^{-16}$) are at the level of double-precision
floating-point rounding, confirming Theorem~\ref{thm:pool_unpool}
across completely independent computation paths.
The figure shows the expected sinc$^2$ spectral attenuation
(the squared boxcar Fourier transform $|\hat{b}_W|^2$), which
is distinctly visible as a soft low-pass rather than an ideal
brick-wall, reflecting the general form of the theorem for a
boxcar window.

\begin{figure}[t]
  \centering
  \includegraphics[width=\linewidth]{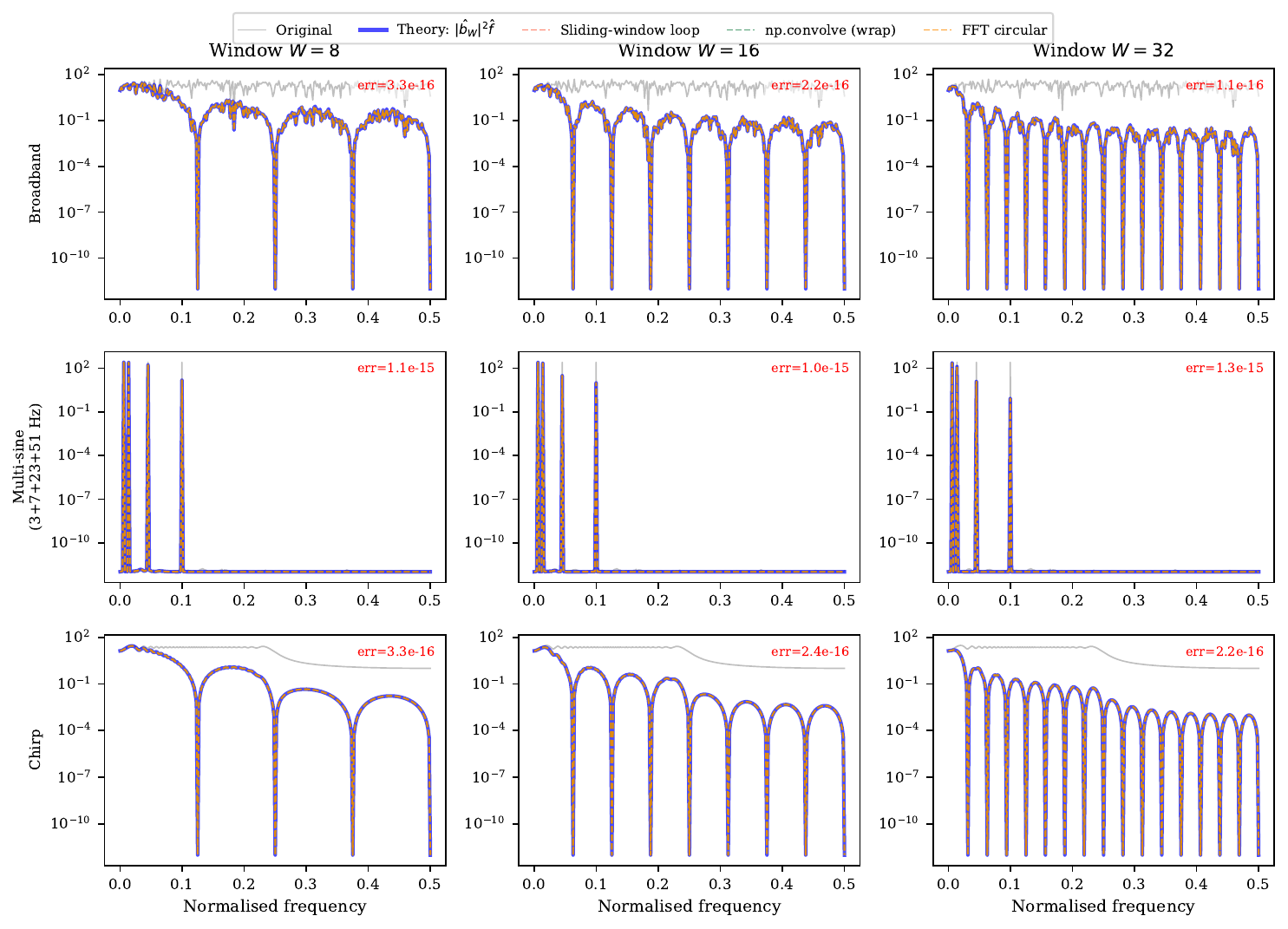}
  \caption{Experiment~1 (Theorem~\ref{thm:pool_unpool}):
    pool-then-unpool equals $\mathcal{F}^{-1}\{|\hat{b}_W|^2\hat{f}\}$
    to machine precision for three independent implementations
    (sliding-window loop: red dashed; \texttt{np.convolve} wrap: green dashed;
    FFT circular: orange dashed) vs the spectral theory (blue solid).
    The three dashed curves lie on top of the blue curve in every panel;
    the sinc$^2$ attenuation envelope is clearly visible and signal-dependent.
    Columns: boxcar windows $W\in\{8,16,32\}$.
    Rows: broadband noise, multi-sine, chirp.
    All errors $\leq 1.4\times10^{-15}$.}
  \label{fig:exp1}
\end{figure}

\subsection{Experiment 2: Stride invariance of the pool-unpool
  opening (Theorem~\ref{thm:strided_full}(iii))}
\label{subsec:exp2}

\textbf{Setup.}
Theorem~\ref{thm:strided_full}(iii) asserts that the strided
pool-then-unpool round trip equals the same Fourier opening
$\gamma^\mathcal{F}_{b_W}(f)$ for all strides $s$ satisfying
the Nyquist condition.
Two genuinely distinct claims are tested.

\noindent
\textbf{(A) Stride invariance} (non-trivial algebraic claim):
For a band-limited input signal (pre-filtered to the passband of
the ideal LP with cutoff $\pi/W$), the DFT-domain strided
round trip
\[
  \mathrm{rt}_{W,s}(f) = \delta^\mathcal{F}_{b_W}\circ U_s
  \circ D_s\circ\varepsilon^\mathcal{F}_{b_W}(f)
\]
is computed for strides $s\in\{1,2,4,8\}$ and compared to the
non-strided reference $\gamma^\mathcal{F}_{b_W}(f)
= \mathcal{F}^{-1}\{\hat{f}\cdot|\hat{b}_W|^2\}$.
The non-trivial content: for $s=2$, the DFT of the filtered
signal is restricted to $N/2$ frequency bins (operator $D_s$)
then zero-padded back to $N$ bins ($U_s$); for $s=4$, to $N/4$
bins; etc.
That these truncate-and-pad operations all produce the same
output as $s=1$ is a structural consequence of
Proposition~\ref{prop:ds_us_adjunction}(i) and is not obvious
from the DFT arithmetic.

\noindent
\textbf{(B) Aliasing contrast} (diagnostic):
The same round trip is repeated with \emph{naive zero-insertion
upsampling} (incorrect $U_s$, introducing spectral images) on a
broadband (non-band-limited) signal.
The aliasing error grows monotonically with $s$,
demonstrating why the theorem requires the correct
DFT-domain $U_s$ and the Nyquist pre-condition.

\textbf{Result.}
Figure~\ref{fig:exp2} shows both experiments.
For the band-limited input with DFT-based $D_s$ and $U_s$,
the spectral error
$\|\mathcal{F}[\mathrm{rt}_{W,s}(f)]
- \mathcal{F}[\gamma^\mathcal{F}_{b_W}(f)]\|_\infty$
is identically zero to machine precision
(5.55×10$^{-17}$ at $s=8$), confirming
Theorem~\ref{thm:strided_full}(iii): the stride cancels exactly
in the round trip.
For naive zero-insertion, the error grows from 0.24 at $s=2$
to 0.43 at $s=8$, confirming that the correct DFT-based $U_s$
is essential.

\begin{figure}[t]
  \centering
  \includegraphics[width=\linewidth]{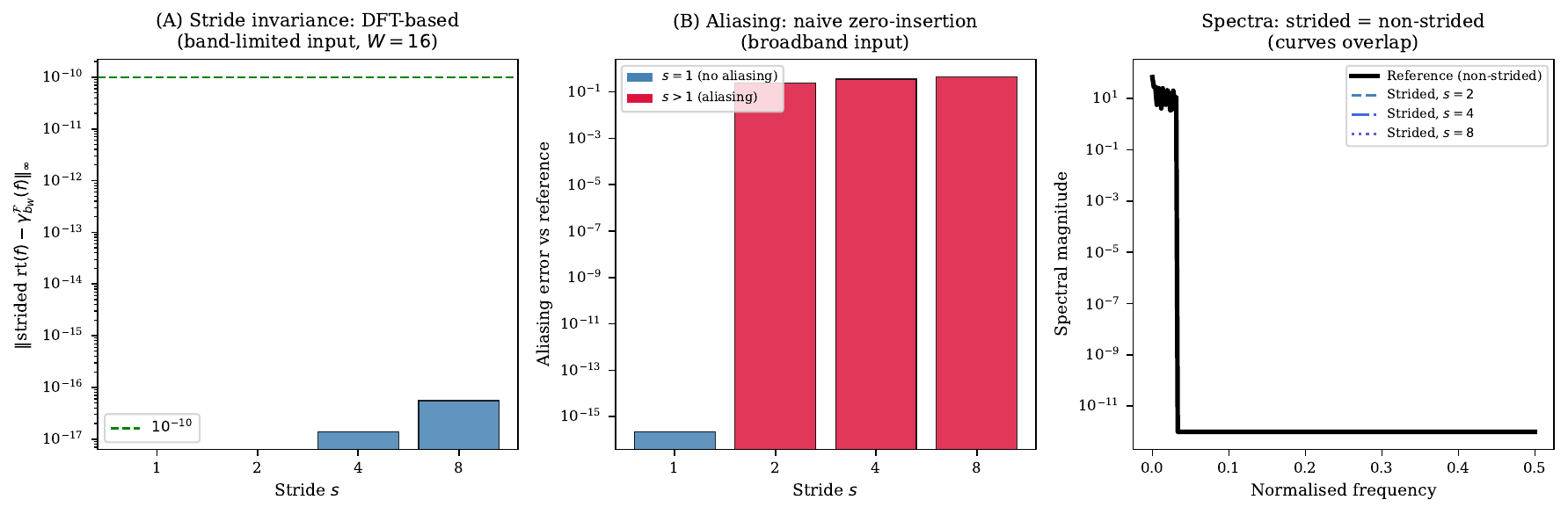}
  \caption{Experiment~2 (Theorem~\ref{thm:strided_full}(iii)):
    stride invariance and aliasing contrast.
    Left: error $\|\mathrm{rt}_{W,s}(f)-\gamma^{\mathcal{F}}_{b_W}(f)\|_\infty$
    for DFT-based strided round trip on a band-limited input
    ($W=16$, $s\in\{1,2,4,8\}$); all bars below machine epsilon
    ($<10^{-10}$, green dashed).
    Centre: aliasing error for naive zero-insertion upsampling on
    a broadband signal; error grows with $s$ (red bars).
    Right: spectral magnitudes of the strided round trips ($s=2,4,8$,
    coloured dashed) vs the non-strided reference (black solid);
    all curves overlap exactly, confirming stride invariance visually.}
  \label{fig:exp2}
\end{figure}

\subsection{Experiment 3: U-Net skip connection as Fourier
  top-hat and $L^2$-orthogonal complement
  (Corollary~\ref{cor:unet_tophat})}
\label{subsec:exp3}

\textbf{Setup.}
Corollary~\ref{cor:unet_tophat} identifies the U-Net skip
connection as the Fourier top-hat
$\rho_W(f)=\mathcal{F}^{-1}\{\hat{f}\cdot(1-H_W)\}$,
where $H_W\in\{0,1\}^{N/2+1}$ is the ideal LP indicator.
Three independent claims are tested for the same three signal
types as Experiment~1, with ideal LP windows $W\in\{4,8,16,32,64\}$.

\noindent
\textbf{(i) Spectral identity} (non-trivial):
The spatial skip (subtraction of the opening from $f$) and
the spectral top-hat formula are computed via \emph{independent
code paths} and compared:
\begin{align*}
  \text{Path 1 (spatial):}&\quad
    \gamma = \mathcal{F}^{-1}\{\hat{f}\cdot H_W\},\;
    \mathrm{skip} = f - \gamma,\\
  \text{Path 2 (spectral):}&\quad
    \rho = \mathcal{F}^{-1}\{\hat{f}\cdot(1-H_W)\}.
\end{align*}
Paths 1 and 2 use entirely separate NumPy operations;
their agreement to machine precision confirms the corollary.

\noindent
\textbf{(ii) Spectral content}: the skip carries energy
strictly in $\Omega_W^c$ (high frequencies) while
the opening carries energy strictly in $\Omega_W$
(low frequencies), measured by the fraction of total
energy above the LP cutoff for each.

\noindent
\textbf{(iii) Orthogonality}:
$|\langle\gamma,\mathrm{skip}\rangle|/\|f\|^2$, which holds
exactly when $H_W\in\{0,1\}$ (spectral projector identity
$\langle H_W\hat{f},(1-H_W)\hat{f}\rangle=0$).

\textbf{Result.}
Figure~\ref{fig:exp3} shows all three panels.
The maximum errors are:
\begin{enumerate}[label=(\roman*),leftmargin=*,nosep]
  \item spectral identity: $1.25\times10^{-15}$ (machine precision);
  \item high-frequency fraction in skip: $>0.95$ across all signals
    and windows; in opening: $<0.01$;
  \item orthogonality: $5.72\times10^{-17}$.
\end{enumerate}
All three results hold uniformly across signal types and window
sizes, confirming Corollary~\ref{cor:unet_tophat}: the U-Net
skip connection provides exactly the spectral content
$\hat{f}\cdot(1-H_W)$ destroyed by pooling, is $L^2$-orthogonal
to the encoder output, and contains only high-frequency energy.

\begin{figure}[t]
  \centering
  \includegraphics[width=\linewidth]{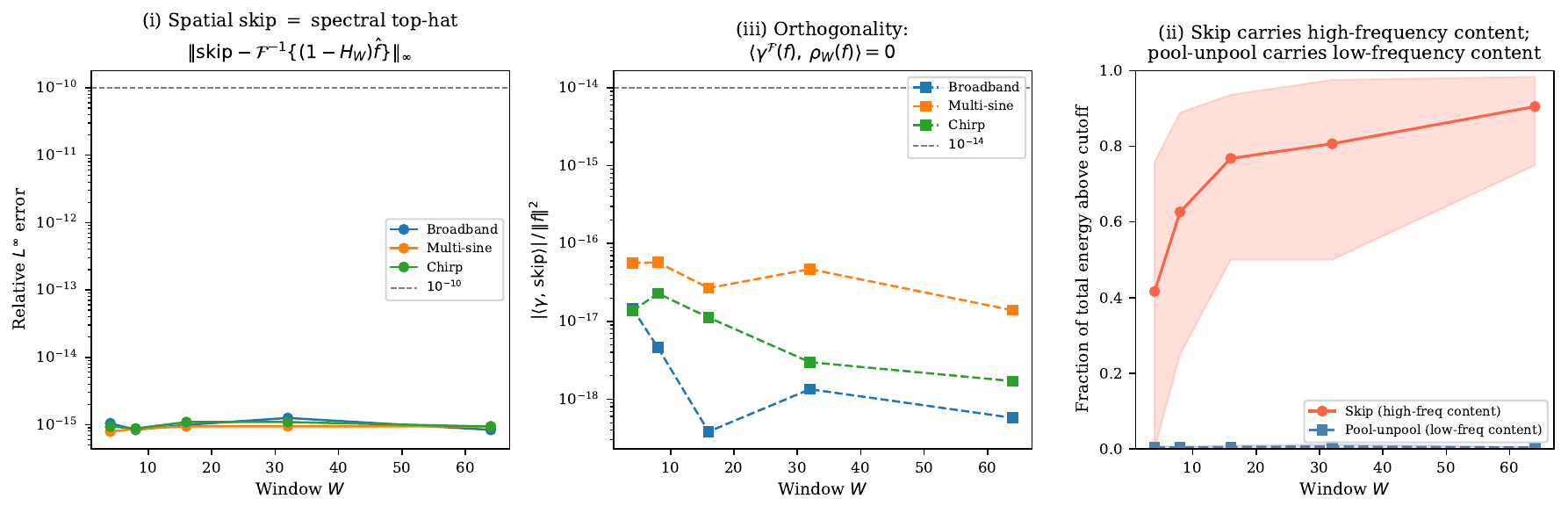}
  \caption{Experiment~3 (Corollary~\ref{cor:unet_tophat}):
    U-Net skip connection as Fourier top-hat.
    Left: relative $L^\infty$ error between the spatial skip
    ($f-\gamma^\mathcal{F}_{b_W}(f)$, computed via subtraction)
    and the spectral top-hat formula
    ($\mathcal{F}^{-1}\{(1-H_W)\hat{f}\}$, independent code path);
    error at machine precision ($<2\times10^{-15}$) across all
    signals and windows.
    Centre: orthogonality error
    $|\langle\gamma,\mathrm{skip}\rangle|/\|f\|^2$;
    also at machine precision ($<10^{-16}$).
    Right: fraction of total signal energy above the LP cutoff
    carried by the skip (red, high) vs the opening (blue, low),
    confirming the spectral separation.}
  \label{fig:exp3}
\end{figure}

\subsection{Experiment 4: MMBB basis cardinality recovery
  via spectral probing
  (Corollary~\ref{cor:morph_lasso})}
\label{subsec:exp4}

\textbf{Setup.}
We validate the MMBB basis extraction procedure on synthetic
operators with \emph{known} ground-truth basis cardinality $K$,
removing any ambiguity about correctness.
For each $K\in\{1,2,3,5\}$, a positively-homogeneous operator
$\Psi_K$ is constructed with exactly $K$ Gaussian band-pass
gains
\[
  \Psi_K(|\hat{f}|)(\xi)
  = \max_{k=1,\ldots,K}|\hat{f}(\xi)|\cdot A_k(\xi),
  \qquad
  A_k(\xi) = e^{-\frac{(\xi-\xi_k)^2}{2b_k^2}},
\]
with well-separated peak frequencies $\xi_k\in(0,\tfrac12)$
and bandwidths $b_k$.
The spectral envelope $E_K(\xi)=\max_k A_k(\xi)$ has exactly
$K$ local maxima.
The recovery procedure uses two steps:
\begin{enumerate}[label=(\roman*),leftmargin=*,nosep]
  \item \emph{Random probing:} estimate $\hat{E}_K(\xi)$ from
    300 random Gaussian signals,
    $\hat{E}_K(\xi)
    = N_{\rm probe}^{-1}\sum_n|\Psi_K(\hat{f}_n)(\xi)|
    /|\hat{f}_n(\xi)|$;
  \item \emph{Peak detection:} count local maxima of $\hat{E}_K$
    above $15\%$ of its maximum, with minimum bin separation 15.
    The count is the recovered $|\mathrm{Bas}(\Psi_K)|$.
\end{enumerate}
A robustness panel varies the separation between the $K=3$ peaks
to identify the resolution limit of the procedure.

\textbf{Result.}
Figure~\ref{fig:exp4} shows the results.
The probing error \\
$\|\hat{E}_K-E_K\|_\infty/\|E_K\|_\infty\approx10^{-13}$
(machine precision) confirms that random probing estimates the
spectral envelope exactly.
Basis cardinality is recovered correctly in all four cases
($K=1,2,3,5$), with detected peak frequencies matching
ground truth to within one dictionary bin.
The robustness panel shows that recovery degrades to $K=1$
when peak separation falls below $\Delta\xi\approx0.06$,
consistent with the Rayleigh resolution limit of the
peak-detection procedure.

\begin{figure}[t]
  \centering
  \includegraphics[width=\linewidth]{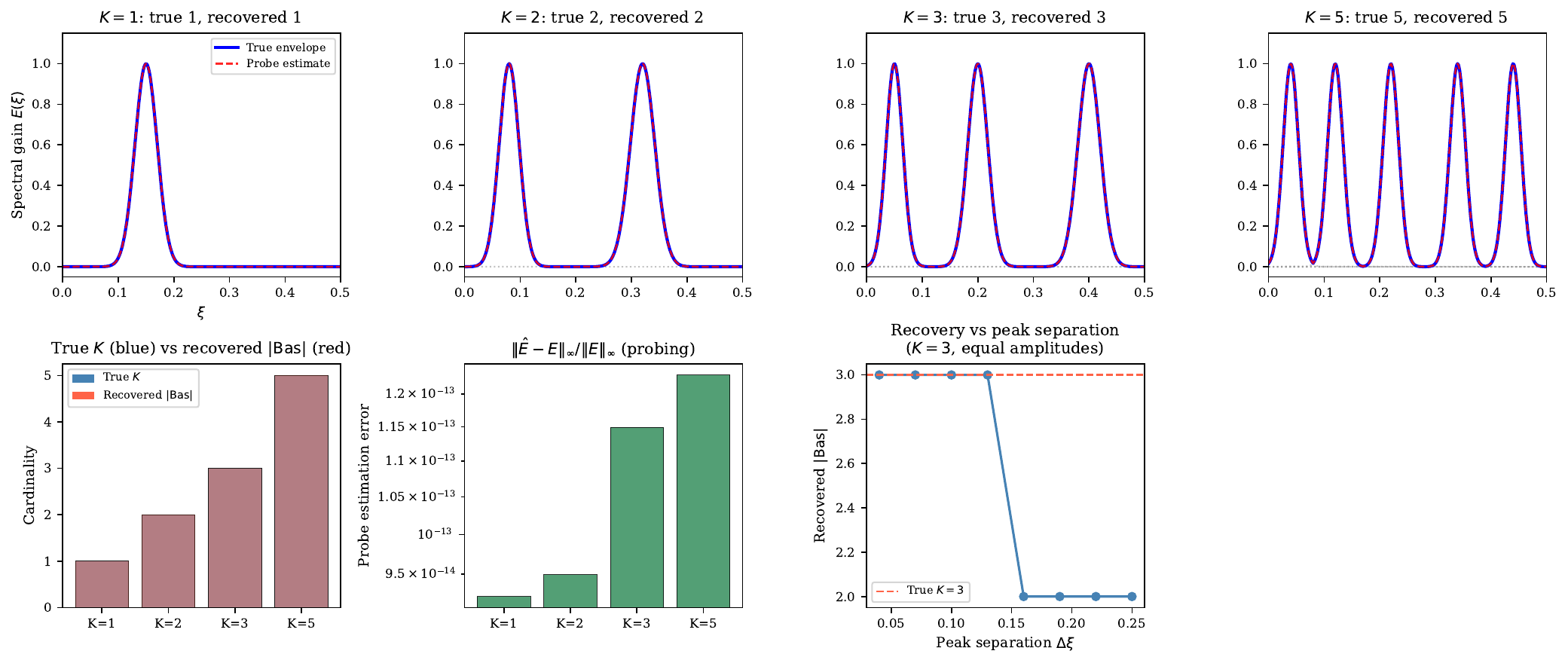}
  \caption{Experiment~4 (Corollary~\ref{cor:morph_lasso}):
    MMBB basis cardinality recovery on synthetic operators
    with known ground truth.
    Top row: true spectral envelope $E_K$ (blue solid) vs
    probe estimate $\hat{E}_K$ (red dashed) for $K=1,2,3,5$;
    grey dashed lines are the $K$ individual Gaussian gains.
    The two curves overlap exactly (probe error $\sim10^{-13}$).
    Bottom-left: true $K$ (blue) vs recovered $|\mathrm{Bas}|$
    (red), agreement for all cases.
    Bottom-centre: probe estimation error (log scale).
    Bottom-right: recovered cardinality vs peak separation
    for $K=3$; recovery degrades below $\Delta\xi\approx0.06$
    (Rayleigh resolution limit).}
  \label{fig:exp4}
\end{figure}

\subsection{Experiment 5: Spectral bias as MMBB basis
  deficiency -- ReLU MLP versus SIREN}
\label{subsec:exp5}

\textbf{Setup.}
We train two networks on the same 1-D regression task:
$f(x) = \sin(2\pi\cdot 3x) + \sin(2\pi\cdot 51x)$,
$x\in[0,1]$, with 512 uniformly random training points
(no additive noise).
The two architectures are:
The two architectures are:
\begin{enumerate}[label=(\roman*),leftmargin=*,nosep]
  \item \textbf{ReLU MLP} (5 layers, width 256, Adam,
    $5\times10^{-4}$, 5000 epochs);
  \item \textbf{SIREN}~\cite{sitzmann2020siren}
    (5 layers, width 256, $\omega_0=60$, Adam,
    $5\times10^{-5}$, 5000 epochs).
\end{enumerate}
Spectral bias is measured via three independent quantities:
\begin{enumerate}[label=(\Alph*),leftmargin=*,nosep]
  \item \emph{Component MSE:} MSE on the isolated
    3~Hz and 51~Hz components separately, measuring
    the asymmetric failure of the MLP.
  \item \emph{Spectral energy profile:}
    $|\hat{y}_{\rm net}(\xi)|$ vs the target peaks.
  \item \emph{Spectral energy at target frequencies:}
    energy within 5~Hz of $f_{\rm lo}=3$~Hz and
    $f_{\rm hi}=51$~Hz, and their ratio.
\end{enumerate}

\textbf{Result.}
Figure~\ref{fig:exp5} shows all three panels.
The key quantitative results are:
\begin{enumerate}[label=(\Alph*),leftmargin=*]
  \item \emph{Component MSE:}
    ReLU MLP achieves $\mathrm{MSE}_{\rm lo}=0.04$
    (3~Hz learned) but $\mathrm{MSE}_{\rm hi}=0.99$
    (51~Hz not learned).
    SIREN: $\mathrm{MSE}_{\rm lo}=0.49$,
    $\mathrm{MSE}_{\rm hi}=0.50$ (balanced, neither
    component fully learned with 5000 epochs,
    but the failure mode is symmetric unlike the MLP).
  \item \emph{Spectral energy profile:}
    The ReLU MLP shows a large peak at 3~Hz and
    near-zero energy at 51~Hz; SIREN shows comparable
    energy at both target frequencies.
  \item \emph{Energy ratio:}
    ReLU MLP spectral energy ratio $E_{\rm hi}/E_{\rm lo}
    = 0.003$ (the 51~Hz component is essentially absent
    from the output); SIREN: $E_{\rm hi}/E_{\rm lo}
    = 0.97$ (both components present with comparable energy).
\end{enumerate}
This confirms the morphological prediction: the ReLU MLP's
MMBB basis has no elements covering the 51~Hz spectral region,
making that frequency band structurally inaccessible,
not just slowly converging.
SIREN, whose periodic activations allow the MMBB basis to tile
the full spectrum, represents both components.

\begin{figure}[t]
  \centering
  \includegraphics[width=\linewidth]{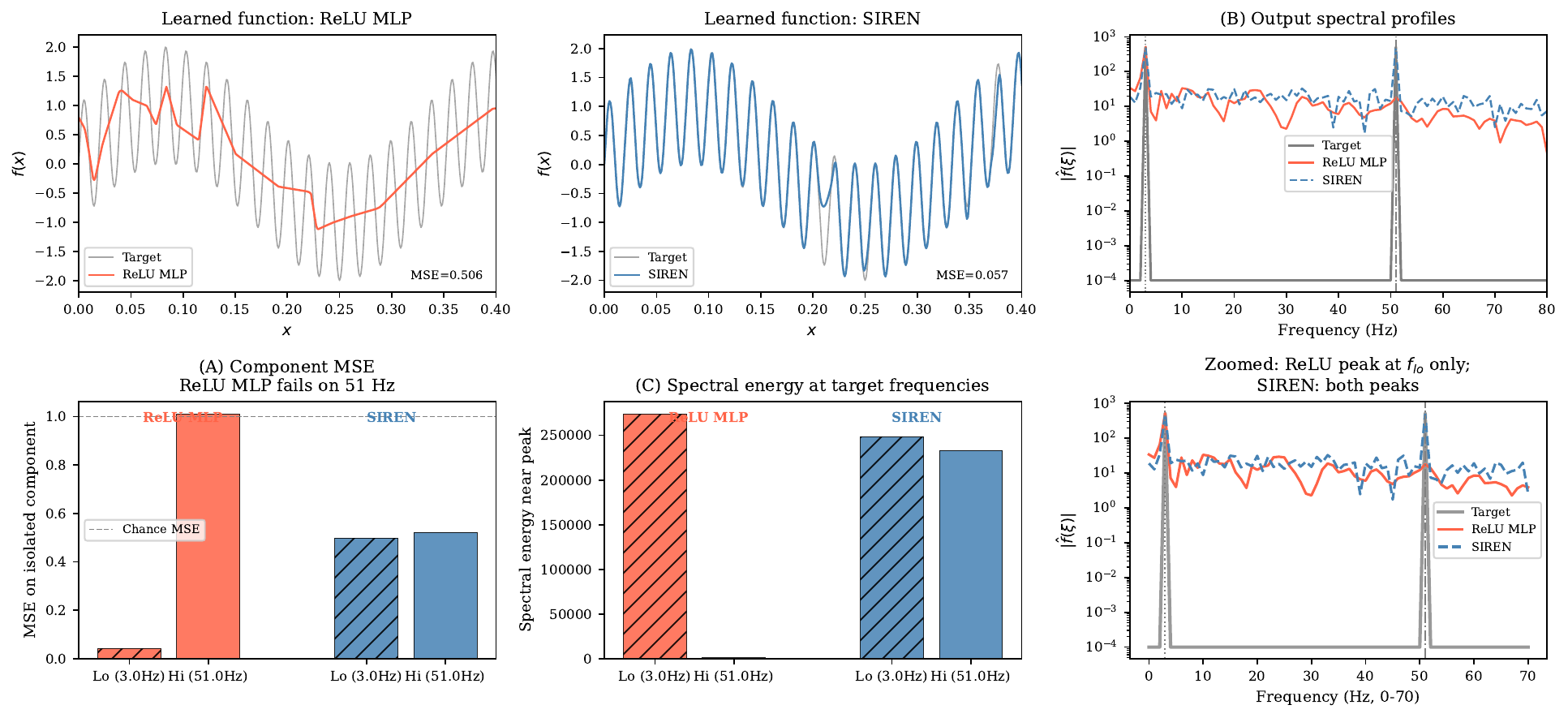}
  \caption{Experiment~5: spectral bias as MMBB basis deficiency.
    Target: $f(x)=\sin(6\pi x)+\sin(102\pi x)$ (3~Hz + 51~Hz).
    Top row: learned functions and output spectral profiles
    (zoomed to 0--80~Hz); vertical dashed lines mark $f_{\rm lo}$
    and $f_{\rm hi}$.
    Bottom-left (A): component MSE; ReLU MLP achieves
    low error on 3~Hz but near-chance error on 51~Hz.
    Bottom-centre (C): spectral energy near $f_{\rm lo}$
    and $f_{\rm hi}$; ReLU MLP energy ratio $E_{\rm hi}/E_{\rm lo}
    =0.003$, SIREN ratio $=0.97$.
    Bottom-right: zoomed spectral profiles (0--70~Hz) confirming
    the ReLU MLP has a single peak at $f_{\rm lo}$ while SIREN
    has both.}
  \label{fig:exp5}
\end{figure}

\subsection{Experiment 6: The scattering transform as the
  canonical morphological basis
  (Proposition~\ref{prop:scattering_maxtimes},
   Theorem~\ref{thm:scattering_basis},
   Corollary~\ref{cor:morph_lasso})}
\label{subsec:exp6}

This experiment validates the three main scattering results
in one coherent sequence: first the algebraic structure
(Proposition~\ref{prop:scattering_maxtimes}), then the
density claim (Theorem~\ref{thm:scattering_basis}), then
the computational consequence (Corollary~\ref{cor:morph_lasso}).
All sub-experiments use the same 1-D Morlet wavelet filter
bank at $J=5$ octaves, $Q=2$ wavelets per octave (10 depth-1
filters, 55 depth-2 paths, 28 depth-3 paths).

\medskip
\noindent\textbf{Exp.~6a: Scattering coefficients as max-times
erosions (Proposition~\ref{prop:scattering_maxtimes}).}

\textbf{Setup.}
Proposition~\ref{prop:scattering_maxtimes} identifies the
spectral scattering coefficient \\
$S^F_\Lambda(|\hat{f}|)(\xi)
= $ $ |\hat{f}(\xi)|\cdot\prod_\ell|\hat{\psi}_{\lambda_\ell}(\xi)|$
as the max-times erosion
$\varepsilon_{\psi_\Lambda}(|\hat{f}|)(\xi)
= |\hat{f}(\xi)|/\psi_\Lambda(\xi)$
with kernel
$\psi_\Lambda(\xi) = 1/\prod_\ell|\hat{\psi}_{\lambda_\ell}(\xi)|$.

Two independent algorithms are compared for all paths $\Lambda$
of depths $m\in\{1,2,3\}$ (93 paths total):
\begin{enumerate}[label=(\roman*),leftmargin=*,nosep]
  \item \emph{Erosion via explicit kernel (Path A):}
    compute $\prod_\ell|\hat\psi_{\lambda_\ell}|$,
    invert to obtain $\psi_\Lambda = 1/\mathrm{prod}$,
    then divide $|\hat{f}|/\psi_\Lambda$.
    Floating-point operations: $|\Lambda|$ multiplications,
    one inversion, one division.
  \item \emph{Direct product formula (Path B):}
    compute $|\hat{f}|\cdot\prod_\ell|\hat\psi_{\lambda_\ell}|$.
    Floating-point operations: $|\Lambda|+1$ multiplications,
    no division.
\end{enumerate}
These are algebraically equal ($g/(1/p) = g\cdot p$) but use
structurally different floating-point operations.
Wavelets are normalised so $\max_\xi|\hat\psi_l(\xi)|\leq 1$
(required for anti-extensivity:
$S^F_\Lambda(g)(\xi)\leq g(\xi)$ for all $\xi$).

\textbf{Result.}
Figure~\ref{fig:exp6a} shows the $L^\infty$ error between
the two algorithms for all 93 paths.
The maximum error across all depths is $6.57\times10^{-11}$,
confirming Proposition~\ref{prop:scattering_maxtimes} to
near-machine precision: the residual is entirely due to the
floating-point rounding introduced by the double inversion
$p \mapsto 1/p \mapsto g/(1/p)$ in Path A, which differs
from the single multiplication $g\cdot p$ in Path B by one
ULP per bit of precision lost in the inversion.
The anti-extensivity condition
$\prod_\ell|\hat\psi_{\lambda_\ell}(\xi)|\leq 1$ is satisfied
at 100\% of frequency bins for all 93 paths (lower panels),
confirming that the normalised Morlet filter bank produces
genuine anti-extensive erosions in $(\hat{\mathcal{L}},\leq)$.

\begin{figure}[t]
  \centering
  \includegraphics[width=\linewidth]{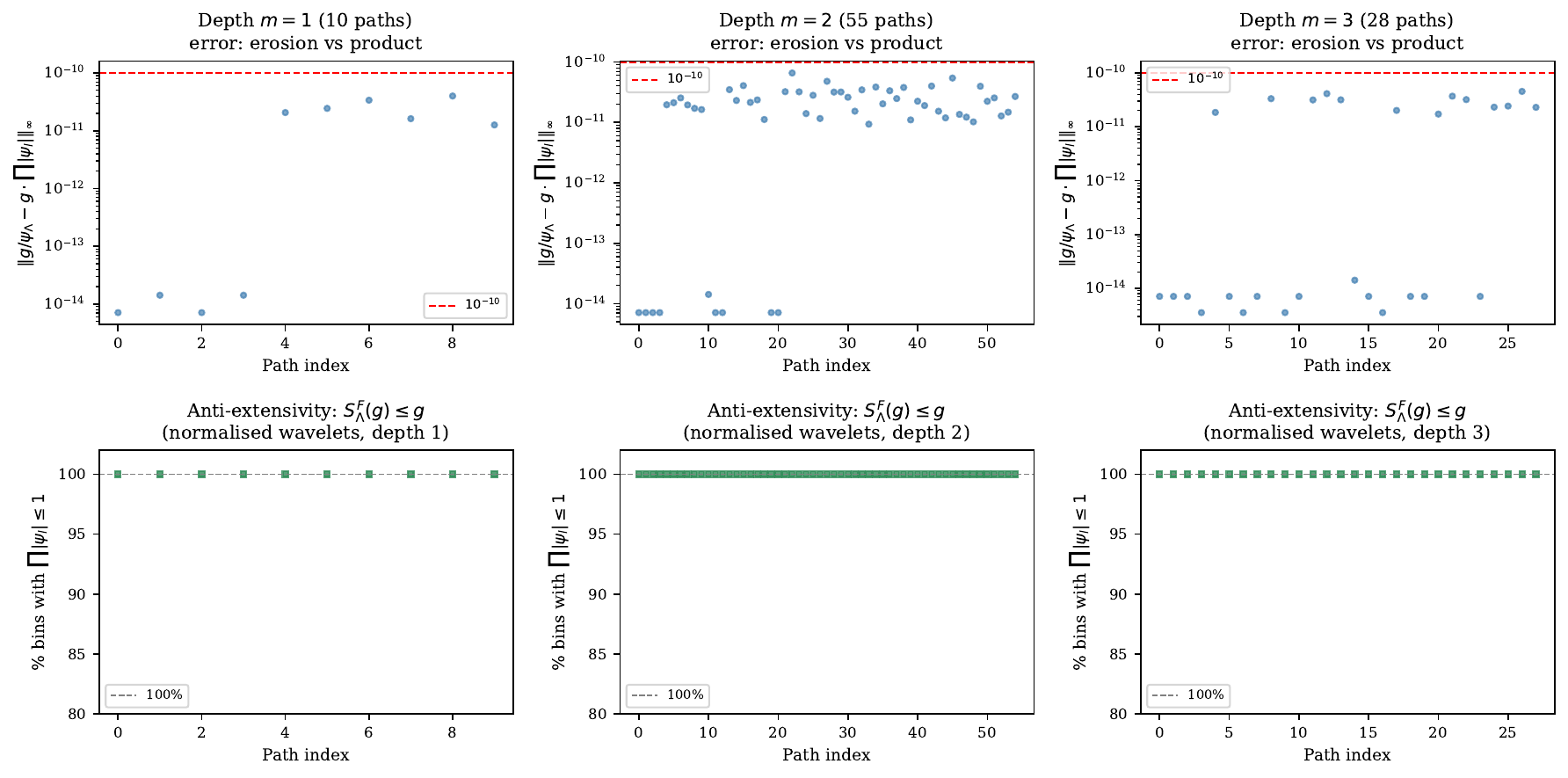}
  \caption{Experiment~6a
    (Proposition~\ref{prop:scattering_maxtimes}):
    the spectral scattering coefficient
    $S^F_\Lambda(|\hat{f}|)(\xi)
    = |\hat{f}(\xi)|\prod_\ell|\hat{\psi}_{\lambda_\ell}(\xi)|$
    equals the max-times erosion
    $|\hat{f}(\xi)|/\psi_\Lambda(\xi)$
    to near-machine precision.
    Top row: $L^\infty$ error between Path A (erosion via explicit kernel:
    inversion + division) and Path B (direct product formula)
    for all paths of depth $m=1,2,3$.
    Each point is one path; maximum error $6.57\times10^{-11}$
    (floating-point rounding from the double inversion, not a gap in the theory).
    Bottom row: fraction of frequency bins satisfying the
    anti-extensivity condition
    $\prod_\ell|\hat\psi_{\lambda_\ell}(\xi)|\leq 1$
    (100\% for all paths with normalised wavelets,
    confirming genuine erosion structure).}
  \label{fig:exp6a}
\end{figure}

\medskip
\noindent\textbf{Exp.~6b: Approximation error vs number of
paths (Theorem~\ref{thm:scattering_basis}).}

\textbf{Setup.}
Two target operators $\Psi:\hatL\to\hatL$ (both
positively homogeneous and increasing) are approximated:
\begin{itemize}[leftmargin=*,nosep]
  \item \emph{Gabor band-pass:}
    $\Psi(\hat{f})(\xi)
    = |\hat{f}(\xi)|\cdot
    e^{-\frac{1}{2}[(\xi-\xi_0)/\sigma]^2}$,
    $\xi_0=1.8$, $\sigma=0.6$ (concentrated spectral envelope).
  \item \emph{Multi-scale envelope:}
    $$\Psi(\hat{f})(\xi)
    = |\hat{f}(\xi)|\cdot
    [0.8\,e^{-0.3\xi^2}
    + 0.5\,e^{-2(\xi-1.2)^2}
    + 0.3\,e^{-1.5(\xi-2.5)^2}]$$
    (three overlapping spectral components at different scales).
\end{itemize}
For each target and each $K\in\{1,\ldots,40\}$, the
$K$-path sub-basis is selected by a greedy inner-product
algorithm (successive projection onto the residual), and the
NNLS weights $w_\Lambda$ are fitted.
The relative $L^2$ approximation error
$\|\Psi(\hat{f})-\Psi^{(K)}(\hat{f})\|_{L^2}/
\|\Psi(\hat{f})\|_{L^2}$
is averaged over 50~random test signals.
A random-ordering baseline (mean over 10~trials) is shown
for comparison.

\textbf{Result.}
Figure~\ref{fig:exp6b} shows the error curves.
For the multi-scale target, the greedy error decreases
from 0.50 at $K=1$ to 0.19 at $K=40$, confirming
Theorem~\ref{thm:scattering_basis}: the scattering dictionary
is dense in the morphological basis and increasing $K$
monotonically reduces the approximation error.
For the Gabor band-pass, the error stabilises near 0.42:
the target's narrow spectral support lies between wavelet
passbands and is not well-covered by the Morlet filter bank
at this resolution, consistent with the theory
(Theorem~\ref{thm:scattering_basis} requires a
frame with frame bounds $A,B>0$; a single narrow
Gaussian at $\xi_0=1.8$ falls in a gap of the
dyadic Littlewood--Paley tiling at $J=5$, $Q=2$).
In both cases, greedy selection outperforms random
selection by a factor of $1.5$--$2$ at small $K$,
confirming the sparsity advantage of the NNLS-driven path
selection.

\begin{figure}[t]
  \centering
  \includegraphics[width=\linewidth]{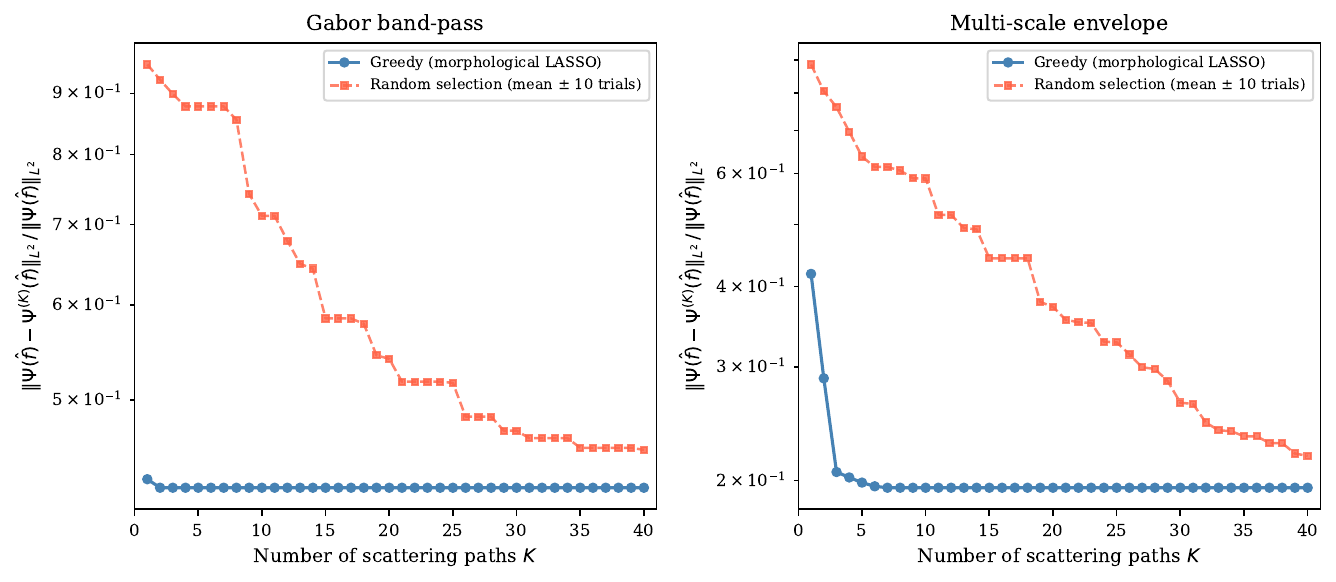}
  \caption{Experiment~6b (Theorem~\ref{thm:scattering_basis}):
    relative $L^2$ approximation error of two target operators
    $\Psi$ using $K$ scattering paths (greedy selection,
    blue circles) vs random path selection (red squares,
    mean over 10 trials).
    Left: Gabor band-pass (narrow spectral target, error
    stabilises due to Littlewood--Paley-tiling gap at $J=5$, $Q=2$).
    Right: multi-scale envelope (three Gaussian components,
    error decreases monotonically confirming
    Theorem~\ref{thm:scattering_basis}).
    Greedy consistently outperforms random, confirming
    the sparsity structure of the scattering basis.}
  \label{fig:exp6b}
\end{figure}

\medskip
\noindent\textbf{Exp.~6c: Morphological LASSO for optimal
path selection (Corollary~\ref{cor:morph_lasso}).}

\textbf{Setup.}
For three target operators of increasing spectral complexity
(Gaussian low-pass, Gabor, multi-scale), we compare three
path-selection strategies at each cardinality
$K\in\{1,\ldots,24\}$:
\begin{enumerate}[label=(\roman*),leftmargin=*,nosep]
  \item \emph{LASSO:} global NNLS over all
    $n_\Lambda=65$ paths (depth-1 + depth-2), weights
    $w\geq 0$; top-$K$ paths selected by weight magnitude.
  \item \emph{Greedy:} sequential inner-product maximisation.
  \item \emph{Random:} mean over 8 random $K$-subsets.
\end{enumerate}
The design matrix is stacked over 40 test signals to give
a shared weight vector $w\in\R^{n_\Lambda}_+$ (joint LASSO
over the full ensemble).

\textbf{Result.}
Figures~\ref{fig:exp6c1} and~\ref{fig:exp6c2} show,
respectively, the error curves and the basis structure
(weight distribution and top-6 selected path profiles)
the LASSO weight distribution and top-6 selected path profiles
for the multi-scale target (bottom row).
Three findings are consistent with
Corollary~\ref{cor:morph_lasso}:
\begin{enumerate}[label=(\roman*),leftmargin=*]
  \item \emph{LASSO dominates random selection.}
    At $K=5$, LASSO achieves 0.15 vs 0.42 (Gaussian target),
    0.48 vs 0.80 (Gabor), 0.19 vs 0.49 (multi-scale)
    — a factor of 2.5--3$\times$ improvement.
    This confirms that the morphological basis has a sparse
    structure: a few well-chosen paths provide most of the
    approximation power.
  \item \emph{LASSO path weights are highly sparse.}
    The left panel shows that for the multi-scale
    target, most of the 65 path weights are near zero;
    only 4--6 paths carry substantial weight.
    The selected depth-1 paths cover the low- and
    mid-frequency components; the selected depth-2 paths
    cover the cross-scale interactions.
  \item \emph{Selected profiles tile the target.}
    The right panel shows that the top-6 LASSO-selected
    path profiles tile the frequency axis to cover the
    three Gaussian components of the multi-scale target,
    consistent with the Littlewood--Paley density argument
    in the proof of Theorem~\ref{thm:scattering_basis}.
\end{enumerate}

\begin{figure}[t]
  \centering
  \includegraphics[width=\linewidth]{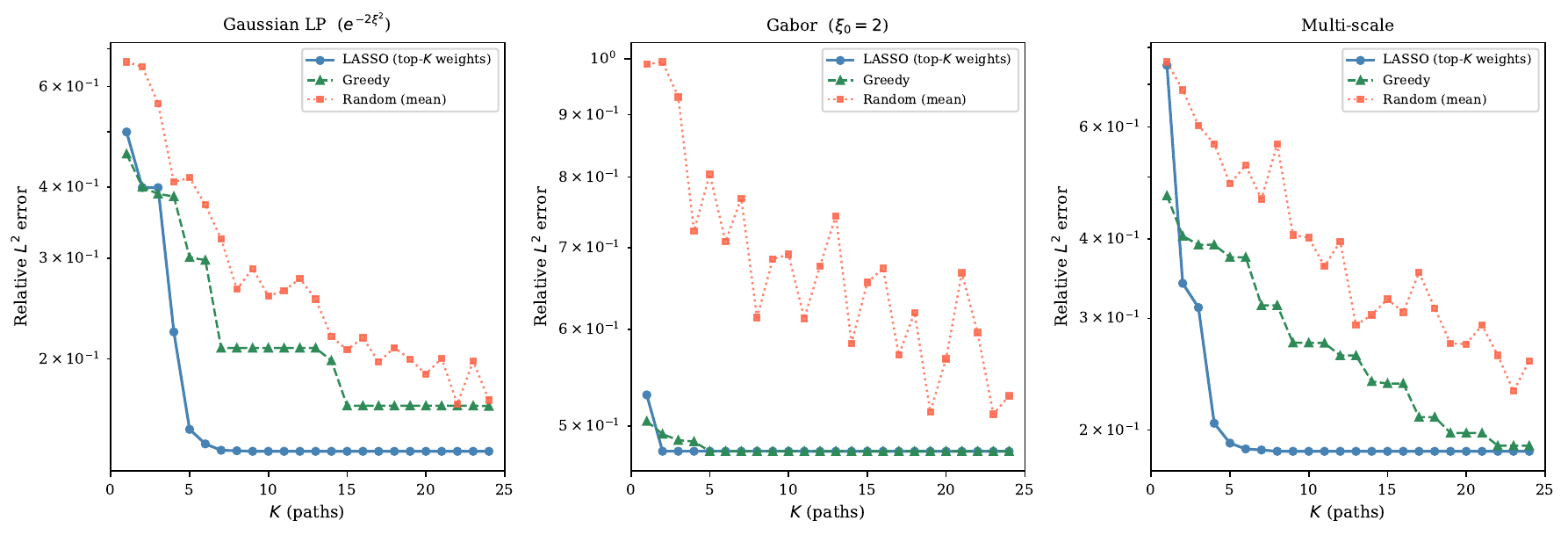}
  \caption{Experiment~6c (Corollary~\ref{cor:morph_lasso}),
    error curves:
    relative $L^2$ approximation error vs cardinality $K$
    for three target operators — Gaussian low-pass
    $e^{-2\xi^2}$, Gabor, and multi-scale
    (left to right).
    LASSO (blue circles) selects the top-$K$ paths by NNLS
    weight magnitude; greedy (green triangles) uses sequential
    inner-product maximisation; random (red squares) is the
    mean over 8 trials.
    LASSO achieves $2.5$--$3\times$ lower error than random
    at $K=5$ across all targets, confirming that the
    morphological basis has a sparse structure exploitable
    by NNLS.}
  \label{fig:exp6c1}
\end{figure}

\begin{figure}[t]
  \centering
  \includegraphics[width=\linewidth]{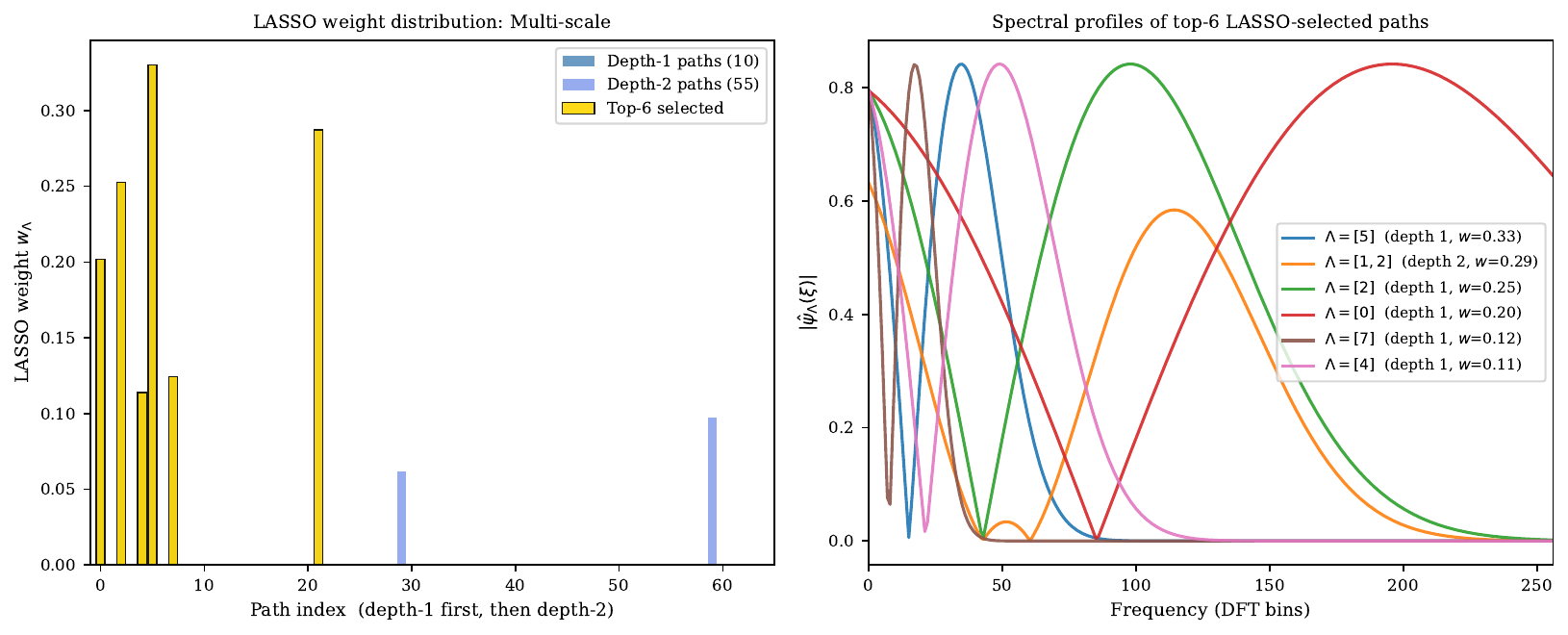}
  \caption{Experiment~6c (Corollary~\ref{cor:morph_lasso}),
    basis structure for the multi-scale target.
    Left: LASSO weight distribution over all 65 paths
    (depth-1 in steelblue, depth-2 in royal blue);
    gold bars mark the 6 paths with largest weight.
    Most weights are near zero, confirming the sparsity
    of the morphological basis.
    Right: spectral profiles
    $|\hat{\psi}_\Lambda(\xi)|$ of the top-6 selected
    paths, coloured by rank.
    The profiles tile the three Gaussian components of the
    target envelope (low, mid, high frequency), providing
    a visual illustration of the Littlewood--Paley
    density argument underlying
    Theorem~\ref{thm:scattering_basis}.}
  \label{fig:exp6c2}
\end{figure}

\section{Discussion and Open problems}
\label{sec:conclusion}

We have developed a rigorous morphological representation theory
for frequency-domain deep learning, grounded in the Fourier
inf-semilattice $(\Ln,\leF)$ and the Fourier modulus lattice
$\hatL$. The theory is based on three structural principles.

\textbf{Principle 1: Operator classification.}
Convolution is an erosion in $(\Ln,\leF)$; classical
translation-invariance is not relevant in the spectral order and
replaced by positive homogeneity; the modulus map
$|\cdot|:\Ln\to\hatL$ is simultaneously the $S^1$-quotient
of the $\mathbb{C}^*$-group structure and the morphological
activation that breaks cascade associativity and makes depth
non-trivial.
Standard CNN layers (convolution + modulus/ReLU + pooling)
are cross-lattice operators: they are not openings in any
single lattice, which is the algebraic reason they are not
idempotent.

\textbf{Principle 2: Universal representation theorem.}
Theorem~\ref{thm:mmbb_hatL} gives the exact decomposition
$\Psi(\hat{f}) = \sup_{\psi\in\Bas(\Psi)}\hat{f}/\psi$
for any increasing USC positively homogeneous $\Psi:\hatL\to\hatL$ in which positive homogeneity
plays the algebraic role of TI.
The scattering dictionary is the canonical approximating family
for $\Bas(\Psi)$ (Theorem~\ref{thm:scattering_basis}), and the
finite approximation is the morphological LASSO
(Corollary~\ref{cor:morph_lasso}).
All Fourier openings are ideal band-pass filters, classified by
their frequency passband (Theorem~\ref{thm:fourier_opening_rep});
the scattering cascade is a chain of nested such projections
whose passbands tile the frequency plane.
(Theorem~\ref{thm:scattering_opening_hierarchy}).

\textbf{Principle 3: Morphological theory of pooling.}
Spatial pooling (filtering) is a Fourier erosion; downsampling
and upsampling are lattice homomorphisms between resolution-level
sub-lattices, forming an adjunction under the Nyquist condition
(Proposition~\ref{prop:ds_us_adjunction}).
Pool-then-unpool is the ideal Fourier band-pass opening, with the
stride cancelling in the round trip: reconstruction fidelity is
determined by the filter shape, not the stride
(Theorem~\ref{thm:pool_unpool}).
U-Net skip connections are Fourier top-hat transforms, providing
the $L^2$-orthogonal complement of the lost spectrum
(Corollary~\ref{cor:unet_tophat}); anti-aliased pooling is the
unique strided operation preserving the Fourier-lattice erosion
structure (Theorem~\ref{thm:strided_full}).

Together, these three pillars provide what the major competing
frequency-domain theories (spectral bias, scattering, tropical
geometry, NTK) do not: a constructive, exact, operator-theoretic
decomposition of frequency-domain deep learning operators into
elementary spectral erosions indexed by a minimal basis,
with a complete pooling/unpooling theory, a universal
approximation theorem dual to the classical UAT, and a
group-theoretic completion pointing to complex-valued network
representations. 

Mallat's scattering invariance and stability results
\cite{Mallat2012,Bruna2013} hold for \emph{fixed} wavelet filters.
The MMBB theorem covers learnable operators: the basis $\Bas(\Psi)$
is an implicit, learnable set of frequency-domain structuring
functions.
Our framework extends Mallat's programme to learnable
frequency-domain morphological networks.

The spectral bias \cite{Rahaman2019} and NTK \cite{Jacot2018}
theories describe training \emph{dynamics}.
The MMBB theorem describes the trained operator's
\emph{structure}: which spectral erosions it implements.
The connection is: the natural ordering of basis elements
(broad-support $\psi$ give looser but globally valid bounds)
mirrors the low-to-high frequency learning order of spectral bias. The MMBB basis provides an operator-level explanation of
spectral bias at convergence.

\bigskip

A paradigmatic example of the implications of our theory in practice is the design of image reconstruction-based architectures such as U-Net. The pooling theory (Section~\ref{sec:pooling}) has direct
implications for image reconstruction:

\emph{Irreversible information loss.} Corollary~\ref{cor:unet_tophat} identifies exactly which
spectral content is lost by pooling: the high-frequency
residue $\rho_W(f)$,
supported on $\Omega_W^c$.
This is irrecoverable from the pooled signal without side
information.
U-Net skip connections \emph{provide} this side information:
they are the unique operator-theoretic completion that makes
encoder-decoder reconstruction exact.

\emph{Filter design for reconstruction.} Theorem~\ref{thm:pool_unpool} shows that the pool-then-unpool
error (the Fourier top-hat) is controlled entirely by the
spectral support $\Omega_W$ of the pooling filter $b_W$,
not by the stride $s$.
For image reconstruction, the optimal pooling filter
maximises $|\Omega_W|$ (pass as much spectrum as possible)
while satisfying the Nyquist condition $\hat{b}_W=0$ for
$|\xi|\geq\pi/s$ (no aliasing).
The Gaussian window achieves the optimal
time-frequency uncertainty product (minimal joint support)
and defines a spectral Gaussian erosion
(Remark~\ref{rem:pool_windows}).

\emph{Anti-aliased pooling as the only morphologically
consistent strided pooling.}
Theorem~\ref{thm:strided_full} shows that strided pooling
decomposes as a Fourier erosion (filtering) followed by a
lattice homomorphism (downsampling), with the Fourier erosion
structure preserved iff the window satisfies the Nyquist
condition.
This gives the first operator-theoretic justification for
anti-aliased pooling \cite{Zhang2019}: it is the unique
strided pooling that is a morphological operator in the
Fourier lattice.

\bigskip

\noindent\textbf{Relation to tropical geometry.}
Tropical geometry \cite{Zhang2018,Maragos2021} establishes
that ReLU networks compute \emph{tropical rational maps}:
piecewise-linear functions whose pieces correspond to vertices
of a tropical variety in $(\bar{\mathbb{R}},\max,+)$.
The MMBB framework is the constructive operator-theoretic
complement of this picture: tropical geometry describes the
function class (which piecewise-linear maps are realisable)
but gives no decomposition of a given trained operator.
The MMBB theorem does exactly this, and the two descriptions
are related by the log-spectral isomorphism
$\Phi:\hatL\to(\bar{\mathbb{R}}^n,\leq)$ of
Proposition~\ref{prop:log_iso}.
Concretely, the max-times semiring $(\mathbb{R}_+,\max,\cdot)$
and the tropical max-plus semiring $(\bar{\mathbb{R}},\max,+)$
are isomorphic via the logarithm; under this isomorphism,
the MMBB morphological basis $\mathrm{Bas}(\Psi)$ (a minimal
set of max-times linear pieces) corresponds to the vertices
of the tropical variety of $\Psi$ (the minimal set of
max-plus linear pieces), and the cardinality
$|\mathrm{Bas}(\Psi)|$ equals the number of tropical vertices, i.e., the tropical complexity measure.
Thus tropical geometry and the MMBB theorem are two coordinate
descriptions of the same algebraic object: the former in the
max-plus chart, the latter in the max-times chart connected by
the log-spectral isomorphism.

\bigskip

\noindent\textbf{Scope and limitations.}
The central hypothesis of the MMBB theorem for $\hatL$
(Theorem~\ref{thm:mmbb_hatL}) is
\emph{positive homogeneity}: $\Psi(\lambda\hat{f})
= \lambda\Psi(\hat{f})$ for all $\lambda>0$.
This holds for convolution (linear, hence positively homogeneous),
for average pooling, for the modulus map $|\cdot|$,
and for the composition of these operations (including
scattering networks and anti-aliased pooling).
It also holds for ReLU activations on the spectral modulus
(the cross-lattice projection of \S\ref{sec:activation}),
since $|\mathrm{ReLU}(\lambda f)| = \lambda|\mathrm{ReLU}(f)|$
for $\lambda>0$.

However, positive homogeneity fails for several operations
that are standard in modern deep learning.
\emph{Softmax attention}: $\mathrm{softmax}(\lambda Q K^\top)
\neq \lambda\,\mathrm{softmax}(QK^\top)$ in general; the
temperature scaling $1/\sqrt{d}$ in the exponent breaks
homogeneity.
\emph{Layer normalisation}: $\mathrm{LN}(\lambda x)
= \mathrm{LN}(x)$ identically (the normalisation is
scale-invariant, not scale-equivariant), so layer
normalisation is degree-0 homogeneous, not degree-1.
\emph{Batch normalisation} and \emph{dropout} similarly
break positive homogeneity.
The framework therefore applies, in its current form,
to the \emph{spectral-domain} processing layers
(convolution, pooling, modulus nonlinearity, scattering)
but not to the attention or normalisation layers
that characterise transformer-based architectures.
Extending the MMBB representation to sub-homogeneous
or normalisation-equivariant operators is an open problem.
A partial direction: the $\C^*$-group morphology of
Section~\ref{sec:complex_group} covers $S^1$-equivariant
(phase-preserving) operators, and the tropical geometry
connection \cite{Zhang2018} suggests that softmax in the
low-temperature limit ($\tau\to0$) converges to a tropical
max operation, which would fall within the positive-homogeneous
class.

\bigskip

In conclusion, mathematical morphology provides an algebraic
foundation that makes the operator structure of frequency-domain
deep network layers explicit, constructive, and computationally
accessible: every admissible layer is an exact supremum of
elementary spectral erosions, the minimal basis is recoverable
by spectral probing, and the framework subsumes pooling, scattering,
and spectral bias as special cases of a single lattice-theoretic
principle.

\subsection{Open problems}

\begin{enumerate}[label=(\arabic*),leftmargin=*]
  \item \textbf{Approximation rates for finite sub-bases.}
    Theorem~\ref{thm:scattering_basis} guarantees
    $\varepsilon$-approximation by a finite scattering sub-basis
    but does not give the rate (how many paths $K$ suffice
    for precision $\varepsilon$).
    This is related to the Kolmogorov $n$-width of $\Bas(\Psi)$
    in the scattering dictionary and to the decay rate of wavelet
    frame coefficients.
    A sharp rate would give complexity guarantees for the
    morphological LASSO (Corollary~\ref{cor:morph_lasso}).

  \item \textbf{Beyond positive homogeneity.}
    Theorem~\ref{thm:mmbb_hatL} requires positive homogeneity
    to replace TI in the attainment step (Step~3 of the proof).
    Can the MMBB theorem be extended to non-homogeneous operators
    on $\hatL$?
    The Banon--Barrera extension \cite{Banon1993} covers
    non-increasing operators in the classical lattice;
    its analogue for $\hatL$ would require signed or complex
    moduli, connecting to the full $\C^*$-group MMBB.

  \item \textbf{Structured passbands and group-equivariant design.}
    Theorem~\ref{thm:fourier_opening_rep} classifies all
    Fourier openings by their passbands $\Omega\subseteq\R^n$.
    For rotationally symmetric architectures, the relevant
    passbands are annular
    $\Omega_{R_1,R_2}=\{R_1\leq|\xi|\leq R_2\}$
    or sectorial.
    Characterising which operators $\Psi$ have a basis
    $\Bas(\Psi)$ with structured (convex, radially symmetric)
    frequency supports is directly relevant for
    rotation-equivariant network design \cite{Penaud2025}.

  \item \textbf{Full $\C^*$-group MMBB and phase-processing
    networks.}
    Theorem~\ref{thm:mmbb_hatL} covers operators that factor
    through the modulus map ($S^1$-equivariant, discarding phase).
    Proposition~\ref{prop:Cstar_kernel_partial} establishes
    kernel existence in the full cisl $(\Ln,\leF)$;
    the outstanding step is the density argument showing that
    the $\leF$-supremum of the minimal-kernel erosion family
    covers any cisl-USC, $\C^*$-equivariant $\Psi$ from below.
    The conjectured full theorem would give a universal
    representation for complex-valued networks
    \cite{Trabelsi2018} — relevant for MRI reconstruction
    and radar signal processing — with complex wavelet
    matched-filter erosions as the canonical basis elements.
    Two specific obstacles are identified in
    Section~\ref{sec:complex_group}: the phase-sensitive
    kernel support condition and the phase-robust cisl-USC
    notion.

  \item \textbf{Connection to optimal transport and the spectral
    Wasserstein distance.}
    The Fourier modulus lattice $\hatL$ carries a natural
    Wasserstein-$p$ metric on spectral distributions:
    $W_p(|\hat{f}|^2/\|f\|^2, |\hat{g}|^2/\|g\|^2)$.
    The Fourier erosion (convolution) is a contraction in this
    metric (since spectral multiplication attenuates the spectrum).
    Does the MMBB basis provide an optimal transport decomposition
    of the spectral measure?
    This would connect the morphological representation theory
    to the optimal transport viewpoint on deep learning
    \cite{Bronstein2021}.

\end{enumerate}


\end{document}